\DeclareMathOperator*{\esssup}{\mathrm{ess\,sup}}
\makeatletter\@addtoreset{equation}{section}\makeatother
\newtheorem{theorem}{Theorem}[section]
\newtheorem{corollary}[theorem]{Corollary}
\newtheorem{lemma}[theorem]{Lemma}
\newtheorem{proposition}[theorem]{Proposition}
\newtheorem{assumption}[theorem]{Assumption}
\newtheorem{definition}[theorem]{Definition}
\newtheorem{remark}[theorem]{Remark}
\numberwithin{equation}{section}
\title[A Markov process for a particle system]{A Markov process for an infinite interacting particle system in the
continuum}
\author{ Yuri  Kozitsky}
\address{Instytut Matematyki, Uniwersytet Marii Curie-Sk{\l}odowskiej, 20-031 Lublin, Poland}
\email{jkozi@hektor.umcs.lublin.pl}
\author{Michael R\"ockner}
\address{Fakult\"at f\"ur Mathematik, Universit\"at Bielefeld, Bielefeld, Germany}
\email{roeckner@math.uni-bielefeld.de}
\keywords{Measure-valued Markov process; point process; martingale solution; Fokker-Planck equation; stochastic semigroup}
\begin{document}

\subjclass{60J25; 60J75; 60G55; 35Q84}%

\begin{abstract}

An infinite system of point particles placed in $\mathds{R}^d$ is
studied. Its constituents perform random jumps (walks) with mutual
repulsion described by a translation-invariant jump kernel and
interaction potential, respectively. The pure states of the system
are locally finite subsets of $\mathds{R}^d$, which can also be
interpreted as locally finite Radon measures. The set of all such
measures $\Gamma$ is equipped with the vague topology and the
corresponding Borel $\sigma$-field. For a special class
$\mathcal{P}_{\rm exp}$ of (sub-Poissonian) probability measures on
$\Gamma$, we prove the existence of a unique family $\{P_{t,\mu}:
t\geq 0, \ \mu \in \mathcal{P}_{\rm exp}\}$  of probability measures
on the space of cadlag paths with values in $\Gamma$ that solves a
restricted initial-value martingale problem for the mentioned
system. Thereby, a Markov process with cadlag paths is specified
which describes the stochastic dynamics of this particle system.

\end{abstract}

\maketitle

\tableofcontents

\section{Introduction}

As a challenging object of probability theory, measure-valued Markov
processes  attract considerable attention. They have also become
popular due to applications in mathematical physics, biology,
ecology, etc. Among such applications one can distinguish those
describing stochastic evolution of infinite systems of point
particles dwelling in a continuous habitat, e.g., $\mathds{R}^d$. In
this case, as the state space of the system is taken the set of all
locally finite configurations of particles, which can also be
interpreted as counting Radon measures. For finite particle systems,
the construction of the corresponding Markov processes is now quite
standard. For infinite systems, however, the list of results reduces
mostly to those describing free (noninteracting) systems \cite{KLR},
conservative diffusions with invariant Gibbs measures \cite{AKR}, or
birth-and-death dynamics with generators obeying essential
restrictions \cite{GK,holley,skor,penrose}. In this context, one can
 also mention models with interactions of Curie-Weiss (mean-field) type, e.g., \cite{Mel}, where one starts with a system of $N$ particles
interacting with a uniform strength proportional to $1/N$, and then
passes to the limit $N\to +\infty$.

In the present paper, we prove the existence and uniqueness of a
Markov process with cadlag paths for an infinite system of point
particles performing random jumps (walks) in $\mathds{R}^d$ with
mutual repulsion, which appears to be the first result of this kind
known in the literature. The starting point of our construction is
the configuration space $\Gamma$. As in \cite{Lenard}, by a
configuration $\gamma$ we mean a finite or countably infinite,
unordered system of points placed in $\mathds{R}^d$, in which
several points may have the same location. Configurations are
supposed to be \emph{locally finite}, which means that each compact
$\Lambda\subset \mathds{R}^d$ contains  a finite number of elements
of a given ${\gamma}\in {\Gamma}$.  The set $\Gamma$ is equipped
with the vague (weak-hash) topology -- the weakest topology that
makes continuous all the maps $\gamma \mapsto \sum_{x\in \gamma}
g(x)$, $g\in C_{\rm cs}(\mathds{R}^d)$, where $C_{\rm
cs}(\mathds{R}^d)$ denotes the set of all compactly supported
continuous functions $g:\mathds{R}^d\to \mathds{R}$. Here by writing
$\sum_{x\in \gamma}g(x)$ we understand $\sum_{i} g(x_i)$ for a
certain enumeration of the elements of $\gamma$. Clearly, such sums
are independent of the enumeration choice, see \cite{Lenard}. The
vague topology is separable and consistent with a complete metric,
i.e., is metrizable in such a way that the corresponding metric
space is complete. Then the states of the considered system are
probability measures on $\Gamma$, the set of which is denoted by
$\mathcal{P}(\Gamma)$. The point states $\gamma$ are associated to
the Dirac measures $\delta_\gamma$. The evolution of the system
which we consider is described by the (backward) Kolmogorov equation
\begin{equation}
  \label{I4}
 \frac{d}{dt} F_t  = L F_t,
\end{equation}
where $F_t:\Gamma\to \mathds{R}$, $t\geq 0$, are test functions and
\begin{eqnarray}
  \label{I5}
  (LF)(\gamma) & = & \sum_{x\in \gamma}\int_{\mathds{R}^d}   c(x,y;\gamma) \left[F(\gamma \setminus x \cup y) - F(\gamma) \right] d
  y, \\[.2cm] \nonumber
c(x,y; \gamma) & = &
  a(x-y) \exp\left(- \sum_{z\in \gamma\setminus x} \phi(z-y)
  \right).
\end{eqnarray}
Here --  and in sequel in similar expressions -- by writing
$\gamma\setminus x$, $x\in \gamma$, or $\gamma \cup x$, $x\in
\mathds{R}^d$, we mean $\gamma\setminus \{x\}$ and $\gamma\cup
\{x\}$, respectively,  i.e., $x$ is considered as the singleton
$\{x\}$.

The model specified by (\ref{I5}) presents an infinite collection of
point particles performing random walks (jumps) over $\mathds{R}^d$,
such that  the probability that the particle located at a given
$x\in \gamma$ changes instantly its position to $y\in \mathds{R}^d$
at time $t$ is $1 - \exp( - t c(x,y;\gamma))$. This probability is
asymmetric in $x$ and $y$ and is state dependent. This means that
the remaining particles prevent the one located at $x\in \gamma$
from jumping to $y$ -- by diminishing the jump kernel -- if the
target point is `close' to $\gamma\setminus x$. The diminishing
factor $\exp\left(- \sum_{z\in \gamma\setminus x} \phi(z-y)
  \right)$ is independent of $x$.
 Originally, models of this kind were introduced
and (heuristically) studied in physics \cite{Kawasaki}, where they
are known under a common name {\it Kawasaki model}. In the rigorous
setting, the stochastic dynamics of the model described by
(\ref{I4}), (\ref{I5}) were studied in \cite{asia} (see also
\cite{Berns} for preliminary results). In \cite{asia},  for a class
of states $\mathcal{P}_{\rm exp} \subset \mathcal{P}(\Gamma)$ --
defined by a certain analytic condition --  and each $\mu_0 \in
\mathcal{P}_{\rm exp}$, there was constructed a map $[0,+\infty) \ni
t \mapsto \mu_t \in \mathcal{P}_{\rm exp}$ that can be interpreted
as the evolution of states described by (\ref{I4}). In the present
work, we construct a Markov process with cadlag paths  such  that
the mentioned $\mu_t$ is its law at time $t$. Let us outline now
some of the aspects of this construction. As we show here, for a
sufficiently large set of functions $F:\Gamma\to \mathds{R}$, the
map $[0,+\infty) \ni t \mapsto \mu_t \in \mathcal{P}_{\rm exp}$
constructed in \cite{asia} is the unique (in the set of all
measures) solution of the Fokker-Planck equation
\begin{equation}
  \label{A1}
  \mu_t (F) = \mu_s (F) + \int_s^t \mu_u (LF) d
  u , \qquad \mu(F) := \int F d \mu,
\end{equation}
holding for all $0\leq s < t<\infty$, see \cite{FKP} for a general
theory of the equations of this kind. Unfortunately, the Dirac
measure $\delta_\gamma$ is not in $\mathcal{P}_{\rm exp}$ for any
$\gamma\in \Gamma$. Therefore, one cannot directly construct a
transition function (and hence the corresponding Markov process)
just by setting $\mu_0 = \delta_\gamma$. In view of this,  we take a
version of the martingale approach suggested in \cite{Stroock}, see
also \cite[Sect. 5.1]{Dawson}, \cite[Chapter 4]{EK}, and proceed as
follows. When dealing with measures $\mu\in \mathcal{P}_{\rm exp}$,
it is natural to use a subset $\Gamma_*\subset \Gamma$ such that
$\mu(\Gamma_*)=1$ for all $\mu \in \mathcal{P}_{\rm exp}$. We define
it by means of a positive continuous function $\psi:\mathds{R}^d \to
\mathds{R}$, chosen in such a way that $\Psi(\gamma):=\sum_{x\in
\gamma}\psi(x)$ be $\mu$-integrable for each $\mu\in
\mathcal{P}_{\rm exp}$. Thereby, we set $\Gamma_*=\{\gamma:
\Psi(\gamma)<\infty\}$, and equip it with the weakest topology that
makes continuous all the maps $\gamma \mapsto \sum_{x\in \gamma}
g(x) \psi(x)$, $g\in C_{\rm b}(\mathds{R}^d)$, where the latter is
the set of all bounded continuous functions. This topology makes
$\Gamma_*$ a Polish space, continuously embedded in $\Gamma$. Then
the measures of interest are redefined as measures on $\Gamma_*$.
 To construct
the process in question, we use spaces of cadlag maps
$[s,+\infty)\ni t \mapsto \gamma_t \in {\Gamma}_*$, $s\geq 0$,
denoted by $\mathfrak{D}_{[s,+\infty)}({\Gamma}_*)$, equipped with
the Skorohod metric, see \cite[page 118]{EK}, constructed with the
help of a complete metric of ${\Gamma}_*$. The principal result of
this work  (Theorem \ref{1tm}) can be characterized as follows. We
prove that there exists a family of probability measures,
$\{P_{s,\mu}: s \geq 0, \ \mu \in \mathcal{P}_{\rm exp}\}$, on
$\mathfrak{D}_{\mathds{R}_{+}}({\Gamma}_*)$ which is a unique
solution of the restricted initial-value martingale problem
corresponding to (\ref{I5}). For such measures, their
one-dimensional marginals belong to $\mathcal{P}_{\rm exp}$ and
satisfy the corresponding version of the Fokker-Planck equation
(\ref{A1}), i.e., they coincide with the measures $\mu_t$
constructed in \cite{asia}. By this we prove the existence of a
unique Markov process with cadlag paths taking values in
${\Gamma}_*$. Finally, we prove that with probability one the
constructed process takes values in the subset of $\Gamma_*$
consisting of simple configurations.

In \cite{asia1}, there was studied a model in which point particles
of two types perform random jumps over $\mathds{R}^d$. Their common
dynamics are described by the corresponding analog of the Kolmogorov
operator (\ref{I5}) in which particles of different types repel each
other, whereas those of the same type do not interact. This kind of
interaction is typical for the classical Widom-Rowlinson model (see
\cite{WR} and the literature quoted therein), for which the states
of thermal equilibrium can be multiple \cite{WR,KK}. The latter fact
ought to have an essential impact on the stochastic dynamics of such
models, cf. \cite{Kissel}, which further stimulates constructing
Markov processes here. The results of \cite{asia1} are pretty
analogous to those of \cite{asia}, which means that -- after proper
modification -- the approach developed in the present work can be
applied also to the model  of \cite{asia1}, which we will realize in
a subsequent paper.

The rest of the paper is organized as follows. In Sect. 2, we
introduce all necessary facts and notions, among which are
sub-Poissonian measures and the above-mentioned set $\Gamma_*\subset
\Gamma$. Here we also introduce and study two classes of functions
$F:\Gamma_*\to \mathds{R}$, which play a crucial role in defining
the Kolmogorov operator $L$ introduced in (\ref{I5}). In Sect. 3, we
impose standard assumptions on $a$ and $\phi$ and then make precise
the domain of $L$. Thereafter, in Theorem \ref{1tm} we formulate the
result, the main part of which is the statement that the restricted
initial value martingale problem for our model has precisely one
solution. Then we outline our strategy of proving this statement. In
Sect. 4, we present and employ the results of \cite{asia} where the
evolution of states $t\to \mu_t\in \mathcal{P}_{\rm exp}$ was
constructed. In Sect. 5, we prove that the restricted initial value
martingale problem for our model has at most one solution. This is
done by proving that the Fokker-Planck equation (\ref{A1}) has a
unique solution, which lies in the class of sub-Poissonian measures.
Since the one-dimensional marginals of the path measures in question
should solve (\ref{A1}), this yields a tool of proving the desired
uniqueness. In Sect. 6 and 7, we prove the existence of the path
measures by employing auxiliary models (Sect. 6) for which one can
construct the processes directly (by means of transition functions),
and then by proving (Sect. 7) that these models approximate the main
model. Their Markov property is then obtained similarly as in
\cite[Sect. 5.1, pages 78, 79]{Dawson}.

\section*{Notations and notions}

In view of the size of this work, for the reader convenience we
collect here essential notations and notions used throughout the
whole paper.
\subsection*{Sets and spaces}
\begin{itemize}
\item The habitat of the system which we study is the Euclidean
space $\mathds{R}^d$. By $\Lambda$ we always denote a compact subset
of it. Further related notations: $\mathds{R}_{+}=[0,+\infty)$;
$\mathds{N}=\{1,2,3 \dots\}$, $\mathds{N}_0 = \mathds{N}\cup \{0\}$;
$C_{\rm cs} (\mathds{R}^d)$ - the set of all compactly supported
continuous functions $g:\mathds{R}^d\to \mathds{R}$, $B_r(y) =
\{x\in \mathds{R}^d: |x-y|\leq r\}$,  $r>0$ and $y\in \mathds{R}^d$.
For a finite subset $\Delta \subset \mathds{R}$, by $|\Delta|$ we
denote its cardinality.
  \item By a Polish space we mean a separable topological space, the
  topology of which is consistent with a complete metric, see, e.g.,
  \cite[Chapt. 8]{Cohn}. Subsets of such spaces are usually denoted by $\mathbb{A}$,
  $\mathbb{B}$, whereas $A$, $B$ (with indices) are reserved for
  denoting operators.
  For a Polish space $E$, by $C_{\rm b}(E)$
  and $B_{\rm b}(E)$ we denote the sets of bounded continuous and
  bounded measurable functions $g:E\to \mathds{R}$, respectively; $\mathcal{B}(E)$ denotes the Borel $\sigma$-field
of subsets of $E$.   For a suitable set $\Delta$, by
  $\mathds{1}_\Delta$ we denote the indicator of $\Delta$.
\item By $\Gamma$, $\Gamma_0$, $\Gamma_*$  and $\breve{\Gamma}_*$ we denote
configurations spaces consisting of all configurations, finite
configurations (\ref{C22}), tempered configurations (\ref{N1}), and
tempered simple configurations, respectively, see (\ref{Psi10}).
These sets are equipped with the vague topology ($\Gamma$) and the
weak topologies ($\Gamma_0$, $\Gamma_*$, $\breve{\Gamma}_*$), which
make them Polish spaces, see Lemma \ref{Lenardlm}. By
$\mathcal{P}(\Gamma)$, $\mathcal{P}(\Gamma_*)$ we denote the sets of
probability measures defined on these spaces. The set of
sub-Poissonian measures $\mathcal{P}_{\rm exp}$ is introduced in
Definition \ref{Pexpdf}. Its crucial property is established in
Lemma \ref{Lenalm}.
\item By $\mathfrak{D}_{[s,+\infty)}(\Gamma_*)$ we denote the space
of cadlag paths $\gamma:[0,+\infty)\to \Gamma_*$, and
$\mathfrak{D}_{\mathds{R}_{+}}(\Gamma_*):=\mathfrak{D}_{[0,+\infty)}(\Gamma_*)$.
Functions on such spaces are denoted by $\mathsf{F}$, $\mathsf{G}$,
etc. By $\varpi_t$ we denote the evaluation map, i.e.,
$\varpi_t(\gamma) = \gamma_t\in \Gamma_*$.  Related $\sigma$-fields
of measurable subsets are defined in (\ref{Frak}).
\end{itemize}
\subsection*{Functions, measures, operators}
\begin{itemize}
  \item Functions $f:\mathds{R}^d \to \mathds{R}$ are usually
  denoted  by small letters $f$, $g$, $\theta$, etc. By $\psi$ we
  denote the function by which we define tempered configurations, see (\ref{C3}) and (\ref{psi}).
For a positive integrable $\theta:\mathds{R}^d \to \mathds{R}_{+}$,
we write $\langle \theta \rangle = \int \theta(x) d x$.
  Functions
  $F:\Gamma_* \to \mathds{R}$ are denoted by capital letters, often $F$ with
  additional symbols. The key functions are defined in
  (\ref{T4}) and (\ref{TH1}). Functions defined on finite configurations $\Gamma_0$
 are mostly denoted by capital $G$ with exception  for correlation
 functions $k_\mu$, see (\ref{Lenard2}).
\item Measures on configuration spaces and their correlation measures are denoted by
$\mu$ and $\chi_\mu$, respectively. Measures on $\mathds{R}^d$ are
usually denoted by $\nu$. By $\lambda$ we denote the
Lebesgue-Poisson measure, see (\ref{C22c}). Measures on path spaces
are denoted by capital $P$. For a tempered configuration $\gamma\in
\Gamma_*$, by $\nu_\gamma$ we denote the measure $\sum_{x\in \gamma}
\psi(x) \delta_x$, see (\ref{Psi32a}). The complete metric on
$\Gamma_*$ used to obtain Chentsov-like estimates is defined in
(\ref{Psi4}).
\item By $L$ we denote the Kolmogorov operator (\ref{I5}),
(\ref{KO}), whereas $L^\alpha$ stands for the approximating operator
(\ref{Lalpha}). Then $L^\Delta$ and $\widehat{L}$ are the
counterparts of $L$ acting in the spaces of functions of $\eta\in
\Gamma_0$, see (\ref{A9a}), (\ref{K3}) and (\ref{M1}), (\ref{M2}).
By $K$ we define the operator defined in (\ref{A2}). Operators
$L^{\dagger, \alpha}$ act in the Banach space of signed measures
$\mathcal{M}_*$, see (\ref{V11}), (\ref{V12}).

 \end{itemize}

\section{Preliminaries}
\label{S2}

\subsection{The configuration spaces}

Each $\gamma\in \Gamma$ gives rise to a counting Radon measure
$\sum_{x\in \gamma} \delta_x$. Bearing this fact in mind, we shall
mostly keep using set notations, i.e., for a compact $\Lambda
\subset \mathds{R}^d$, the value of the mentioned measure on
$\Lambda$ is denoted by $|\gamma \cap \Lambda|$. The vague
(weak-hash) topology of $\Gamma$ is defined as the weakest topology
that makes continuous all the maps $\Gamma \ni \gamma \mapsto
\sum_{x\in \gamma} f(x)$ with $f\in C_{\rm cs} (\mathds{R}^d)$. The
corresponding Borel $\sigma$-field $\mathcal{B}(\Gamma)$ is the
smallest $\sigma$-field of subsets of $\Gamma$ that makes measurable
all the maps $\gamma \mapsto N_\Lambda (\gamma):= |\gamma \cap
\Lambda|$ with compact $\Lambda\subset \mathds{R}^d$. By
$\mathcal{P}(\Gamma)$ we denote the set of all probability measures
on $(\Gamma, \mathcal{B}(\Gamma))$.

As mentioned in Introduction,  configurations $\gamma\in \Gamma$ may
have multiple points. Let  $x_1, x_2, \dots $ be any enumeration of
the elements of a given $\gamma$ in which coinciding $x$ receive
distinct numbers. Then, for a suitable function $g$, by $\sum_{x\in
\gamma}g(x)$ we will mean $\sum_{i} g(x_i)$, which is independent of
the enumeration used herein.The same relates to the sums
\[
\sum_{x\in \gamma} \sum_{y\in \gamma\setminus x} \sum_{z\in
\gamma\setminus \{x,y\}} \cdots.
\]
Along with $\Gamma$, we also use
\begin{equation}
  \label{C22}
  \Gamma_0 = \bigcup_{n\in \mathds{N}_0} \Gamma^{(n)}, \qquad
  \Gamma^{(n)}= \{\gamma \in \Gamma: |\gamma| = n\}.
\end{equation}
Obviously, each $\Gamma^{(n)}$ -- and hence the set of finite
configurations $\Gamma_0$ -- belong to $\mathcal{B}(\Gamma)$. The
topology induced on $\Gamma_0$ by the vague topology of $\Gamma$
coincides with the weak topology determined with the help of $C_{\rm
b}(\mathds{R}^d)$. Then the corresponding Borel $\sigma$-field
$\mathcal{B}(\Gamma_0)$ is a sub-field of $\mathcal{B}(\Gamma)$. It
is possible to show that a function $G:\Gamma_0 \to \mathds{R}$ is
measurable if and only if there exists a family of symmetric Borel
functions $G^{(n)}: (\mathds{R}^d)^n \to \mathds{R}$, $n\in
\mathds{N}$ such that
\begin{equation}
  \label{C22b}
G(\{x_1 , \dots , x_n\}) =   G^{(n)}(x_1 , \dots , x_n).
\end{equation}
In this context, we also write $G^{(0)}= G(\varnothing)$.
\begin{definition}
  \label{Bbsdf}
A measurable function, $G:\Gamma_0\to \mathds{R}$, is said to have
bounded support if there exist $N\in \mathds{N}$ and a compact
$\Lambda$ such that: (a) $G^{(n)}\equiv 0$ for all $n>N$; (b)
$G(\eta)=0$ whenever $\eta$ is not a subset of $\Lambda$. By $B_{\rm
bs}$ we will denote the set of all bounded functions with bounded
support. For $G\in B_{\rm bs}$,  $N_G$ and $\Lambda_G$ will denote
the least $N$ and $\Lambda$ as in (a) and (b), respectively. We also
set $C_G=\sup_{\eta\in \Gamma_0} |G(\eta)|$.
\end{definition}
The Lebesgue-Poisson measure $\lambda$ is defined on $\Gamma_0$ by
the integrals
\begin{equation}
 \label{C22c}
\int_{\Gamma_0} G(\eta) \lambda(d \eta) = G(\varnothing ) +
\sum_{n=1}^\infty \frac{1}{n!} \int_{(\mathds{R}^d)^n} G^{(n)} (x_1
, \dots, x_n) d x_1 \cdots d x_n,
\end{equation}
holding for all $G\in B_{\rm bs}$. For $G\in B_{\rm bs}$, we set
\begin{equation}
  \label{A2}
  (KG)(\gamma) = \sum_{\eta \Subset \gamma} G(\eta), \qquad
  \gamma\in \Gamma,
\end{equation}
where $\eta \Subset \gamma$ means $\eta\in \Gamma_0$, i.e., the sum
in (\ref{A2}) runs over finite subsets of $\gamma$.
\begin{remark} \cite[Proposition 3.1]{Tobi}
  \label{Bbsrk}
For each $G\in B_{\rm bs}$, $KG$ is measurable and such that
$|(KG)(\gamma)| \leq C_G ( 1 + |\gamma \cap \Lambda_G|^{N_G})$ with
$C_G$, $\Lambda_G$ and $N_G$ as in Definition \ref{Bbsdf}.
\end{remark}

\subsection{Sub-Poissonian measures}

When dealing with infinite configurations, one might expect problems
(e.g., blowups) if the dynamics start from certain $\gamma\in
\Gamma$ or $\mu\in \mathcal{P}(\Gamma)$. Thus, it seems reasonable
to avoid considering such states by imposing appropriate
restrictions. Another reason to do this is gaining technical
advantages, which is especially important in view of the high
complexity of the problem. The main observation here is that, for
measures having \emph{finite correlations} \cite{Lenard},
integration over $\Gamma$ can be performed in the following way
\begin{gather}
  \label{Lenard}
  \int_\Gamma (KG)(\gamma) \mu ( d \gamma) =  G(\varnothing) +
  \sum_{n=1}^\infty \frac{1}{n!}\int_{(\mathds{R}^d)^n} G^{(n)} (x_1 , \dots , x_n)
  \chi^{(n)}_\mu(dx_1 , \dots , dx_n),
\end{gather}
where $\chi_\mu^{(n)}$ are the \emph{correlation measures} of $\mu$.
That is, for a compact $\Lambda \subset \mathds{R}^d$,
$\chi_\mu^{(n)}(\Lambda^n)/n!$ is the $\mu$-expected value of the
number of $n$-clusters of particles contained in $\Lambda$. Next,
one observes that the Kolmogorov operator (\ref{I5}) contains the
probability kernel $a(x-y) d y$, which is absolutely continuous with
respect to Lebesgue's measure on $\mathds{R}^d$. In view of this, we
shall demand that each $\chi_\mu^{(n)}$ satisfy
\begin{equation}
  \label{Lenard1}
\chi^{(n)}_\mu(dx_1 , \dots , dx_n) = k^{(n)}_\mu (x_1 , \dots ,
x_n) d x_1 \cdots  d x_n, \qquad k^{(n)}_\mu \in L^\infty
((\mathds{R}^d)^n), \quad n\in \mathds{N}.
\end{equation}
Thereby, the right-hand side of (\ref{Lenard}) can be rewritten in
the form, cf. (\ref{C22c}),
\begin{equation}
  \label{Lenard2}
  \int_\Gamma (KG)(\gamma) \mu ( d \gamma) = \int_{\Gamma_0} k_\mu
  (\eta)G(\eta) \lambda (d \eta) =:
\langle \!\langle k_\mu, G \rangle \!\rangle,
\end{equation}
where $k_\mu:\Gamma_0 \to \mathds{R}$ is defined as in (\ref{C22b}).
Then $k^{(n)}_\mu$ (resp. $k_\mu$) is called $n$-th order
correlation function (resp. correlation function) of $\mu$. Keeping
this in mind, we introduce the following class of measures. For
$\theta\in C_{\rm cs}(\mathds{R}^d)$ and $n\in \mathds{N}$, we write
$\theta^{\otimes n}(x_1 , \dots ,x_n)= \theta (x_1) \cdots
\theta(x_n)$.
\begin{definition}
 \label{Pexpdf}
By $\mathcal{P}_{\rm exp}$ we denote the set of all those $\mu\in
\mathcal{P}(\Gamma)$ that have finite correlations and their
correlation measures satisfy
\begin{equation}
  \label{Len}
  \chi^{(n)}_\mu (\theta^{\otimes n}) \leq \varkappa^n
  \|\theta\|_{L^ 1(\mathds{R}^d)}^n,
\end{equation}
holding for some $\mu$-specific $\varkappa>0$ and all $\theta \in
C_{\rm cs}(\mathds{R}^d)$ and $n\in \mathds{N}$.
\end{definition}
\begin{remark}
  \label{Len1rk}
It is clear from (\ref{Len}) that the map $C_{\rm cs}(\mathds{R}^d)
\ni \theta \mapsto \chi^{(n)}_\mu (\theta^{\otimes n}) \in
\mathds{R}$ can be continued to a homogeneous continuous monomial of
$\theta \in L^1 (\mathds{R}^d)$. One can show that $\mu\in
\mathcal{P}_{\rm exp}$ holds if and only if each $\chi^{(n)}_\mu$
satisfies (\ref{Lenard1}) with $k^{(n)}_\mu$ such that
\begin{equation}
  \label{I3}
  0 \leq k_{\mu}^{(n)} (x_1 , \dots , x_n) \leq \varkappa^n,
\end{equation}
for the  same $\varkappa$ as in (\ref{Len}). Moreover, if we set
\begin{equation}
  \label{I1}
F^\theta(\gamma) = \prod_{x\in \gamma} (1+\theta(x))=
\exp\left(\sum_{x\in \gamma} \log \left( 1 + \theta(x)
\right)\right), \qquad \theta \in C_{\rm cs}(\mathds{R}^d),
\end{equation}
then the map $C_{\rm cs}(\mathds{R}^d) \ni \theta \mapsto
\mu(F^\theta)\in \mathds{R}$ can be continued to a real exponential
entire function of normal type of $\theta\in L^1(\mathds{R}^d)$. The
least $\varkappa$ satisfying (\ref{I3}) will be called the type of
$\mu$.
\end{remark}
A Poisson measure, $\pi_\chi$, is characterized by its
\emph{intensity measure} $\chi$, see, e.g., \cite[page 45]{Dawson},
by the following formula
\begin{equation*}
 % \label{Len1}
\pi_\chi (F^\theta)= \exp\left( \chi(\theta)\right).
\end{equation*}
Then $\pi_\chi\in \mathcal{P}_{\rm exp}$ if
\begin{equation*}
  %\label{Len2}
  \chi(dx) = \varrho(x) d x, \qquad \varrho\in L^\infty
  (\mathds{R}^d).
\end{equation*}
In particular, this holds for the homogeneous Poisson measure
$\pi_\varkappa$, for which $\varrho(x) \equiv \varkappa>0$.
\begin{remark}
 \label{Iirk}
Let $G$ in (\ref{Lenard2}) be positive, i.e., such that $G(\eta)
\geq 0$ for all $\eta\in \Gamma_0$. Then by (\ref{I3}) it follows
that $\mu(KG) \leq \pi_\varkappa(KG)$, where $\varkappa$ is the type
of $\mu$. In view of this, the elements of $\mathcal{P}_{\rm exp}$
are called {\it sub-Poissonian} measures.  By taking in (\ref{Len})
$\theta = \mathds{1}_\Lambda$ one gets that the $\mu$-expected value
of the number of $n$-clusters contained in $\Lambda$ does not exceed
that of the homogeneous Poisson measure with density $\varkappa$,
i.e., clusters are not more probable than in the case of free
particles. Moreover, the states of thermal equilibrium of infinite
systems of physical particles interacting via super-stable
potentials belong to $\mathcal{P}_{\rm exp}$, see \cite{Ruelle}.
\end{remark}
Recall that $\mathds{1}_\Lambda$ denotes the indicator of $\Lambda$.
Then $N_\Lambda (\gamma) := |\gamma\cap \Lambda|=\sum_{x\in \gamma}
\mathds{1}_\Lambda(x)$, and thus
\begin{gather*}
N^n_\Lambda (\gamma) = \sum_{l=1}^n  S(n,l) \sum_{x_1\in \gamma}
\sum_{x_2\in \gamma\setminus x_1} \cdots \sum_{x_l\in
\gamma\setminus \{x_1,\dots , x_{l-1}\}}
 \mathds{1}_\Lambda(x_1)\cdots \mathds{1}_\Lambda(x_l)\\[.21cm] = \sum_{l=1}^n l! S(n,l) \sum_{\{x_1 , \dots ,
x_l\}\subset \gamma} \mathds{1}_\Lambda(x_1)\cdots
\mathds{1}_\Lambda(x_l), \qquad n\in \mathds{N},
\end{gather*}
where $S(n,l)$ is Stirling's number of second kind -- the number of
ways to divide $n$ labeled items into $l$ unlabeled groups. By
(\ref{Lenard2}) this yields
\begin{equation}
  \label{Boy1}
\pi_\varkappa (N_\Lambda^n) = \sum_{l=1}^n S(n,l) \left(\varkappa
|\Lambda| \right)^l = T_n \left(\varkappa |\Lambda| \right),  \qquad
n\in \mathds{N},
\end{equation}
where $|\Lambda|$ is the Lebesgue measure (volume) of $\Lambda$ and
$T_n$, $n\in \mathds{N}$, are Touchard's polynomials, attributed
also to J. A. Grunert, S. Ramanujan, and others, see \cite[page
6]{Boy}. For these polynomials, it is known that, see eq. (2.19)
\emph{ibid},
\begin{equation}
  \label{Boy}
  \exp\left( x (e^z-1) \right) = \sum_{n=0}^\infty T_n(x)
  \frac{z^n}{n!}.
\end{equation}
Then for $\mu\in \mathcal{P}_{\rm exp}$, by (\ref{Boy1}) we obtain,
cf. Remark \ref{Iirk},
\begin{equation}
  \label{C2c}
  \mu(N_\Lambda^n) \leq T_n \left(\varkappa |\Lambda| \right).
\end{equation}

\subsection{Tempered configurations}

When dealing with measures from $\mathcal{P}_{\rm exp}$, it might be
natural to distinguish a subset $\Gamma_* \subset \Gamma$ by the
condition that $\mu(\Gamma_*)=1$ for each $\mu\in \mathcal{P}_{\rm
exp}$. Obviously, the  choice of such $\Gamma_*$ should also be
consistent with the properties of $L$, in particular, with those of
the aforementioned probability kernel $a(x-y) d y$. Let $\psi\in
C_{\rm b}(\mathds{R}^d)$ be a strictly positive function that
vanishes at infinity. Denote
\begin{equation}
  \label{Psi}
  \Psi(\gamma) = \sum_{x\in \gamma} \psi(x), \qquad \Gamma^\psi =
  \{\gamma\in \Gamma: \Psi(\gamma) < \infty\}.
\end{equation}
Let $\{\psi_n\}_{n\in \mathds{N}}\subset C_{\rm cs}(\mathds{R}^d)$
be an increasing sequence such that $0<\psi_n(x) \to \psi(x)$, $n\to
+\infty$, for each $x$. Then the maps $\Gamma \ni \gamma \mapsto
\Psi_n(\gamma):=\sum_{x\in \gamma}\psi_n(x)$ are vaguely continuous;
hence $\{\gamma: \Psi_n (\gamma)\leq N\}$, $N\in \mathds{N}$ are
measurable, which by (\ref{Psi}) yields $\Gamma^\psi\in
\mathcal{B}(\Gamma)$. Moreover, if $\tilde{\psi}$ has similar
properties and satisfies $\tilde{\psi}(x) \leq \psi(x)$, $x\in
\mathds{R}^d$, then $\Gamma^\psi \subset \Gamma^{\tilde{\psi}}$.
Thus, the slower the decay of $\psi$ is, the more restrictive
condition is imposed on the configurations. Bearing this in mind, we
will choose $\psi$ satisfying
\begin{equation}
  \label{Psi1}
(i) \quad \mu(\Psi) < \infty; \quad \qquad (ii) \quad
\int_{\mathds{R}^d} \frac{a(x)}{\psi(x)} d x < \infty.
\end{equation}
By (\ref{A2}) and (\ref{Lenard2}) it follows that
\begin{equation*}
 % \label{Psi2}
  \mu(\Psi) = \chi_\mu (\psi) = \int_{\mathds{R}^d} k_\mu^{(1)}(x)
  \psi(x) d x,
\end{equation*}
and thus condition (i) in (\ref{Psi1}) turns into
\begin{equation}
  \label{psi}
\langle \psi \rangle := \int_{\mathds{R}^d} \psi(x) d x < \infty.
\end{equation}
Our choice of $\psi$ in this work is
\begin{equation}
  \label{C3}
\psi (x) = \frac{1}{1 + |x|^{d+1}},
\end{equation}
which means that we prefer to be less restrictive in choosing the
jump kernel $a$ at the expense of  stronger restrictions imposed on
the configurations.

Similarly as in (\ref{C2c}), for all $n\in \mathds{N}$ and each
$\mu\in \mathcal{P}_{\rm exp}$, one obtains
\begin{eqnarray}
  \label{N7}
\mu(\Psi^n) \leq   \sum_{l=1}^nS(n,l) \left(\varkappa \langle \psi
\rangle\right)^l = T_n \left(\varkappa \langle \psi \rangle\right),
\end{eqnarray}
where we have taken into account that $\psi^n(x) \leq \psi(x)$ for
all $n\geq 1$ and $x$, $\varkappa$ is the type of $\mu$. By
(\ref{N7}) and (\ref{Boy})  it follows that
\begin{equation}
  \label{N701}
    \int_{\Gamma} \exp\left(\beta \Psi(\gamma) \right) \mu ( d
  \gamma)  \leq  \exp\left(\varkappa \langle \psi \rangle (e^\beta - 1)
  \right),
 \end{equation}
holding for all $\beta >0$. Next, we define
\begin{equation}
  \label{N1}
\Gamma_{*} = \Gamma^\psi,
\end{equation}
with $\psi$ as in (\ref{C3}).  By (\ref{N7}) it follows that
\begin{equation}
  \label{C4}
 \forall \mu \in \mathcal{P}_{\rm exp} \qquad  \mu(\Gamma_*)=1.
\end{equation}
This crucial property of the elements of $\mathcal{P}_{\rm exp}$
will allow us to consider only configurations belonging to
$\Gamma_*$. In particular, this means that we will use the following
sub-field of $\mathcal{B}(\Gamma)$:
\begin{equation}
\label{C4z} \mathcal{A}_* = \{\mathbb{A} \in \mathcal{B}(\Gamma):
\mathbb{A} \subset \Gamma_*\}.
\end{equation}
Now let us consider
\begin{equation*}
  %\label{N1z}
C_{\rm b}^L(\mathds{R}^d) =\{g \in C_{\rm b}(\mathds{R}^d): \|g\|_L
<\infty \}, \quad \|g\|_L := \sup_{x,y\in \mathds{R}^d, \ x\neq y}
\frac{|g(x) - g(y)|}{|x-y|},
\end{equation*}
and then define
\begin{equation*}
 % \label{N2}
  \|g\|_{BL} = \|g\|_L + \sup_{x\in \mathds{R}^d}|g(x)|, \qquad g\in
  C_{\rm b}^L(\mathds{R}^d),
\end{equation*}
and also
\begin{equation*}
  %\label{N3}
 \upsilon (\nu, \nu') = \sup_{g: \|g\|_{BL}\leq 1} \left\vert
 \nu(g) - \nu'(g)\right\vert, \qquad \nu, \nu' \in \mathcal{N},
\end{equation*}
where $\mathcal{N}$ is the set of all positive finite measures Borel
on $\mathds{R}^d$.
\begin{proposition}{\cite[Theorem 18]{Dudley}}
  \label{N1pn}
The following three types of the convergence of a sequence
$\{\nu_n\}\subset \mathcal{N}$ to a certain $\nu\in \mathcal{N}$ are
equivalent:
\begin{itemize}
  \item[(i)] $\nu_n(g) \to \nu(g)$ for all $g\in C_{\rm
  b}(\mathds{R}^d)$;
  \item[(ii)] $\nu_n(g) \to \nu(g)$ for all $g\in C_{\rm
  b}^L(\mathds{R}^d)$;
  \item[(iii)] $\upsilon (\nu_n, \nu) \to 0$.
\end{itemize}
\end{proposition}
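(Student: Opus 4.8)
The plan is to establish the cycle of implications $(i)\Rightarrow(ii)$, $(iii)\Rightarrow(ii)$, $(ii)\Rightarrow(iii)$ and $(ii)\Rightarrow(i)$; together these give $(i)\Leftrightarrow(ii)\Leftrightarrow(iii)$. Two of the four are immediate. Since $C_{\rm b}^L(\mathds{R}^d)\subset C_{\rm b}(\mathds{R}^d)$, testing against fewer functions yields $(i)\Rightarrow(ii)$ at once. For $(iii)\Rightarrow(ii)$ I would use the homogeneity of the norm: for a fixed $g$ with $\|g\|_{BL}=M>0$ the function $g/M$ lies in the unit ball $\{g:\|g\|_{BL}\leq 1\}$, so $|\nu_n(g)-\nu(g)|\leq M\,\upsilon(\nu_n,\nu)\to 0$. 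The substance of the statement lies in $(ii)\Rightarrow(iii)$, with $(ii)\Rightarrow(i)$ a variant of the same localization idea.

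The first thing I would extract from $(ii)$ alone is a uniform mass bound together with tightness. Taking the constant function $g\equiv 1$, which satisfies $\|g\|_{BL}=1$, gives $\nu_n(\mathds{R}^d)\to\nu(\mathds{R}^d)=:m$, hence $M:=\sup_n\nu_n(\mathds{R}^d)<\infty$. For tightness, for each $R$ I would pick a function $\phi_R\in C_{\rm b}^L(\mathds{R}^d)$ with $\mathds{1}_{\mathds{R}^d\setminus B_{R+1}(0)}\leq\phi_R\leq\mathds{1}_{\mathds{R}^d\setminus B_R(0)}$ and Lipschitz constant $\leq 1$. Since $\nu(\phi_R)\leq\nu(\mathds{R}^d\setminus B_R(0))\to 0$ as $R\to\infty$ (because $\nu$ is finite) and $\nu_n(\phi_R)\to\nu(\phi_R)$, combining this with the individual tightness of the finitely many initial measures yields, for every $\varepsilon>0$, a radius $R$ with $\nu_n(\mathds{R}^d\setminus B_R(0))<\varepsilon$ for all $n$ and $\nu(\mathds{R}^d\setminus B_R(0))<\varepsilon$.

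With tightness in hand I would localize and invoke Arzelà--Ascoli. Fix a cutoff $\chi_R\in C_{\rm b}^L(\mathds{R}^d)$ with $\mathds{1}_{B_R(0)}\leq\chi_R\leq\mathds{1}_{B_{R+1}(0)}$ and $\|\chi_R\|_{BL}$ bounded. For $g$ in the unit ball I replace $g$ by $g\chi_R$, at a cost of only $2\varepsilon$: indeed $g(1-\chi_R)$ is supported in $\mathds{R}^d\setminus B_R(0)$ and $|g|\leq\|g\|_\infty\leq 1$, so $|\nu_n(g)-\nu_n(g\chi_R)|\leq\nu_n(\mathds{R}^d\setminus B_R(0))<\varepsilon$ and likewise for $\nu$. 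The family $\{g\chi_R:\|g\|_{BL}\leq 1\}$ is uniformly bounded, equi-Lipschitz, and supported in the fixed compact $B_{R+1}(0)$, hence totally bounded in the supremum norm; choosing finitely many $h_1,\dots,h_k$ in this family as a $\delta$-net, every $g\chi_R$ is sup-norm close to some $h_j$, whence $|\nu_n(g\chi_R)-\nu(g\chi_R)|\leq\delta(M+m)+\max_{j}|\nu_n(h_j)-\nu(h_j)|$. The maximum over the finitely many $h_j$ tends to $0$ by $(ii)$, so $\limsup_n\upsilon(\nu_n,\nu)\leq 2\varepsilon+\delta(M+m)$, and letting $\delta,\varepsilon\to 0$ gives $(iii)$. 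The implication $(ii)\Rightarrow(i)$ reuses the cutoff: given $g\in C_{\rm b}(\mathds{R}^d)$, I control its tail by $\|g\|_\infty\,\nu_n(\mathds{R}^d\setminus B_R(0))$, then approximate the uniformly continuous $g$ on the compact $B_{R+1}(0)$ by a bounded Lipschitz $\tilde g$ (for instance via the inf-convolution $\tilde g(x)=\inf_y[g(y)+L|x-y|]$ for large $L$), so that $\tilde g\chi_R\in C_{\rm b}^L(\mathds{R}^d)$ and $(ii)$ closes the estimate.

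I expect the genuine obstacle to be $(ii)\Rightarrow(iii)$, and within it the passage from convergence on each individual bounded Lipschitz function to uniformity over the whole unit ball. This is precisely where tightness is indispensable, to reduce the problem to a fixed compact set, and where the Arzelà--Ascoli total-boundedness of the localized unit ball is indispensable, to replace the supremum over infinitely many test functions by a maximum over finitely many. The remaining implications are either trivial or become routine once the uniform mass bound and tightness have been secured.
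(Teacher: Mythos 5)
Your proposal is correct, but there is nothing in the paper to compare it against: the paper does not prove this proposition at all — it imports it verbatim as \cite[Theorem 18]{Dudley} and uses it as a black box (in the proof of Lemma \ref{Lenardlm}). What you have done, in effect, is reconstruct the standard proof of Dudley's theorem. Your logical architecture is sound: $(i)\Rightarrow(ii)$ by inclusion $C_{\rm b}^L(\mathds{R}^d)\subset C_{\rm b}(\mathds{R}^d)$, $(iii)\Rightarrow(ii)$ by homogeneity of the $BL$-norm, and the two substantive implications $(ii)\Rightarrow(iii)$ and $(ii)\Rightarrow(i)$ both flow from the same two ingredients extracted from $(ii)$ alone: the uniform mass bound (via $g\equiv 1$, which is where the finiteness, as opposed to probability normalization, of the elements of $\mathcal{N}$ is correctly handled) and tightness (via the Lipschitz tail cutoffs $\phi_R$, with the finitely many initial indices absorbed by individual tightness of finite Borel measures on $\mathds{R}^d$). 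The key step — upgrading pointwise convergence on $C_{\rm b}^L$ to uniformity over the $BL$-unit ball — is handled exactly as it must be: localize to a compact ball at cost $2\varepsilon$, observe that the localized unit ball $\{g\chi_R\}$ is uniformly bounded, equi-Lipschitz and compactly supported, hence totally bounded in sup norm by Arzel\`{a}--Ascoli, and pass from a supremum over the ball to a maximum over a finite $\delta$-net chosen \emph{inside} the family (so that each net element is itself a legitimate $C_{\rm b}^L$ test function for hypothesis $(ii)$). The quantifier order in the final $\limsup$ estimate $\limsup_n \upsilon(\nu_n,\nu)\leq 2\varepsilon + \delta(M+m)$ is correct, and the inf-convolution $\tilde g(x)=\inf_y[g(y)+L|x-y|]$ does yield a bounded Lipschitz approximation of $g$ that is uniformly accurate on the compact ball (since near-minimizers $y$ lie within distance $2\|g\|_\infty/L$ of $x$, where $g$ is uniformly continuous), which closes $(ii)\Rightarrow(i)$. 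So: correct, self-contained, and essentially the argument behind the cited theorem; the paper's route simply buys brevity by citation, while yours makes the dependence on tightness and Arzel\`{a}--Ascoli explicit.
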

By means of this statement we prove the following important facts.
For a configuration, $\gamma\in {\Gamma}_*$, by $\nu_\gamma\in
\mathcal{N}$ we mean the measure defined by
\begin{equation}
  \label{Psi32a}
\nu_\gamma (g) = \sum_{x\in \gamma} g(x) \psi(x), \qquad g\in C_{\rm
b}(\mathds{R}^d).
\end{equation}
Then we set
\begin{equation}
   \label{Psi4}
 \upsilon_* (\gamma, \gamma') = \upsilon (\nu_\gamma, \nu_{\gamma'}) = \sup_{g: \|g\|_{BL}\leq 1} \left\vert
 \sum_{x\in \gamma} g(x ) \psi(x)  - \sum_{x\in \gamma'} g(x ) \psi(x) \right\vert, \qquad  \gamma, \gamma'\in \breve{\Gamma}_*
 \end{equation}
In the next statement, by $\breve{\Gamma}_*$ we mean the subset of
$\Gamma_*$ consisting of single configurations. That is, $\gamma\in
\Gamma_*$ belongs to $\breve{\Gamma}_*$ if $B_\delta(x) \cap
\gamma=\{x\}$, holding for each $x\in \gamma$ and an $x$-specific
$\delta>0$.

\begin{lemma}
  \label{Lenardlm}
 The metric space $({\Gamma}_*, \upsilon_* )$
 is  complete and separable. $\breve{\Gamma}_*$ is a $G_\delta$ subset of
 $\breve{\Gamma}_*$, and thus is
 a Polish spaces.
\end{lemma}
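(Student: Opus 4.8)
The plan is to identify $\Gamma_*$ isometrically with a subset of the space $\mathcal{N}$ of finite positive measures and to transport all the work onto $\mathcal{N}$, where Proposition \ref{N1pn} is available. Concretely, consider the map $\iota:\gamma\mapsto\nu_\gamma$ with $\nu_\gamma$ as in \eqref{Psi32a}. Since $\psi>0$ everywhere, $\gamma$ is recovered from $\nu_\gamma$ as the collection of its atoms counted with multiplicity $\nu_\gamma(\{x\})/\psi(x)$, so $\iota$ is injective; by the very definition \eqref{Psi4}, $\upsilon_*$ is the pull-back under $\iota$ of Dudley's metric $\upsilon$, hence $\iota$ is an isometry of $(\Gamma_*,\upsilon_*)$ onto $\iota(\Gamma_*)\subset(\mathcal{N},\upsilon)$. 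Separability is then immediate: $(\mathcal{N},\upsilon)$ is separable (by Proposition \ref{N1pn} the finitely supported measures with rational atoms and rational masses are $\upsilon$-dense), and a subspace of a separable metric space is separable. Equivalently, one may exhibit directly the countable dense set of finite configurations with rational coordinates, approximating a given $\gamma$ by its restriction to a large ball (the discarded tail has small $\nu_\gamma$-mass since $\Psi(\gamma)<\infty$) followed by rationalisation of the finitely many remaining points.

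For completeness, let $\{\gamma_n\}$ be $\upsilon_*$-Cauchy, so $\{\nu_{\gamma_n}\}$ is $\upsilon$-Cauchy. Testing against $g\equiv 1$, which satisfies $\|g\|_{BL}=1$, shows that the total masses $\Psi(\gamma_n)=\nu_{\gamma_n}(\mathds{R}^d)$ form a Cauchy sequence in $\mathds{R}$, hence are bounded by some $C$ and converge to some $M<\infty$. Because $\psi(x)\ge(1+R^{d+1})^{-1}$ on $B_R(0)$, this yields the \emph{locally uniform} bound $|\gamma_n\cap B_R(0)|\le C(1+R^{d+1})$ for all $n$ and $R$. Since $\mathds{R}^d$ is complete and separable, the set of positive measures of total mass at most $C$ is complete under $\upsilon$ (a standard fact about the bounded-Lipschitz metric, of which Proposition \ref{N1pn} is the convergence part); hence $\nu_{\gamma_n}$ converges weakly to a finite measure $\nu$ with $\nu(\mathds{R}^d)=M$.

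The crux is to show that this limit is again of the form $\nu_\gamma$ for some $\gamma\in\Gamma_*$. Fix a ball $B_R(0)$ whose boundary carries no $\nu$-mass. The restriction of $\nu_{\gamma_n}$ to $B_R(0)$ carries at most $C(1+R^{d+1})$ atoms, all located in the compact $B_R(0)$ and with masses $\psi(\cdot)$; passing to a subsequence, these finitely many atom positions converge, and by continuity of $\psi$ their masses converge to the corresponding $\psi$-values, so the weak limit on $B_R(0)$ is a \emph{finite} sum $\sum_i m_i\,\psi(y_i)\,\delta_{y_i}$ with multiplicities $m_i\in\mathds{N}$ (coinciding limit positions simply add up). Exhausting $\mathds{R}^d$ by such balls produces a locally finite configuration $\gamma=\{y_i\ \text{with multiplicity}\ m_i\}$ with $\nu=\nu_\gamma$ and $\Psi(\gamma)=\nu(\mathds{R}^d)=M<\infty$, i.e. $\gamma\in\Gamma_*$ and $\upsilon_*(\gamma_n,\gamma)=\upsilon(\nu_{\gamma_n},\nu_\gamma)\to0$. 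I expect this identification to be the main obstacle: one must rule out that mass escapes to infinity (handled by the completeness/tightness of the bounded-mass part of $\mathcal{N}$) or condenses into a continuous component (ruled out by the locally uniform bound on the number of atoms together with the continuity and strict positivity of $\psi$).

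Finally, for the $G_\delta$ property I would write the set of simple configurations as $\breve{\Gamma}_*=\bigcap_{R\in\mathds{N}}S_R$, where $S_R=\{\gamma\in\Gamma_*:\gamma\ \text{has no multiple point in}\ B_R(0)\}$. It suffices to prove that each complement $\Gamma_*\setminus S_R=\{\gamma:\gamma\ \text{has a multiple point in}\ B_R(0)\}$ is closed. If configurations $\gamma_n$ lie in this set, with a point of multiplicity at least $2$ at some $c_n\in B_R(0)$, and $\gamma_n\to\gamma$, then along a subsequence $c_n\to c\in B_R(0)$; applying the Portmanteau theorem to the closed balls $B_{1/m}(c)$ gives $\nu_\gamma(B_{1/m}(c))\ge\limsup_n\nu_{\gamma_n}(B_{1/m}(c))\ge\lim_n 2\psi(c_n)=2\psi(c)$, and letting $m\to\infty$ yields $\nu_\gamma(\{c\})\ge2\psi(c)$, i.e. $c$ is a point of $\gamma$ of multiplicity at least $2$. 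Hence $\Gamma_*\setminus S_R$ is closed, each $S_R$ is open, and $\breve{\Gamma}_*$ is a $G_\delta$ subset of the Polish space $(\Gamma_*,\upsilon_*)$. By Alexandrov's theorem a $G_\delta$ subset of a Polish space is itself Polish, which completes the proof.
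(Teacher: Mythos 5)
Your proof is correct, and it relates to the paper's in two different ways. The completeness/separability half is essentially the paper's own route: both identify $(\Gamma_*,\upsilon_*)$ isometrically with a subset of $(\mathcal{N},\upsilon)$ via $\gamma\mapsto\nu_\gamma$ and invoke completeness of $(\mathcal{N},\upsilon)$; the only real difference is how the $\upsilon$-limit $\nu$ is recognised as $\nu_\gamma$ for some $\gamma\in\Gamma_*$. The paper goes through the vague topology: since every $h\in C_{\rm cs}(\mathds{R}^d)$ can be written as $g\psi$, weak convergence of $\nu_{\gamma_n}$ yields vague convergence of $\gamma_n$ to some $\gamma\in\Gamma$ (implicitly using the classical fact that counting Radon measures form a vaguely closed set), and then Urysohn cut-offs plus monotone convergence give $\Psi(\gamma)\le\nu(\mathds{R}^d)<\infty$. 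You instead reprove that closedness fact by hand, using the locally uniform bound $|\gamma_n\cap B_R(0)|\le C(1+R^{d+1})$ to extract convergent atom configurations ball by ball and identify $\nu$ directly as an integer-$\psi$-weighted atomic measure; this is more self-contained but costs some bookkeeping (restricting to balls with $\nu(\partial B_R(0))=0$, etc.). The $G_\delta$ half is where you genuinely diverge: the paper switches to the Prohorov metric $\upsilon_P$ of (\ref{Se16a}) and, around each $\gamma$ simple in $D_k$, constructs an explicit $\upsilon_P$-ball contained in $\Gamma_{*,k}$ via the quantitative estimates (\ref{Psi8})--(\ref{Psi9}); you instead show that the complement of each $S_R$ is sequentially closed, using the Portmanteau upper bound on the closed balls $B_{1/m}(c)$ to get $\nu_\gamma(\{c\})\ge 2\psi(c)$ and hence a multiple point of the limit configuration. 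Your argument is shorter, stays with the original metric, and needs only the compactness of $B_R(0)$ and continuity and strict positivity of $\psi$; the paper's argument is quantitative (it exhibits the radius of a neighbourhood witnessing openness), but that extra information is not used elsewhere, so your route is a fully adequate substitute.
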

\begin{proof}
First, we prove that ${\Gamma}_*$ has the properties in question.
Let $\{\gamma_n\}_{n \in \mathds{N}}\subset {\Gamma}_*$ be a
$\upsilon_*$-Cauchy sequence. Since the metric space
$(\mathcal{N},\upsilon)$ is complete, see \cite[Corollary 8.6.3,
Sect. 8.6]{VB}, the sequence $\{\nu_{\gamma_n}\}_{n \in \mathds{N}}$
converges to a certain $\nu \in \mathcal{N}$. As each $h\in C_{\rm
cs}(\mathds{R}^d)$ can be written in the form $h(x) = g(x) \psi(x)$,
$g\in C_{\rm cs}(\mathds{R}^d)$, this convergence implies the vague
convergence of $\{\gamma_n\}_{n\in \mathds{N}}$ to a certain
$\gamma\in {\Gamma}$. Let now $\{g_m\}_{m\in \mathds{N}}\subset
C_{\rm cs}(\mathds{R}^d)$ be such that $g_m(x)=1$ for $|x|\leq m$
and $g_m(x)=0$ for $|x|\geq m+1$, which is possible by Urysohn's
lemma. Then
\[
\lim_{n\to +\infty} \sum_{x\in \gamma_n} g_m(x) \psi(x) = \sum_{x\in
\gamma} g_m(x) \psi(x) \leq \nu(\mathds{R}^d),
\]
which by the dominated convergence theorem yields $\gamma \in
{\Gamma}_*$, and hence $\nu=\nu_\gamma$. Then $({\Gamma}_*,
\upsilon_*)$ is a complete metric space. Its separability follows by
the separability of $\mathds{R}^d$.

When dealing with a topological property of a subset of
${\Gamma}_*$, we may use any metric consistent with its weak
topology. As such one, we take Prohorov's metric, cf. \cite[page
96]{EK}, introduced as follows. For $\varepsilon>0$ and $\Delta
\subset \mathds{R}^d$, we set $\Delta^\varepsilon = \cup_{x\in
\Delta} B_\varepsilon (x)$ and also
\begin{equation}
  \label{Se16a}
 \upsilon_P (\gamma, \gamma') = \inf\{\varepsilon>0:
 \nu_{{\gamma}} (\Delta) \leq
 \nu_{{\gamma}'}(\Delta^\varepsilon) + \varepsilon,
 \ \& \ \nu_{{\gamma}'} (\Delta) \leq
 \nu_{{\gamma}}(\Delta^\varepsilon) + \varepsilon, \ \forall \Delta \
 - \ {\rm closed}
 \}.
\end{equation}
Let $\{R_k\}_{k\in \mathds{N}}$ be such that $0<R_1 < R_2 <\cdots <
R_k < \cdots$ and $\lim_{k\to +\infty}R_k =+\infty$. Set $D_k =\{
x\in \mathds{R}^d: |x| < R_k\}$ and $\gamma_k = \gamma\cap D_k$,
$\gamma\in {\Gamma}_*$, $k\in \mathds{N}$. By (\ref{C3}) we then
have
\begin{gather}
  \label{Psi5}
  \sup_{x\in D_k} 1/\psi(x) = 1+ R_k^{d+1} =: \alpha^{-1}_k, \\[.2cm] \nonumber
  |\psi(x) - \psi(y)| \leq (d+1)|x-y|,  \qquad
x,y\in D_k.
\end{gather}
Next,  we set
\begin{equation}
  \label{Psi6}
  \Gamma_{*,k} =\{ \gamma \in {\Gamma}_*: \gamma_k \in
  \breve{\Gamma}_*\}, \qquad k\in \mathds{N},
\end{equation}
i.e., $\gamma\in {\Gamma}_*$ belongs to $\Gamma_{*,k}$ if its part
in $D_k$ is a simple configuration. Our aim is to show that
$\Gamma_{*,k}$ is an open subset of the Polish space ${\Gamma}_*$.
To this end, we take any $\gamma\in \Gamma_{*,k}$ and look for $r>0$
such that
\begin{equation*}
  %\label{Psi7}
\Upsilon_r(\gamma):= \{\gamma': \upsilon_P (\gamma, \gamma')<r\}
\subset \Gamma_{*,k}.
\end{equation*}
For the chosen $\gamma$, we pick $\ell>0$ satisfying $B_\ell (x)
\cap \gamma = \{x\}$ for all $x\in \gamma_k$. Now take positive
$\varepsilon$ and $\delta$ such that
\begin{equation}
  \label{Psi8}
  \varepsilon \leq \frac{1}{4} \min\{ \ell;\alpha_k\}, \qquad
  \delta  \leq \frac{1}{4} \min\{ \ell; \alpha_k /(d+1)\},
\end{equation}
and then assume that $\gamma' \in \Upsilon_r(\gamma)$ with $r<
\varepsilon$. For $x\in \gamma_k$,  the second estimate in
(\ref{Se16a}) for $\Delta =B_\delta (x)$ yields in this case
\begin{equation}
  \label{Psi9}
  \sum_{y\in \gamma'\cap B_\delta (x)} \psi(y) \leq \psi(x) +
  \varepsilon,
\end{equation}
where we have taken into account that $B_\delta^\varepsilon (x)
\subseteq B_\ell (x)$, see (\ref{Psi8}). For $y\in B_\delta (x)$, by
(\ref{Psi5}) we have $\psi(y) \geq \psi(x) - \delta (d+1)$. Thus,
\[
{\rm LHS}(\ref{Psi9}) \geq m(x) \psi(x) - m(x) \delta (d+1),
\]
where $m(x) =|\gamma'\cap B_\delta (x)|$. Then
\[
m(x) -1 \leq \frac{\varepsilon}{\psi(x)} + \frac{m(x) \delta(d+1)}
{\psi(x)} \leq (m(x) + 1)/4,
\]
which means that $m(x) =1$, holding for each $x\in \gamma_k$ and
$B_\delta(x)$. At the same time, for $\gamma' \in
\Upsilon_r(\gamma)$ with $r< \varepsilon$, it follows that $ \gamma'
\cap \left(D_k \setminus \cup_{x\in \gamma_k} B_\delta (x) \right) =
\varnothing$. For otherwise, the second estimate in (\ref{Se16a})
with $\Delta = B_\delta (y)$, $y$ lying in the mentioned
intersection, would yield $\psi(y)\leq \varepsilon$ which
contradicts (\ref{Psi8}). Thus, $\gamma'\in \Gamma_{*,k}$, and hence
the latter is an open subset of ${\Gamma}_*$. Therefore,
$\breve{\Gamma}_* = \cap_{k\in \mathds{N}} \Gamma_{*,k}$ is a
$G_\delta$-subset of ${\Gamma}_*$. In view of the first part of this
statement, $\breve{\Gamma}_*$ is a Polish space, see
\cite[Proposition 8.1.5, page 242]{Cohn}. This completes the proof.
\end{proof}
The following formulas summarize the relationships between the
configuration spaces we will deal with
\begin{equation}
  \label{Psi10}
  \breve{\Gamma}_* \subset {\Gamma}_* \subset {\Gamma}.
\end{equation}
Note that the embedding of the Polish space $\Gamma_*$ into the
Polish space $\Gamma$ is continuous, since the weak convergence
$\gamma_n \to \gamma$ implies also the corresponding vague
convergence. Let $\mathcal{B}(\Gamma_*)$,
$\mathcal{B}(\breve{\Gamma}_*)$ be the Borel $\sigma$-field of
subsets of $\Gamma_*$ and $\breve{\Gamma}_*$, respectively.  Recall
that we have another $\sigma$-field, $\mathcal{A}_*$, defined in
(\ref{C4z}).
\begin{corollary}
  \label{N1co}
It follows that $\mathcal{A}_* = \mathcal{B}(\Gamma_*)= \{\mathbb{A}
\in  \mathcal{B}({\Gamma}_*): \mathbb{A}\subset
\breve{\Gamma}_*\}=\mathcal{B}(\breve{\Gamma}_*)$.
\end{corollary}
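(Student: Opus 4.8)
The plan is to read the displayed chain as two substantive identifications and to prove each in turn: (I) that the trace $\sigma$-field $\mathcal{A}_*$ of $\mathcal{B}(\Gamma)$ on $\Gamma_*$ equals the intrinsic Borel $\sigma$-field $\mathcal{B}(\Gamma_*)$ of the finer weak topology, and (II) that $\mathcal{B}(\breve{\Gamma}_*)$ equals the trace $\{\mathbb{A}\in\mathcal{B}(\Gamma_*):\mathbb{A}\subset\breve{\Gamma}_*\}$. Throughout I would use that $\Gamma_*=\Gamma^\psi\in\mathcal{B}(\Gamma)$ (shown just after (\ref{Psi})) and that, by Lemma \ref{Lenardlm}, both $(\Gamma_*,\upsilon_*)$ and $\breve{\Gamma}_*$ are Polish, with $\breve{\Gamma}_*$ a $G_\delta$ subset of $\Gamma_*$.

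For (I), the easy inclusion is $\mathcal{A}_*\subset\mathcal{B}(\Gamma_*)$. Since $\Gamma_*\in\mathcal{B}(\Gamma)$, the collection $\mathcal{A}_*=\{A\in\mathcal{B}(\Gamma):A\subset\Gamma_*\}$ is precisely the trace of $\mathcal{B}(\Gamma)$ on $\Gamma_*$. Because weak convergence implies vague convergence (the observation following (\ref{Psi10})), the inclusion map $\iota:(\Gamma_*,\upsilon_*)\hookrightarrow(\Gamma,\text{vague})$ is continuous, hence Borel, and $\mathcal{A}_*=\iota^{-1}(\mathcal{B}(\Gamma))\subset\mathcal{B}(\Gamma_*)$ follows at once. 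The reverse inclusion is the heart of the matter. Here I would appeal to the Lusin--Souslin theorem: a continuous injection between Polish spaces maps Borel sets to Borel sets. Since $\iota$ is a continuous injection of the Polish space $(\Gamma_*,\upsilon_*)$ into the Polish space $\Gamma$, every $B\in\mathcal{B}(\Gamma_*)$ has $\iota(B)=B$ Borel in $\Gamma$ and contained in $\Gamma_*$, i.e.\ $B\in\mathcal{A}_*$; thus $\mathcal{B}(\Gamma_*)\subset\mathcal{A}_*$ and (I) holds.

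I expect this last step to be the main obstacle, in the sense that it is where all the hypotheses are genuinely used; the descriptive-set-theoretic input replaces the more laborious alternative of exhibiting a countable generating family for $\mathcal{B}(\Gamma_*)$. In that alternative one would write $\upsilon_*$ from (\ref{Psi4}) as a countable supremum of evaluation maps $\gamma\mapsto\nu_\gamma(g)$ and check that each such map with $g\in C_{\rm b}(\mathds{R}^d)$ is $\mathcal{A}_*$-measurable, by approximating $g\psi$ (which vanishes at infinity) in $C_{\rm cs}(\mathds{R}^d)$ and passing to the pointwise limit on $\Gamma_*$, where $\sum_{x\in\gamma}|g(x)|\psi(x)\le\|g\|_\infty\Psi(\gamma)<\infty$. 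Reducing the bounded--Lipschitz supremum to a countable one (via Arzel\`a--Ascoli on a compact exhaustion together with the finiteness of $\nu_\gamma$) is the awkward point there, which is why the Lusin--Souslin route is preferable.

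Finally, for (II) I would invoke the standard fact that for a metrizable (indeed second countable) space the Borel $\sigma$-field of a subspace is the trace of the ambient Borel $\sigma$-field: $\mathcal{B}(\breve{\Gamma}_*)=\{B\cap\breve{\Gamma}_*:B\in\mathcal{B}(\Gamma_*)\}$. Since $\breve{\Gamma}_*$ is a $G_\delta$, hence Borel, subset of $\Gamma_*$ by Lemma \ref{Lenardlm}, this trace coincides with $\{\mathbb{A}\in\mathcal{B}(\Gamma_*):\mathbb{A}\subset\breve{\Gamma}_*\}$, giving (II). Chaining (I) and (II) produces the stated equalities.
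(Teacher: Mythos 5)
Your proof is correct and follows essentially the same route as the paper: the first identification is obtained from the continuity of the embedding $\Gamma_*\hookrightarrow\Gamma$ together with the Lusin--Souslin theorem, which is exactly the Kuratowski theorem (Parthasarathy, Theorem 3.9) cited in the paper, and the second from the fact that the weak topology of $\breve{\Gamma}_*$ is the subspace topology inherited from $\Gamma_*$, so its Borel $\sigma$-field is the trace, with Borelness of $\breve{\Gamma}_*$ supplied by its being a $G_\delta$ set. Your version merely spells out the details the paper leaves implicit.
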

\begin{proof}
The first equality follows follows  by the continuity of the
embedding and then by  Kuratowski's theorem, see \cite[Theorem 3.9,
page 21]{Part}. The second equality follows by the equality of the
weak topology of $\breve{\Gamma}_*$ with that induced by the weak
topology of ${\Gamma}_*$.
\end{proof}
\begin{remark}
  \label{Lenark}
The latter statement allows one to redefine each $\mu\in
\mathcal{P}(\Gamma)$ with the property $\mu(\Gamma_*)=1$ as a
measure on the measurable space $(\Gamma_*, \mathcal{B}(\Gamma_*))$.
And similarly, each measure on $({\Gamma}_*,
\mathcal{B}({\Gamma}_*))$ possessing the property
$\mu(\breve{\Gamma}_*)=1$ can be considered as a measure on
$(\breve{\Gamma}_*, \mathcal{B}(\breve{\Gamma}_*))$.
\end{remark}
Now we turn to proving the following statement.
\begin{lemma}
  \label{Lenalm}
For each $\mu \in \mathcal{P}_{\rm exp}$, see Definition
\ref{Pexpdf}, it follows that $\mu(\breve{\Gamma}_*)=1$. Hence, this
$\mu$ can be redefined as a measure on $(\breve{\Gamma}_*,
\mathcal{B}(\breve{\Gamma}_*))$, cf. Corollary \ref{N1co} and Remark
\ref{Lenark}.
\end{lemma}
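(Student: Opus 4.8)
The plan is to prove that $\Gamma_* \setminus \breve{\Gamma}_*$ is $\mu$-null; combined with (\ref{C4}) this gives $\mu(\breve{\Gamma}_*)=1$, after which the redefinition of $\mu$ on $(\breve{\Gamma}_*,\mathcal{B}(\breve{\Gamma}_*))$ follows from Remark \ref{Lenark}. The first step is to describe the complement concretely. Since every $\gamma\in\Gamma_*\subset\Gamma$ is locally finite, a point $x\in\gamma$ fails to admit a $\delta>0$ with $B_\delta(x)\cap\gamma=\{x\}$ precisely when $x$ is a \emph{multiple} point of $\gamma$, i.e.\ when two or more elements of $\gamma$ sit at the same location: if $x$ has multiplicity one, then $\bar{B}_1(x)$ contains only finitely many other points of $\gamma$, whence there is a strictly positive distance to the nearest one and a ball isolating $x$. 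Thus $\gamma\in\breve{\Gamma}_*$ if and only if $\gamma$ has no multiple points, and, writing $\Lambda_k=B_k(0)$,
\[
\Gamma_*\setminus\breve{\Gamma}_* \;=\; \bigcup_{k\in\mathds{N}}\mathbb{D}_k,\qquad
\mathbb{D}_k:=\{\gamma\in\Gamma_*:\ \gamma\text{ has a multiple point in }\Lambda_k\}.
\]
It therefore suffices to prove $\mu(\mathbb{D}_k)=0$ for each $k$; measurability of $\breve{\Gamma}_*$ (hence of $\Gamma_*\setminus\breve{\Gamma}_*$) is already supplied by Lemma \ref{Lenardlm}.

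The key step is a quantitative estimate obtained by thickening the diagonal. For $\epsilon>0$ define $G_\epsilon\in B_{\rm bs}$ by $G_\epsilon^{(2)}(x,y)=\mathds{1}_{\Lambda_k}(x)\,\mathds{1}_{\Lambda_k}(y)\,\mathds{1}_{\{|x-y|\le\epsilon\}}$ and $G_\epsilon^{(n)}\equiv 0$ for $n\ne 2$. By (\ref{A2}), $(KG_\epsilon)(\gamma)$ counts the unordered pairs of (distinctly enumerated) points of $\gamma$ lying in $\Lambda_k$ within distance $\epsilon$ of one another; in particular a coincident pair contributes since it is at distance $0\le\epsilon$, so $(KG_\epsilon)\ge\mathds{1}_{\mathbb{D}_k}$. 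Integrating by means of (\ref{Lenard2}) and bounding $k_\mu^{(2)}\le\varkappa^2$ via (\ref{I3}),
\[
\mu(\mathbb{D}_k)\;\le\;\int_{\Gamma_*}(KG_\epsilon)\,d\mu
\;=\;\frac{1}{2}\int_{\Lambda_k\times\Lambda_k}\mathds{1}_{\{|x-y|\le\epsilon\}}\,k_\mu^{(2)}(x,y)\,dx\,dy
\;\le\;\frac{\varkappa^2}{2}\,|\Lambda_k|\,\omega_d\,\epsilon^{d},
\]
where $\omega_d$ is the volume of the unit ball in $\mathds{R}^d$ and $\varkappa$ is the type of $\mu$. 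Letting $\epsilon\to 0$ yields $\mu(\mathbb{D}_k)=0$, and summing over $k$ gives $\mu(\Gamma_*\setminus\breve{\Gamma}_*)=0$, hence $\mu(\breve{\Gamma}_*)=1$ by (\ref{C4}).

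The conceptual heart of the argument is the observation that the absolute continuity of the correlation measures built into Definition \ref{Pexpdf} through (\ref{Lenard1})--(\ref{I3}) is exactly what forbids multiple points: the expected number of coincident pairs is an integral of the bounded density $k_\mu^{(2)}$ over the diagonal $\{x=y\}$, which is Lebesgue-null, and the displayed estimate makes this quantitative by replacing the diagonal with its $\epsilon$-neighbourhood. The remaining points are routine: one checks that $G_\epsilon\in B_{\rm bs}$ (so that (\ref{Lenard2}) applies and $KG_\epsilon$ is measurable by Remark \ref{Bbsrk}), and measurability of $\mathbb{D}_k$ is not even needed separately, since $\mathbb{D}_k\subset\{\liminf_{\epsilon\to 0}(KG_\epsilon)\ge 1\}$ and the right-hand side is a measurable $\mu$-null superset by Fatou's lemma. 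No delicate limiting argument beyond monotonicity of $(KG_\epsilon)$ in $\epsilon$ is required.
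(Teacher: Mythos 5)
Your proof is correct, and it rests on the same underlying fact as the paper's argument: by (\ref{I3}) the second correlation measure of $\mu\in\mathcal{P}_{\rm exp}$ has a bounded density, so the expected number of coincident (or nearly coincident) pairs of points is a Lebesgue integral concentrated near the diagonal of $(\mathds{R}^d)^2$, hence negligible. The execution, however, is genuinely different. The paper works globally: it introduces the singular-kernel functional $H(\gamma)=\sum_{x\in\gamma}\sum_{y\in\gamma\setminus x}\psi(x)\psi(y)|x-y|^{-d\epsilon}$ (see (\ref{Hk})), shows via the cut-offs $H_N$ and monotone convergence that $\mu(H)<\infty$, and concludes that $H$ is finite $\mu$-a.s., which forces simplicity because any multiple point makes a term of $H$ infinite; the $\psi$-weights are what make the global integral converge. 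You instead localize to the balls $\Lambda_k$, test against indicators of $\epsilon$-thickened diagonals, and obtain the explicit quantitative bound $\mu(KG_\epsilon)\le \tfrac{1}{2}\varkappa^2|\Lambda_k|\omega_d\epsilon^d$, after which $\epsilon\to 0$ and a union over $k$ finish the job. Your route is more elementary — no singular kernel, no weighting, only the crude bound $k^{(2)}_\mu\le\varkappa^2$ on a compact set — and your treatment of measurability (a measurable $\mu$-null superset of $\mathbb{D}_k$ via Fatou along $\epsilon_n=1/n$, with measurability of $\breve{\Gamma}_*$ itself supplied by Lemma \ref{Lenardlm}) is sound. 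One convention you implicitly use, namely that a doubled point of $\gamma$ yields a legitimate two-point sub-configuration in the sum (\ref{A2}), is exactly the multiset convention the paper itself relies on (otherwise finiteness of $H$ would not imply simplicity), so this is not a gap. What the paper's heavier construction buys is the functional $H$ itself: its a.s. finiteness is strictly stronger information than simplicity, and the paper reuses precisely this $H$ later, in the proof of claim (c) of Theorem \ref{1tm}, where the stopping times $T^k_N$ are built from $H_k(\gamma)=H(\gamma\cap D_k)$. So your argument fully proves the lemma, but it does not produce that auxiliary object needed downstream.
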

\begin{proof}
By our assumption the
 correlation measures $\chi_\mu^{(n)}$ of the measure under consideration have the
 properties corresponding to (\ref{Lenard1}) and (\ref{I3}). For
 $N\in \mathds{N}$ and $\epsilon \in (0,1)$, we set
\begin{equation*}
  %\label{Sm2}
  H_N (\gamma) = \sum_{x\in \gamma}\sum_{y\in \gamma\setminus x}
  h_N(x,y), \quad h_N(x,y)= \psi(x) \psi(y) \min\{N; |x-y|^{-d
  \epsilon}\}.
\end{equation*}
Note that $H_N (\gamma)<\infty$ for all $N\in \mathds{N}$ and
$\gamma\in {\Gamma}$. By (\ref{Lenard2}) we then have
\begin{gather*}
  %\label{Sm3}
\mu(H_N) = \int_{(\mathds{R}^d)^2 } k^{(2)}_\mu ( x , y) h_N (x,y) d
x dy \leq \varkappa^2 \int_{\mathds{R}^d } \psi(x)
\left(\int_{\mathds{R}^d } \frac{\psi(y) dy}{|x-y|^{d \epsilon}}
\right) d x \\[.2cm] \nonumber \leq \varkappa^2 \int_{\mathds{R}^d } \psi(x)
\left( \int_{B_r(x)} \frac{dy}{|x-y|^{d\epsilon}} + \frac{\langle
\psi \rangle}{r^{d \epsilon}}\right) d x =: \varkappa^2 C,
\end{gather*}
for an appropriate $C>0$. Since $H_{N}\leq H_{N+1}$, we can apply
here the Beppo Levi (monotone convergence) theorem, which yields
that the point-wise limit
\begin{equation}
  \label{Hk}
  \lim_{N\to +\infty} H_N(\gamma) = H(\gamma):= \sum_{x\in
\gamma}\sum_{y\in \gamma\setminus x} \frac{\psi(x)
\psi(y)}{|x-y|^{d\epsilon}}
\end{equation}
is finite for $\mu$-almost all $\gamma$, i.e., for all ${\gamma}\in
{\Gamma}_{*,\mu}$ such that $\mu({\Gamma}_{*,\mu}) =1$. For $c>0$,
we set ${\Gamma}_c  =\{{\gamma}:H ({\gamma}) \leq c\}$. Then
$|x-y|\geq c^{-1/d\epsilon}$ for all pairs $x,y\in {\gamma}$ and
each ${\gamma}\in {\Gamma}_c$. That is, ${\gamma}$ is simple; hence,
${\Gamma}_{*,\mu}\subset \breve{\Gamma}_{*}$, which completes the
proof.
\end{proof}
\begin{remark}
  \label{Lena1rk}
By (\ref{C4}) it follows that the class of measures $\mu\in
\mathcal{P}({\Gamma}_*)$ with the property $\mu(\breve{\Gamma}_*)=1$
includes $\mathcal{P}_{\rm exp}$. Therefore, depending on the
context, we can and will consider such measures on either of these
spaces.
\end{remark}

\subsection{Functions and measures on  $\Gamma_*$}
The main aim of this part is to introduce suitable classes of
functions $F:\Gamma_* \to \mathds{R}$, for which we define $LF$ and
then use in (\ref{A1}). We begin by introducing suitable functions
$g:\mathds{R}^d\to \mathds{R}$. For $\psi$ defined in (\ref{C3}), we
set
\begin{eqnarray}
  \label{T1}
 \varTheta_\psi & = & \{ \theta (x) = g(x) \psi (x) :  g\in C_{\rm
 b}(\mathds{R}^d), \ \ \theta(x)\geq 0\}, \\[.2cm] \nonumber
 \varTheta^{+}_\psi & = & \{ \theta \in \varTheta_\psi: \theta(x) >0 \ \ \forall x\in
 \mathds{R}^d\}.
\end{eqnarray}
Clearly, each $\theta\in \varTheta_\psi$ is integrable. For such
$\theta$, we also define
\begin{equation}
  \label{C80}
  c_\theta = \sup_{x\in \mathds{R}^d} \frac{1}{\psi(x)}\log\left(1+{\theta(x)}
  \right),\qquad  \bar{c}_\theta:= e^{c_\theta} -1.
\end{equation}
Then
\begin{equation}
  \label{C801}
 0\leq  \theta (x) \leq \bar{c}_\theta
  \psi(x), \qquad \theta \in \varTheta_\psi.
\end{equation}
Now let us turn to $F^\theta$ defined in (\ref{I1}). By Remark
\ref{Iirk}, (\ref{psi}), Remark \ref{Lena1rk},  and  then by
(\ref{C801}), for $\mu \in \mathcal{P}_{\rm exp}$ of type
$\varkappa$ we have
\begin{gather*}
%\label{T1a}
\int_{\breve{\Gamma}_*}F^\theta (\gamma) \mu(d \gamma) =
\int_{{\Gamma}_*}F^\theta (\gamma) \mu(d \gamma)\leq \pi_\varkappa
(F^\theta) \leq \exp\left( \varkappa \langle \psi \rangle
\bar{c}_\theta \right), \qquad \theta \in \varTheta_\psi.
\end{gather*}
\begin{remark}
  \label{N1rk}
In general, for $\theta\in\varTheta_\psi$ the map $\Gamma_* \ni
\gamma \mapsto \sum_{x\in \gamma} \theta(x)$ need not be vaguely
continuous. But it is weakly continuous for all such $\theta$, which
is also the case  for $\breve{\Gamma}_* \ni \gamma \mapsto
\sum_{x\in \gamma} \theta(x)$. In particular, the map ${\Gamma}_*
\ni \gamma \mapsto \Psi(\gamma)$ is weakly continuous, that is one
of the advantages of passing to tempered configurations. Since the
measurability and continuity of $F:\breve{\Gamma}_* \to \mathds{R}$
and $F:{\Gamma}_* \to \mathds{R}$ occur simultaneously, each such a
function can and will be considered as a map acting from either of
these spaces. In the sequel, when we speak of the properties of a
given $F:{\Gamma}_* \to \mathds{R}$, we tacitely assume that the
same also holds for its restriction to $\breve{\Gamma}_*$.
\end{remark}
For $\theta \in \varTheta_\psi$, we set, see (\ref{T1}) and
(\ref{C80}),
\begin{equation}
  \label{T3}
  v_{\tau}^{\theta} (x) = \tau - \frac{1}{\psi(x)} \log \left( 1 + \theta(x)\right),
  \qquad V = \left\{ v_{\tau}^\theta:\theta \in \varTheta_{\psi}, \ \tau > c_\theta  \right\}.
\end{equation}
Note that $V\subset C_{\rm b}(\mathds{R}^d)$ is closed with respect
to the pointwise addition and its elements are separated away from
zero. The former follows by the fact that $\theta + \theta' +\theta
\theta'$ belongs to $\varTheta_\psi$ for each $\theta, \theta'\in
\varTheta_\psi$. Next, define
\begin{equation}
  \label{T4}
\widetilde{F}_{\tau}^\theta (\gamma)  =  \prod_{x\in \gamma} \left(
1 + \theta(x)\right)e^{-\tau \psi(x)}= \exp\left( - \nu_\gamma
(v_\tau^\theta)\right).
\end{equation}
Recall here that $\tau >c_\theta$, see (\ref{T3}). We extend this to
$\tau = 0$ and $\theta(x) \equiv 0$ by setting  $\widetilde{F}_{0}^0
(\gamma)\equiv 1$ and include this function in the set
\begin{equation}
 \label{T4a}
\widetilde{\mathcal{F}}  :=  \{ \widetilde{F}_{\tau}^\theta : \theta
\in \varTheta_\psi, \  \tau > c_\theta \} \subset  C_{\rm
b}(\Gamma_*).
\end{equation}
 Similarly as in \cite[Sect. 3.2, page
41]{Dawson}, see also \cite[page 111]{EK}, we introduce the
following notion.
\begin{definition}
  \label{V1df}
A sequence of bounded measurable functions $F_n : \Gamma_* \to
\mathds{R}$, $n\in \mathds{N}$ is said to boundedly and pointwise
(bp-) converge to a given $F : \Gamma_* \to \mathds{R}$ if: (a) $F_n
(\gamma) \to F(\gamma)$ for all $\gamma\in \Gamma_*$; (b)
$\sup_{n\in \mathds{N}} \sup_{\gamma\in \Gamma_*} |F_n(\gamma)| <
\infty$. The bp-closure of a set $\mathcal{H}\subset B_{\rm
b}(\Gamma_*)$ is the smallest subset of $B_{\rm b}(\Gamma_*)$ that
contains $\mathcal{H}$ and is closed under the bp-convergence. In a
similar way, one defines also the bp-convergence of sequences of
functions $g:\mathds{R}^d\to \mathds{R}$.
\end{definition}
It is well-known that $C_{\rm b}(\mathds{R}^d)$ contains a countable
family of nonnegative functions, $\{g_i\}_{i\in \mathds{N}}$, which
is {\it convergence determining} and such that its linear span is
bp-dense in $B_{\rm b}(\mathds{R}^d)$, see \cite[Proposition 4.2,
page 111]{EK} and \cite[Lemma 3.2.1, page 41]{Dawson}. This means
that a sequence of finite positive measures $\{\nu_n\}\in
\mathcal{P}(\mathds{R}^d)$ weakly converges to a certain $\nu$ if
and only if $\nu_n(g_i) \to \nu (g_i)$, $n \to +\infty$ for all
$i\in \mathds{N}$. One may take such a family containing the
constant function $g(x)\equiv 1$ and closed with respect to the
pointwise addition. Moreover, one may assume that
\begin{equation}
  \label{S}
 \forall i\in \mathds{N} \qquad  \inf_{x\in \mathds{R}^d} g_i(x) =: \varsigma_i >0.
\end{equation}
If this is not the case for a given $g_i$, in place of it one may take
$\tilde{g}_i(x)= g_i(x) + \varsigma_i$ with some $\varsigma_i>0$. The new set,
$\{\tilde{g}_i\}$, has both mentioned properties and also satisfies
(\ref{S}). Then assuming the latter we conclude that
\begin{equation}
  \label{T3z}
 V_0 := \{g_i\}_{i\in \mathds{N}} \subset V.
\end{equation}
To see this, for a given $g_i$, take $\tau_i \geq \sup_{x} g_i(x)$
and then set
\begin{equation}
  \label{MG}
  \theta_i (x) = \exp\bigg{(} [\tau_i - g_i(x)]\psi(x) \bigg{)} -1.
\end{equation}
Clearly, $\theta_i(x) \geq 0$. Since $\psi^n (x) \leq \psi(x)$,
$n\in \mathds{N}$, we have that $\theta_i(x) \leq e^{\tau_i}
\psi(x)$, and hence $\{\theta_i\}_{i\in \mathds{N}}\subset
\varTheta_\psi$, see (\ref{T1}). At the same time,
$v^{\theta_i}_{\tau_i} =g_i$ and $c_{\theta_i} = \sup_{x} (\tau_i -
g_i(x))< \tau_i$ in view of (\ref{S}). By (\ref{MG}) and
(\ref{T3z}), for all $i\in \mathds{N}$, it follows that
\begin{equation*}
 %\label{T3y}
 F_i \in \widetilde{\mathcal{F}},  \qquad F_i(\gamma) := \exp\left( - \nu_\gamma ( g_i)\right).
\end{equation*}
\begin{proposition}
  \label{T1pn}
The set $\widetilde{\mathcal{F}}$ defined in (\ref{T4a}) is closed
with respect to the poinwise multiplication. Moreover, it has the
following properties:
\begin{itemize}
  \item[(i)] It is separating: $\mu_1 (F)=\mu_2 (F)$, holding for all
  $F\in \widetilde{\mathcal{F}}$, implies $\mu_1 = \mu_2$ for all $\mu_1,
  \mu_2 \in \mathcal{P}(\Gamma_*)$.
\item[(ii)] It is convergence determining: if a sequence $\{\mu_n\}_{n\in \mathds{N}}\subset \mathcal{P}(\Gamma_*)$ is such that $\mu_n(F) \to \mu(F)$, $n\to +\infty$ for all $F\in \widetilde{\mathcal{F}}$ and some $\mu\in  \mathcal{P}(\Gamma_*)$, then $\mu_n(F) \to \mu(F)$ for all $F\in C_{\rm b}(\Gamma_*)$.
\item[(iii)]   The set $B_{\rm b}(\Gamma_*)$ is the bp-closure of the linear span of $\widetilde{\mathcal{F}}$.
\end{itemize}
\end{proposition}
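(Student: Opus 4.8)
The plan is to verify first that $\widetilde{\mathcal{F}}$ is a point-separating subalgebra of $C_{\rm b}(\Gamma_*)$ containing the constants and generating $\mathcal{B}(\Gamma_*)$, and then to obtain (i) and (iii) from standard functional-analytic principles, so that the only genuinely new work lies in the convergence-determining property (ii).

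Closure under multiplication is a direct computation. For $\theta,\theta'\in\varTheta_\psi$, $\tau>c_\theta$, $\tau'>c_{\theta'}$, formula (\ref{T4}) gives $\widetilde{F}_{\tau}^{\theta}\,\widetilde{F}_{\tau'}^{\theta'}=\widetilde{F}_{\tau+\tau'}^{\theta''}$ with $\theta''=\theta+\theta'+\theta\theta'$; as observed after (\ref{T3}) we have $\theta''\in\varTheta_\psi$, and since $\log(1+\theta'')=\log(1+\theta)+\log(1+\theta')$ the definition (\ref{C80}) yields $c_{\theta''}\le c_\theta+c_{\theta'}<\tau+\tau'$, so the product again belongs to $\widetilde{\mathcal{F}}$. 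From (\ref{C80}) one also reads off the uniform bound $0<\widetilde{F}_{\tau}^{\theta}(\gamma)\le e^{(c_\theta-\tau)\Psi(\gamma)}\le 1$. To see that $\widetilde{\mathcal{F}}$ separates points, recall that by (\ref{S})--(\ref{MG}) the functions $F_i(\gamma)=\exp(-\nu_\gamma(g_i))$ lie in $\widetilde{\mathcal{F}}$, where $\{g_i\}$ is convergence determining (hence separating) on $\mathds{R}^d$; since $\psi>0$, distinct $\gamma,\gamma'\in\Gamma_*$ give $\nu_\gamma\neq\nu_{\gamma'}$, so some $g_i$ satisfies $\nu_\gamma(g_i)\neq\nu_{\gamma'}(g_i)$ and thus $F_i(\gamma)\neq F_i(\gamma')$. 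As the $F_i$ are continuous and separate points, the countable family $\{F_i\}$ already generates $\mathcal{B}(\Gamma_*)$.

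Granting this, (i) is the assertion that a point-separating subalgebra of $C_{\rm b}(\Gamma_*)$ containing the constants is separating on the Polish space $\Gamma_*$; this follows from the Stone--Weierstrass theorem applied on compacta together with the tightness of Borel probability measures on a Polish space (cf.\ \cite[Chapt.\ 3]{EK}), the uniform bound above being convenient for controlling the approximation off a compact set. For (iii), the linear span of $\widetilde{\mathcal{F}}$ is a vector space of bounded functions containing the constants and, by the multiplicativity just proved, closed under multiplication; the multiplicative-system (functional monotone-class) theorem then shows that its bp-closure contains every bounded $\sigma(\widetilde{\mathcal{F}})$-measurable function. Since $\sigma(\widetilde{\mathcal{F}})=\mathcal{B}(\Gamma_*)$ by the previous paragraph, this bp-closure equals $B_{\rm b}(\Gamma_*)$, the reverse inclusion being trivial.

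The substantive point is (ii). Once tightness of $\{\mu_n\}$ is known, Prohorov's theorem makes $\{\mu_n\}$ relatively compact in $\mathcal{P}(\Gamma_*)$, and any weak limit point $\mu'$ satisfies $\mu'(F)=\lim_n\mu_n(F)=\mu(F)$ for every $F\in\widetilde{\mathcal{F}}\subset C_{\rm b}(\Gamma_*)$; by (i) this forces $\mu'=\mu$, so the whole sequence converges weakly to $\mu$, which is exactly the conclusion of (ii). Hence everything reduces to deducing tightness in $\mathcal{P}(\Gamma_*)$ from the convergence of the Laplace-type functionals $\mu_n(F)=\int_{\Gamma_*}F\,d\mu_n$, $F\in\widetilde{\mathcal{F}}$. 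My plan is to transport this to the space $\mathcal{N}$ through the embedding $\gamma\mapsto\nu_\gamma$ of (\ref{Psi32a}), which by the definition of the weak topology of $\Gamma_*$ and Proposition \ref{N1pn} is a homeomorphism onto its image, and then to exhibit explicit compact subsets of $\Gamma_*$ carrying $\mu_n$-mass close to $1$ uniformly in $n$. The natural candidates are the sets on which $\Psi(\gamma)=\nu_\gamma(\mathds{R}^d)$ is bounded and $\sum_{x\in\gamma,\,|x|>R_k}\psi(x)$ is controlled by a null sequence; since $\psi$ is bounded below on each ball, such a control forces boundedly many points of $\gamma$ in every bounded region, and this is precisely what renders these sets compact in $\Gamma_*$ rather than merely relatively compact in $\mathcal{N}$ (the required limit argument being of the same nature as the completeness proof in Lemma \ref{Lenardlm}). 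The main obstacle is thus twofold: verifying this compactness, and extracting the uniform total-mass and tail estimates from the convergent functionals $\mu_n(F_i)=\int_{\Gamma_*}e^{-\nu_\gamma(g_i)}\,d\mu_n$, for which one uses elementary inequalities relating $\int(1-e^{-\nu_\gamma(g)})\,d\mu_n$ to the tail probabilities of $\nu_\gamma(g)$, choosing $g$ comparable to a constant to bound $\Psi$ and $g$ large off a ball to bound the tails. Once these uniform estimates and the compactness are in hand, Prohorov's theorem and (i) complete the argument.
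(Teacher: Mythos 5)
Your proposal is correct, and its skeleton --- closure under multiplication via $\theta+\theta'+\theta\theta'\in\varTheta_\psi$, point separation via the functions $F_i(\gamma)=\exp\left(-\nu_\gamma(g_i)\right)$, and then claims (i)--(iii) deduced from these two facts --- is exactly the paper's. The difference is that the paper dispatches (i), (ii), (iii) by citation: (i) by \cite[Theorem 4.5(a), page 113]{EK} (a point-separating algebra in $C_{\rm b}$ of a Polish space is separating), (ii) by \cite[Theorem 3.2.6]{Dawson} (Laplace functionals over a convergence determining family on $\mathds{R}^d$ are convergence determining on finite measures, transported to $\Gamma_*$ through $\gamma\mapsto\nu_\gamma$), and (iii) by \cite[Lemma 3.2.5]{Dawson}; you instead reconstruct the proofs behind these citations: Stone--Weierstrass plus tightness for (i); the functional monotone class theorem plus the Kuratowski-type fact that a countable separating family of continuous functions on a Polish space generates $\mathcal{B}(\Gamma_*)$ for (iii); and, for (ii), the tightness-plus-Prohorov scheme with tightness extracted from the functionals $e^{-\nu_\gamma(h)}$. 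That last scheme does go through: your test functions (a small constant, and large multiples of continuous cutoffs vanishing on balls) do lie in the admissible class, since by the construction (\ref{MG}) the set $V$ contains every $h\in C_{\rm b}(\mathds{R}^d)$ with $\inf h>0$, so in particular $e^{-\tau\Psi}=\widetilde{F}_\tau^0\in\widetilde{\mathcal{F}}$; and your candidate compact sets (bounded $\Psi$, uniformly small tails $\nu_\gamma(\{|x|>R_k\})\leq\varepsilon_k$) are indeed compact, being weakly closed (portmanteau on the open sets $\{|x|>R_k\}$), uniformly tight and mass-bounded, and contained in the image of $\Gamma_*$, which is closed in $\mathcal{N}$ by the completeness argument of Lemma \ref{Lenardlm}. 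What the paper's route buys is brevity; what yours buys is a self-contained argument exposing which elements of $\widetilde{\mathcal{F}}$ actually do the work --- at the cost that the tightness estimates and the compactness verification in (ii), the one genuinely substantive step, are only sketched, though all the ingredients you name are the right ones and their execution is routine.
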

\begin{proof}
The closedness of $\widetilde{\mathcal{F}}$ under multiplication
follows directly by (\ref{T4}) and the fact that $\theta_1 +
\theta_2 + \theta_1\theta_2 \in \varTheta_\psi$ for each $\theta_1,
\theta_2\in \varTheta_\psi$. It is clear that
$\widetilde{\mathcal{F}}$ separates points of $\Gamma_*$, i.e., one
finds $F\in \widetilde{\mathcal{F}}$ such that $F(\gamma_1) \neq
F(\gamma_2)$ whenever $\gamma_1 \neq \gamma_2$, that holds for each
pair $\gamma_1 , \gamma_2\in \Gamma_*$. Then claim (i) follows by
\cite[claim (a) Theorem 4.5, page 113]{EK}. Claim (ii) follows by
the fact that $\{F_i\}_{i\in \mathds{N}} \subset
\widetilde{\mathcal{F}}$ has the property in question, which in turn
follows by \cite[Theorem 3.2.6, page 43]{Dawson}. Likewise, claim
(iii) follows by \cite[Lemma 3.2.5, page 43]{Dawson}.
\end{proof}
Note that each function as in (\ref{T4}) can be written in the form
\begin{equation}
  \label{TH}
\widetilde{F}^\theta_\tau (\gamma) = \exp\left( - \tau\Psi (\gamma)
\right) F^\theta (\gamma),
\end{equation}
where $F^\theta$ is as in (\ref{I1}), which is a
$\upsilon_*$-continuous function for each $\theta\in
\varTheta_{\psi}$.

For $m\in \mathds{N}$, $\theta_1, \dots , \theta_m\in
\varTheta^{+}_\psi$, see (\ref{T1}), we set
\begin{eqnarray}
  \label{TH1}
 & & \widehat{F}_\tau^{\theta_1, \dots , \theta_m} (\gamma)
\\[.2cm] \nonumber & &  = \sum_{x_1 \in \gamma} \theta_1(x_1) \sum_{x_2\in \gamma\setminus
x_1} \theta_2(x_2) \cdots \sum_{x_n \in \gamma\setminus \{x_1 ,
\dots , x_{m-1}\}} \theta_m (x_m)
\widetilde{F}_\tau^{0}(\gamma\setminus \{ x_1, \dots , x_m\})\\[.2cm] \nonumber & &  = \sum_{\{x_1, \dots , x_m\}\subset \gamma} \sum_{\sigma \in S_m} \theta_1(x_{\sigma (1)}) \cdots \theta_m(x_{\sigma (m)})\widetilde{F}_\tau^{0}(\gamma\setminus \{ x_1, \dots , x_m\}),
\end{eqnarray}
where $S_m$ is the symmetric group and $\widetilde{F}^0_\tau
(\gamma) = \exp\left( - \tau \Psi(\gamma) \right)$, see (\ref{TH}).
\begin{proposition}
  \label{TH1pn}
For each $\tau>0$, $m\in\mathds{N}$ and $\theta_1, \dots , \theta_m\in \varTheta_\psi^{+}$,  it follows that $\widehat{F}_\tau^{\theta_1, \dots , \theta_m}\in C_{\rm
 b}(\Gamma_*)$.
\end{proposition}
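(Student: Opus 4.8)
The plan is to factor $\widehat{F}^{\theta_1,\dots,\theta_m}_\tau$ into the manifestly continuous prefactor $\widetilde{F}^0_\tau(\gamma)=\exp(-\tau\Psi(\gamma))$ times a polynomial in ``power sums'' of the configuration, and then to bound it by a Poisson-type estimate. First I would rewrite the summand in (\ref{TH1}). Since $\Psi(\gamma\setminus\{x_1,\dots,x_m\})=\Psi(\gamma)-\sum_{j=1}^m\psi(x_j)$, one has $\widetilde{F}^0_\tau(\gamma\setminus\{x_1,\dots,x_m\})=e^{-\tau\Psi(\gamma)}\prod_{j=1}^m e^{\tau\psi(x_j)}$, so that, with $\vartheta_j(x):=\theta_j(x)e^{\tau\psi(x)}$,
\begin{equation*}
\widehat{F}^{\theta_1,\dots,\theta_m}_\tau(\gamma)=e^{-\tau\Psi(\gamma)}\sum_{(x_1,\dots,x_m)}\vartheta_1(x_1)\cdots\vartheta_m(x_m),
\end{equation*}
where the sum runs over the ordered tuples of pairwise distinct points of $\gamma$. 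As $\psi$ is bounded, each $g_j e^{\tau\psi}$ (with $\theta_j=g_j\psi$) is again bounded and continuous, whence $\vartheta_j\in\varTheta^{+}_\psi$, and (\ref{C801}) yields $0\le\vartheta_j(x)\le M_j\psi(x)$ with $M_j:=\bar c_{\vartheta_j}<\infty$.

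For the bound I would simply drop the distinctness restriction: since all terms are nonnegative,
\begin{equation*}
\sum_{(x_1,\dots,x_m)\ \mathrm{distinct}}\vartheta_1(x_1)\cdots\vartheta_m(x_m)\le\Big(\prod_{j=1}^m M_j\Big)\Big(\sum_{x\in\gamma}\psi(x)\Big)^m=\Big(\prod_{j=1}^m M_j\Big)\Psi(\gamma)^m.
\end{equation*}
Therefore $|\widehat{F}^{\theta_1,\dots,\theta_m}_\tau(\gamma)|\le(\prod_j M_j)\,\Psi(\gamma)^m e^{-\tau\Psi(\gamma)}$, and since $\sup_{t\ge0}t^m e^{-\tau t}=m^m/(e\tau)^m<\infty$, the right-hand side is bounded uniformly in $\gamma\in\Gamma_*$. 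This settles boundedness.

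For continuity I would use that, by Proposition \ref{N1pn}, $\upsilon_*$-convergence $\gamma^{(k)}\to\gamma$ is equivalent to $\nu_{\gamma^{(k)}}(h)\to\nu_\gamma(h)$ for every $h\in C_{\rm b}(\mathds R^d)$, where $\nu_\gamma(h)=\sum_{x\in\gamma}h(x)\psi(x)$, see (\ref{Psi32a}); in particular each map $\gamma\mapsto\nu_\gamma(h)$, $h\in C_{\rm b}(\mathds R^d)$, is $\upsilon_*$-continuous. Taking $h\equiv1$ shows that $\gamma\mapsto\Psi(\gamma)=\nu_\gamma(1)$ is continuous, hence so is the prefactor $e^{-\tau\Psi(\gamma)}$. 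It remains to treat $P_m(\gamma):=\sum_{(x_1,\dots,x_m)\,\mathrm{distinct}}\vartheta_1(x_1)\cdots\vartheta_m(x_m)$. Here I would invoke the standard inclusion--exclusion identity expressing a sum over distinct tuples as a finite linear combination, indexed by the set partitions $\pi$ of $\{1,\dots,m\}$, of the products $\prod_{B\in\pi}\big(\sum_{x\in\gamma}\prod_{j\in B}\vartheta_j(x)\big)$. For a block $B$ one writes $\prod_{j\in B}\vartheta_j(x)=h_B(x)\psi(x)$ with $h_B=\big(\prod_{j\in B}\vartheta_j/\psi\big)\psi^{|B|-1}$; the crucial point is that $\vartheta_j/\psi=g_j e^{\tau\psi}\in C_{\rm b}(\mathds R^d)$ and the surplus factor $\psi^{|B|-1}$ is bounded, so $h_B\in C_{\rm b}(\mathds R^d)$ and $\sum_{x\in\gamma}\prod_{j\in B}\vartheta_j(x)=\nu_\gamma(h_B)$ is $\upsilon_*$-continuous. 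Consequently $P_m$ is a finite polynomial in continuous functions, hence continuous, and $\widehat{F}^{\theta_1,\dots,\theta_m}_\tau=e^{-\tau\Psi}P_m$ lies in $C_{\rm b}(\Gamma_*)$.

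The main obstacle is precisely the combinatorial bookkeeping imposed by the distinctness constraint: after resolving the coincidences by inclusion--exclusion one must verify that every resulting power sum is genuinely of the form $\nu_\gamma(h)$ with $h\in C_{\rm b}(\mathds R^d)$. This succeeds only because one factor of $\psi$ can always be absorbed into the tempered measure $\nu_\gamma$ while the leftover $\psi^{|B|-1}$ remains bounded; the boundedness of $\psi$ together with the passage to the weak topology of $\Gamma_*$ (rather than the vague topology of $\Gamma$, cf. Remark \ref{N1rk}) are what make the higher power sums continuous.
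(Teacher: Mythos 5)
Your proof is correct and follows essentially the same route as the paper: you factor out $e^{-\tau\Psi(\gamma)}$, pass to the modified weights $\vartheta_j=\theta_j e^{\tau\psi}\in\varTheta_\psi^{+}$, resolve the distinctness constraint by inclusion--exclusion into power sums $\nu_\gamma(h_B)$ with $h_B\in C_{\rm b}(\mathds{R}^d)$ (the paper phrases this as closedness of $\varTheta_\psi^{+}$ under pointwise multiplication), and obtain boundedness from $\Psi^m e^{-\tau\Psi}\leq (m/e\tau)^m$. Your write-up merely makes the partition-lattice bookkeeping and the absorption of one factor of $\psi$ into $\nu_\gamma$ explicit, which the paper leaves implicit.
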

\begin{proof}
To prove the continuity of $\widehat{F}_\tau^{\theta_1, \dots ,\theta_m} $ we rewrite (\ref{TH1}) in the form
\begin{eqnarray}
  \label{TH1z}
\widehat{F}_\tau^{\theta_1, \dots , \theta_m} (\gamma)& = &
\exp\left(
-\tau \Psi_0(\gamma) \right)\\[.2cm] \nonumber & \times & \sum_{\{x_1, \dots , x_m\}\subset
\gamma} \sum_{\sigma \in S_m} \varphi_{\sigma(1)}(x_1) \cdots
\varphi_{\sigma(m)}(x_m),
\end{eqnarray}
with $\varphi_j(x) := \theta_j(x)e^{\tau \psi(x)}$, $j=1, \dots ,
m$. Clearly, all $\varphi_j$ belong to $\varTheta_\psi^{+}$. By an
inclusion-exclusion formula the right-hand side of (\ref{TH1z}) can
be written as a linear combination of the products of the following
terms
\[
\varPhi_{i_1, \dots , i_s} (\gamma)= \sum_{x\in \gamma}
\varphi_{i_1}(x) \cdots \varphi_{i_s} (x),
\]
multiplied by a continuous function, $\gamma \mapsto \exp\left(
-\tau \Psi(\gamma) \right)$. Since $\varTheta_\psi^{+}$ is closed
with respect to the pointwise multiplication, such terms are
continuous that yields the continuity of
$\widehat{F}_\tau^{\theta_1, \dots , \theta_m}$. To prove the
boundedness we estimate each  $\varphi_j(x) \leq \varphi(x):=
c\psi(x) e^{\tau \psi(x)}\leq c\psi(x) e^{\tau}$. Then
\begin{eqnarray*}
  \widehat{F}_\tau^{\theta_1, \dots , \theta_m} (\gamma)& \leq &
  \exp\left( - \tau \Psi(\gamma) \right) \sum_{x_1\in \gamma}
 \varphi(x_1) \sum_{x_2\in \gamma\setminus x_1}\varphi(x_2)\cdots \sum_{x_m\in \gamma\setminus \{x_1, \dots,
 x_{m-1}\}}\varphi(x_m) \\[.2cm] & \leq & \exp\left( - \tau \Psi(\gamma)
 \right)\left(\sum_{x\in \gamma}\varphi(x) \right)^m \leq c^m
 \Psi^m(\gamma)\exp\left( - \tau \left[\Psi(\gamma) - m
 \right]
 \right) \\[.2cm] &\leq & \left(\frac{c m }{\tau}\right)^m \exp\left(m(\tau-1)
 \right),
\end{eqnarray*}
which completes the proof.
\end{proof}
\section{The Result}

\subsection{The domain of $L$}

Here we recall that the model we study is specified by the
Kolmogorov operator , cf. (\ref{I5}),
\begin{equation}
  \label{KO}
(LF)(\gamma) = \sum_{x\in \gamma}\int_{\mathds{R}^d}
a(x-y)\exp\left(- \sum_{z\in \gamma\setminus x} \phi(z-y)
\right)\left[F(\gamma\setminus x\cup y) - F(\gamma)\right] d y.
\end{equation}
Let us make precise the conditions imposed on the model. The
positive measurable functions $a$ and $\phi$ are supposed to satisfy
the following:
\begin{gather}
  \label{C7}
  \sup_{x} a(x) = \bar{a} <\infty, \qquad \sup_{x} \phi(x) =
  \bar{\phi}<\infty, \\[.2cm] \nonumber
  \int_{\mathds{R}^d} \phi(x) d x =:\langle \phi\rangle <
  \infty,\qquad \int_{\mathds{R}^d}  a(x) dx  =1,
\end{gather}
and
\begin{equation}
  \label{C8}
  \int_{\mathds{R}^d} |x|^{l}a(x) d x =:m^a_l < \infty, \qquad
  {\rm for} \ \ l =1 , \dots ,  d+1.
\end{equation}
The conditions in (\ref{C7}) are the same as in \cite{asia}. We
impose them to be able to use the results of this work here. Note
that the assumed boubedness of $\phi$ excludes a hard-core
repulsion. The condition in (\ref{C8}) is the realization of item
(ii) of (\ref{Psi1}). It was not used in \cite{asia}.

As mentioned in Introduction,  we are going to construct the process
as a solution of a {\it restricted initial value martingale
problem}. In this case, the domain of the operator introduced in
(\ref{I5}) is crucial, cf. \cite[page 79]{Dawson}. Along with the
set introduced in (\ref{T4a}), we define
\begin{gather}
  \label{C800}
    \widehat{\mathcal{F}}=
  \{\widehat{F}^{\theta_1, \dots , \theta_m}_\tau: m\in \mathds{N},
  \ \theta_1, \dots , \theta_m\in \varTheta^{+}_\psi, \ \tau>0 \},
\end{gather}
where $\widehat{F}_\tau^{\theta_1, \dots , \theta_m}$ is as in
(\ref{TH1}).
\begin{definition}
  \label{THdf}
By $\mathcal{D}(L)$ we denote the linear span of the set
$\widetilde{\mathcal{F}} \cup \widehat{\mathcal{F}}$.
\end{definition}
By (\ref{T4a}) and Proposition \ref{TH1pn} one concludes that
$\mathcal{D}(L)\subset C_{\rm b}(\Gamma_*)$. Let us show that
$L\widehat{F}_\tau^{\theta_1, \dots , \theta_m}\in B_{\rm
b}(\Gamma_*)$. For $\gamma\in \Gamma_*$, $x\in \gamma$, $y\in
\mathds{R}^d$  and a suitable $F:\Gamma_* \to \mathds{R}$, define,
cf. (\ref{I5}),
\[
\nabla^{y,x} F (\gamma) = F(\gamma\setminus x\cup y) - F(\gamma).
\]
By (\ref{TH1}) we have
\[
\widehat{F}_\tau^{\theta_1, \dots , \theta_m} (\gamma) =
\sum_{x_1\in \gamma} \theta_1(x_1) \widehat{F}_\tau^{\theta_2, \dots
, \theta_m}(\gamma\setminus x_1).
\]
Then
\[
\nabla^{y,x} \widehat{F}_\tau^{\theta_1, \dots , \theta_m} (\gamma)
= [\theta_1 (y) - \theta_1(x) ]\widehat{F}_\tau^{\theta_2, \dots ,
\theta_m} (\gamma\setminus x) + \sum_{x_1\in \gamma\setminus x}
\theta_1(x_1) \nabla^{y,x}\widehat{F}_\tau^{\theta_2, \dots ,
\theta_m}(\gamma\setminus x_1).
\]
By iterating the latter we get
\begin{eqnarray}
  \label{TH2}
\nabla^{y,x} \widehat{F}_\tau^{\theta_1, \dots , \theta_m} (\gamma)
& = & \sum_{j=1}^m[\theta_j (y) - \theta_j(x)
]\widehat{F}_\tau^{\theta_1, \dots , \theta_{j-1}, \theta_{j+1} ,
\dots , \theta_m} (\gamma\setminus x) \\[.2cm] \nonumber & + &
\bigg{(}\exp\left(- \tau\psi(y) \right) - \exp\left(- \tau\psi(x)
\right)\bigg{)}\widehat{F}_\tau^{\theta_1, \dots , \theta_m}
(\gamma\setminus x).
\end{eqnarray}
For $\theta\in \varTheta_\psi$ and $a$ as in (\ref{C7}), we set
\begin{equation}
  \label{TH3}
(a*\theta)(x) = \int_{\mathds{R}^d} a(x-y) \theta (y) dy =
\int_{\mathds{R}^d} \theta(x-y) a (y) dy.
\end{equation}
Then $a*\theta \in C_{\rm b}(\mathds{R}^d)$, where the continuity
follows by the dominated convergence theorem and the latter equality
in (\ref{TH3}). Moreover, by (\ref{C801}) we have
\begin{eqnarray}
  \label{C81}
(a\ast\theta) (x) & \leq & \bar{c}_\theta \psi(x)\int_{\mathds{R}^d}
( 1 +
|x|^{d+1} )a(x-y) \psi(y) dy \\[.2cm] \nonumber & \leq &\bar{c}_\theta \psi(x)\left[ 1+ \int_{\mathds{R}^d}
(|x-y|+ |y|)^{d+1} a(x-y) \psi(y) dy \right] \\[.2cm] \nonumber & = & \bar{c}_\theta \psi(x)\left[
1+ \sum_{l=0}^{d+1} { d+1 \choose l} \int_{\mathds{R}^d}
|x-y|^{d+1-l} |y|^l \psi(y) a(x-y) dy \right]\\[.2cm] \nonumber &
\leq & \bar{c}_\theta \psi(x)\left[ 1+ \sum_{l=0}^{d+1} { d+1
\choose l} m_l^a \right],
\end{eqnarray}
where we have used (\ref{C7}), (\ref{C8}) and the fact that $|y|^l
\psi(y) \leq 1$ holding for all $y$ and $0\leq l\leq d+1$.
Therefore,
\begin{equation}
  \label{TH4}
\theta_j^1 (x) := (a *\theta_j)(x) + \theta_j(x) \leq c_a
\bar{c}_{\theta_j} \psi(x).
\end{equation}
Since $\theta_j\in \varTheta_\psi^{+}$, we then get by the latter
that  also $\theta_j^1\in \varTheta_\psi^{+}$, $j=1, \dots , m$.
Here
\begin{equation}
  \label{ca}
  c_a := 2 + \sum_{l=0}^{d+1} { d+1 \choose l} m_l^a.
\end{equation}
At the same time
\begin{eqnarray}
  \label{TH5}
|\exp\left(- \tau\psi(y) \right) - \exp\left(- \tau\psi(x) \right)
|\leq \tau \psi(y) \psi(x) ||x|^{d+1} - |y|^{d+1}|.
\end{eqnarray}
Then proceeding as in (\ref{C81}) we get
\begin{eqnarray}
  \label{TH6}
  \int_{\mathds{R}^d} a(x-y) |\exp\left(- \tau\psi(y) \right) - \exp\left(- \tau\psi(x) \right)
| dy \leq \tau c_a \psi(x).
\end{eqnarray}
Thereafter, by (\ref{TH2}), (\ref{TH3}), (\ref{TH4}), and (\ref{I5}) we
obtain
\begin{eqnarray}
  \label{TH7}
\left|L \widehat{F}_\tau^{\theta_1, \dots , \theta_m} (\gamma)
\right| & = & \left\vert\sum_{x\in \gamma} \int_{\mathds{R}^d}
a(x-y) \exp\left( - \sum_{z\in \gamma\setminus x}
\phi(z-y)\right)\nabla^{y,x}\widehat{F}_\tau^{\theta_1, \dots ,
\theta_m} (\gamma) d y  \right\vert \qquad   \\[.2cm] \nonumber &
\leq & \sum_{j=1}^m \widehat{F}_\tau^{\theta_1, \dots ,
\theta_{j-1}, \theta^1_j,\theta_{j+1} , \dots , \theta_m} (\gamma) +
\tau c_a \left(\prod_{j=1}^m \bar{c}_{\theta_j}\right)
\widehat{F}_\tau^{m+1} (\gamma).
\end{eqnarray}
where, cf. (\ref{TH1}),
\begin{eqnarray}
  \label{TH8}
\widehat{F}_\tau^{m} (\gamma) = \sum_{x_1\in \gamma } \psi(x_1)
\sum_{x_2\in \gamma\setminus x_1} \psi(x_2)\cdots \sum_{x_m \in
\gamma\setminus \{x_1, \dots , x_{m-1}\}} \psi(x_m)
\widetilde{F}^0_\tau (\gamma\setminus \{x_1 , \dots, x_m\}). \qquad
\end{eqnarray}
Then the boundedness of $L \widehat{F}_\tau^{\theta_1, \dots ,
\theta_m}$ follows by Proposition \ref{TH1pn}.

Now let us show that
$L \widetilde{F}_\tau^\theta \in B_{\rm b}(\Gamma_*)$ for all
$\theta \in \varTheta_{\psi}$ and $\tau>c_\theta$. Similarly as in
(\ref{TH2}) we get
\begin{eqnarray*}
 \nabla^{y,x} \widetilde{F}_\tau^\theta(\gamma) = \left[ e^{-\tau \psi(y)} -  e^{-\tau
 \psi(x)}\right] \widetilde{F}_\tau^\theta(\gamma\setminus x) +  \left[\theta(y) e^{-\tau \psi(y)} - \theta(x) e^{-\tau
 \psi(x)}\right] \widetilde{F}_\tau^\theta(\gamma\setminus x).
\end{eqnarray*}
Then by (\ref{TH4}), (\ref{ca}), (\ref{TH5}) and (\ref{TH6}) we
arrive at
\begin{eqnarray*}
%\label{TH7a}
\left|L \widetilde{F}_\tau^\theta (\gamma)\right| \leq e^\tau
(1+\tau) c_a \Psi(\gamma) \exp\left( - \tau_0 \Psi (\gamma)
\right)\prod_{x\in \gamma} \left(1+\theta(x)
\right)e^{-(\tau-\tau_0)\psi(x)},
\end{eqnarray*}
where $\tau_0>0$ is such that $\tau - \tau_0 > c_\theta $ which is
possible for each $\tau>c_\theta$. Then the boundedness in question
follows similarly as in Proposition \ref{TH1pn}. The next statement
summarizes the properties of $\mathcal{D}(L)$.
\begin{proposition}
  \label{T3pn}
The set of functions introduced in Definition \ref{THdf} has the
following properties:
\begin{itemize}
\item[(i)] $\mathcal{D}(L) \subset  C_{\rm b}(\Gamma_*)$ and $L: \mathcal{D}(L) \to B_{\rm b}(\Gamma_*)$.
  \item[(ii)] The set $B_{\rm b}(\Gamma_*)$ is the bp-closure of $\mathcal{D}(L)$.
  \item[(iii)] $\mathcal{D}(L)$ is separating. That is, if $\mu_1, \mu_2\in \mathcal{P}_{\rm exp}$ satisfy
  $\mu_1(F) = \mu_2(F)$ for all $F\in \mathcal{D}(L)$, then $\mu_1 =
  \mu_2$.
\item[(iv)] For each $F\in \widetilde{\mathcal{F}}$, see (\ref{C800}), and $\mu\in \mathcal{P}_{\rm exp}$, the measure $F \mu/ \mu(F)$ belongs to
$\mathcal{P}_{\rm exp}$.
\end{itemize}
\end{proposition}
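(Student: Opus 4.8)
The plan is to dispatch (i)--(iii) as bookkeeping on results already established, and to put the real work into (iv). For (i), the inclusion $\mathcal{D}(L)\subset C_{\rm b}(\Gamma_*)$ is immediate: $\widetilde{\mathcal{F}}\subset C_{\rm b}(\Gamma_*)$ by (\ref{T4a}), $\widehat{\mathcal{F}}\subset C_{\rm b}(\Gamma_*)$ by Proposition \ref{TH1pn}, and $\mathcal{D}(L)$ is their linear span. That $L$ maps $\mathcal{D}(L)$ into $B_{\rm b}(\Gamma_*)$ is exactly what the estimate (\ref{TH7}) (with Proposition \ref{TH1pn}) gives for the generators $\widehat{F}_\tau^{\theta_1,\dots,\theta_m}$ and what the bound displayed just before the proposition gives for $\widetilde{F}_\tau^\theta$; linearity of $L$ then finishes the claim.

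For (ii) I would use that the bp-closure is monotone. Since $\widetilde{\mathcal{F}}\subset\widetilde{\mathcal{F}}\cup\widehat{\mathcal{F}}$, the linear span of $\widetilde{\mathcal{F}}$ is contained in $\mathcal{D}(L)$, so the bp-closure of $\mathcal{D}(L)$ contains the bp-closure of $\mathrm{span}\,\widetilde{\mathcal{F}}$, which equals $B_{\rm b}(\Gamma_*)$ by Proposition \ref{T1pn}(iii). The reverse inclusion holds because $\mathcal{D}(L)\subset B_{\rm b}(\Gamma_*)$ by (i) and $B_{\rm b}(\Gamma_*)$ is already bp-closed. For (iii), since $\widetilde{\mathcal{F}}\subset\mathcal{D}(L)$ and $\mathcal{P}_{\rm exp}\subset\mathcal{P}(\Gamma_*)$, agreement of $\mu_1,\mu_2$ on all of $\mathcal{D}(L)$ forces agreement on $\widetilde{\mathcal{F}}$, whence $\mu_1=\mu_2$ by Proposition \ref{T1pn}(i).

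The substance is (iv). Write $F=\widetilde{F}_\tau^\theta$ and set $b(x)=(1+\theta(x))e^{-\tau\psi(x)}$, so $F(\gamma)=\prod_{x\in\gamma}b(x)$ is strictly positive on $\Gamma_*$ with $0<\mu(F)\le\exp(\varkappa\langle\psi\rangle\bar{c}_\theta)<\infty$; hence $\nu:=F\mu/\mu(F)$ is a well-defined probability measure on $\Gamma_*$. The key observation is the multiplicative identity $\widetilde{F}_\tau^\theta(\gamma)\,F^{\theta'}(\gamma)=F^{\theta''}(\gamma)$ with $1+\theta''=b\,(1+\theta')$, i.e. $\theta''=(b-1)+b\,\theta'$, valid for every $\theta'\in\varTheta_\psi$, which yields
\[
\nu(F^{\theta'})=\frac{\mu(F^{\theta''})}{\mu(F)}.
\]
I would then note that $\theta'\mapsto\theta''=(b-1)+b\theta'$ is a continuous affine self-map of $L^1(\mathds{R}^d)$: indeed $b\in L^\infty$, so $b\theta'\in L^1$ whenever $\theta'\in L^1$, while $b-1=\theta e^{-\tau\psi}+(e^{-\tau\psi}-1)$ lies in $L^1$ because $\theta\in L^1$ and $|e^{-\tau\psi}-1|\le\tau\psi$ with $\psi\in L^1$ by (\ref{psi}). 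By Remark \ref{Len1rk}, $\theta\mapsto\mu(F^\theta)$ extends to a real entire function of normal (finite exponential) type on $L^1$; composing it with the continuous affine map $\theta'\mapsto\theta''$ and dividing by the positive constant $\mu(F)$ produces an entire function of normal type agreeing with $\theta'\mapsto\nu(F^{\theta'})$ on $\varTheta_\psi$. Invoking the converse direction of Remark \ref{Len1rk} then gives $\nu\in\mathcal{P}_{\rm exp}$.

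The hard part will be to make this last step precise, i.e. to verify that composing a normal-type entire function with a continuous \emph{affine} (not merely linear) map again yields normal type --- the shift $b-1$ must be controlled in $L^1$, which is why its integrability is checked explicitly --- and to confirm that the entire extension of $\nu(F^\cdot)$ genuinely coincides with the integral functional on the generating set (here one uses $1+\theta''=b(1+\theta')\ge0$ and $\mu$-integrability of $F^{\theta''}$). An equivalent, more hands-on route that I would keep in reserve expands $\mu(F^{\theta''})$ in powers of $\theta'$ to read off
\[
k_\nu^{(m)}(y_1,\dots,y_m)=\frac{1}{\mu(F)}\prod_{j=1}^m b(y_j)\sum_{l=0}^\infty\frac{1}{l!}\int_{(\mathds{R}^d)^l}k_\mu^{(m+l)}(y_1,\dots,y_m,z_1,\dots,z_l)\prod_{i=1}^l(b(z_i)-1)\,dz_1\cdots dz_l ,
\]
and then bounds, using $0\le k_\mu^{(m+l)}\le\varkappa^{m+l}$,
\[
\bigl|k_\nu^{(m)}\bigr|\le\frac{e^{\varkappa\|b-1\|_{L^1}}}{\mu(F)}\,(\varkappa\,\|b\|_{L^\infty})^m .
\]
Since the prefactor is an $m$-independent constant and $k_\nu^{(m)}\ge0$ holds automatically (being a correlation density of a genuine point process), this exhibits a type $\varkappa_\nu$ for $\nu$ and hence $\nu\in\mathcal{P}_{\rm exp}$ directly.
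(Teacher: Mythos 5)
Your items (i)--(iii) coincide with the paper's treatment: (i) is exactly the content of the estimates established before the proposition, and (ii), (iii) follow from Proposition \ref{T1pn} once one notes $\widetilde{\mathcal{F}}\subset\mathcal{D}(L)$. The divergence is in (iv), where the paper's argument is a one-line domination that you never use: for positive $\theta\in C_{\rm cs}(\mathds{R}^d)$ the integrand $\sum_{x_1\in\gamma}\sum_{x_2\in\gamma\setminus x_1}\cdots\theta(x_1)\cdots\theta(x_n)$ is nonnegative, so bounding $F(\gamma)\le\sup_\gamma F$ inside the integral gives $\chi_{\mu_F}(\theta^{\otimes n})\le\bigl(\sup F/\mu(F)\bigr)\,\chi_\mu(\theta^{\otimes n})$, and the $n$-independent prefactor $C$ is harmless because $C\varkappa^n\le(\max(C,1)\varkappa)^n$. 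Concerning your two routes: the primary one, which composes the generating functional with the affine map $\theta'\mapsto(b-1)+b\theta'$ and then invokes a ``converse direction'' of Remark \ref{Len1rk}, has a genuine gap as you yourself flag --- the remark asserts entirety of normal type only as a \emph{consequence} of $\mu\in\mathcal{P}_{\rm exp}$, not as a characterization, so the implication you need (normal-type extension $\Rightarrow$ sub-Poissonian) is not available in the paper and would require Cauchy-type estimates on the homogeneous terms to establish. Your reserve route, however, is sound and does prove the claim: the tilting formula for $\nu:=F\mu/\mu(F)$,
\[
k_\nu^{(m)}(y_1,\dots,y_m)=\frac{1}{\mu(F)}\prod_{j=1}^m b(y_j)\sum_{l\ge 0}\frac{1}{l!}\int_{(\mathds{R}^d)^l}k_\mu^{(m+l)}(y_1,\dots,y_m,z_1,\dots,z_l)\prod_{i=1}^l\bigl(b(z_i)-1\bigr)\,dz_1\cdots dz_l,
\]
is correct (it needs $b\in L^\infty$ and $b-1\in L^1$, which you verify, plus a routine justification of the interchange of series and integrals using the sub-Poissonian bounds of $\mu$ and summability of $|b-1|$ on tempered configurations), and with $0\le k_\mu^{(m+l)}\le\varkappa^{m+l}$ it yields $k_\nu^{(m)}\le\bigl(e^{\varkappa\|b-1\|_{L^1}}/\mu(F)\bigr)(\varkappa\|b\|_\infty)^m$, whence $\nu\in\mathcal{P}_{\rm exp}$ after absorbing the constant as above. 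The trade-off: your computation buys an explicit formula for the correlation functions of the tilted measure, at the price of combinatorics the paper entirely avoids; the paper's domination argument buys the result for \emph{any} bounded positive $F$ with $\mu(F)>0$, with essentially no work.
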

\begin{proof}
Claim (i) has been just proved. Claims (ii) and (iii) follow by
Proposition \ref{T1pn} and the fact $\widetilde{\mathcal{F}}\subset
\mathcal{D}(L)$. It remains to check the validity of (\ref{Len}) for
$\mu_F := F \mu/\mu(F)$. For positive $\theta\in C_{\rm
cs}(\mathds{R}^d)$ and a bounded positive $F$, we have that
\begin{eqnarray*}
\chi_{\mu_F}(\theta^{\otimes n})& = & \frac{1}{\mu(F)}
\int_{\Gamma_*} F(\gamma)\left(\sum_{x_1\in \gamma} \sum_{x_2\in
\gamma\setminus x_1} \cdots \int_{\gamma\setminus \{x_1 . \dots ,
x_{n-1}\}} \theta (x_1) \theta(x_2) \cdots \theta(x_n) \right) \mu(d
\gamma) \\[.2cm] & \leq & \chi_\mu (\theta^{\otimes n})
\left(\sup_{\gamma}F(\gamma) / \mu(F)\right),
\end{eqnarray*}
which completes the proof.
\end{proof}

\subsection{Formulating the result}

As mentioned in Introduction, following \cite[Chapter 5]{Dawson} we
are going to obtain the process by solving a {\it restricted initial
value martingale problem}. Recall that
$\mathfrak{D}_{\mathds{R}_{+}} ({\Gamma}_*)$ stands for the space of
all cadlag maps $[0,+\infty)=:\mathds{R}_{+} \ni t \mapsto \gamma_t
\in \Gamma_*$, and the evaluation maps $\varpi_t$, $t\geq 0$, act as
follows: $\mathfrak{D}_{\mathds{R}_{+}} ({\Gamma}_*)\ni \gamma
\mapsto \varpi_t(\gamma) = \gamma_t \in {\Gamma}_*$. In a similar
way, one defines also the spaces $\mathfrak{D}_{[s,+\infty)}
({\Gamma}_*)$, $s>0$.
 For $s, t \geq 0$, $s < t$, by $\mathfrak{F}^0_{s,t}$ we
denote the $\sigma$-field of subsets of
$\mathfrak{D}_{\mathds{R}_{+}}({\Gamma}_*)$ generated by the family
$\{ \varpi_u: u\in [s,t]\}$. Then we set
\begin{equation}
  \label{Frak}
 \mathfrak{F}_{s,t}= \bigcap_{\varepsilon
>0}\mathfrak{F}^0_{s,t+\varepsilon}, \qquad \mathfrak{F}_{s,+\infty}
=\bigvee_{n\in \mathds{N}} \mathfrak{F}_{s,s+n}.
\end{equation}
 That is, $\mathfrak{F}_{s,+\infty}$ is the smallest $\sigma$-field
which contains all $\mathfrak{F}_{s,s+n}$. Given $s\geq 0$ and $\mu
\in \mathcal{P}_{\rm exp}$, in the definition below -- which is an
adaptation of the definition in \cite[Section 5.1, pages 78,
79]{Dawson}) -- we deal with probability measures $P_{s,\mu}$ on
$(\mathfrak{D}_{[s,+\infty)} (\Gamma_*), \mathfrak{F}_{s,+\infty})$.
\begin{definition}
 \label{A1df}
A family of probability measures $\{P_{s,\mu}: s\geq 0, \ \mu\in
\mathcal{P}_{\rm exp}\}$ is said to be a solution of the restricted
initial value martingale problem for our model if, for all $s\geq 0$
and $\mu \in \mathcal{P}_{\rm exp}$, the following holds: (a)
$P_{s,\mu}\circ \varpi_s^{-1} = \mu$; (b) $P_{s,\mu}\circ
\varpi_t^{-1} \in \mathcal{P}_{\rm exp}$ for all $t >s$; (c)  for
each $F \in \mathcal{D}(L)$ (Definition \ref{THdf}), $t_2 \geq t_1
\geq s$ and any bounded function ${\sf G}:\mathfrak{D}_{[s,+\infty)}
(\Gamma) \to \mathds{R}$ which is $\mathfrak{F}_{s,t_1}$-measurable,
the function
\begin{equation}
  \label{A120}
{\sf H}(\gamma) := \left[F(\varpi_{t_2} (\gamma)) - F(\varpi_{t_1}
(\gamma)) - \int_{t_1}^{t_2} (L F) (\varpi_u (\gamma)) d u \right]
{\sf G}(\gamma)
\end{equation}
is such that
\begin{gather}
 \label{A12}
\int_{\mathfrak{D}_{[s,+\infty)}}{\sf H}(\gamma) P_{s,\mu} (d
\gamma) = 0.
\end{gather}
The restricted initial value martingale problem is said to be {\it
well-posed} if, for each $s\geq 0$ and $\mu\in \mathcal{P}_{\rm
exp}$, there exists a unique $P_{s,\mu}$ satisfying all the
conditions mentioned above. Exactly in the same way one defines
\end{definition}
Here by saying ``for our model'' along with the Kolmogorov operator
$L$ given in (\ref{I5}) we mean also its domain $\mathcal{D}(L)$
(Definition \ref{THdf}) and the class $\mathcal{P}_{\rm exp}$
defined by the property (\ref{Len}). Note that ${\sf H}$ defined in
(\ref{A120}) is $P_{s,\mu}$-integrable, that follows by claim (i) of
Proposition \ref{T3pn}. Note also that the functions $\sf G$ in
(\ref{A120}) can be taken in the form
\begin{equation}
  \label{A1200}
  {\sf G} (\gamma) = F_1 (\varpi_{s_1} (\gamma)) \cdots F_m (\varpi_{s_m}
  (\gamma)),
\end{equation}
with all possible choices $m\in \mathds{N}$, $F_1, \dots, F_m \in
\widetilde{\mathcal{F}}$ (see Proposition \ref{T1pn}), and $s\leq
s_1 < s_2 <\cdots < s_m \leq t_1$, see \cite[eq. (3.4), page
174]{EK}.
\begin{definition}
\label{A01df} For a given $s\geq 0$, a map, $[s,+\infty)\ni t
\mapsto \mu_t\in \mathcal{P}(\Gamma_*)$, is said to be measurable if
the maps $[s,+\infty) \ni t \mapsto \mu_t(\mathbb{A}) \in
\mathds{R}$ are measurable for all $\mathbb{A}\in
\mathcal{B}(\Gamma_*)$. Such a map is said to be a solution of the
Fokker-Planck equation for our model if, for each $F\in
\mathcal{D}(L)$ and any $t_2 > t_1 \geq s$, the following holds
\begin{equation}
  \label{A12a}
  \mu_{t_2}(F) = \mu_{t_1}(F) + \int_{t_1}^{t_2} \mu_{u}(L F) d u.
\end{equation}
\end{definition}
\begin{remark}
  \label{A2rk}
In view of the integral form of (\ref{A12a}), its solutions are
often called \emph{weak}. We do not do this as the precise meaning
of this notion is clear from the definition above. By taking ${\sf
G}\equiv 1$ in (\ref{A120}) one comes to the following conclusion.
Let $\{P_{s,\mu}: s\geq 0, \mu \in \mathcal{P}_{\rm exp}\}$ be a
solution as in Definition \ref{A1df}. Then, for each $s$ and $\mu
\in \mathcal{P}_{\rm exp}$, the map $[s,+\infty) \ni t \mapsto
P_{s,\mu}\circ \varpi_{t}^{-1}$ solves (\ref{A12a}) for all
$t_2>t_1\geq s$.
\end{remark}
Below, by $\mathfrak{D}_{[s,+\infty)} (\breve{\Gamma}_*)$, $s\geq
0$, we mean the space of cadlag maps $[s,+\infty) \ni t \mapsto
\gamma_t \in \breve{\Gamma}_*$, where the latter is the space of
single configurations, see (\ref{Psi10}). Now we formulate our
principal result.
\begin{theorem}
  \label{1tm}
For the model defined in (\ref{I5}) satisfying (\ref{C7}) and
(\ref{C8}), the following is true:
\begin{itemize}
  \item[(a)] The restricted initial value martingale problem
is well-posed in the sense of Definition \ref{A1df}.
\item[(b)] The stochastic
process $X$ related to the family $$(\mathfrak{D}_{[s,+\infty)}
(\Gamma_*), \mathfrak{F}_{s,+\infty}, \{\mathfrak{F}_{s,t}: t\geq
s\}, \{P_{s,\mu}:\mu \in \mathcal{P}_{\rm exp}\})_{s\geq 0}$$ is
Markov. This  means that, for all $t>s$ and $\mathbb{B}\in
\mathfrak{F}_{t, +\infty}$, the following holds
\[
{\rm Prob}(X \in \mathbb{B})=P_{s,\mu}
(\mathbb{B}|\mathfrak{F}_{s,t}) = P_{s,\mu}
(\mathbb{B}|\mathfrak{F}_{t}), \qquad P_{s,\mu} \ - \ {\rm almost} \
{\rm surely}.
\]
Here  $\mathfrak{F}_{t}$ is the smallest $\sigma$-field of subsets
of $\mathfrak{D}_{[s,+\infty)}$ that contains all
$\varpi^{-1}_t(\mathbb{A})$, $\mathbb{A}\in \mathcal{B}(\Gamma)$.
\item[(c)] The aforementioned process has the property
\[
{\rm Prob}\left( X \in \mathfrak{D}_{\mathds{R}_{+}}
(\breve{\Gamma}_*)\right) =1.
\]
\end{itemize}
\end{theorem}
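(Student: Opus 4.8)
The plan is to establish the three assertions in turn, obtaining (a) from the combination of a uniqueness argument and an approximation/tightness argument, and then reading off (b) and (c) as consequences. Throughout I would exploit that, by Remark \ref{A2rk}, the one-dimensional marginals $\mu_t := P_{s,\mu}\circ \varpi_t^{-1}$ of \emph{any} solution of the restricted martingale problem solve the Fokker--Planck equation \eqref{A12a}; this reduces the probabilistic uniqueness to an analytic uniqueness statement for \eqref{A12a} within the class $\mathcal{P}_{\rm exp}$.

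The first main step is uniqueness. Here the crucial point is to show that \eqref{A12a} has at most one solution taking values in $\mathcal{P}_{\rm exp}$. Using the duality \eqref{Lenard2} between measures and their correlation functions $k_\mu$ together with the operator $K$ of \eqref{A2}, one transports \eqref{A12a} into a linear Cauchy problem for the evolution $t\mapsto k_{\mu_t}$; the sub-Poissonian bound \eqref{I3} keeps all the $k_{\mu_t}$ in a fixed scale of weighted $L^\infty$-spaces, on which the corresponding generator generates a unique evolution (via the results of \cite{asia}), forcing $\mu_t$ to coincide with the states constructed there. Once the marginals are pinned down, uniqueness of the path measure $P_{s,\mu}$ itself follows from the standard martingale-problem machinery of \cite[Theorem 4.4.2]{EK}: since $\mathcal{D}(L)\supset \widetilde{\mathcal{F}}$ is separating (Proposition \ref{T3pn}(iii)) and the multiplicative test functionals \eqref{A1200} generate the finite-dimensional distributions, equality of all one-dimensional marginals propagates, through regular conditioning, to equality of all finite-dimensional distributions, hence of the measures on $\mathfrak{D}_{[s,+\infty)}(\Gamma_*)$.

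For existence I would proceed by approximation. Replacing $L$ by the tamed operators $L^\alpha$ of \eqref{Lalpha}, whose jump rates are bounded and of effectively finite range, the associated dynamics can be built directly through a transition function, yielding path measures $P^\alpha_{s,\mu}$. I would then prove these tight: the exponential moment bound \eqref{N701} and the polynomial bounds \eqref{N7}, \eqref{C2c} for $\Psi$ under measures in $\mathcal{P}_{\rm exp}$ control the marginals, while a Chentsov--Aldous estimate in the complete metric $\upsilon_*$ of \eqref{Psi4} controls the path oscillations. Passing to a convergent subsequence and using that $L^\alpha F\to LF$ boundedly and pointwise on $\mathcal{D}(L)$ (Definition \ref{V1df}), I would verify that the limit satisfies \eqref{A12} for every $F\in\mathcal{D}(L)$ and that its marginals lie in $\mathcal{P}_{\rm exp}$, i.e.\ that it solves the problem in the sense of Definition \ref{A1df}. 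This tightness-plus-limit passage is, I expect, the main obstacle: the generator $L$ is unbounded and the state space infinite-dimensional, so one must simultaneously prevent mass from escaping to configurations outside $\Gamma_*$ and justify passing $L^\alpha\to L$ through the time integral in \eqref{A12}.

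Finally, the Markov property (b) is obtained from the established uniqueness exactly as in \cite[Sect.\ 5.1, pp.\ 78,79]{Dawson}: the regular conditional distributions of $P_{s,\mu}$ given $\mathfrak{F}_t$ are again solutions of the restricted problem started at time $t$, and uniqueness identifies them with the family $P_{t,\cdot}$; the fact that $\delta_\gamma\notin\mathcal{P}_{\rm exp}$ is circumvented by disintegrating over the marginal $\mu_t\in\mathcal{P}_{\rm exp}$ rather than conditioning on a single point. For (c), each marginal $\mu_t=P_{s,\mu}\circ\varpi_t^{-1}$ lies in $\mathcal{P}_{\rm exp}$, so by Lemma \ref{Lenalm} one has $P_{s,\mu}(\varpi_t\in\breve{\Gamma}_*)=1$ for every fixed $t$, hence simultaneously on a countable dense set of times. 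To upgrade this to a trajectory statement---recalling that $\breve{\Gamma}_*$ is only a $G_\delta$, not closed, subset (Lemma \ref{Lenardlm}), so right-continuity alone does not preserve simplicity---I would invoke the absolute continuity of the jump kernel $a(x-y)\,dy$: along the approximating jump dynamics a jump lands on an already occupied site with probability zero, so simplicity is preserved both between and across jumps, and this property survives the limit, yielding $\mathrm{Prob}(X\in\mathfrak{D}_{\mathds{R}_{+}}(\breve{\Gamma}_*))=1$.
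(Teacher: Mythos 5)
Your overall architecture for claim (a) — reduce uniqueness to the Fokker--Planck equation \eqref{A12a} via Remark \ref{A2rk}, and obtain existence by taming $L$ into $L^\alpha$, building transition functions, proving a Chentsov estimate in the metric $\upsilon_*$, and passing to weak limits — is indeed the paper's route. But two of your steps have genuine gaps, and they are precisely where the paper's technical work lies.

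First, uniqueness. You invoke the ``standard martingale-problem machinery of \cite[Theorem 4.4.2]{EK}'' to propagate equality of one-dimensional marginals to equality of path measures. The paper explicitly points out that this theorem is \emph{not} applicable here: it requires marginal uniqueness for \emph{all} initial laws in $\mathcal{P}(\Gamma_*)$, whereas Theorem \ref{2tm} gives it only for initial laws in $\mathcal{P}_{\rm exp}$. The conditioning step (your ``regular conditioning'') produces path measures whose initial law is $F\mu/\mu(F)$ — still in $\mathcal{P}_{\rm exp}$ by Proposition \ref{T3pn}(iv) — but to apply marginal uniqueness to these conditioned measures you must also know that their marginals at \emph{later} times lie in $\mathcal{P}_{\rm exp}$, i.e.\ that they are again solutions of the \emph{restricted} problem of Definition \ref{A1df}. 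That is exactly Lemma \ref{W2lm}: any $\mathcal{P}(\Gamma_*)$-valued solution of \eqref{A12a} started in $\mathcal{P}_{\rm exp}$ stays in $\mathcal{P}_{\rm exp}$, with type controlled on compact time intervals. This lemma (whose proof, via the functions $\varPhi^m_{\tau,n}$, the combinatorial recursion \eqref{Ma6}--\eqref{Ma9} and the Appendix, is the most technical part of the paper) is missing from your proposal; without it, the inductive step in Lemma \ref{A3pn} cannot be closed, and ``uniqueness of \eqref{A12a} within $\mathcal{P}_{\rm exp}$'' alone does not yield uniqueness of $P_{s,\mu}$.

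Second, claim (c). Your final assertion that almost-sure simplicity of the approximating jump dynamics ``survives the limit'' is not justified and, as stated, fails: weak convergence $P^{\alpha_n}_{s,\mu}\Rightarrow P_{s,\mu}$ does not transfer almost-sure path properties, because $\mathfrak{D}_{\mathds{R}_{+}}(\breve{\Gamma}_*)$ is not a closed subset of $\mathfrak{D}_{\mathds{R}_{+}}(\Gamma_*)$ — indeed $\breve{\Gamma}_*$ is only a $G_\delta$ in $\Gamma_*$ (Lemma \ref{Lenardlm}), as you yourself note — so the portmanteau theorem gives no lower bound for the limit measure on this set. The paper instead argues directly on the limiting process $X$: with the open sets $\Gamma_{*,k}$ from the proof of Lemma \ref{Lenardlm}, the functionals $H_k(\gamma)=H(\gamma\cap D_k)$ from \eqref{Hk}, and the stopping times $T^k_N=\inf\{t\geq s: H_k(X(t))>N\}$, it applies optional sampling to the martingales $\varPhi^m_\tau(X(t))-\int_s^t (L\varPhi^m_\tau)(X(u))\,du$ and the estimates behind Lemma \ref{W2lm} (in particular \eqref{Bog} and \eqref{Ma18}) to show that the law of the stopped process is again sub-Poissonian, hence concentrated on $\breve{\Gamma}_*$ by Lemma \ref{Lenalm}; this forces $T^k=+\infty$ almost surely for every $k$, and hence the paths stay in $\cap_k\mathfrak{D}_{[s,+\infty)}(\Gamma_{*,k})=\mathfrak{D}_{[s,+\infty)}(\breve{\Gamma}_*)$. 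A minor further inaccuracy: the rates of $L^\alpha$ are neither bounded nor of finite range; what the taming buys is finiteness of the total rate $\Phi_\alpha(\gamma)\leq \Psi(\gamma)/\alpha$ on $\Gamma_*$, which is what makes the Thieme--Voigt construction of the stochastic semigroup $S^\alpha$ in the measure space $\mathcal{M}_*$ possible.
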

The proof of claim (a) of this statement is the main concern of the
rest of the paper. It will be done in the following two steps. First
we prove that the restricted initial value martingale problem as in
Definition \ref{A1df} has at most one solution. Thereafter, we
construct a solution by `superposing' (cf. \cite{Trev}) the
collection of measures constructed in \cite{asia}.

\subsection{Strategy of the proof and some comments} Our approach is
essentially based on the Fokker-Planck equation (\ref{A1}),
(\ref{A12a}) for which a solution, $t\to \mu_t\in \mathcal{P}_{\rm
exp}$, $\mu_0\in \mathcal{P}_{\rm exp}$ was constructed in
\cite{asia}. In Sect. 6, we introduce approximating models by
modifying the jump kernel in such the way that allows one to solve
the Fokker-Planck equation directly by constructing stochastic
semigroups in a Banach space of signed measures, with the
possibility to take Dirac measures $\delta_\gamma$, $\gamma\in
\Gamma_*$, as the initial conditions. This allows in turn for
introducing finite-dimensional marginals of the presumed law of the
processes corresponding to these approximating models by means of
the transition functions obtained in that way. Then we prove that
these marginals satisfy a Chentsov-like condition (see \cite[Theorem
3.8.8, page 139]{EK}) -- the same for all approximating models. Here
we employ the complete metric of $\Gamma_*$, see (\ref{Psi4}). This
yields the existence of cadlag versions of the approximating
processes and is used in Sect. 7 to prove that their distributions
have accumulating points -- possible distributions of the process in
question. Then we prove that such accumulation points solve the
martingale problem in the sense of Definition \ref{A1df}. To prove
uniqueness we again use the Fokker-Planck equation and the
construction made in \cite{asia}. At this stage -- realized in Sect.
5 -- we show that this equation has a unique solution, which implies
that the mentioned accumulation points have coinciding
one-dimensional marginals. A classical result (see \cite[claim (a)
of Theorem 4.4.2, page 184]{EK}) is that one would have uniqueness
if the one-dimensional marginals were equal for \emph{all} initial
$\mu\in \mathcal{P}(\Gamma_*)$. Since we have such an equality
\emph{only} for $\mu$ from a subset of $\mathcal{P}(\Gamma_*)$, we
turn to the restricted version of the martingale problem
\cite[Chapter 5]{Dawson}. A crucial element of this version is Lemma
\ref{W2lm} that states that a solution of the Fokker-Planck equation
with $\mu_0\in \mathcal{P}_{\rm \exp}$ is also in $\mathcal{P}_{\rm
\exp}$, and its type satisfies $\varkappa_t \leq \varkappa_T$ for
$t\leq T$, where $\varkappa_T$ depends on $T$ and $\varkappa_0$
only. The proof of Lemma \ref{W2lm} is the most technical element of
this part, based on a number of combinatorial results (see also
Appendix). By means of Lemma \ref{W2lm} we then prove (Theorem
\ref{2tm}) that (\ref{A1}) with $\mu_0\in \mathcal{P}_{\rm exp}$ has
a unique solution coinciding with the map $t\to \mu_t$ constructed
in \cite{asia}. This finally yields the uniqueness of the solution.

\section{The Evolution of States on $\Gamma_*$}

As mentioned above, in the proof of Theorem \ref{1tm} we essentially
use the construction of the family of measures $\{\mu_t\}_{t\geq
0}\subset \mathcal{P}_{\rm exp}$ performed in \cite{asia}. Notably,
in this construction, there was used the space of single
configurations $\breve{\Gamma}_*$, which for measures from
$\mathcal{P}_{\rm exp}$ makes no difference, see Remark
\ref{Lena1rk}. Thus, we begin by describing this family in a way
adapted to the present context.

\subsection{Spaces of functions on $\Gamma_0$}

By (\ref{N701}) it follows that each measurable $F$ satisfying
$|F(\gamma)| \leq C \exp(\beta \Psi(\gamma))$ for some positive
$\beta$ and $C$ is $\mu$-absolutely integrable for each $\mu\in
\mathcal{P}_{\rm exp}$. This obviously relates to $F=KG$ with $G\in
B_{\rm bs}$, see Remark \ref{Bbsrk}. For $a$ and $\phi$ as in
(\ref{C7}) and $G\in B_{\rm bs}$, let us consider
\begin{gather}
  \label{A9a}
  (\widehat{L} G)(\eta) = \sum_{\xi \subset \eta} \sum_{x\in
  \xi}  \int_{\mathds{R}^d} a (x-y) e(\tau_y;\xi) e(t_y; \eta
  \setminus \xi) \left[G ( \xi \setminus x\cup y) - G(\xi)\right] d
  y, \\[.2cm]
\nonumber \tau_y (x) := e^{- \phi (x-y)}, \qquad t_y(x) := \tau_y
(x) - 1, \quad x,y \in \mathds{R}^d.
\end{gather}
In (\ref{A9a}), the sums are finite and the integral is convergent
in view of the integrability of the jump kernel $a$. It turns out
that
\begin{equation*}
%  \label{A10}
L K G =K \widehat{L} G,
\end{equation*}
holding for all $G\in B_{\rm bs}$, see \cite[Corollary 4.3 and eq.
(4.7)]{Fiknel}. By (\ref{Lenard}) this yields
\begin{equation}
  \label{A10A}
 \mu(LKG) = \langle \!\langle k_\mu, \widehat{L} G \rangle\!\rangle,
\end{equation}
which by (\ref{I3}) points to the possibility to extend
$\widehat{L}$ from $B_{\rm bs}$ to integrable functions. For a given
$\vartheta \in \mathds{R}$, let $\mathcal{G}_\vartheta$ stand for
the weighted $L^1$-space equipped with the norm
\begin{eqnarray}
  \label{A6}
  |G|_\vartheta & = & \int_{\Gamma_0} |G(\eta)| \exp\left( \vartheta |\eta|
  \right) \lambda (d \eta)\\[.2cm] \nonumber & = & |G(\varnothing)|
  +  \sum_{n=1}^\infty \frac{e^{\vartheta n}}{n!} \int_{(\mathds{R}^d)^n}
  |G^{(n)}(x_1 , \dots , x_n)| d x_1 \cdots d x_n.
\end{eqnarray}
In fact, we have a descending scale
$\{\mathcal{G}_\vartheta:\vartheta\in \mathds{R}\}$ such that
\begin{equation}
  \label{C21A}
\mathcal{G}_{\vartheta'} \hookrightarrow \mathcal{G}_{\vartheta},
\qquad \vartheta' > \vartheta,
\end{equation}
where by $\hookrightarrow$ we mean continuous embedding. For a given
$\vartheta\in \mathds{R}$ and $G\in B_{\rm bs}$, let us estimate
$|\widehat{L} G|_\vartheta$. By means of \cite[Lemma 2.3]{Fiknel},
see also \cite[Lemma 3.1]{asia}, by (\ref{C7}) we get
\begin{eqnarray*}
%\label{A10a}
|\widehat{L} G|_\vartheta &\leq & \int_{\Gamma_0}
e^{\vartheta |\eta|} \bigg{(}\sum_{\xi \subset \eta} \sum_{x\in
\eta\setminus
  \xi}\int_{\mathds{R}^d} a(x-y) \left( |G(\eta\setminus \xi \setminus x \cup y)| \right. \\[.2cm] \nonumber & + & \left. |G(\eta\setminus \xi  )|
  \right) e(|t_y|; \xi) d y \bigg{)} \lambda (d \eta) \\[.2cm] \nonumber & = &
\int_{\Gamma_0}\bigg{(}\int_{\mathds{R}^d} e^{\vartheta |\eta|}
\sum_{x\in \eta} a(x-y) \left( |G(\eta\setminus x\cup y)| +
|G(\eta)| \right)
\\[.2cm]\nonumber & \times &  \bigg{(}\int_{\Gamma_0} e(e^\vartheta |t_y|; \xi) \lambda (d \xi)
\bigg{)} d y \bigg{)} \lambda ( d \eta) \\[.2cm]\nonumber & \leq & 2 \exp\left(e^\vartheta \langle \phi \rangle
\right) \int_{\Gamma_0} e^{\vartheta |\eta|} |\eta| |G(\eta)|
\lambda ( d \eta).
\end{eqnarray*}
To estimate the last line in the latter formula we use the
inequality $x e^{-\alpha x} \leq 1/e \alpha$, both $x, \alpha$
positive, and the fact that $B_{\rm bs}\subset
\mathcal{G}_{\vartheta'}$ for each $\vartheta' > \vartheta$.
Thereafter, we obtain
\begin{equation}
  \label{A11}
  |\widehat{L} G|_\vartheta \leq  \frac{2 }{ e
  (\vartheta' - \vartheta)}  \exp\left(e^\vartheta \langle \phi \rangle
\right) |G|_{\vartheta'}.
\end{equation}
Below by means of this estimate we extend $\widehat{L}$ to operators
acting in the scale $\{\mathcal{G}_\vartheta\}_{\vartheta\in
\mathds{R}}$, cf. (\ref{C21A}).

Along with $\mathcal{G}_\vartheta$ we introduce the following Banach
spaces. For symmetric $k^{(n)}\in L^\infty((\mathds{R}^d)^n)$, $n\in
\mathds{N}$, let $k$ be defined by $k^{(n)}$ as in (\ref{C22b}),
that includes also some constant $k(\varnothing)=k^{(0)}$. Such $k$
constitute a real linear space and can be considered as essentially
bounded functions $k:\Gamma_0 \to \mathds{R}$. Note that the
correlation functions $k_\mu$, cf. (\ref{I3}), are such functions.
Then for $\vartheta\in \mathds{R}$, we define
\begin{equation*}
%  \label{C21a}
  \|k\|_\vartheta = \sup_{n\geq 0} \bigg{(}\|k^{(n)}\|_{L^\infty((\mathds{R}^d)^n)}e^{-\vartheta n} \bigg{)}=
  \esssup_{\eta \in \Gamma_0}\bigg{(} |k(\eta)|\exp\left(-\vartheta |\eta|\right)\bigg{)}.
\end{equation*}
The linear space $\mathcal{K}_\vartheta$ equipped with this norm is
the Banach space in question. Clearly, cf. (\ref{C21A}),
\begin{equation}
  \label{C21b}
  \mathcal{K}_{\vartheta } \hookrightarrow \mathcal{K}_{\vartheta'
  }, \quad {\rm for} \ \vartheta < \vartheta'.
\end{equation}
Note  that $\mathcal{K}_\vartheta$ is the topological dual to
$\mathcal{G}_\vartheta$ as the value of $k$ on $G$ is given by the
formula
\[
\langle \! \langle k , G \rangle \! \rangle = \int_{\Gamma_0}
k(\eta) G (\eta) \lambda ( d \eta).
\]
Let us now define $L^\Delta$ by the condition, cf. (\ref{A10A}),
\begin{equation}
  \label{A13a}
\langle \! \langle L^\Delta k_\mu, G \rangle\!
  \rangle = \langle \! \langle k_\mu, \widehat{L} G \rangle\!
  \rangle.
\end{equation}
By (\ref{A9a}) it is obtained in the following form, see \cite[eqs.
(2.21), (2.22)]{asia},
\begin{eqnarray}
  \label{K3}
(L^{\Delta} k)(\eta) & = & \sum_{y\in \eta} \int_{\mathds{R}^d}
a(x-y) e(\tau_y; \eta \setminus y\cup x)( W_y k)(\eta \setminus y
\cup x) d x \\[.2cm]\nonumber  &-& \sum_{x\in \eta}
\int_{\mathds{R}^d} a (x-y) e(\tau_y; \eta)( W_y k)(\eta) d y,
\end{eqnarray}
where
\begin{equation}
  \label{K4}
( W_y k)(\eta) = \int_{\Gamma_0} k(\eta\cup \xi) e(t_y ; \xi)
\lambda ( d \xi).
\end{equation}
Proceeding similarly as in obtaining (\ref{A11}), for all
$\vartheta\in \mathds{R}$ and $\vartheta'>\vartheta$, we get
\begin{equation}
  \label{A14}
  \|L^\Delta k \|_{\vartheta'} \leq \frac{2}{e(\vartheta'-\vartheta)}
  \exp\left(e^{\vartheta}\langle \phi \rangle
  \right)\|k\|_{\vartheta},
\end{equation}
where we have taken into account that $\langle a \rangle =1$, see
(\ref{C7}).

\subsection{The evolution in spaces of functions on $\Gamma_0$}
\label{SS4.2}

 By combining (\ref{A10A}) with (\ref{A13a}) we
introduce the following versions of the Kolmogorov equation
(\ref{I4})
\begin{equation}
  \label{M1}
 \frac{d}{dt} G_t = \widehat{L} G_t, \qquad G_{t}|_{t=0} = G_0,
\end{equation}
\begin{equation}
  \label{M2}
 \frac{d}{dt} k_t = L^\Delta k_t, \qquad k_{t}|_{t=0} = k_0,
\end{equation}
which we will solve in the scales $\{\mathcal{G}_\vartheta:
\vartheta\in \mathds{R}\}$ and $\{\mathcal{K}_\vartheta:
\vartheta\in \mathds{R}\}$, respectively, see (\ref{C21A}) and
(\ref{C21b}).

Let us first consider (\ref{M2}). By (\ref{A14}) we see that
$L^\Delta$ maps each $\mathcal{K}_\vartheta$ in each
$\mathcal{K}_{\vartheta'}$, cf. (\ref{C21b}), and the corresponding
map is linear and bounded. Likewise, one can define the linear maps
$(L^\Delta)^n:\mathcal{K}_{\vartheta} \to \mathcal{K}_{\vartheta'}$,
$n\in \mathds{N}$ the norm of which can be estimated by means of the
inequality
\begin{equation}
 \label{A15}
\|(L^\Delta)^n k \|_{\vartheta'} \leq \frac{n^n }{\left(e
T(\vartheta', \vartheta) \right)^n} \|k\|_{\vartheta},
\end{equation}
where, cf. \cite[eq. (4.2)]{asia},
\begin{equation}
  \label{U1}
T(\vartheta_2, \vartheta_1) = \frac{\vartheta_2-\vartheta_1}{2}
\exp\left( - \langle \phi \rangle e^{\vartheta_2} \right), \qquad
\vartheta_2 > \vartheta_1.
\end{equation}
It is known, see \cite[eqs. (4.3), (4.4)]{asia} that
\begin{equation}
  \label{U2}
 \sup_{\vartheta'> \vartheta} T(\vartheta',\vartheta) =
\frac{\delta(\vartheta)}{2}\exp\left(-\frac{1}{\delta(\vartheta)}
\right)=:\tau (\vartheta),
\end{equation}
where $\delta (\vartheta)$ is a unique solution of $\delta e^\delta
= e^{-\vartheta}/\langle \phi \rangle$. The supremum in (\ref{U2})
is attained at $\vartheta' = \vartheta + \delta (\vartheta)$. Then
the expression in (\ref{K3}), (\ref{K4}) can be used to define: (a)
bounded linear operators $(L^\Delta)^n_{\vartheta'\vartheta}:
\mathcal{K}_{\vartheta} \to \mathcal{K}_{\vartheta'}$, $n\in
\mathds{N}$ the norm of which can be estimated by means of
(\ref{A15}); (b) unbounded linear operators $L^\Delta_{\vartheta'}$
with domains, cf. \cite[eq. 3.19)]{asia},
\begin{equation*}
 %\label{A16}
{\rm Dom}L^\Delta_{\vartheta'} = \{k\in \mathcal{K}_{\vartheta'}:
L^\Delta k\in \mathcal{K}_{\vartheta'}\}..
\end{equation*}
Now we turn to (\ref{M1}). In a similar way, by means of (\ref{A11})
one defines: (a) bounded linear operators
$(\widehat{L})^n_{\vartheta\vartheta'}: \mathcal{G}_{\vartheta'} \to
\mathcal{G}_{\vartheta}$, $n\in \mathds{N}$, the norm of which
satisfies
\begin{equation}
 \label{A17}
\|(\widehat{L})^n_{\vartheta\vartheta'}\| =
\|(L^\Delta)^n_{\vartheta'\vartheta}\|, \qquad n \in \mathds{N};
\end{equation}
(b) unbounded operators $\widehat{L}_\vartheta$ with domains
\begin{equation*}
 %\label{A18}
{\rm Dom} \widehat{L}_{\vartheta} = \{G\in \mathcal{G}_{\vartheta}:
\widehat{L} G\in \mathcal{G}_{\vartheta}\}..
\end{equation*}
It can be shown, see \cite[Lemma 3.1]{asia}, that, for each
$\vartheta \in \mathds{R}$ and $\vartheta'>\vartheta$, the following
is true
\begin{equation*}
% \label{A19}
\mathcal{K}_\vartheta \subset {\rm Dom}L^\Delta_{\vartheta'}, \qquad
\mathcal{G}_{\vartheta'} \subset {\rm Dom}\widehat{L}_{\vartheta},
\end{equation*}
by which one readily obtains that, for all $\vartheta, \vartheta'$,
$\vartheta'>\vartheta$, the following holds
\begin{equation}
 \label{A20}
\forall k\in \mathcal{K}_\vartheta \qquad \ \ L^\Delta_{\vartheta'\vartheta} k =
L^\Delta_{\vartheta'} k.
\end{equation}
Furthermore, up to the embedding (\ref{C21b}) we have that
\begin{equation*}
% \label{A21}
L^\Delta_{\vartheta'\vartheta} k = L^\Delta_{\vartheta''\vartheta} k,
\end{equation*}
holding for all $\vartheta''\in (\vartheta, \vartheta')$.
By (\ref{A15}) the series
\begin{equation}
 \label{A22}
Q_{\vartheta'\vartheta} (t) = 1 + \sum_{n=1}^{\infty} \frac{t^n}{n!}
(L^\Delta)^n_{\vartheta'\vartheta}
\end{equation}
converges in the operator norm topology -- uniformly on
compact subsets of $[0,T(\vartheta'\vartheta))$ -- to a bounded linear operator
$$Q_{\vartheta'\vartheta} (t) :\mathcal{K}_\vartheta \to
{\rm Dom}L^\Delta_{\vartheta'} \subset \mathcal{K}_{\vartheta'},$$
the norm of which satisfies
\begin{equation}
 \label{A23}
\| Q_{\vartheta'\vartheta} (t)\| \leq \frac{T(\vartheta',\vartheta)}{T(\vartheta',\vartheta)-t}.
\end{equation}
Moreover, the map $[0,T(\vartheta', \vartheta)) \ni t \mapsto Q_{\vartheta'\vartheta} (t)$
is differentiable and the following holds
\begin{equation}
 \label{A24}
\frac{d}{dt} Q_{\vartheta'\vartheta} (t)= L^\Delta_{\vartheta'}
Q_{\vartheta'\vartheta} (t) = L^{\Delta}_{\vartheta'\vartheta''}
Q_{\vartheta''\vartheta} (t) = Q_{\vartheta'\vartheta''} (t)
L^{\Delta}_{\vartheta''\vartheta},
\end{equation}
with an arbitrary $\vartheta''\in (\vartheta, \vartheta')$ provided
$t$ satisfies $t< T(\vartheta'', \vartheta)$ and $t< T(\vartheta',
\vartheta'')$ in the latter two terms, respectively, cf.
(\ref{A23}). By (\ref{A24}) one readily obtains that the Cauchy
problem in (\ref{M2}) with $k_0\in \mathcal{K}_\vartheta$ has a
unique classical solution in $\mathcal{K}_{\vartheta'}$, on the time
interval $[0, T(\vartheta',\vartheta))$, cf. \cite[Lemma 4.1]{asia}.
It is
\begin{equation}
 \label{A26}
k_t = Q_{\vartheta'\vartheta}(t) k_0.
\end{equation}
In a similar way, one shows that the Cauchy problem in (\ref{M1})
has a unique classical solution in $\mathcal{G}_{\vartheta}$, on the
time interval $[0, T(\vartheta',\vartheta))$, given by the formula
\begin{equation}
 \label{A28}
G_t = H_{\vartheta \vartheta'}(t) G_0 =: \left(1 + \sum_{n=1}^{\infty} \frac{t^n}{n!}
(\widehat{L})^n_{\vartheta\vartheta'} \right) G_0, \qquad G_0 \in \mathcal{G}_{\vartheta'}.
\end{equation}
By construction these solutions of (\ref{M2}) and (\ref{M1}) satisfy
\begin{equation}
 \label{A29}
\langle \! \langle k_t , G_0 \rangle\! \rangle = \langle \! \langle k_0 , G_t \rangle\! \rangle,
\qquad t< T(\vartheta', \vartheta).
\end{equation}

\subsection{The evolution of states}

\label{SS4.3}

A priori, the solution given in (\ref{A26}) need not be the
correlation function for any measure. Moreover, it may not even be
positive, cf. (\ref{I3}). To check whether a given $k:\Gamma_0\to
\mathds{R}_{+}$ is the correlation function of a certain $\mu\in
\mathcal{P}_{\rm exp}$ we introduce the following set
\begin{equation}
  \label{C30}
  B^\star_{\rm bs}=\{ G\in B_{\rm bs}: \sum_{\xi\subset \eta} G(\xi)
  \geq 0, \  {\rm for} \ {\rm all} \ \eta \in \Gamma_0\}.
\end{equation}
Note that some of its members can take also negative values. By
\cite[Theorems 6.1 and 6.2 and Remark 6.3]{Tobi} one proves the
following statement.
\begin{proposition}
  \label{Tobipn}
Let a measurable function, $k:\Gamma_0\to \mathds{R}$, have the
following properties:
\begin{itemize}
  \item[(a)] $\langle \! \langle k, G\rangle \! \rangle \geq 0$ for
  all $G\in B^\star_{\rm bs}$, see (\ref{C30});
  \item[(b)] $k(\varnothing) =1$;
  \item[(c)] $k(\eta) \leq \varkappa^{|\eta|}$ for some
  $\varkappa>0$, cf. (\ref{I3}).
\end{itemize}
Then $k$ is the correlation function for a unique $\mu \in
\mathcal{P}_{\rm exp}$.
\end{proposition}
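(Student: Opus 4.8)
The plan is to realize $k$ as the correlation function of a measure through a moment-problem (Lenard-type) argument, and then to extract membership in $\mathcal{P}_{\rm exp}$ and uniqueness from the growth bound (c). The first step is to set up the appropriate positive linear functional. Recall from (\ref{A2}) that the $K$-transform $(KG)(\gamma) = \sum_{\eta \Subset \gamma} G(\eta)$ sends $B_{\rm bs}$ into measurable functions on $\Gamma$, and that it is injective, its inverse being the M\"obius-type formula $(K^{-1}F)(\eta) = \sum_{\xi\subset\eta}(-1)^{|\eta\setminus\xi|}F(\xi)$. Hence the prescription $\ell(KG) := \langle\!\langle k, G\rangle\!\rangle$ unambiguously defines a linear functional on the space $K(B_{\rm bs})$. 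Since $G \in B_{\rm bs}$ has bounded support, $(KG)(\gamma) = (KG)(\gamma\cap\Lambda_G)$ depends only on a finite part of $\gamma$, so $KG \geq 0$ on all of $\Gamma$ is equivalent to $(KG)(\eta) = \sum_{\xi\subset\eta}G(\xi) \geq 0$ for every finite $\eta$. Comparing with the definition of $B^\star_{\rm bs}$ in (\ref{C30}), hypothesis (a) is exactly the statement that $\ell$ is \emph{positive}: $KG \geq 0 \Rightarrow \ell(KG) \geq 0$.

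Next I would invoke Lenard's existence theorem in the form established in \cite[Theorem 6.1]{Tobi}: a positive, measurability-compatible functional on $K(B_{\rm bs})$ is represented by integration against a genuine positive measure $\mu$ on $(\Gamma,\mathcal{B}(\Gamma))$, so that $\ell(KG) = \int_\Gamma (KG)(\gamma)\,\mu(d\gamma)$ for all $G\in B_{\rm bs}$. Comparing this with (\ref{Lenard2}) shows precisely that $k = k_\mu$, i.e. $k$ is the correlation function of $\mu$. Normalization is then immediate: taking $G=\mathds{1}_{\{\varnothing\}}\in B_{\rm bs}$ gives $KG\equiv 1$ on $\Gamma$ and, by (\ref{C22c}), $\langle\!\langle k,G\rangle\!\rangle = k(\varnothing)$, so hypothesis (b) yields $\mu(\Gamma)=1$ and $\mu\in\mathcal{P}(\Gamma)$. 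Because $\mu$ is a positive measure, its correlation function $k=k_\mu$ is automatically nonnegative, and hypothesis (c) supplies the two-sided bound $0 \leq k_\mu^{(n)} \leq \varkappa^n$; by the characterization recorded in Remark \ref{Len1rk} (see (\ref{I3})) this is exactly the defining property of $\mathcal{P}_{\rm exp}$, whence $\mu\in\mathcal{P}_{\rm exp}$.

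For uniqueness I would argue that the growth bound (c) makes the moment problem determinate. Indeed (c) forces the exponential-moment estimate (\ref{N701}), so for any $\mu\in\mathcal{P}_{\rm exp}$ with correlation function $k$ the value $\mu(F^\theta)$ is finite and, since $F^\theta = K\big(\prod_{x\in\cdot}\theta(x)\big)$, it is computed from $k$ alone by the convergent series $\mu(F^\theta) = \sum_{n} \frac{1}{n!}\int (\theta^{\otimes n}) k^{(n)}$; by Remark \ref{Len1rk} this continues to an entire function of normal type of $\theta\in L^1(\mathds{R}^d)$. Consequently any two measures in $\mathcal{P}_{\rm exp}$ sharing the correlation function $k$ take the same values on all of $\widetilde{\mathcal{F}}$, which is separating by Proposition \ref{T1pn}(i); hence they coincide. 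This is the content of \cite[Theorem 6.2 and Remark 6.3]{Tobi}, which I would cite directly, so the proof reduces to checking that hypotheses (a)--(c) match the assumptions of those results.

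The step I expect to be the genuine obstacle is the \emph{existence} half, namely upgrading the positive functional $\ell$ on $K(B_{\rm bs})$ to an honest $\sigma$-additive measure on $\Gamma$. The crucial and nonobvious point is that positivity must be imposed on the full cone $B^\star_{\rm bs}$ (all $G$ with $KG\geq 0$), not merely on pointwise nonnegative $G$; this is exactly what Lenard's Daniell--Stone construction requires, and verifying the attendant regularity/tightness so that the representing object is a bona fide Radon measure is the technical heart of the matter. That analysis is precisely what \cite[Theorem 6.1]{Tobi} carries out, so in practice I would lean on it rather than reprove it. The uniqueness part, by contrast, is routine once the sub-Poissonian bound (c) is in hand.
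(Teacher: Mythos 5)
Your proposal is correct and follows essentially the same route as the paper: the paper itself establishes Proposition \ref{Tobipn} simply by invoking \cite[Theorems 6.1 and 6.2 and Remark 6.3]{Tobi}, which is exactly what you do, with the added (accurate) bookkeeping of how hypotheses (a)--(c) match the positivity, normalization, and growth assumptions of those cited results. Nothing in your sketch conflicts with the paper's treatment; you have merely made explicit the reduction that the paper leaves implicit.
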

Recall that the least $\varkappa$ as in item (c) above is the type
of $\mu$ of which $k$ is then the correlation function. Set
\begin{equation}
  \label{C30A}
  \mathcal{P}_{\rm exp}^\vartheta = \{\mu \in \mathcal{P}_{\rm exp}:
\mu \ {\rm is} \ {\rm of} \ {\rm type} \leq  e^\vartheta\}.
\end{equation}
Let $\mathcal{K}^\star$ be the set of all $k:\Gamma_0\to \mathds{R}$
that possess the properties listed in Proposition \ref{Tobipn}. In
\cite[Theorem 3.3]{asia}, it was shown that $k_t$ as in (\ref{A26})
belongs to $\mathcal{K}^\star$ whenever $k_0$ is the correlation
function of a certain $\mu\in \mathcal{P}_{\rm exp}$. In the context
of the present study, the relevant results of \cite{asia} can be
formulated as follows.
\begin{proposition}
  \label{3.3pn}
Given $\vartheta_0\in \mathds{R}$, let $\mu$ be an arbitrary element
of $\mathcal{P}_{\rm exp}^{\vartheta_0}$. For this $\vartheta_0$,
set $\vartheta_t = \vartheta_0 + t$, $t\geq 0$. Then there exists a
unique map, $[0,+\infty) \ni t \mapsto k_t \in \mathcal{K}^\star$,
such that $k_0 = k_\mu$ and the following holds:
\begin{itemize}
  \item[(a)] for each $t>0$,
  \[
0\leq k_t (\eta) \leq e^{\vartheta_t |\eta|} , \qquad \eta
\in \Gamma_0,
  \]
by which $k_t\in \mathcal{K}_{\vartheta_t}$.
 \item[(b)] For each $T>0$ and $t\in [0,T)$, the map $t\mapsto k_t
 \in \mathcal{K}_{\vartheta_t} \subset {\rm Dom}
 L^{\Delta}_{\vartheta_T}$ is continuous on $[0, T)$ and continuously differentiable on $(0,
T)$ in $\mathcal{K}_{\vartheta_T}$ and the following holds:
\begin{equation}
  \label{C31}
 \frac{d}{dt} k_t = L^{\Delta}_{\vartheta_T} k_t.
\end{equation}
\end{itemize}
\end{proposition}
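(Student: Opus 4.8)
The plan is to build $t\mapsto k_t$ locally by the Ovsyannikov scheme recalled in Subsection~\ref{SS4.2}, to promote it to a global solution by means of an a~priori sub-Poissonian bound, and then to read off properties (a) and (b) from the local construction. First, since $\mu\in \mathcal{P}_{\rm exp}^{\vartheta_0}$, its correlation function $k_0:=k_\mu$ satisfies $0\le k_0(\eta)\le e^{\vartheta_0|\eta|}$ by (\ref{I3}) and the definition of the type, so $k_0\in \mathcal{K}_{\vartheta_0}$ with $\|k_0\|_{\vartheta_0}\le 1$. For any $\vartheta'>\vartheta_0$ the series (\ref{A22}) yields the evolution operator $Q_{\vartheta'\vartheta_0}(t)$, and by (\ref{A23})--(\ref{A26}) the map $t\mapsto k_t:=Q_{\vartheta'\vartheta_0}(t)k_0$ is the unique classical solution of (\ref{M2}) in $\mathcal{K}_{\vartheta'}$ on $[0,T(\vartheta',\vartheta_0))$; taking the supremum over $\vartheta'$, cf. (\ref{U2}), this defines $k_t$ on $[0,\tau(\vartheta_0))$.

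The decisive input is the a~priori bound. By \cite[Theorem 3.3]{asia} the locally constructed $k_t$ lies in $\mathcal{K}^\star$ and obeys $0\le k_t(\eta)\le e^{\vartheta_t|\eta|}$, i.e. $\|k_t\|_{\vartheta_t}\le 1$ with $\vartheta_t=\vartheta_0+t$. This is far sharper than the bare estimate (\ref{A15}) permits: there a time step of length $\Delta$ costs an amount of order $\Delta\,e^{\langle\phi\rangle e^{\vartheta}}$ in the scale parameter, whereas positivity preservation forces that parameter to grow only at unit rate. I would exploit this to continue the solution. At any $s<\tau(\vartheta_0)$ the function $k_s$ is in $\mathcal{K}^\star$ with $\|k_s\|_{\vartheta_s}\le 1$, so by Proposition~\ref{Tobipn} it is the correlation function of some $\mu_s\in \mathcal{P}_{\rm exp}^{\vartheta_s}$, and the construction may be restarted from $(s,\vartheta_s)$, producing on $[s,s+\tau(\vartheta_s))$ a continuation that again meets the a~priori bound. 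Choosing restart points $\vartheta_{k+1}=\vartheta_k+\tfrac12\tau(\vartheta_k)$, the associated times $s_k=\vartheta_k-\vartheta_0$ exhaust $[0,+\infty)$: the increasing sequence $\{\vartheta_k\}$ cannot converge to a finite limit because $\tau$ is strictly positive there, whence $\vartheta_k\to+\infty$ and $s_k\to+\infty$. This furnishes the global solution.

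For property (b) I would fix $T>0$ and observe that for $t\in[0,T)$ one has $k_t\in \mathcal{K}_{\vartheta_t}\subset \mathcal{K}_{\vartheta_T}$, with $\mathcal{K}_{\vartheta_t}\subset {\rm Dom}\,L^\Delta_{\vartheta_T}$ by the inclusion preceding (\ref{A20}). Continuity on $[0,T)$ and continuous differentiability on $(0,T)$ with $\tfrac{d}{dt}k_t=L^\Delta_{\vartheta_T}k_t$ then follow by patching the local identities (\ref{A24}) and using (\ref{A20}). Uniqueness is local-to-global: on each step the Cauchy problem (\ref{M2}) has a unique solution in the relevant $\mathcal{K}_{\vartheta'}$, so any competing map with the stated properties agrees with $k_t$ on $[0,\tau(\vartheta_0))$ and hence, by the restart argument, on all of $[0,+\infty)$.

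The main obstacle is precisely the passage from local to global existence. Without the a~priori sub-Poissonian estimate the available scale is consumed super-exponentially fast, so the direct series would only produce a solution up to the finite time $\tau(\vartheta_0)$. It is the positivity preservation of \cite[Theorem 3.3]{asia}, which replaces the crude scale loss by the linear growth $\vartheta_t=\vartheta_0+t$, that lets the continuation reach arbitrarily large times; verifying that the restart dynamics indeed escapes to $\vartheta=+\infty$ (rather than stalling at a finite value) is the one point I would treat with care.
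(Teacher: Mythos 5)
Your proposal is correct and follows essentially the same route as the paper: the paper gives no independent proof of Proposition \ref{3.3pn}, presenting it as a reformulation of \cite[Theorem 3.3]{asia}, and your scheme --- local construction via the series (\ref{A22})--(\ref{A26}), the positivity/sub-Poissonian a priori bound imported from \cite{asia}, and the restart argument whose escape to $\vartheta=+\infty$ you verify exactly as the paper does for the sequences in the proof of Lemma \ref{U2lm} --- is precisely the argument underlying that citation. The one caveat is that the decisive bound $0\leq k_t(\eta)\leq e^{\vartheta_t|\eta|}$ is not proved but cited, so your write-up is a reconstruction of the continuation mechanism around \cite[Theorem 3.3]{asia} rather than a self-contained proof; this matches the paper's own treatment.
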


\section{The Uniqueness}

In this section, we prove that the restricted initial value
martingale problem has at most one solution. To this end we use the
properties of $\mathcal{D}(L)$ stated in Proposition \ref{T3pn}. In
view of Remark \ref{A2rk}, see also Lemma \ref{A3pn} below, the
proof of the uniqueness in question amounts to proving that, for
each $\mu\in \mathcal{P}_{\rm exp}$, the Fokker-Planck equation
(\ref{A12a}) has at most one solution $\mu_t \in \mathcal{P}_{\rm
exp}$ satisfying $\mu_0=\mu$. The main tool for this is based on
controlling the type of $\mu_t$ by a method based on the use of the
concrete form of the elements of $\mathcal{D}(L)$, see Definition
\ref{THdf}.

\subsection{Solving the Fokker-Planck equation}

We begin by pointing out that in Definition \ref{A01df} we do not
assume that $\mu_t\in \mathcal{P}_{\rm exp}$ for $t>0$. Recall that
$(KG)(\gamma) = \sum_{\xi\Subset \gamma} G(\xi)$, see (\ref{A2}).
\begin{lemma}
  \label{W2lm}
Let $[0,+\infty) \ni t\mapsto \mu_t\in \mathcal{P}(\Gamma_*)$ be a
solution of (\ref{A12a}) with  all $F$ belonging to the linear span
of $\widehat{\mathcal{F}}$ and a given $\mu_0 \in \mathcal{P}_{\rm
exp}^{\vartheta_0}$. Then, for each $T>0$, there exists
$\vartheta_T\in \mathds{R}$ such that, for all $t\in [0,T]$,
$\mu_t\in \mathcal{P}_{\rm exp}^{\vartheta_t}$ with some
$\vartheta_t < \vartheta_T$.
\end{lemma}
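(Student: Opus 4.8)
The plan is to translate the Fokker--Planck equation (\ref{A12a}), which we may test only against $F$ in the linear span of $\widehat{\mathcal{F}}$, into a differential inequality for the correlation functionals of $\mu_t$, and then to close this inequality within the Ovsyannikov-type lifetime supplied by (\ref{U1})--(\ref{A23}). The point of departure is the observation, immediate from (\ref{TH1}) and (\ref{Lenard2}), that for positive $\theta\in\varTheta_\psi^{+}$ and $m=n$ equal copies $\theta_1=\dots=\theta_n=\theta$ one has $\lim_{\tau\downarrow0}\mu(\widehat{F}_\tau^{\theta,\dots,\theta})=\chi^{(n)}_\mu(\theta^{\otimes n})$ whenever the right-hand side is finite, since $\widetilde{F}^0_\tau=\exp(-\tau\Psi)\uparrow1$ as $\tau\downarrow0$. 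Hence a uniform-in-$\tau$ bound on $\mu_t(\widehat{F}_\tau^{\theta,\dots,\theta})$, together with the nonnegativity of these quantities, yields exactly the estimate (\ref{Len}) for $\mu_t$, i.e.\ membership in $\mathcal{P}_{\rm exp}$ with a type governed by the bound; this is the quantity I would control.

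To obtain the differential inequality I would insert $F=\widehat{F}_\tau^{\theta_1,\dots,\theta_m}$ into (\ref{A12a}) and use the exact form of $\nabla^{y,x}\widehat{F}$ from (\ref{TH2}) inside (\ref{KO}). Because the interaction factor $\exp(-\sum_{z\in\gamma\setminus x}\phi(z-y))$ is bounded by $1$, all the estimates of Section~3 apply and give, as in (\ref{TH7}),
\begin{equation*}
\bigl|L\widehat{F}_\tau^{\theta_1,\dots,\theta_m}\bigr|\le\sum_{j=1}^{m}\widehat{F}_\tau^{\theta_1,\dots,\theta_{j-1},\theta_j^1,\theta_{j+1},\dots,\theta_m}+\tau c_a\Bigl(\prod_{j=1}^{m}\bar{c}_{\theta_j}\Bigr)\widehat{F}_\tau^{m+1},
\end{equation*}
with $\theta_j^1=a*\theta_j+\theta_j$ as in (\ref{TH4}). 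The crucial structural feature is that the first group of terms preserves the order $m$ and merely replaces one $\theta_j$ by $a*\theta_j+\theta_j$ --- precisely the kernel encoded by $\widehat{L}$, $L^\Delta$ in (\ref{A9a}), (\ref{K3}) --- while the order-raising term carries the prefactor $\tau$ coming from (\ref{TH5})--(\ref{TH6}). Thus the hierarchy produced by the Fokker--Planck equation is upper triangular with an arbitrarily small super-diagonal. I would therefore pass to the generating functional $\mu_t(\widetilde{F}^\theta_\tau)$, for which the same computation gives a bound of the form $C\,\mu_t(\Psi\,\widetilde{F}^\theta_{\tau-\tau_0})$ with $\tau_0>0$ (the displayed estimate for $|L\widetilde{F}^\theta_\tau|$ preceding Proposition \ref{T3pn}); since $\Psi\exp(-\tau\Psi)=-\partial_\tau\exp(-\tau\Psi)$, the generator lowers the scale parameter at the cost of a factor controlled by $1/(\tau-\tau_0)$, which is the mechanism behind (\ref{A15}), (\ref{A23}).

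With this in hand I would run the Ovsyannikov scheme exactly as for (\ref{M2}): on the interval $[0,T(\vartheta',\vartheta_0))$, with $T(\cdot,\cdot)$ and $\tau(\vartheta)$ as in (\ref{U1}), (\ref{U2}), the resulting series converges and yields $\mu_t(\widetilde{F}^\theta_\tau)\le\exp\bigl(e^{\vartheta_t}\langle\psi\rangle\,\bar{c}_\theta\bigr)$ with $\vartheta_t=\vartheta_0+t$, uniformly in $\tau>0$; letting $\tau\downarrow0$ and invoking the first paragraph gives (\ref{Len}) with $\varkappa_t=e^{\vartheta_t}$, that is $\mu_t\in\mathcal{P}^{\vartheta_t}_{\rm exp}$. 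Since each single step is valid only up to the lifetime $\tau(\vartheta)$, I would reach an arbitrary $T$ by partitioning $[0,T]$ into finitely many subintervals of length less than $\tau(\vartheta_0+T)$, re-initialising the scale at each endpoint with the bound already established; this yields $\vartheta_t=\vartheta_0+t$ for all $t\in[0,T]$, and choosing $\vartheta_T$ slightly larger than $\vartheta_0+T$ secures the strict inequality $\vartheta_t<\vartheta_T$. As an alternative to the last limiting argument, once finiteness of the correlation functionals is secured the same estimates show that $k_{\mu_t}$ solves (\ref{M2}) weakly, whence the uniqueness in Proposition \ref{3.3pn} identifies $k_{\mu_t}$ with the function $k_t$ constructed in \cite{asia} and furnishes the type bound directly.

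The main obstacle --- and the reason this is the most technical step --- is the apparent circularity: a priori $\mu_t$ is only known to lie in $\mathcal{P}(\Gamma_*)$, so its correlation functionals, and in particular the order-$(m+1)$ term above, are not known to be finite before the bound is proved. The estimate must therefore be set up so that finiteness and the quantitative bound are obtained simultaneously, through a continuity/bootstrap argument in $t$ applied to the generating functional rather than to individual orders, using the $O(\tau)$ smallness of the order-raising term together with the scale bound to close the loop within each Ovsyannikov lifetime. Keeping track of the combinatorial factors generated by the repeated differences $\nabla^{y,x}$ and by the inclusion--exclusion expansion of $\widehat{F}_\tau^{\theta_1,\dots,\theta_m}$ (cf.\ Proposition \ref{TH1pn} and the Appendix) is the bulk of the technical work.
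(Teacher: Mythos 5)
Your skeleton matches the paper's: the monotone limit $\tau\downarrow 0$ of $\mu_t(\widehat{F}^{\theta,\dots,\theta}_\tau)$ recovering the correlation measures, the hierarchy inequality (\ref{TH7}) with its order-preserving part $\theta_j\mapsto a\ast\theta_j+\theta_j$ and its $O(\tau)$ order-raising part, the use of $\mu_0\in\mathcal{P}_{\rm exp}^{\vartheta_0}$ on the initial-time terms, and the restart argument to reach arbitrary $T$. But the engine you propose for closing the estimate --- an Ovsyannikov scheme in the damping parameter $\tau$ applied to the generating functional $\mu_t(\widetilde{F}^\theta_\tau)$ --- fails, for two reasons. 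First, $\widetilde{F}^\theta_\tau$ is \emph{not} in the linear span of $\widehat{\mathcal{F}}$ (it is an infinite series of such functions; e.g.\ it equals $1$ at $\gamma=\varnothing$ while every element of ${\rm span}\,\widehat{\mathcal{F}}$ vanishes there), so under the hypothesis of the lemma you may not test (\ref{A12a}) against it; justifying the interchange of the infinite sum with $\int_0^t\mu_u(L\,\cdot)\,du$ already requires the uniform integrability you are trying to prove. Second, and more fundamentally, your descent mechanism has a floor: each application of $L$ lowers the scale $\tau\to\tau-\tau_0$, and since the only a priori information on $\mu_u$ for $u>0$ is that it is a probability measure, the Duhamel remainder can be discarded only while the function at the bottom scale is \emph{bounded}. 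But $\widetilde{F}^\theta_s$ is unbounded whenever $s<c_\theta$ (take configurations with many points accumulating where $\frac{1}{\psi}\log(1+\theta)$ exceeds $s$), and $c_\theta>0$ for every nonzero $\theta\in\varTheta_\psi^{+}$. The total descent budget is therefore $\tau-c_\theta$, the achievable time horizon is $O(\tau-c_\theta)$, which is empty for $\tau\leq c_\theta$ and degenerates as $\tau\downarrow c_\theta$; hence your claimed bound ``uniformly in $\tau>0$'' is unobtainable by this route, and the limit $\tau\downarrow 0$ on which your first paragraph relies never becomes available.

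The paper's proof avoids exactly this trap by never lowering $\tau$: it iterates $|L\varPhi^m_{\tau,n-1}|\leq\varPhi^m_{\tau,n}$ at \emph{fixed} $\tau$, where $\varPhi^m_{\tau,n}$ in (\ref{Ma6}) mixes same-order terms with upgraded kernels $\theta^{q}\leq c_a^{q}\psi$ (weights $C_{m,n}(c)$ summing to $m^n$) and higher-order terms $\widehat{F}^{m+k}_\tau$ carrying prefactors $\tau^k w_k(m,n)$ with $w_k(m,n)=\varDelta^k m^n$, see (\ref{Ma9}). Since $\sup_\gamma\widehat{F}^{m+k}_\tau<\infty$ for each fixed $\tau>0$, the auxiliary generating function in a new variable $\rho$, (\ref{Ma10})--(\ref{Ma15}), yields the sup bound $\varPhi^m_{\tau,n}\leq (n!/\rho_\varepsilon^n)\delta_m(\tau)$ with a horizon $\rho_\varepsilon$ (\ref{Ma14}) independent of both $\tau\in(0,1]$ and $\varkappa_0$; this is what permits sending $n\to+\infty$ first (killing the remainder for $t<\rho_\varepsilon$, giving (\ref{Ma18})) and only afterwards $\tau\to 0$, in that order. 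Finally, your fallback --- identifying $k_{\mu_t}$ with the $k_t$ of \cite{asia} through the uniqueness of Proposition \ref{3.3pn} --- is circular: that uniqueness holds within the class $\mathcal{K}^\star$ of functions obeying $k^{(n)}\leq\varkappa^n$, so invoking it presupposes precisely the type bound on $k_{\mu_t}$ that Lemma \ref{W2lm} asserts; mere finiteness of the correlation functionals is not enough (this is also why, in the paper, Theorem \ref{2tm} uses the lemma as input rather than the other way around).
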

Note that here we assume that {\it only} the initial state $\mu_0$
belongs to $\mathcal{P}_{\rm exp}$. Also, we assume that $\mu_t$
solves (\ref{A12a}) with $F$ belonging {\it only} to a subset of
$\mathcal{D}(L)$. It turns out that this is enough to solve it for
all $\mathcal{D}(L)$, and even more. Set
\begin{equation}
  \label{WA}
  \mathcal{F}= \left\{ F\in B_{\rm b}(\Gamma_* ): F = KG , \ G \in
  \bigcap_{\vartheta\in \mathds{R}} \mathcal{G}_\vartheta\right\},
\end{equation}
where $K$ is defined in (\ref{A2}) and $G$ is supposed to be such
that $|G|_\vartheta$ is finite for all $\vartheta$, see (\ref{A6}).
Let us show that $\mathcal{D}(L) \subset \mathcal{F}$. Since $K$ is linear, this will
follow from the fact that
\begin{equation}
  \label{WA1}
  \widetilde{\mathcal{F}} \cup \widehat{\mathcal{F}} \subset \mathcal{F}.
\end{equation}
 By (\ref{T4}) we have
\begin{eqnarray}
  \label{T4AA}
  \widetilde{F}_\tau^\theta (\gamma)& = & \prod_{x\in \gamma} (1+
  \theta(x)) e^{-\tau \psi(x)} = \sum_{\eta \subset
  \gamma} e(\theta_\tau ;\eta) =: (K \widetilde{G}_\tau^\theta)(\gamma) , \\[.2cm] \nonumber
  \theta_\tau (x) & := & \theta(x) e^{-\tau \psi(x)} +\psi_\tau (x), \qquad \psi_\tau(x) = -1 +e^{-\tau
  \psi(x)}.
\end{eqnarray}
Clearly, $\theta_\tau\in L^1 (\mathds{R}^d)$ for each $\tau \geq 0$
and $\theta\in \varTheta_\psi$, cf. Definition \ref{THdf}. Then
$\widetilde{G}_\tau^\theta = e(\theta_\tau;\cdot)\in
\mathcal{G}_\vartheta$ for any $\vartheta\in \mathds{R}$, which
yields $ \widetilde{\mathcal{F}} \subset \mathcal{F}$.

In the case of $F$ given in (\ref{TH1}), (\ref{TH1z}), we write
\begin{eqnarray}
  \label{T4AB}
 \widehat{F}_\tau^{\theta_1, \dots , \theta_m}(\gamma)& = & \sum_{\xi\subset \gamma} g_m(\xi) \prod_{x\in
 \gamma\setminus\xi} \left(1 + \psi_\tau (x)
 \right)\\[.2cm] \nonumber& = & \sum_{\eta \subset \gamma} \left( \sum_{\xi\subset \eta} g_m(\xi) e(\psi_\tau;\eta \setminus \xi)\right)=: \sum_{\eta \subset \gamma}\widehat{G}_\tau^{\theta_1, \dots ,
 \theta_m}(\eta),
\end{eqnarray}
where $\psi_\tau (x)$ is as in (\ref{T4AA}) and
\begin{equation*}
g_m (\xi) = \left\{ \begin{array}{ll} \sum_{\sigma\in S_m} \theta_1(x_{\sigma(1)}) \cdots \theta_m(x_{\sigma(m)}), \qquad &{\rm if} \ \xi =\{x_1 , \dots , x_m\}; \\[.3cm] 0 \qquad & {\rm otherwise}. \end{array} \right.
\end{equation*}
Let us estimate $\widehat{G}_\tau^{\theta_1, \dots ,
 \theta_m}$ with  $\theta_1, \dots , \theta_m \in \varTheta_\psi^{+}$. For $\tau\in (0,1]$, we have $|\psi_\tau (x)| \leq
 \psi(x)$, and hence
\begin{gather}
  \label{T4ABz}
 \left| \widehat{G}_\tau^{\theta_1, \dots ,
 \theta_m}(\eta)\right| \leq \sum_{\xi\subset \eta} g_m(\xi) e(\psi;
 \eta \setminus \xi).
\end{gather}
At the same time, for each $\eta\in \Gamma_0$, it follows that
\begin{equation}
  \label{T4ABy}
\widehat{G}_\tau^{\theta_1, \dots ,
 \theta_m}(\eta) \to g_m(\eta), \qquad \tau \to 0^{+}.
\end{equation}
By (\ref{T4ABz}) let us show that $\widehat{G}_\tau^{\theta_1, \dots
,
 \theta_m}$ belongs to
 $\mathcal{G}_\vartheta$, $\vartheta \in \mathds{R}$
 Indeed, by (\ref{A6}) we have
 \begin{eqnarray}
  \label{A6A}
 | \widehat{G}_\tau^{\theta_1, \dots ,
 \theta_m}|_\vartheta & = & \int_{\Gamma_0} |\widehat{G}_\tau^{\theta_1, \dots ,
\theta_m}(\eta)|e^{\vartheta |\eta|} \lambda (d\eta) \\[.2cm] \nonumber  & \leq & \int_{\Gamma_0}
\int_{\Gamma_0} e^{\vartheta|\xi|} g_m(\xi) e^{\vartheta|\eta|}
e(\psi; \eta )
 \lambda ( d \xi) \lambda ( d \eta)\\[.2cm] \nonumber & \leq & e^{m \vartheta} \langle \theta_1 \rangle \cdots
 \langle \theta_m \rangle \exp\left( e^\vartheta  \langle \psi \rangle  \right)\\[.2cm] \nonumber & =: &
 \delta_m^{\theta_1, \dots , \theta_m} (\vartheta),
 \end{eqnarray}
where $\langle \psi \rangle$, $\langle \theta_i \rangle$, $i=1,\dots
, m$ are the corresponding $L^1$-norms, cf. (\ref{psi}). This
completes the proof of (\ref{WA1}).

Lemma \ref{W2lm}  is proved below. Now assuming that its claim holds
true, we prove the next statement -- one of the two basic tools of
proving Theorem \ref{1tm}.
\begin{theorem}
 \label{2tm}
For each $\mu_0\in \mathcal{P}_{\rm exp}$, the solution of the
Fokker-Planck equation in the sense of Definition \ref{A01df} exists
and is unique.
\end{theorem}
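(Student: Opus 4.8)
The plan is to derive existence directly from the evolution of correlation functions built in \cite{asia} (Proposition \ref{3.3pn}), and to derive uniqueness from a duality argument whose validity up to any finite time is secured by the uniform control of the type furnished by Lemma \ref{W2lm}. The common mechanism is the fact, already recorded in (\ref{WA})--(\ref{WA1}), that every $F\in\mathcal{D}(L)$ is of the form $F=KG$ with $G\in\bigcap_{\vartheta}\mathcal{G}_\vartheta$; on such $F$ both sides of (\ref{A12a}) may be rewritten through the dual pairing $\langle\!\langle\,\cdot\,,\,\cdot\,\rangle\!\rangle$ by means of (\ref{Lenard2}) and (\ref{A10A}).

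For existence, I would let $k_t$ be the map supplied by Proposition \ref{3.3pn} for the given $\mu_0\in\mathcal{P}_{\rm exp}^{\vartheta_0}$ and let $\mu_t\in\mathcal{P}_{\rm exp}$ be the unique measure having $k_t$ as its correlation function, which exists since $k_t\in\mathcal{K}^\star$ (Proposition \ref{Tobipn}). For $F=KG\in\mathcal{D}(L)$ one has $\mu_t(F)=\langle\!\langle k_t,G\rangle\!\rangle$ by (\ref{Lenard2}), and I would differentiate this using the classical equation (\ref{C31}) together with the defining duality (\ref{A13a}) of $L^\Delta$,
\begin{equation*}
\frac{d}{dt}\,\mu_t(F)=\langle\!\langle L^\Delta_{\vartheta_T}k_t,G\rangle\!\rangle=\langle\!\langle k_t,\widehat{L}G\rangle\!\rangle=\mu_t(LF),
\end{equation*}
the last equality being (\ref{A10A}). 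Integrating over $[t_1,t_2]$ yields (\ref{A12a}); the continuity and differentiability required here are exactly those asserted in Proposition \ref{3.3pn}(b), and all pairings are finite because $G\in\mathcal{G}_{\vartheta_T}$.

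For uniqueness, let $t\mapsto\mu_t$ be any solution of (\ref{A12a}) with $\mu_0\in\mathcal{P}_{\rm exp}$. Since $\widehat{\mathcal{F}}\subset\mathcal{D}(L)$, Lemma \ref{W2lm} applies and gives, for each $T>0$, a bound $\vartheta_t<\vartheta_T$ on $[0,T]$; thus every $\mu_t$ possesses a correlation function $\tilde k_t\in\mathcal{K}_{\vartheta_T}$, and (\ref{A12a}) with $F=KG$ becomes the weak equation
\begin{equation*}
\langle\!\langle\tilde k_{t_2},G\rangle\!\rangle-\langle\!\langle\tilde k_{t_1},G\rangle\!\rangle=\int_{t_1}^{t_2}\langle\!\langle\tilde k_u,\widehat{L}G\rangle\!\rangle\,du,\qquad G\in\bigcap_\vartheta\mathcal{G}_\vartheta.
\end{equation*}
I would then run the standard duality: fixing $G_0$ and a horizon, let $G_s=H_{\vartheta\vartheta'}(s)G_0$ be the backward solution (\ref{A28}) of (\ref{M1}), and set $\Phi(u)=\langle\!\langle\tilde k_u,G_{T-u}\rangle\!\rangle$. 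Splitting increments of $\Phi$ and applying the weak equation to the $\tilde k$-increment and $\tfrac{d}{ds}G_s=\widehat{L}G_s$ to the $G$-increment, the two contributions cancel, so $\Phi$ is constant and
\begin{equation*}
\langle\!\langle\tilde k_T,G_0\rangle\!\rangle=\langle\!\langle\tilde k_0,G_T\rangle\!\rangle=\langle\!\langle k_{\mu_0},G_T\rangle\!\rangle.
\end{equation*}
The right-hand side depends only on $\mu_0$, and by (\ref{A29}) it equals $\langle\!\langle k_T,G_0\rangle\!\rangle$ for the solution $k_T$ of the existence part; as $G_0$ ranges over a class for which $\{KG_0\}$ contains the separating family $\widetilde{\mathcal{F}}$ (Proposition \ref{T3pn}(iii)), this forces $\tilde k_T=k_T$ and hence $\mu_T$ to coincide with the constructed measure.

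The principal obstacle is that the backward flow $H_{\vartheta\vartheta'}(s)$ exists only for $s<T(\vartheta',\vartheta)$, so the constancy of $\Phi$ is available only on short intervals in a fixed pair of scales. This is precisely where Lemma \ref{W2lm} is indispensable: the uniform bound $\vartheta_t<\vartheta_T$ keeps all $\tilde k_t$ within the single scale $\mathcal{K}_{\vartheta_T}$, so I may partition $[0,T]$ into finitely many subintervals, apply the constancy of $\Phi$ on each, and chain the resulting identities to propagate uniqueness up to an arbitrary $T$. A secondary technical point, to be dispatched by a Fubini/approximation estimate, is the justification of differentiating $\Phi$, that is, of combining the merely integrated (weak) time-dependence of $\tilde k_u$ with the classical differentiability of $s\mapsto G_s$.
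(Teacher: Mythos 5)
Your existence half is the paper's own argument: take $k_t$ from Proposition \ref{3.3pn}, let $\mu_t$ be the measure it determines via Proposition \ref{Tobipn}, and convert (\ref{C31}) into (\ref{A12a}) through the dualities (\ref{A13a}) and (\ref{A10A}). Your uniqueness half, by contrast, is a genuinely different route: the paper never runs the adjoint-flow argument with $\Phi(u)=\langle\!\langle \tilde k_u, G_{T-u}\rangle\!\rangle$; instead it iterates the integral identity (\ref{C102}) $n$ times, obtaining a Duhamel-type remainder that is killed as $n\to+\infty$ by the factorial bounds (\ref{A15}), (\ref{A17}), (\ref{U2}) on intervals $t<\tau(\tilde{\vartheta}_T)$, and then chains in $T$. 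Both schemes use Lemma \ref{W2lm} for exactly the same purpose, and your duality scheme could in principle be completed.

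However, there is a genuine gap, and it sits at the crux of the matter. You assert that (\ref{A12a}) — which, by Definition \ref{A01df}, holds only for $F\in\mathcal{D}(L)$ — ``becomes'' the weak equation $\langle\!\langle \tilde k_{t_2}-\tilde k_{t_1},G\rangle\!\rangle=\int_{t_1}^{t_2}\langle\!\langle \tilde k_u,\widehat{L}G\rangle\!\rangle\,du$ for \emph{all} $G\in\bigcap_{\vartheta}\mathcal{G}_\vartheta$. The inclusion (\ref{WA1}) you invoke goes the wrong way: it says every $F\in\mathcal{D}(L)$ is of the form $KG$, not that every $G\in\bigcap_\vartheta\mathcal{G}_\vartheta$ yields $KG\in\mathcal{D}(L)$. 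A priori the weak equation is available only for the special test functions $\widetilde{G}_\tau^\theta$ and $\widehat{G}_\tau^{\theta_1,\dots,\theta_m}$ and their span. Extending it to all of $\bigcap_\vartheta\mathcal{G}_\vartheta$ is precisely the technical core of the paper's proof: the $\tau\to 0^{+}$ limit via dominated convergence (using (\ref{T4ABy}), (\ref{A6A}), (\ref{A6B})), the Stone--Weierstrass density of products $\theta_1(x_1)\cdots\theta_m(x_m)$ in $C_0((\mathds{R}^d)^m)$, and the $L^1$-density step, culminating in (\ref{C101a}) (recorded afterwards as Corollary \ref{Walm}). Your proposal uses this conclusion without doing the work. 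Moreover, your duality argument needs strictly more than the paper's: the test functions $G_{T-u}=H_{\vartheta_T\vartheta'}(T-u)G_0$ do \emph{not} lie in $\bigcap_\vartheta\mathcal{G}_\vartheta$ — the dual flow acts only between two fixed scales, and for fixed $s>0$ one can place $H(s)G_0$ in $\mathcal{G}_\vartheta$ only for those $\vartheta$ with $\tau(\vartheta)>s$, while $\tau(\vartheta)\to 0$ as $\vartheta\to+\infty$. (The paper's iteration avoids this because $\widehat{L}$ maps $\bigcap_\vartheta\mathcal{G}_\vartheta$ into itself, so it never leaves that intersection.) Hence you must extend the weak equation yet further, to arbitrary $G$ in a fixed $\mathcal{G}_{\vartheta_T}$ — feasible by a $B_{\rm bs}$-density argument using $\tilde k_u\in\mathcal{K}_{\vartheta_T}$ and the bound (\ref{A11}), but entirely absent from your write-up. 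Relatedly, your closing remark treats the differentiation of $\Phi$ as a ``secondary'' Fubini point; the real obstruction there is not time-regularity but the class of spatial test functions for which the defining equation of a solution may legitimately be invoked.
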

\begin{proof}
We begin by showing that (\ref{A12a}) has a solution.  Take
$G=\widehat{G}_\tau^{\theta_1, \dots , \theta_m}$ and let $k_t$ be
as in Proposition \ref{3.3pn} with $k_0$ being the correlation
function of the initial state $\mu_0$. Since $k_t$ is in
$\mathcal{K}^\star$, by Proposition \ref{Tobipn} it determines a
unique $\mu_t\in \mathcal{P}_{\rm exp}^{\vartheta_t}$, see
(\ref{C30A}), for which
\begin{equation}
  \label{C101}
\mu_t (F) = \mu_t (KG )= \langle \! \langle k_t, G\rangle \!
\rangle, \qquad t\geq 0,
\end{equation}
holding for all $F\in \widehat{\mathcal{F}}$. By claim (a) of
Proposition \ref{3.3pn} and (\ref{WA}), (\ref{WA1}) the integral in
the right-hand side of (\ref{C101}) is absolutely convergent for
each $t\geq 0$. Moreover, by claim (b) of Proposition \ref{3.3pn} we
have that
$$k_{t_2} - k_{t_1} = \int_{t_1}^{t_2} L^\Delta_{\vartheta_T} k_{u} du$$ holding
for all $t_2>t_1 \geq 0$ and $T> t_2$. We multiply both parts of the
latter equality by an arbitrary $G\in \cap_{\vartheta\in \mathds{R}}
\mathcal{G}_\vartheta$ -- also corresponding to $F\in \mathcal{F}$
-- and then integrate with respect to $\lambda$. By claim (b) of
Proposition \ref{3.3pn} this integration and that over $[t_1, t_2]$
can be interchanged, that implies
\begin{equation}
 \label{AUx}
\mu_{t_2}(F) - \mu_{t_1}(F) = \int_{t_1}^{t_2}\langle \! \langle
L^\Delta_{\vartheta_T} k_u, G\rangle \!\rangle d u =
\int_{t_1}^{t_2} \langle \! \langle
 k_u, \widehat{L}G\rangle \!\rangle du  = \int_{t_1}^{t_2} \mu_u (L
 F) du,
 \end{equation}
where we have used (\ref{A10A}), (\ref{A13a}) and the fact that
$G\in \cap_{\vartheta\in \mathds{R}} \mathcal{G}_\vartheta$. This
yields (\ref{A12a}). By (\ref{WA1}) we then get that $\mu_t$
corresponding to $k_t$ is a solution.

Assume now that there exists another solution, say
$\{\tilde{\mu}_t\}_{t\geq 0}\subset \mathcal{P}(\Gamma_*)$, such
that $\tilde{\mu}_0 = \mu_0$. By Lemma \ref{W2lm} we have that
$\tilde{\mu}_t\in \mathcal{P}_{\rm exp}^{\tilde{\vartheta}_t}$ and
$\tilde{\vartheta}_t \in(\vartheta_0, \tilde{\vartheta}_T)$ for some
$\tilde{\vartheta}_T$ and all $t\leq T$. This means that the
corresponding correlation functions, $\tilde{k}_t$, $t\leq T$ belong
to $\mathcal{K}_{\tilde{\vartheta}_t}$. Then the vector $q_u =
L^\Delta_{\tilde{\vartheta}_T}\tilde{k}_u =
L^\Delta_{\tilde{\vartheta}_T \tilde{\vartheta}_u}\tilde{k}_u$, see
(\ref{A20}), lies in $\mathcal{K}_{\tilde{\vartheta}_T}$, and hence
in $\mathcal{K}_{\tilde{\vartheta}_T+\varepsilon}$ for each
$\varepsilon >0$, see (\ref{C21b}). Then, for a fixed $\varepsilon$,
by (\ref{A15}) and (\ref{I3}) we have
\begin{equation}
 \label{A6B}
 \|q_u\|_{\tilde{\vartheta}_T+\varepsilon} \leq C (T, \varepsilon) e^{\tilde{\vartheta}_T}, \qquad u\in [0,t],
\end{equation}
with $C (T, \varepsilon) = 1/e T(\tilde{\vartheta}_T+\varepsilon,
\tilde{\vartheta}_T)$, see (\ref{U1}). Let us prove that the
following holds
\begin{equation}
 \label{C101a}
 \forall G \in \bigcap_{\vartheta\in \mathds{R}} \mathcal{G}_\vartheta \qquad \langle \! \langle \tilde{k}_t - k_0,G \rangle \! \rangle = \int_0^t
\langle \! \langle  q_u, G\rangle \! \rangle du.
\end{equation}
A priori, the equality in (\ref{C101a}) holds for only for $G$
corresponding to $F\in \mathcal{D}(L)$, that includes
$G=\widehat{G}_\tau^{\theta_1, \dots , \theta_m}$, see (\ref{T4AB}).
For $\tau\in (0,1]$, by (\ref{A6A}) and (\ref{A6B}) we then have
\begin{eqnarray*}
% \label{C101b}
 & & \left| \langle \! \langle \tilde{k}_t - k_0,\widehat{G}_\tau^{\theta_1, \dots , \theta_m} \rangle \! \rangle \right|\leq
 \langle \! \langle \left| \tilde{k}_t - k_0\right|,\left|\widehat{G}_\tau^{\theta_1, \dots , \theta_m}\right| \rangle \! \rangle
 \leq 2 e^{\tilde{\vartheta}_T}\delta_m^{\theta_1, \dots , \theta_m} (\tilde{\vartheta}_T + \varepsilon), \\[.2cm] \nonumber & &
 \left| \int_0^t
\langle \! \langle  q_u, \widehat{G}_\tau^{\theta_1, \dots ,
\theta_m}\rangle \! \rangle du \right| \leq \int_0^t \langle \!
\langle \left| q_u\right|,\left| \widehat{G}_\tau^{\theta_1, \dots ,
\theta_m}\right|\rangle \! \rangle du  \leq t C(T,\varepsilon)
e^{\tilde{\vartheta}_T} \delta_m^{\theta_1, \dots , \theta_m} (
\tilde{\vartheta}_T + \varepsilon).
\end{eqnarray*}
Now we write (\ref{C101a}) for $G=\widehat{G}_\tau^{\theta_1, \dots
, \theta_m}$ and pass  to the limit $\tau\to 0^{+}$. By the
dominated convergence theorem and (\ref{T4ABy}) we then obtain
\begin{eqnarray}
 \label{C101c}
& & \int_{(\mathds{R}^d)^m} \left[\tilde{k}_t^{(m)} ( x_1 , \dots , x_m) - k_0^{(m)} ( x_1 , \dots , x_m)    \right] \theta_1 (x_1) \cdots \theta_m(x_m) d x_1 \cdots d x_m \\[.2cm] \nonumber & & \quad  =  \int_0^t\left(\int_{(\mathds{R}^d)^m} q_u^{(m)} ( x_1 , \dots , x_m)  \theta_1 (x_1) \cdots \theta_m(x_m)d x_1 \cdots d x_m \right) d u,
\end{eqnarray}
that holds for all $m\in \mathds{N}$ and $\theta_1, \dots ,
\theta_m\in \varTheta_\psi^+$, see (\ref{T1}). For a fixed $m\in
\mathds{N}$, the set of functions $(x_1, \dots , x_m) \mapsto
\theta_1(x_1) \cdots \theta_m(x_m)$ with $\theta_1 \dots , \theta_m
\in \varTheta_{\psi}^+$ is closed with respect to the pointwise
multiplication and separates points of $(\mathds{R}^{d })^m$. Such
functions vanish at infinity and are everywhere positive. Then by
the corresponding version of the Stone-Weierstrass theorem \cite{dB}
the linear span of this set is dense (in the supremum norm) in the
algebra $C_0((\mathds{R}^{d })^m)$ of all continuous functions that
vanish at infinity. At the same time, $C_0((\mathds{R}^{d })^m)\cap
L^1((\mathds{R}^{d })^m)$ is dense in $L^1((\mathds{R}^{d })^m)$.
For its subset $C_{\rm cs} ((\mathds{R}^{d })^m)$ has this property.
This allows us to extend the equality in (\ref{C101c}) to the
following
\begin{eqnarray*}
% \label{C101d}
& & \int_{(\mathds{R}^d)^m} \left[\tilde{k}_t^{(m)} ( x_1 , \dots , x_m) - k_0^{(m)} ( x_1 , \dots , x_m)    \right] G^{(m)}(x_1, \dots, x_m) d x_1 \cdots d x_m \\[.2cm] \nonumber & & \quad  =  \int_0^t\left(\int_{(\mathds{R}^d)^m} q_u^{(m)} ( x_1 , \dots , x_m) G^{(m)}(x_1, \dots, x_m)d x_1 \cdots d x_m \right) d u,
\end{eqnarray*}
holding for all $G^{(m)} \in L^1((\mathds{R}^{d })^m)$. Then the
passage from this equality to that in (\ref{C101a}) follows by the
fact that $G$ belongs to each $\mathcal{G}_\vartheta$, $\vartheta\in
\mathds{R}$.

By (\ref{A13a}) the equality in (\ref{C101a}) yields
\begin{equation}
  \label{C102}
\langle \! \langle \tilde{k}_t, G \rangle \! \rangle = \langle \!
\langle k_0 , G\rangle \! \rangle + \int_0^t \langle \! \langle
\tilde{k}_u, \widehat{L}_{\tilde{\vartheta}_t \vartheta_0}G\rangle
\! \rangle du,
\end{equation}
in which $\widehat{L}_{\tilde{\vartheta}_t \vartheta_0}G =: G_1 \in
\cap_{\vartheta\in \mathds{R}}\mathcal{G}_{\vartheta}$. In view of
(\ref{C101a}), we can repeat (\ref{C102}) with $G_1$ instead of $G$,
and repeat this procedure again by employing the same arguments.
After repeating $n$ times we arrive at
\begin{gather*}
  %\label{C103}
\langle \! \langle \tilde{k}_t, G \rangle \! \rangle = \langle \!
\langle k_0 , G\rangle \! \rangle + t \langle\! \langle k_0 ,
\widehat{L}_{\tilde{\vartheta}_t \vartheta} G\rangle \! \rangle +
\frac{t^2}{2}\langle\! \langle k_0 ,
(\widehat{L}_{\tilde{\vartheta}_t \vartheta})^2 G\rangle \!
\rangle \\[.2cm] \nonumber + \cdots + \frac{t^{n-1}}{(n-1)!}\langle\! \langle k_0 ,
(\widehat{L}_{\tilde{\vartheta}_t \vartheta})^{n-1} G\rangle \!
\rangle +\int_0^t\int_0^{t_1} \cdots \int_0^{t_{n-1}} \langle \!
\langle \tilde{k}_{t_n}, (\widehat{L}_{\tilde{\vartheta}_t
\vartheta})^{n} G \rangle \! \rangle d t_1 \cdots d t_n.
\end{gather*}
Assume now that $\tilde{\vartheta}_T > \vartheta_0  + T$, see
Proposition \ref{3.3pn}, that is clearly possible by (\ref{C21b}).
Then we write down the same formula -- in the same spaces -- for
$k_t$ considered in (\ref{C101}), i.e., described in Proposition
\ref{3.3pn}. This yields
\begin{gather*}
  %\label{C104}
\langle \! \langle \tilde{k}_t - k_t, G \rangle \! \rangle =
\int_0^t\int_0^{t_1} \cdots \int_0^{t_{n-1}} \langle \! \langle
\tilde{k}_{t_n}- k_{t_n}, (\widehat{L}_{\tilde{\vartheta}_t
\vartheta})^{n} G \rangle \! \rangle d t_1 \cdots d t_n.
\end{gather*}
Now we take $\vartheta = \tilde{\vartheta}_t+
\delta(\tilde{\vartheta}_t)$, see (\ref{U2}). Then by (\ref{A15}),
(\ref{A17}) and (\ref{U2}) we have from the latter
\begin{gather}
  \label{C105}
\left\vert \langle \! \langle \tilde{k}_t - k_t, G \rangle \!
\rangle \right\vert \leq \frac{n^n}{n! e^n} \left(
\frac{t}{\tau(\tilde{\vartheta}_t)}\right)^n |G|_{\vartheta}
\sup_{u\in [0,t]} \left(\|\tilde{k}_u\|_{\tilde{\vartheta}_t} +
\|k_u\|_{\tilde{\vartheta}_t} \right).
\end{gather}
Note that here $\tau (\tilde{\vartheta}_t) \geq \tau
(\tilde{\vartheta}_T)$. Then for $t<\tau (\tilde{\vartheta}_T)$, the
right-hand side of (\ref{C105}) can be made as small as one wants by
taking big enough $n$. Since $G\in \mathcal{G}_\vartheta$ is
arbitrary, this yields $\tilde{k}_t = k_t$ for all such $t$. The
latter implies $\tilde{\mu}_t = \mu_t$, see Proposition \ref{3.3pn}.
The continuation to bigger values of $t$ is made by repeating the
same procedure. The proof that these continuations cover the whole
$\mathds{R}_{+}$ can be done similarly as in the proof of Theorem
3.3 \cite{asia}.\end{proof}
\begin{corollary}
 \label{Walm}
Let $t\mapsto \mu_t$ satisfy the assumptions of Lemma \ref{W2lm}.
Then it solves (\ref{A12a}) with all $F=KG$ with $G\in
\cap_{\vartheta\in \mathds{R}} \mathcal{G}_\vartheta$, also for
unbounded ones.
\end{corollary}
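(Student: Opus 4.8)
The plan is to recognize that the identity labelled (\ref{C101a}), obtained inside the proof of Theorem \ref{2tm}, already holds for \emph{every} map $t\mapsto\mu_t$ satisfying the hypotheses of Lemma \ref{W2lm} and for all $G\in\cap_{\vartheta\in\mathds{R}}\mathcal{G}_\vartheta$, and that its derivation at no point uses the boundedness of $F=KG$. First I would apply Lemma \ref{W2lm} to the given $\mu_t$, obtaining $\mu_t\in\mathcal{P}_{\rm exp}^{\vartheta_t}$ with $\vartheta_t<\vartheta_T$ on $[0,T]$; writing $k_t$ for the correlation function of $\mu_t$, this places $k_t\in\mathcal{K}_{\vartheta_t}\subset\mathcal{K}_{\vartheta_T}$ and guarantees, via (\ref{A15}) and (\ref{I3}), the bound (\ref{A6B}) for $q_u:=L^\Delta_{\vartheta_T}k_u$. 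The type bound (\ref{I3}) together with (\ref{Lenard2}) also yields $\langle\!\langle|k_t|,|G|\rangle\!\rangle\leq|G|_{\vartheta_T}<\infty$, so that $KG$ is $\mu_t$-integrable for each $G\in\cap_\vartheta\mathcal{G}_\vartheta$; this is exactly what makes the assertion meaningful for unbounded $F$.

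Next I would start from the standing hypothesis that $\mu_t$ solves (\ref{A12a}) for $F$ in the linear span of $\widehat{\mathcal{F}}$, i.e.\ for $G=\widehat{G}_\tau^{\theta_1,\dots,\theta_m}$, and recast this through (\ref{Lenard2}), (\ref{A10A}) and (\ref{A13a}) as (\ref{C101a}) for these special $G$. From here I would reproduce the three-step extension of Theorem \ref{2tm}: pass to the limit $\tau\to0^{+}$ using (\ref{T4ABy}) and the uniform domination supplied by (\ref{A6A}) and (\ref{A6B}) to reach (\ref{C101c}); invoke the Stone--Weierstrass density argument to upgrade the product test functions $\theta_1\otimes\cdots\otimes\theta_m$ to arbitrary $G^{(m)}\in L^1((\mathds{R}^d)^m)$; and finally sum over $m$, the summation being controlled by the finiteness of $|G|_\vartheta$ for every $\vartheta$. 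This delivers (\ref{C101a}) for all $G\in\cap_\vartheta\mathcal{G}_\vartheta$.

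It then remains to translate (\ref{C101a}) back into (\ref{A12a}). By (\ref{Lenard2}), $\langle\!\langle k_t-k_0,G\rangle\!\rangle=\mu_t(KG)-\mu_0(KG)=\mu_t(F)-\mu_0(F)$, while (\ref{A10A}) and (\ref{A13a}) give $\langle\!\langle q_u,G\rangle\!\rangle=\langle\!\langle k_u,\widehat{L}G\rangle\!\rangle=\mu_u(LKG)=\mu_u(LF)$, the interchange of the $u$-integration with the pairing being justified by (\ref{A6B}). This produces $\mu_t(F)=\mu_0(F)+\int_0^t\mu_u(LF)\,du$ for $F=KG$, which is (\ref{A12a}) (the case of general $t_1<t_2$ following by subtraction, with $T$ arbitrary). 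I do not expect a genuine obstacle here, since the whole argument is a transcription of the estimates already established for Theorem \ref{2tm}; the only point deserving care is the verification that every pairing $\langle\!\langle\,\cdot\,,G\rangle\!\rangle$ and the resulting time integral stay absolutely convergent when $KG$ is unbounded, and this is precisely what the sub-Poissonian bound (\ref{I3}) and the finiteness of $|G|_\vartheta$ secure.
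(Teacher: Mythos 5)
Your proof is correct and follows essentially the same route as the paper's: the paper's own proof just invokes Lemma \ref{W2lm} and then cites the extension argument already carried out in the proof of Theorem \ref{2tm} (the $\tau\to 0^{+}$ limit via dominated convergence, the Stone--Weierstrass density step, and the passage to general $G$ controlled by the finiteness of $|G|_\vartheta$ for every $\vartheta$), which is exactly what you reproduce. Your explicit remarks on the $\mu_t$-integrability of the unbounded $KG$ and on translating (\ref{C101a}) back into (\ref{A12a}) are details the paper leaves implicit, not deviations from its argument.
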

\begin{proof}
By Lemma \ref{W2lm} a solution $\mu_t$ is in $\mathcal{P}_{\rm
exp}^{\vartheta_T}$ for $t\leq T$. Let $k_t$ be its correlation
function, which satisfies the equality in (\ref{C101a}) with $G=
\widehat{G}_\tau^{\theta_1, \dots , \theta_m}$. As we have shown in
the proof of Theorem \ref{2tm} it satisfies this equality for all
$G$ such that $F=KG$ with $G\in
\cap_{\vartheta}\mathcal{G}_\vartheta$, see (\ref{C101a}). This
yields the proof.
\end{proof}

\subsection{Further properties of the solutions}

In this subsection, we prepare proving Lemma \ref{W2lm}. Our
ultimate goal here is to estimate the integrals of the solutions of
(\ref{A12a}) taken with the functions
\begin{equation}
  \label{Ma}
F^\theta_m (\gamma) = \sum_{x_1\in \gamma} \theta (x_1) \sum_{x_2\in
\gamma\setminus x_1} \theta(x_2) \cdots \sum_{x_m\in \gamma\setminus
\{x_1 , \dots , x_{m-1}\}} \theta (x_m), \quad \theta\in
\varTheta_\psi^{+},
\end{equation}
which can be obtained from the functions defined in (\ref{TH1}) by
setting $\theta_1=\cdots =\theta_m=\theta$ and $\tau=0$. Note that
$F^\theta_m$ is unbounded, but integrable for each $\mu \in
\mathcal{P}_{\rm exp}$, as it follows from the formula, see
(\ref{Lenard2}),
\begin{equation}
  \label{Ma1}
  \mu (F_m^\theta) = \int_{\mathds{R}^d} k_\mu^{(m)} (x_1 , \dots ,
  x _m) \theta(x_1) \cdots \theta(x_m) d x_1 \cdots d x_m.
\end{equation}
Then by estimating $\mu_t(F^{\theta}_m)$ we will prove  the
mentioned lemma.

To simplify notations by $\varPhi_\tau^m$ we denote a particular
case of the function defined in (\ref{TH1}), corresponding to the
choice $\theta_1 = \cdots = \theta_m = \theta\in
\varTheta_{\psi}^{+}$ with $\bar{c}_\theta=1$, see (\ref{C801}).
Namely, for $\theta \in \varTheta_\psi^{+}$, we set, cf. also
(\ref{Ma}),
\begin{equation}
  \label{Ma2}
  \varPhi_\tau^m (\gamma) =  \sum_{x_1\in \gamma} \theta (x_1) \sum_{x_2\in
\gamma\setminus x_1} \theta(x_2) \cdots \sum_{x_m\in \gamma\setminus
\{x_1 , \dots , x_{m-1}\}} \theta (x_m) \widetilde{F}^0_\tau
(\gamma\setminus \{x_1, \dots , x_m\}),
\end{equation}
and consider such functions with $\tau\in (0,1]$.
Note that the function defined in (\ref{TH8}) is a particular
case of $ \varPhi_\tau^m (\gamma)$ corresponding to the choice
$\theta = \psi$. Then by (\ref{TH7}) we obtain
\begin{equation}
  \label{Ma3}
 \left|L \varPhi_\tau^m (\gamma)\right| \leq  m
 \varPhi_\tau^{m,\theta^1} (\gamma) + \tau c_a
 \widehat{F}^{m+1}_\tau (\gamma) =:\varPhi_{\tau,1}^{m} (\gamma).
\end{equation}
Here $\theta^1 = a\ast \theta + \theta$, see (\ref{TH4}), and
\begin{equation}
  \label{Ma3a} \varPhi_\tau^{m,\theta'} = \widehat{F}_\tau^{\theta', \theta_2,
\dots, \theta_m}, \qquad \theta_2 = \cdots = \theta_m = \theta.
\end{equation}
Note that $\varPhi_{\tau,1}^{m}$ is a linear combination of the
elements of $\widehat{\mathcal{F}}$, see (\ref{C800}). Hence, any
solution of (\ref{A12a}) should satisfy it also with this function.
Let us then estimate $L \varPhi_{\tau,1}^m$. Proceeding as in
(\ref{TH2}) we obtain
\begin{gather*}
\nabla^{y,x}\varPhi_{\tau,1}^{m} (\gamma) = m[\theta^1(y) -
\theta^1(x)] \varPhi^{m-1}_\tau (\gamma\setminus x) +
m(m-1)[\theta(y) - \theta(x)] \varPhi^{m-1,\theta^1}_\tau
(\gamma\setminus x)\\[.2cm] + m\left[e^{-\tau \psi(y)} - e^{-\tau
\psi(x)}\right]\varPhi^{m,\theta^1}_\tau (\gamma\setminus x) + (m+1)
\tau c_a [\psi(y)-\psi(x)]\widehat{F}^{m}_\tau (\gamma\setminus x)
\\[.2cm] + \tau c_a\left[e^{-\tau \psi(y)} - e^{-\tau
\psi(x)}\right]\widehat{F}^{m+1}_\tau (\gamma\setminus x).
\end{gather*}
Now to estimate $L \varPhi_{\tau,1}^m$ we perform the same
calculations as in passing to the second line in the right-hand side
of (\ref{TH7}), see (\ref{TH5}), (\ref{TH6}). In addition, the third
term in the right-hand side of the latter is estimated by employing
$\theta(x) \leq \psi(x)$, cf. (\ref{C801}), and $\theta^1(x) \leq
c_a \psi(x)$, cf. (\ref{TH4}). This yields
\begin{gather*}
m \left|e^{-\tau \psi(y)} - e^{-\tau
\psi(x)}\right|\varPhi^{m,\theta^1}_\tau (\gamma\setminus x) \leq m
\tau c_a |\psi(y)-\psi(x)|\widehat{F}^{m}_\tau (\gamma\setminus x),
\end{gather*}
see also (\ref{TH8}). Thereafter, we obtain
\begin{eqnarray}
  \label{Ma4}
 \left|L \varPhi_{\tau,1}^m (\gamma)\right|& \leq & m
 \varPhi_\tau^{m,\theta^2} (\gamma) + m(m-1)
 \varPhi_\tau^{m,\theta^1, \theta^1} (\gamma)\\[.2cm]\nonumber & + &(2 m +1) \tau c_a^2 \widehat{F}^{m+1}_\tau
 (\gamma) +  \tau^2 c_a^2 \widehat{F}^{m+2}_\tau
 (\gamma)\\[.2cm] \nonumber & =: & \varPhi_{\tau,2}^m (\gamma).
\end{eqnarray}
Here and below we denote $\theta^0=\theta$ and
\begin{equation}
  \label{Ma5}
\theta^k = a\ast \theta^{k-1} + \theta^{k-1},\qquad k=2,3, \dots,
\end{equation}
$\varPhi_\tau^{m,\theta^2}$ is obtained according to (\ref{Ma3a}),
and
\begin{equation*}
  %\label{Ma3b}
  \varPhi_\tau^{m,\theta',\theta''} = \widehat{F}_\tau^{\theta', \theta'', \theta_3,
\dots, \theta_m}, \qquad \theta_3 = \cdots = \theta_m = \theta.
\end{equation*}
Note that by (\ref{TH4}) we have $\theta^k (x) \leq c_a^k \psi(x)$
(recall that $\bar{c}_\theta =1$).

To proceed further we introduce the following notations. For $m\in
\mathds{N}$ and $n\in \mathds{N}_0$, by $\mathcal{C}_{m,n}$ we
denote the set of all sequences $c=\{c_k\}_{k\in \mathds{N}_0}
\subset \mathds{N}_0$ such that the following holds:
\begin{equation}
  \label{CV}
  c_0 + c_1 + \cdots + c_k +\cdots = m, \qquad c_1 + 2 c_2 +\cdots +
  k c_k +\cdots = n.
\end{equation}
Since all $c_k$ are nonnegative integers, for $c\in \mathcal{C}_{m,n}$ by (\ref{CV}) we have that
$c_{n+j}=0$ for all $j\geq 1$, $c_n \leq 1$, and $c_j=0$ for all
$j=1,2, \dots , n-1$ whenever $c_n=1$. For example,
$\mathcal{C}_{m,0}$ and $\mathcal{C}_{m,1}$ are singletons,
consisting of $c= (m, 0,0 \dots)$ and $c= (m-1, 1,0 \dots)$,
respectively. $\mathcal{C}_{m,2}$ consists of $c=(m-1, 0, 1, 0,
\dots,  )$ and $c=(m-2, 2, 0, \dots)$. For $c\in \mathcal{C}_{m,n}$,
$\tau \in (0,1]$ and $\gamma\in \Gamma_*$, we set
\begin{equation}
  \label{CV1}
  V_\tau (c;\gamma) = \widehat{F}_\tau^{\theta^{q_1}, \dots ,
  \theta^{q_m}}(\gamma),
\end{equation}
where $c_0$ members of the family $\{\theta^{q_1}, \dots ,
  \theta^{q_m}\}$ are equal to $\theta^0=\theta$, $c_1$ of them are
equal to $\theta^1$, etc. In particular, $\varPhi_\tau^{m,\theta^2}$
and $\varPhi_\tau^{m,\theta^1,\theta^1}$ can be written as in
(\ref{CV1}) with $c=(m-1, 0, 1, 0, \dots,  )$ and $c=(m-2, 2, 0,
\dots)$, respectively. In Appendix below, we prove the following
estimates
\begin{equation}
 \label{Bog}
\forall \gamma \in \Gamma_* \qquad  \left|L \varPhi^m_{\tau,n-1} (\gamma)\right| \leq \varPhi^m_{\tau,n} (\gamma) , \quad n\in \mathds{N},
\end{equation}
holding with $\varPhi^m_{\tau,n}$ given by the following formula
\begin{eqnarray}
  \label{Ma6}
\varPhi^{m}_{\tau,n}(\gamma)& = & \sum_{c\in \mathcal{C}_{m,n}}
C_{m,n}
(c) V_\tau (c;\gamma) + c_a^n\sum_{k=1}^m \tau^k w_k (m,n) \widehat{F}^{m+k}_\tau ( \gamma),\\[.2cm] \nonumber
C_{m,n}(c) & = & \frac{m! n!}{c_0! c_1! \cdots c_k! \cdots
(0!)^{c_0} (1!)^{c_1} \cdots (k!)^{c_k} \cdots }.
\end{eqnarray}
We also prove that
\begin{equation}
\label{Ma7a} \sum_{c\in \mathcal{C}_{m,n}} C_{m,n} (c) = m^n.
\end{equation}
The coefficients in the second summand of the first line in
(\ref{Ma6}) are subject to the following recurrence relations
\begin{eqnarray}
  \label{Ma8}
w_1 (m,n+1) & = & m^n + (m+1) w_1 (m,n), \\[.2cm] \nonumber
w_k (m,n+1) & = & w_{k-1} (m,n) + (m+k) w_k (m,n), \qquad k=2, \dots
n, \\[.2cm] \nonumber
w_{n+1}(m, n+1) & = & w_{n}(m,n) = 1,
\end{eqnarray}
that can be deduced in the same way as we obtained the estimate in
(\ref{Ma4}). In the first line of (\ref{Ma8}) we take into account
also (\ref{Ma7a}). The initial condition $w_1 (m,1) = 1$ can easily
be derived from (\ref{Ma3}). Then iterating back to $n=1$ in the
first line of (\ref{Ma8}) yields $w_1 (m,n)= (m+1)^n - m^n$. It
turns out that the complete solution of (\ref{Ma8}) has the
following simple form
\begin{equation}
  \label{Ma9}
 w_k (m,n) = \varDelta^k m^n = \frac{1}{k!} \sum_{s=0}^k {k \choose
 s } (-1)^{k-s} (m+s)^n,
\end{equation}
where $\varDelta$ is the forward difference operator -- a standard
combinatorial object. Note that the right-hand side of (\ref{Ma9})
makes sense for all $k\in \mathds{N}_0$: $w_0(m,n) = m^n$,
$w_k(m,n)=0$ for all $k>n$.

In view of (\ref{CV1}) and Proposition \ref{TH1pn}, all the terms of
the linear combination in the first line in (\ref{Ma6}) are
continuous bounded functions of $\gamma$. Hence, the same is
$\varPhi^m_{\tau,n}$. However, its bound may depend on $n$, and our
aim now is to control this dependence. For $\rho>0$, set
\begin{equation}
  \label{Ma10}
  \varUpsilon^m_{\tau, \rho} (\gamma) = \sum_{n=0}^{+\infty}
  \frac{\rho^n}{n!} \varPhi^m_{\tau, n} (\gamma), \qquad \tau \in (0,1].
\end{equation}
To get an upper bound for $\varUpsilon^m_{\tau, \rho}$ we estimate
each $\theta^q$ in the first line of (\ref{Ma6}) as $\theta^q\leq
c_a^q\psi$, $q\geq 0$, see (\ref{Ma5}), which by (\ref{CV1}) and
(\ref{CV}) yields
\begin{equation*}
  %\label{Ma11}
V_\tau (c;\gamma) \leq
  c_a^{q_1 + \cdots + q_m} \widehat{F}_\tau^m (\gamma) = c_a^{n} \widehat{F}_\tau^m
  (\gamma),
\end{equation*}
where we have taken into account that $q_1 + \cdots + q_m = c_1 + 2
c_2 +\cdots + k c_k +\cdots = n$. In view of (\ref{Ma7a}), this
leads to the following
\begin{eqnarray}
  \label{Ma12}
\varUpsilon^m_{\tau, \rho} (\gamma) & \leq & \sum_{n=0}^{+\infty}
\frac{(c_a \rho)^n}{n!} \sum_{k=0}^n \tau^k w_k (m,n)
\widehat{F}_\tau^{m+k} (\gamma) \\[.2cm] \nonumber & = &
\sum_{k=0}^{+\infty} \tau^k \left(\sum_{n=k}^{+\infty}\frac{(c_a
\rho)^n}{n!} w_k(m,n)\right) \widehat{F}_\tau^{m+k} (\gamma) \\[.2cm] \nonumber & = &
\sum_{k=0}^{+\infty} \tau^k \left(\sum_{n=0}^{+\infty}\frac{(c_a
\rho)^n}{n!} w_k(m,n)\right) \widehat{F}_\tau^{m+k} (\gamma) \\[.2cm] \nonumber & =
& \sum_{k=0}^{+\infty} \frac{\tau^k}{k!} \sum_{s=0}^k {k \choose s}
(-1)^{k-s} \left( \sum_{n=0}^{+\infty}\frac{(c_a \rho (m+s))^n}{n!}
\right)  \widehat{F}_\tau^{m+k} (\gamma)  \\[.2cm] \nonumber & =
& e^{c_a\rho m}\sum_{k=0}^{+\infty} \frac{\tau^k}{k!} \left(e^{c_a\rho}-1
\right)^k  \widehat{F}_\tau^{m+k} (\gamma).
\end{eqnarray}
Here we used the fact that $\varDelta^k m^{n} = 0$ for $k> n$, see
(\ref{Ma9}). To proceed further we use Proposition \ref{TH1pn} and
(\ref{TH8}) and then obtain
\begin{eqnarray*}
%  \label{Ma13}
 \widehat{F}_\tau^{m+k} (\gamma) \leq e^{\tau(m+k)} \Psi_0^{m+k} (\gamma) \exp\left( - \tau \Psi_0
 (\gamma) \right) ,
\end{eqnarray*}
which in turn yields in the last line of (\ref{Ma12}) the following
estimate
\begin{eqnarray*}
%\label{Ma13a}
\varUpsilon^m_{\tau, \rho} (\gamma) & \leq & e^{m(c_a \rho+ \tau)}
\Psi_0^m(\gamma) \exp\bigg{(} - \tau \Psi_0(\gamma) \left[ 1 - e^\tau (e^{c_a\rho} - 1) \right] \bigg{)} \\[.2cm] \nonumber & \leq &
e^{m(c_a \rho+ \tau)}
\Psi_0^m(\gamma) \exp\left( - \tau \varepsilon \Psi_0 (\gamma)
\right)  \\[.2cm] \nonumber & \leq & \left( \frac{m}{e \tau\varepsilon}\right)^m
(e + 1-\varepsilon)^m =: \delta_m(\tau),
\end{eqnarray*}
holding for some fixed $\varepsilon \in(0, 1)$ and all
\begin{equation}
  \label{Ma14}
  \rho \leq \rho_\varepsilon := \frac{1}{c_a} \left[ \log ( 1+e - \varepsilon) - 1\right].
\end{equation}
By (\ref{Ma10}) this yields the estimate in question in the
following form
\begin{equation}
  \label{Ma15}
  \varPhi^m_{\tau,n} (\gamma) \leq \frac{n!}{\rho_\varepsilon^n}
  \delta_m(\tau), \qquad \tau \in (0,1].
\end{equation}

\subsection{Proof of Lemma \ref{W2lm}} According to Definition
\ref{THdf} and (\ref{Ma2}), we have that $\varPhi_\tau^m$ lies in
the linear span of $\widehat{\mathcal{F}}$ for each $\tau>0$ and
$m\in \mathds{N}$. If $\{\mu_t\}_{t\geq 0}\subset
\mathcal{P}(\Gamma_*)$ solves (\ref{A12a}), then
\begin{equation}
  \label{Ma16}
  \mu_t (\varPhi_\tau^m) = \mu_0 (\varPhi_\tau^m) + \int_0^t \mu_u (L
  \varPhi_\tau^m) du  \leq \mu_0 (\varPhi_\tau^m) + \int_0^t \mu_u (
  \varPhi_{\tau,1}^m) du,
\end{equation}
where we have used (\ref{Ma3}). Since $\varPhi_{\tau,1}^m$ is a
linear combination of the elements of $\widehat{\mathcal{F}}$, we can
repeat (\ref{Ma16}) with this function and obtain
\begin{equation*}
 % \label{Ma16}
  \mu_t (\varPhi_{\tau,1}^m)  \leq \mu_0 (\varPhi_{\tau,1}^m) + \int_0^t \mu_u (
  \varPhi_{\tau,2}^m) du,
\end{equation*}
which then can be used in (\ref{Ma16}). In view of (\ref{Bog}), we can
repeat this procedure due times and thereby get the following
estimate
\begin{eqnarray}
  \label{Ma17}
  \mu_t (\varPhi_\tau^m) & \leq & \sum_{k=0}^{n-1} \frac{t^k}{k!} \mu_0
  (\varPhi_{\tau,k}^m) + \int_0^{t}\int_0^{t_1} \cdots
  \int_0^{t_{n-1}} \mu_{t_n} (\varPhi_{\tau,n}^m) d t_n d t_{n-1}
  \cdots d t_1 \\[.2cm] \nonumber &\leq &  \sum_{k=0}^{n-1} \frac{t^k}{k!} \mu_0
  (\varPhi_{\tau,k}^m) + \left(
  \frac{t}{\rho_\varepsilon}\right)^n\delta_m(\tau),
\end{eqnarray}
where we have used (\ref{Ma15}) and the fact that $\mu_t$ is a
probability measure. For $t< \rho_\varepsilon$, the last summand in
the right-hand side of (\ref{Ma17}) vanishes as $n\to +\infty$.
Hence,
\begin{equation}
  \label{Ma18}
\mu_t (\varPhi_\tau^m) \leq \sum_{n=0}^{+\infty} \frac{t^n}{n!}
\mu_0
  (\varPhi_{\tau,n}^m), \qquad t< \rho_\varepsilon, \ \ \tau \in (0,1].
\end{equation}
By (\ref{TH1}) and (\ref{CV1}) it follows that the element of
$\widehat{\mathcal{F}}$ in the first summand in the first line in
(\ref{Ma6}) satisfies
\begin{eqnarray*}
& & V_\tau (c;\gamma) \leq V_0 (c;\gamma):= \sum_{x_1\in \gamma}
\theta^{q_1}(x_1) \sum_{x_2\in \gamma\setminus x_1}
\theta^{q_2}(x_2) \cdots \sum_{x_m\in \gamma\setminus \{x_1 , \dots
, x_{m-1}\}} \theta^{q_m}(x_m), \qquad \theta \in
\varTheta^{+}_\psi.
\end{eqnarray*}
$V_0 (c;\cdot)$ is an unbounded function, which, however, is
$\mu_0$-integrable. Let $\varkappa_0$ be the type of $\mu_0$. As in
Remark \ref{Iirk}, we then have
\begin{eqnarray}
\label{Ma19}  \mu_0(V_\tau (c;\cdot)) \leq \pi_{\varkappa_0} (V_0
(c;\cdot) )= \varkappa_0^m \langle \theta^{q_1} \rangle \cdots
\langle \theta^{q_m} \rangle = 2^n  \left( \varkappa_0\langle \theta
\rangle\right)^{m},
\end{eqnarray}
where
\[
\langle \theta^{q_j} \rangle := \int_{\mathds{R}^d}\theta^{q_j}  (x)
d x = 2^{q_j} \int_{\mathds{R}^d}\theta  (x) d x =2^{q_j} \langle \theta
\rangle,
\]
see (\ref{Ma5}) and (\ref{C7}). Here we have taken into account that
$q_1 +\cdots + q_{m} = n$.  By (\ref{TH8}) we have
\begin{eqnarray*}
\widehat{F}_\tau^{m+k} (\gamma)\leq  \sum_{x_1\in \gamma} \psi(x_1)
\sum_{x_2\in \gamma\setminus x_1} \psi(x_2) \cdots \sum_{x_{m+k}\in
\gamma\setminus \{x_1 , \dots , x_{m+k-1}\}} \psi(x_{m+k}).
\end{eqnarray*}
Then similarly as in (\ref{Ma19}) we obtain
\begin{equation}
  \label{Ma20}
\mu_0(\widehat{F}_\tau^{m+k}) \leq \left(\varkappa_0 \langle \psi
\rangle \right)^{m+k}.
\end{equation}
We use (\ref{Ma19}) and (\ref{Ma20}) in (\ref{Ma6}) and then in
(\ref{Ma18}) and arrive at the following estimate
\begin{eqnarray*}
%  \label{Ma21}
\mu_t (\varPhi_\tau^m) & \leq & \left(e^{2t} \varkappa_0 \langle
\theta\rangle \right)^m + \left(\varkappa_0\langle \psi \rangle
\right)^m e^{c_a t m} \sum_{k=1}^\infty \frac{\tau^k}{k!} \left(
\varkappa_0 \langle
\psi \rangle \right)^k \left( e^{c_at} - 1\right)^k \\[.2cm]
\nonumber & \leq & \left( e^{2t} \varkappa_0 \langle \theta\rangle
\right)^m + \tau  \left(\varkappa_0\langle \psi \rangle
\right)^{m+1} e^{c_a t m} \left( e^{c_at} - 1\right) \exp\left(
\varkappa_0\langle \psi \rangle\left( e^{c_at} - 1\right) \right),
\end{eqnarray*}
where we have applied the same approach as in obtaining (\ref{Ma12})
and the fact that $\tau\leq 1$. Since, for each $\gamma\in \Gamma_*$
and an arbitrary sequence $\tau_n\to 0$, $\{\widetilde{F}^0_{\tau_n}
(\gamma)\}_{n\in \mathds{N}}$ is a nondegreasing sequence, by
(\ref{Ma2}) and Beppo Levi's monotone convergence theorem we then
get from the latter that, cf. (\ref{Ma1}),
\begin{eqnarray}
 \label{Ma22}
& &  \lim_{\tau \to 0} \mu_t (\varPhi_\tau^m)  = \mu_t(F^\theta_m) =
\langle k_{\mu_t}^{(m)}, \theta^{\otimes m} \rangle
 \\[.2cm]\nonumber & &   := \int_{\mathds{R}^d}
 k_{\mu_t}^{(m)} (x_1 , \dots  , x_m) \theta(x_1 ) \cdots \theta(x_m) d x_1 \cdots d x_m
  \\[.2cm]\nonumber & & \leq \left( e^{2t}\varkappa_0 \langle
  \theta\rangle
\right)^m,
\end{eqnarray}
holding for all $m\in \mathds{N}$ and $t < \rho_\varepsilon$, see
(\ref{Ma14}).
 Since $\theta\in \varTheta_\psi^{+}$, we have $\langle
\theta \rangle = \| \theta\|_{L^1(\mathds{R}^d)}$, and the latter
estimate can be rewritten in the form, cf. (\ref{Len}),
\begin{equation}
  \label{Ma22a}
\forall m \in \mathds{N} \qquad \langle k_{\mu_t}^{(m)} ,
\theta^{\otimes m} \rangle \leq (2 e^t \varkappa_0)^m
\|\theta\|^m_{L^1(\mathds{R}^d)} , \qquad \theta \in
\varTheta_\psi^{+}.
\end{equation}
The set of functions $\varTheta_{\psi}^+$ defined in (\ref{T1}) is
closed with respect to the pointwise multiplication and separates
points of $\mathds{R}^{d }$. Such functions vanish at infinity and
are everywhere positive. Then by the aforementioned version of the
Stone-Weierstrass theorem \cite{dB} the linear span of this set is
dense (in the supremum norm) in the algebra $C_0(\mathds{R}^{d })$
of all continuous functions that vanish at infinity. At the same
time, $C_0(\mathds{R}^{d })\cap L^1(\mathds{R}^{d })$ is dense in
$L^1(\mathds{R}^{d })$. Therefore, by (\ref{Ma22a}) the maps $\theta
\mapsto \langle k_{\mu_t}^{(m)} , \theta^{\otimes m} \rangle$, $m\in
\mathds{N}$ can be extended to homogeneous continuous monomials on
$L^1(\mathds{R}^d)$.  This yields the proof of the considered
statement for $t< \rho_\varepsilon$, see Remark  \ref{Len1rk}. Since
$\rho_\varepsilon$ is independent of $\varkappa_0$, the continuation
to all $t>0$ can be made by the repetition of the same arguments.

\subsection{Proof of the uniqueness}

By employing Lemma \ref{W2lm} and  Corollary \ref{Walm}, see also
Remark \ref{A2rk}, we prove the following statement.
\begin{lemma}
  \label{A3pn}
Assume that two solutions $\{P^{(i)}_{s,\mu}: s \geq 0, \ \mu \in
\mathcal{P}_{\rm exp}\}$, $i=1,2$, see Definition \ref{A1df},
satisfy $P_{s,\mu}^{1}\circ \varpi_{t}^{-1}=P_{s,\mu}^{2}\circ
\varpi_{t}^{-1}$ for all $t\geq s$, $s\geq 0$ and $\mu\in
\mathcal{P}_{\rm exp}$. Then $P_{s,\mu}^{1}=P_{s,\mu}^{2}$ for all
$s$ and $\mu$.
\end{lemma}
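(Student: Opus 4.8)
The plan is to prove that $P^{(1)}_{s,\mu}$ and $P^{(2)}_{s,\mu}$ have the same finite-dimensional distributions. Since the cylinder sets $\{\varpi_{s_1}\in \mathbb{A}_1, \dots, \varpi_{s_m}\in \mathbb{A}_m\}$ form a $\pi$-system generating $\mathfrak{F}_{s,+\infty}$, Dynkin's lemma will then force $P^{(1)}_{s,\mu}=P^{(2)}_{s,\mu}$; and because $\widetilde{\mathcal{F}}$ is separating, closed under multiplication and contains the constants (Proposition \ref{T1pn}), it suffices to match the integrals of the products $\prod_{j=1}^m F_j(\varpi_{s_j})$ with $F_j\in \widetilde{\mathcal{F}}$ and $s\le s_1<\cdots<s_m$. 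For such a tuple and $t\ge s_m$ I introduce the finite positive measures on $(\Gamma_*,\mathcal{B}(\Gamma_*))$
\[
M^{(i)}_t(\mathbb{A})=\int_{\mathfrak{D}_{[s,+\infty)}(\Gamma_*)}\mathds{1}_{\mathbb{A}}(\varpi_t)\prod_{j=1}^{m}F_j(\varpi_{s_j})\,dP^{(i)}_{s,\mu},\qquad \mathbb{A}\in\mathcal{B}(\Gamma_*),
\]
and argue by induction on $m$ the statement $\mathcal{H}_m$: for every such tuple, (a) $M^{(1)}_t=M^{(2)}_t$ for all $t\ge s_m$, and (b) the total mass $c=M^{(i)}_t(\Gamma_*)$ is independent of $t\ge s_m$ and of $i$, while for $c>0$ the normalized measure $M^{(i)}_t/c$ lies in $\mathcal{P}_{\rm exp}$ for every $t\ge s_m$. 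Taking $t=s_m$ in (b) yields the desired equality of the $m$-point functions, so $\mathcal{H}_m$ for all $m$ finishes the proof. The base case is $m=0$ (empty product), where $M^{(i)}_t=P^{(i)}_{s,\mu}\circ\varpi_t^{-1}$ for $t\ge s$: these agree for $i=1,2$ by the hypothesis of the lemma, have mass $1$, and lie in $\mathcal{P}_{\rm exp}$ by items (a),(b) of Definition \ref{A1df}.

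For the step $\mathcal{H}_m\Rightarrow\mathcal{H}_{m+1}$, fix $s\le s_1<\cdots<s_{m+1}$ and $F_1,\dots,F_{m+1}$, and set $\mathsf{G}=\prod_{j=1}^{m+1}F_j(\varpi_{s_j})$, a bounded nonnegative $\mathfrak{F}_{s,s_{m+1}}$-measurable function of the admissible form (\ref{A1200}). Inserting $\mathsf{G}$, $t_1=s_{m+1}$ and $F\equiv 1$ into the martingale identity (\ref{A120})--(\ref{A12}) (note $1\in\widetilde{\mathcal{F}}\subset\mathcal{D}(L)$ and $L1=0$) shows that $c'=M^{(i)}_t(\Gamma_*)$ is constant in $t\ge s_{m+1}$, and running the same identity for general $F\in\mathcal{D}(L)$ shows, after dividing by $c'$, that $t\mapsto M^{(i)}_t/c'$ solves the Fokker--Planck equation (\ref{A12a}) on $[s_{m+1},+\infty)$ (measurability in $t$ being routine by Fubini and the joint measurability of cadlag evaluations). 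Its initial value factorizes as $M^{(i)}_{s_{m+1}}=F_{m+1}\cdot N^{(i)}$, where $N^{(i)}(\mathbb{A})=\int\mathds{1}_{\mathbb{A}}(\varpi_{s_{m+1}})\prod_{j=1}^m F_j(\varpi_{s_j})\,dP^{(i)}_{s,\mu}$ is exactly the measure assigned by $\mathcal{H}_m$ to the $m$-tuple $(F_1,\dots,F_m;s_1,\dots,s_m)$ at the time $s_{m+1}\ge s_m$.

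Here the restricted setting enters. By $\mathcal{H}_m$ either $N^{(i)}\equiv0$, in which case $M^{(i)}_t\equiv 0$ and $\mathcal{H}_{m+1}$ is trivial, or $c:=N^{(i)}(\Gamma_*)>0$ and $\hat N:=N^{(1)}/c=N^{(2)}/c\in\mathcal{P}_{\rm exp}$. Since $F_{m+1}\in\widetilde{\mathcal{F}}$ is bounded and strictly positive, $M^{(i)}_{s_{m+1}}=c\,F_{m+1}\hat N$ has mass $c'=c\,\hat N(F_{m+1})>0$, the same for $i=1,2$, and Proposition \ref{T3pn}(iv) gives that the common normalization $M^{(i)}_{s_{m+1}}/c'=F_{m+1}\hat N/\hat N(F_{m+1})$ belongs to $\mathcal{P}_{\rm exp}$. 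Thus $t\mapsto M^{(1)}_t/c'$ and $t\mapsto M^{(2)}_t/c'$ are two Fokker--Planck solutions on $[s_{m+1},+\infty)$ sharing one and the same $\mathcal{P}_{\rm exp}$ initial datum; by time-homogeneity of $L$ together with the uniqueness in Theorem \ref{2tm} they agree for all $t\ge s_{m+1}$, which is $\mathcal{H}_{m+1}$(a). Finally, Lemma \ref{W2lm} applied to this common solution (whose test functions include the linear span of $\widehat{\mathcal{F}}$) yields $M^{(i)}_t/c'\in\mathcal{P}_{\rm exp}$ for all $t\ge s_{m+1}$, which is $\mathcal{H}_{m+1}$(b). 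This closes the induction.

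The main obstacle is precisely the mechanism just used. Conditioning a solution on the exact value $\varpi_{s_{m+1}}=\gamma$ would produce the Dirac datum $\delta_\gamma\notin\mathcal{P}_{\rm exp}$, for which the Fokker--Planck uniqueness of Theorem \ref{2tm} is unavailable; this is exactly why the classical argument of \cite[Theorem 4.4.2]{EK} cannot be applied verbatim. The restricted formulation circumvents this by ``conditioning'' only against the bounded, strictly positive functions of $\widetilde{\mathcal{F}}$, so that each renormalized one-dimensional datum stays inside $\mathcal{P}_{\rm exp}$ (Proposition \ref{T3pn}(iv)) and propagates within it (Lemma \ref{W2lm}); this is what lets the Fokker--Planck uniqueness drive the induction. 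Everything else—the reduction to a generating $\pi$-system, the Fubini measurability in $t$, and the time shift reducing the initial instant from $s_{m+1}$ to $0$—is routine.
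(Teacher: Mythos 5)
Your proposal is correct and follows the same strategy as the paper's proof: reduce to finite-dimensional distributions tested against products of elements of $\widetilde{\mathcal{F}}$, induct on the number of time points, tilt by the product of the earlier (bounded, strictly positive) factors, use Proposition \ref{T3pn}(iv) to keep the tilted initial datum inside $\mathcal{P}_{\rm exp}$, derive the Fokker--Planck equation for the tilted one-dimensional flow from the martingale identity with $\mathsf{G}$ of the form (\ref{A1200}), and invoke Lemma \ref{W2lm} to propagate the sub-Poissonian property. The one genuine difference is how the two tilted flows are identified at the end of the inductive step: the paper packages the tilted objects as path measures $Q^i$ on $\mathfrak{D}_{[t_n,+\infty)}(\Gamma_*)$, checks that they satisfy all three conditions of Definition \ref{A1df}, and then concludes equality of their marginals ``by the assumption of the lemma'' --- which is slightly loose, since the $Q^i$ need not be members of the given families $\{P^{(i)}_{s,\mu}\}$ to which the hypothesis literally applies. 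You instead work with the (unnormalized) marginal flows $M^{(i)}_t$ directly and close the induction by the uniqueness in Theorem \ref{2tm}, which applies to \emph{any} pair of Fokker--Planck solutions sharing a common $\mathcal{P}_{\rm exp}$ initial datum. That is exactly the input the argument needs, so your version is, if anything, tighter at this point; it also spares you from verifying that the tilted objects are full solutions of the restricted martingale problem, since only the Fokker--Planck property of their one-dimensional marginals is used.
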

\begin{proof}
By Kolmogorov's extension theorem it is enough to prove that all
finite-dimensional marginals of both path measures coincide. In view
of claim (i) of Proposition \ref{T1pn}, to this end we have to show
that the following holds
\begin{equation}
  \label{AUz}
P_{s,\mu}^{1} \left(\mathsf{F}_{t_1}\cdots \mathsf{F}_{t_n} \right)
= P_{s,\mu}^{2} \left(\mathsf{F}_{t_1}\cdots \mathsf{F}_{t_n}
\right),
\end{equation}
where $\mathsf{F}_{t_i} (\gamma) = \widetilde{F}_{\tau_i}^{\theta_i}
(\varpi_{t_i} (\gamma))$, $i=1, \dots , n$, see (\ref{C800}), ought
to be taken with all possible $\theta_i\in \varTheta_\psi$, $\tau_i
>c_{\theta_i}$ and $t_i$ satisfying $s\leq t_1 \leq \cdots \leq
t_n$. Assume that (\ref{AUz}) holds with a given $n$ and prove its
validity for $n+1$. Since $\mathsf{F}_{t_i} (\gamma)>0$, see
(\ref{T4}), we may set
\[
C^{-1} = P_{s,\mu}^{1} \left(\mathsf{F}_{t_1}\cdots \mathsf{F}_{t_n}
\right),
\]
and then define two path measures on $(\mathfrak{D}_{[t_n,
+\infty)}, \mathfrak{F}_{t_n, +\infty})$
\[
Q^i (\mathbb{B}) = C P_{s,\mu}^{i} \left(\mathsf{F}_{t_1}\cdots
\mathsf{F}_{t_n} \mathds{1}_{\mathbb{B}}\right), \qquad i=1,2.
\]
Since both $P^i$ satisfy (\ref{A12}), we have also
\[
\int_{\mathfrak{D}_{[t_n, +\infty)}}{\sf H}(\gamma) Q^i (d \gamma) =
0, \qquad i=1,2.
\]
Hence, both maps $[t_n , +\infty) \ni t \mapsto Q^i \circ
\varpi^{-1}_{t}=: \mu^i_t \in \mathcal{P}(\Gamma_*)$, $i=1,2$ solve
\begin{equation*}
 % \label{AUy}
\mu^i_{u_2} (F) = \mu^i_{u_1} (F) + \int_{u_1}^{u_2} \mu^i_{v} (L F)
 dv ,\qquad F\in \mathcal{D}(L),
\end{equation*}
for all $u_2 > u_1 \geq t_n$, see Remark \ref{A2rk}. By the
inductive assumption and claim (iv) of Proposition \ref{T3pn} it
follows that $\mu_{t_n}^1 = \mu_{t_n}^2 =:\mu \in \mathcal{P}_{\rm
exp}$. By Lemma \ref{W2lm} we then conclude that $\mu_{t}^i \in
\mathcal{P}_{\rm exp}$, $i=1,2$ for all $t>t_n$. That is, both $Q^i$
satisfy all the three conditions of Definition \ref{A1df} and thus
belong to solutions of the restricted initial value martingale
problem. Hence, $\mu_t^1 = \mu_t^2$ by the assumption of the lemma.
In particular,
\[
\mu_{t_{n+1}}^1 (\widetilde{F}_{\tau_{n+1}}^{\theta_{n+1}}) =
\mu_{t_{n+1}}^2 (\widetilde{F}_{\tau_{n+1}}^{\theta_{n+1}}),
\]
which completes the proof.
\end{proof}
\begin{theorem}
  \label{Au1tm}
Let $\{P^{(i)}_{s,\mu}: s \geq 0, \ \mu \in \mathcal{P}_{\rm
exp}\}$, $i=1,2$ be two solution of the restricted initial value
martingale problem in the sense of Definition \ref{A1df}. Then
$P^{(1)}_{s,\mu} = P^{(2)}_{s,\mu}$ for all $s\geq 0$ and $\mu\in
\mathcal{P}_{\rm exp}$.
\end{theorem}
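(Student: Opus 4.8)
The plan is to obtain the statement by combining two results already established: the uniqueness of solutions to the Fokker--Planck equation (Theorem \ref{2tm}) and the passage from equality of one-dimensional marginals to equality of path measures (Lemma \ref{A3pn}). Fix $s\geq 0$ and $\mu\in\mathcal{P}_{\rm exp}$, and for $i=1,2$ set $\mu^{(i)}_t := P^{(i)}_{s,\mu}\circ\varpi_t^{-1}$ for $t\geq s$. By clause (a) of Definition \ref{A1df} we have $\mu^{(i)}_s=\mu$, and by clause (b) we have $\mu^{(i)}_t\in\mathcal{P}_{\rm exp}$ for every $t>s$. Taking ${\sf G}\equiv 1$ in (\ref{A120})--(\ref{A12}), as recorded in Remark \ref{A2rk}, each map $t\mapsto\mu^{(i)}_t$ is a solution of the Fokker--Planck equation (\ref{A12a}) on $[s,+\infty)$ in the sense of Definition \ref{A01df}, with the common initial value $\mu$.

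First I would reduce to initial time $s=0$. Since the Kolmogorov operator $L$ in (\ref{KO}) does not depend on $t$, the shifted maps $t\mapsto\mu^{(i)}_{s+t}$ again solve (\ref{A12a}), now on $[0,+\infty)$ and with common initial datum $\mu^{(i)}_0=\mu\in\mathcal{P}_{\rm exp}$. Theorem \ref{2tm} then applies directly and yields $\mu^{(1)}_{s+t}=\mu^{(2)}_{s+t}$ for all $t\geq 0$; equivalently, $P^{(1)}_{s,\mu}\circ\varpi_t^{-1}=P^{(2)}_{s,\mu}\circ\varpi_t^{-1}$ for all $t\geq s$. As $s$ and $\mu$ were arbitrary, the one-dimensional marginals of the two path measures coincide for every $s\geq 0$ and every $\mu\in\mathcal{P}_{\rm exp}$, which is exactly the hypothesis of Lemma \ref{A3pn}. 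That lemma then delivers $P^{(1)}_{s,\mu}=P^{(2)}_{s,\mu}$ for all $s$ and $\mu$, which is the assertion.

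Since the two cited results do all the work, there is no genuine obstacle; the only points requiring care are the hypotheses under which Theorem \ref{2tm} is invoked. The decisive one is that the marginals $\mu^{(i)}_t$ actually lie in $\mathcal{P}_{\rm exp}$: this is precisely what clause (b) of Definition \ref{A1df} guarantees, and it is what makes the uniqueness mechanism of Theorem \ref{2tm} (whose proof relies on Lemma \ref{W2lm} to keep the type $\varkappa_t$ under control on every bounded time interval) available in the present setting. The translation to initial time $0$ is harmless by time-homogeneity of $L$. Once these two observations are in place, the conclusion is immediate from Theorem \ref{2tm} and Lemma \ref{A3pn}.
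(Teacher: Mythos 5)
Your proposal is correct and follows essentially the same route as the paper: invoke Remark \ref{A2rk} (the case ${\sf G}\equiv 1$) to see that the one-dimensional marginals of both path measures solve the Fokker--Planck equation (\ref{A12a}), apply the uniqueness in Theorem \ref{2tm} to conclude that these marginals coincide, and then use Lemma \ref{A3pn} to pass from equal marginals to equal path measures. The extra details you supply -- the time shift to initial time $0$ via time-homogeneity of $L$, and the observation that clause (b) of Definition \ref{A1df} (or, alternatively, Lemma \ref{W2lm} itself) keeps the marginals in $\mathcal{P}_{\rm exp}$ -- are sound elaborations of what the paper leaves implicit.
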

\begin{proof}
By Remark \ref{A2rk} both $P^{(i)}_{s,\mu}\circ \varpi_t$, $t\geq s$
solve (\ref{A12a}), which by Theorem \ref{2tm} yields
$P^{(1)}_{s,\mu}\circ \varpi_t^{-1} = P^{(2)}_{s,\mu}\circ
\varpi_t^{-1}$, holding for all $t\geq s$ and $\mu \in
\mathcal{P}_{\rm exp}$. Then the proof follows by Lemma \ref{A3pn}.
\end{proof}

\section{The Existence: Approximating Models}

The aim of this and the subsequent sections is to prove the
following statement which is the second corner stone in the proof of
Theorem \ref{1tm}.
\begin{theorem}
 \label{3tm}
There exists a family of probability measures which solves the
restricted initial value martingale problem for our model in the
sense of Definition \ref{A1df}.
\end{theorem}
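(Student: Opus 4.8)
The plan is to carry out the construction sketched at the end of Section~3: build, for a family of regularized models indexed by a parameter $\alpha$, genuine Markov processes by direct semigroup methods, extract a limit as $\alpha$ is removed, and identify the limit as a solution of the martingale problem of Definition~\ref{A1df}. First I would introduce the approximating operators $L^\alpha$ (see (\ref{Lalpha})) obtained by taming the jump kernel, so that the corresponding dual operators $L^{\dagger,\alpha}$ become bounded and generate stochastic semigroups on the Banach space of signed measures $\mathcal{M}_*$ (see (\ref{V11}), (\ref{V12})). For such tamed kernels the associated Fokker--Planck equation can be solved directly for arbitrary initial data, in particular for the Dirac measures $\delta_\gamma$, $\gamma\in\Gamma_*$, which are inaccessible for the original model. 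This produces transition functions $p^\alpha_{s,t}(\gamma,\cdot)$ and, via Kolmogorov's extension theorem, a family of path measures $\{P^\alpha_{s,\mu}\}$ on $\mathfrak{D}_{[s,+\infty)}(\Gamma_*)$; by construction each $P^\alpha_{s,\mu}$ satisfies the martingale identity (\ref{A12}) with $L$ replaced by $L^\alpha$, so in particular (taking $\mathsf{G}\equiv 1$) its one-dimensional marginals solve the corresponding version of (\ref{A12a}).

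The decisive step is a Chentsov-type estimate (cf.\ \cite[Theorem~3.8.8, page~139]{EK}) for the increments measured in the complete metric $\upsilon_*$ of (\ref{Psi4}), \emph{uniform in $\alpha$}. Writing $\gamma_t=\varpi_t(\gamma)$, I would bound $P^\alpha_{s,\mu}\big(\upsilon_*(\gamma_{t_1},\gamma_t)^{\beta}\,\upsilon_*(\gamma_t,\gamma_{t_2})^{\beta}\big)$ by $C\,(t_2-t_1)^{1+\varepsilon}$, with $C$ independent of $\alpha$, for $t_1\le t\le t_2$. Here one exploits that, by Proposition~\ref{N1pn} and (\ref{Psi4}), $\upsilon_*$ is controlled by the dual-Lipschitz pairing against $\nu_\gamma=\sum_{x\in\gamma}\psi(x)\delta_x$, and that each jump alters $\nu_\gamma$ by a $\psi$-weighted amount whose moments are governed by sub-Poissonian bounds. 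These bounds must hold uniformly in $\alpha$, which I would secure by running the mechanism of Lemma~\ref{W2lm} for the operators $L^\alpha$: since the tamed kernels retain the structural features used there, one obtains a common type bound $\varkappa_t\le\varkappa_T$ on $[0,T]$ for all approximations. The same uniform moment control yields tightness of the initial and spatial marginals through (\ref{N7}). I expect this uniform Chentsov estimate to be the principal obstacle, since it must hold simultaneously for all approximations and requires quantitative two-increment moment control through the generator; it simultaneously provides cadlag versions of the approximating processes and relative compactness of $\{P^\alpha_{s,\mu}\}$ in the Skorohod topology.

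Extracting a weakly convergent subsequence $P^{\alpha_n}_{s,\mu}\to P_{s,\mu}$, I would then verify the conditions of Definition~\ref{A1df}. Condition (a) is immediate, every approximation having initial law $\mu$. For condition (c) I would pass to the limit in the martingale identity using that the test functions lie in $\mathcal{D}(L)\subset C_{\rm b}(\Gamma_*)$, that $L^{\alpha_n}F$ converges to $LF$ boundedly and pointwise on $\Gamma_*$ (this is precisely the sense in which the approximating models approach the original one), and that the product functionals (\ref{A1200}) are continuous; the convergence-determining property of $\widetilde{\mathcal{F}}$ (Proposition~\ref{T1pn}) then secures convergence of the integrals, giving (\ref{A12}) for the original $L$. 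Taking $\mathsf{G}\equiv 1$ in the resulting identity, the marginals $t\mapsto P_{s,\mu}\circ\varpi_t^{-1}$ solve the Fokker--Planck equation (\ref{A12a}) for the original model with initial datum $\mu\in\mathcal{P}_{\rm exp}$; hence Lemma~\ref{W2lm} applies and yields condition (b), namely $P_{s,\mu}\circ\varpi_t^{-1}\in\mathcal{P}_{\rm exp}$ for all $t>s$. By Theorem~\ref{2tm} this marginal evolution is the unique solution and coincides with the family $\mu_t$ of \cite{asia}, which in particular shows the limit to be independent of the chosen subsequence. Finally, the Markov property is obtained exactly as in \cite[Sect.~5.1, pages~78, 79]{Dawson}, completing the proof.
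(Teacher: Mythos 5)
Your overall architecture coincides with the paper's: tame the jump kernel via $\psi_\alpha$, construct the approximating processes from stochastic semigroups on a Banach space of measures, prove a Chentsov-type estimate uniform in $\alpha$, extract a weak limit by tightness, identify the limit as a solution, and obtain the Markov property as in Dawson. One inaccuracy before the main point: the taming does \emph{not} make $L^{\dagger,\alpha}$ bounded. It only makes the total jump rate $\Phi_\alpha(\gamma)\leq \Psi(\gamma)/\alpha$ finite pointwise on $\Gamma_*$, while $\Phi_\alpha$ remains an unbounded function; thus $L^{\dagger,\alpha}=A+B$ (with $A$ the multiplication by $-\Phi_\alpha$) is an unbounded operator on $\mathcal{M}_*$, and constructing the semigroup $S^\alpha$ requires a genuine perturbation theorem for substochastic semigroups (the paper uses the Thieme--Voigt result, Proposition \ref{TVpn}), not the exponential of a bounded generator. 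Also, the uniform-in-$\alpha$ moment bounds come in the paper directly from the construction $k^\alpha_t=Q^\alpha_{\vartheta'\vartheta}(t)k_0$ (possible since $a_\alpha\leq a$), rather than from re-running Lemma \ref{W2lm}; your route is plausible but would need the estimates of that lemma redone for $a_\alpha$.

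The substantive gap is in the identification of the limit, i.e.\ condition (c). You propose to pass to the limit in $\int_{t_1}^{t_2}P^{\alpha_n}_{s,\mu}(\mathsf{K}^{\alpha_n}_u\mathsf{G})\,du$ by invoking weak convergence $P^{\alpha_n}_{s,\mu}\Rightarrow P_{s,\mu}$, bp-convergence $L^{\alpha_n}F\to LF$, and the convergence-determining property of $\widetilde{\mathcal{F}}$. This does not suffice. First, $LF$ belongs only to $B_{\rm b}(\Gamma_*)$, not to $C_{\rm b}(\Gamma_*)$ (claim (i) of Proposition \ref{T3pn}; note that $a$ and $\phi$ are merely measurable), so weak convergence of the path measures gives no control on integrals of $(LF)\circ\varpi_u\cdot\mathsf{G}$, and the convergence-determining property of $\widetilde{\mathcal{F}}$ applies only to bounded \emph{continuous} test functions. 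Second, bp-convergence of the integrands helps only against a fixed measure, whereas here the measures vary with $n$. The paper's treatment of exactly this step (the terms $I^{(2)}_n$ and $I^{(3)}_n$ in (\ref{A1203})) is where the real work lies: one shows that the conditioned marginals $\nu_{n,u}$, $\nu_u$ built from $\mathsf{G}$ lie in $\mathcal{P}^{\vartheta}_{\rm exp}$ with $\vartheta$ uniform in $n$ and $u$ (via claim (iv) of Proposition \ref{T3pn} and Lemma \ref{C3lm}), that this sub-Poissonian property persists under weak limits (Lemma \ref{U40lm}), and then one re-runs the correlation-function perturbation argument of Lemma \ref{U2lm}, inductively in the time points of $\mathsf{G}$, to conclude $\nu_{n,u}(LF)\to\nu_u(LF)$; the remaining term involving $L^{\alpha_n}F-LF$ is bounded by a constant times $\alpha_n$ using the moment condition (\ref{C8}). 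Without this correlation-function machinery, or an explicit substitute for it, your limit passage does not close, and this is precisely the missing piece of the proposal.
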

The basic idea is to approximate the model by auxiliary models
described by $L^\alpha$, $\alpha \in [0,1]$ with $L^0$ coinciding
with $L$ defined in (\ref{I5}). For $\alpha\in (0,1]$, the solution
$\{P^\alpha_{s,\mu}: s\geq 0, \mu \in \mathcal{P}_{\rm exp}\}$ of
the corresponding restricted initial value martingale problem for
$L^\alpha$ will be constructed in a direct way. Then the proof of
Theorem \ref{3tm} will be done by showing the weak convergence
 $P^\alpha_{s,\mu}\Rightarrow P_{s,\mu}$ as $\alpha \to 0$, and then
by proving that $\{P_{s,\mu}: s\geq 0, \mu \in \mathcal{P}_{\rm
exp}\}$ is a solution in question. In the current section, we
introduce the auxiliary models and study their relations with the
basic model. The construction of the path measures
$P^\alpha_{s,\mu}$ will be preformed in the subsequent section.

\subsection{The approximating models} Recall that  $\psi$ was introduced in (\ref{C3}), see also (\ref{Psi}).
Along with these functions, we shall use $\Psi_1(\gamma) = 1
+\Psi(\gamma)$ and
\begin{equation}
  \label{Ma30}
  \psi_\alpha (x) = \frac{1}{1+ \alpha |x|^{d+1}}, \qquad \alpha \in
  [0,1].
\end{equation}
Set
\begin{equation}
  \label{N2a}
  a_\alpha (x,y) = a(x-y) \psi_\alpha (x), \quad  \
  \ x,y \in \mathds{R}^d.
\end{equation}
Note that $a_0(x,y) = a (x-y)$ and $a_\alpha(x,y)\neq a_ \alpha
(y,x)$ for $\alpha\in (0,1]$. Now let $L^\alpha$ be defined as in
(\ref{I5}) with $a$ replaced by $a_\alpha$. That is,
\begin{eqnarray}
 \label{Lalpha}
( L^\alpha F)(\gamma)  =  \sum_{x\in \gamma}\int_{\mathds{R}^d} \psi_\alpha (x) a (x-y) \exp\left( -\sum_{z\in \gamma\setminus x} \phi(z-y)\right) \left[ F(\gamma\setminus x\cup y) - F(\gamma)\right] d y. \qquad
\end{eqnarray}
Then keeping in mind  (\ref{A10A}) and (\ref{A13a}) we define
$L^{\Delta,\alpha}$ by the following expression
\begin{equation*}
%  \label{C25}
\mu(L^\alpha F^\theta) = \langle \! \langle L^{\Delta,\alpha} k_\mu
, e(\theta; \cdot) \rangle\! \rangle, \qquad \alpha \in [0,1].
\end{equation*}
One observes that $L^{\Delta,0}$ coincides with the operator
introduced in (\ref{K3}). For $\alpha \in (0,1]$,
$L^{\Delta,\alpha}$ is then obtained by replacing in (\ref{K3})
$a(x-y)$ by $a_\alpha(x,y)\leq a(x-y)$. Hence, $L^{\Delta,\alpha}$
clearly satisfies (\ref{A15}) and similar estimates. Then by
repeating the construction realized in subsection \ref{SS4.2} we
obtain the family of bounded operators
$\{Q^\alpha_{\vartheta'\vartheta} (t): t\in
[0,T(\vartheta',\vartheta))\}$ (resp.
$\{H^\alpha_{\vartheta\vartheta'} (t): t\in
[0,T(\vartheta',\vartheta))\}$), $\vartheta'>\vartheta$ acting from
$\mathcal{K}_{\vartheta}$ to $\mathcal{K}_{\vartheta'}$ (resp. from
$\mathcal{G}_{\vartheta'}$ to $\mathcal{G}_{\vartheta}$). By
employing these families we then set
\begin{equation}
  \label{Ma31}
  k^\alpha_t = Q^\alpha_{\vartheta'\vartheta}(t)k_0, \qquad G^\alpha_t
  = H^\alpha_{\vartheta \vartheta'} (t) G_0,
\end{equation}
with $k_0 \in \mathcal{K}_{\vartheta}$ and $G_0 \in
\mathcal{G}_{\vartheta'}$. Note that, for $\alpha=0$, these vectors
coincide with those introduced in (\ref{A26}) and (\ref{A28}),
respectively, and thus they satisfy (\ref{A29}) for all $\alpha \in
[0,1]$. Moreover, as in Proposition \ref{3.3pn}, for each
$\vartheta_0\in \mathds{R}$ and $\mu\in \mathcal{P}_{\rm
exp}^{\vartheta_0}$, by (\ref{Ma31}) with $k_0 = k_{\mu}$ we obtain
a family, $\{\mu_t^\alpha: t\geq 0, \mu_0=\mu\}\subset
\mathcal{P}_{\rm exp}$, $\mu_t^\alpha \in \mathcal{P}_{\rm
exp}^{\vartheta_t}$ such that
\begin{equation}
  \label{Ma32}
  \mu_t^\alpha (F^\theta) = \langle \! \langle k_t^\alpha, e(\theta,
  \cdot)\rangle \! \rangle , \qquad \theta \in L^1(\mathds{R}^d).
\end{equation}
Next, by repeating the construction used in the proof of Theorem
\ref{2tm} one obtains that the map $t\mapsto \mu_t^\alpha$ is a
unique solution of the equation
\begin{equation*}
 % \label{Ma33}
 \mu^\alpha_{t_2} (F) = \mu^\alpha_{t_1} (F) + \int_{t_1}^{t_2} \mu^\alpha_{u} (L^\alpha
 F) du, \qquad t_2 > t_1 \geq 0,
\end{equation*}
holding for all $F:\Gamma_* \to \mathds{R}$ which can be written as
$F=KG$ with $G\in \cap_{\vartheta\in \mathds{R}}
\mathcal{G}_\vartheta$, see Corollary \ref{Walm}. Here and below we
set
\[
\mathcal{D}(L^\alpha) = \mathcal{D}(L), \qquad \alpha \in (0,1],
\]
with $\mathcal{D}(L)$ as in Definition \ref{THdf}.

\subsection{The weak convergence}
\label{SS6.2} Our aim now is to prove that the families
$\{\mu_t^\alpha: t\geq 0, \mu_0=\mu\}\subset \mathcal{P}_{\rm exp}$,
$\alpha \in [0,1]$ constructed above have the following property.
\begin{lemma}
  \label{U3lm}
For each $t>0$, it follows that $\mu^\alpha_t \Rightarrow \mu_t$ as
$\alpha \to 0$, where we mean the weak convergence of measures on
the Polish space $\Gamma_*$.
\end{lemma}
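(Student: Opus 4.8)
The plan is to reduce the asserted weak convergence on the Polish space $\Gamma_*$ to convergence of the integrals $\mu^\alpha_t(F)$ over the convergence-determining class $\widetilde{\mathcal{F}}$, and then to transfer the whole problem to the predual scale $\{\mathcal{G}_\vartheta\}_{\vartheta\in\mathds{R}}$, where the perturbation $\widehat{L}-\widehat{L}^\alpha$ can be killed by dominated convergence. Since $\mu^\alpha_t,\mu_t\in\mathcal{P}_{\rm exp}\subset\mathcal{P}(\Gamma_*)$, by claim (ii) of Proposition \ref{T1pn} it suffices to show $\mu^\alpha_t(F)\to\mu_t(F)$ for every $F\in\widetilde{\mathcal{F}}$. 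Fix such an $F=\widetilde{F}^\theta_\tau$; by (\ref{T4AA}) we have $F=KG$ with $G=\widetilde{G}^\theta_\tau=e(\theta_\tau;\cdot)\in\bigcap_{\vartheta}\mathcal{G}_\vartheta$. Writing $G^\alpha_t=H^\alpha_{\vartheta\vartheta'}(t)G$ and $G_t=H_{\vartheta\vartheta'}(t)G$, and using (\ref{Lenard2}) together with the duality (\ref{A29}) (which holds for all $\alpha\in[0,1]$), I obtain
\[
\mu^\alpha_t(F)-\mu_t(F)=\langle\!\langle k^\alpha_t-k_t,G\rangle\!\rangle=\langle\!\langle k_0,G^\alpha_t-G_t\rangle\!\rangle,\qquad k_0=k_{\mu_0}\in\mathcal{K}_{\vartheta_0}\subset\mathcal{K}_\vartheta\ (\vartheta\ge\vartheta_0),
\]
whence $|\mu^\alpha_t(F)-\mu_t(F)|\le\|k_0\|_\vartheta\,|G^\alpha_t-G_t|_\vartheta$. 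Everything thus reduces to proving $|G^\alpha_t-G_t|_\vartheta\to0$ as $\alpha\to0$. Note that working on the $\mathcal{G}$-side is essential: on the $\mathcal{K}$-side the norm is a supremum, and the factor $1-\psi_\alpha$ does not become small near infinity, so dominated convergence fails there.

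The key estimate is that, for a \emph{fixed} $G\in\mathcal{G}_{\vartheta'}$ and any $\vartheta<\vartheta'$, one has $|(\widehat{L}-\widehat{L}^\alpha)G|_\vartheta\to0$ as $\alpha\to0$. Indeed, $\widehat{L}^\alpha$ arises from $\widehat{L}$ in (\ref{A9a}) by inserting the factor $\psi_\alpha$ at the source point, so the integrand of $(\widehat{L}-\widehat{L}^\alpha)G$ carries the extra weight $1-\psi_\alpha=\frac{\alpha|\cdot|^{d+1}}{1+\alpha|\cdot|^{d+1}}\in[0,1]$, which tends to $0$ pointwise as $\alpha\to0$. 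Repeating the chain of bounds that produced (\ref{A11}), now with this extra factor inserted, provides an $\alpha$-independent integrable majorant (the one furnishing the finite bound $\frac{2}{e(\vartheta'-\vartheta)}\exp(e^\vartheta\langle\phi\rangle)|G|_{\vartheta'}$), while the integrand converges to $0$ pointwise. Dominated convergence then gives the claim.

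To propagate this from the generators to the evolutions, I expand $G^\alpha_t-G_t=\sum_{n\ge1}\frac{t^n}{n!}[(\widehat{L}^\alpha)^n-\widehat{L}^n]G$ and telescope,
\[
(\widehat{L}^\alpha)^n-\widehat{L}^n=\sum_{j=0}^{n-1}(\widehat{L}^\alpha)^j(\widehat{L}^\alpha-\widehat{L})\widehat{L}^{\,n-1-j}.
\]
For each summand I apply the key estimate to the \emph{$\alpha$-independent} vector $\widehat{L}^{\,n-1-j}G$, and then the uniform (in $\alpha$, since $a_\alpha\le a$) operator bounds (\ref{A15}), (\ref{A17}) for $(\widehat{L}^\alpha)^j$; hence each summand tends to $0$. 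The same bounds, with the interval $[\vartheta,\vartheta']$ subdivided into $n$ equal pieces so that each of the $n$ factors acts across one subinterval, majorize the double sum by a series of the shape $\sum_n\sqrt{n}\,(t/T(\vartheta',\vartheta))^n$, which is summable and $\alpha$-independent for $t<T(\vartheta',\vartheta)$. A second application of dominated convergence, now for the series, yields $|G^\alpha_t-G_t|_\vartheta\to0$ on this time interval; extension to all $t>0$ follows by iterating over consecutive subintervals through the flow (cocycle) property of $H^\alpha_{\vartheta\vartheta'}$, exactly as in the continuation argument concluding the proof of Theorem \ref{2tm}.

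I expect the main obstacle to be the bookkeeping of the Banach-scale exponents in the telescoping step: the intermediate indices must be chosen so that each of the $n$ operator factors acts across a single subinterval of $[\vartheta,\vartheta']$ (so that (\ref{A15}) applies with the correct $T$), so that the central defect $(\widehat{L}^\alpha-\widehat{L})$ still enjoys the dominated-convergence smallness on its own subinterval while being applied to an $\alpha$-independent vector, and so that the resulting majorant stays summable uniformly in $\alpha$ up to time $T(\vartheta',\vartheta)$. Everything else is either a direct application of the convergence-determining property of $\widetilde{\mathcal{F}}$ or a routine dominated-convergence argument built on the estimate behind (\ref{A11}).
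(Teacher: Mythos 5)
Your reduction via Proposition \ref{T1pn}(ii), the duality transfer $\mu^\alpha_t(F)-\mu_t(F)=\langle\!\langle k_0,G^\alpha_t-G_t\rangle\!\rangle$, and the small-time argument are correct, and they take a genuinely different route from the paper. The paper proves Lemma \ref{U3lm} through Lemma \ref{U2lm}: a Duhamel (variation-of-constants) formula (\ref{U5}) for $k_{t+s}-k^\alpha_{t+s}$ with the defect operator acting on $k^\alpha_{t+u}$, followed by explicit pointwise kernel estimates ($h^{(1)}_\alpha(y)\to 0$ via the cut-off radius $r=\alpha^{-1/(d+2)}$ and the moment condition (\ref{C8})) plus dominated convergence in $y$. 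You instead telescope the Taylor series of the two semigroups and apply dominated convergence directly at the generator level on the $\mathcal{G}$-side, where the integrable decay is supplied by the test function $G$ rather than by moment bounds on $a$; your bookkeeping (each of the $n$ factors acting across one of $n$ equal subintervals of $[\vartheta,\vartheta']$, with $\alpha$-uniform majorant of the shape $\sum_n\sqrt{n}\,(t/T(\vartheta',\vartheta))^n$) is sound, and your observation that DCT fails on the $\mathcal{K}$-side is exactly right.

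The genuine gap is the continuation to all $t>0$. Your identity requires $G^\alpha_t, G_t$ to land in some $\mathcal{G}_\vartheta$ with $\vartheta\geq\vartheta_0$ (the index of $\mu_0$), since otherwise the pairing with $k_0\in\mathcal{K}_{\vartheta_0}$ is undefined. But the dual evolution runs \emph{down} the scale, and the total time achievable by composing steps over \emph{any} choice of intermediate indices $\vartheta_0\leq\sigma_0<\sigma_1<\cdots<\sigma_n$ is bounded:
\[
\sum_{i=1}^n T(\sigma_i,\sigma_{i-1})=\sum_{i=1}^n\frac{\sigma_i-\sigma_{i-1}}{2}\,e^{-\langle\phi\rangle e^{\sigma_i}}\leq\int_{\vartheta_0}^{\infty}\frac{1}{2}\,e^{-\langle\phi\rangle e^{u}}\,du<\infty.
\]
Taking $G\in\bigcap_\vartheta\mathcal{G}_\vartheta$ does not help, because $T(\sigma',\sigma)$ decays double-exponentially as the starting index $\sigma'$ grows. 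So beyond this finite horizon the object $G_t$ (landing at index $\geq\vartheta_0$) simply does not exist, and no cocycle iteration of $H^\alpha_{\vartheta\vartheta'}$ can produce it; the statement ``$|G^\alpha_t-G_t|_\vartheta\to 0$ for all $t$'' is not even well-formed for large $t$. Note also that the continuation concluding Theorem \ref{2tm} is not of this kind: there the restart happens on the forward ($\mathcal{K}$-) side, which is globally defined thanks to Lemma \ref{W2lm}/Proposition \ref{3.3pn}. The fix for your argument is the same forward restart the paper uses: at an intermediate time $t_1$, invoke $k_{t_1},k^\alpha_{t_1}\in\mathcal{K}_{\vartheta_{t_1}}$ with $\alpha$-uniform norm bounds (Proposition \ref{3.3pn} and its $\alpha$-analog), write $k^\alpha_{t_1+s}-k_{t_1+s}=Q^\alpha(s)(k^\alpha_{t_1}-k_{t_1})+[Q^\alpha(s)-Q(s)]k_{t_1}$, pair with $G$, split $H^\alpha(s)=H(s)+(H^\alpha(s)-H(s))$, and use the inductive hypothesis for the fixed test function $H(s)G$ together with your generator-level dominated convergence for $(H^\alpha(s)-H(s))G$ — this reproduces the structure of (\ref{U5})--(\ref{U7}), inside which your argument can indeed replace the paper's kernel estimates; one must then also check, as in (\ref{U18}), that the step lengths do not shrink so fast that the iteration fails to cover all of $[0,+\infty)$.
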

We begin by proving the convergence of the corresponding correlation
functions.
\begin{lemma}
  \label{U2lm}
For each $t>0$, one finds $\tilde{\vartheta}_t > \vartheta_t$ such
that the following holds
\begin{equation}
  \label{U4a}
\forall G\in \mathcal{G}_{\tilde{\vartheta}_t} \qquad \langle \!
\langle k^\alpha_t , G \rangle \! \rangle \to \langle \! \langle k_t
, G \rangle \! \rangle,  \qquad {\rm as} \ \ \alpha
  \to 0.
\end{equation}
\end{lemma}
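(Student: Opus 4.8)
The plan is to deduce the weak convergence \eqref{U4a} from two ingredients — a uniform-in-$\alpha$ bound on the approximating correlation functions and their pointwise convergence — which I would then combine by dominated convergence in the pairing $\langle\!\langle\,\cdot\,,G\rangle\!\rangle$.

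First I would record the uniform bound. By the $\alpha$-analogue of Proposition \ref{3.3pn} (which the text already provides, and which holds because $a_\alpha\le a$, so that $L^{\Delta,\alpha}$ obeys the same estimate \eqref{A15} with the same $T(\vartheta',\vartheta)$), all the measures $\mu^\alpha_t$ lie in $\mathcal{P}^{\vartheta_t}_{\rm exp}$ with $\vartheta_t=\vartheta_0+t$ \emph{independent of} $\alpha$. In terms of correlation functions this reads $0\le k^\alpha_t(\eta)\le e^{\vartheta_t|\eta|}$, i.e. $\|k^\alpha_t\|_{\vartheta_t}\le 1$, and the same holds for $k_t$ (the case $\alpha=0$). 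This is precisely what will make the dominating function in the final step $\lambda$-integrable once $G$ is taken in a strictly smaller space $\mathcal{G}_{\tilde\vartheta_t}$ with $\tilde\vartheta_t>\vartheta_t$.

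The heart of the matter is the pointwise statement $k^\alpha_t(\eta)\to k_t(\eta)$ for every $\eta\in\Gamma_0$. I would isolate the following stability property of the iterated generators: if $h^\alpha(\zeta)\to h(\zeta)$ for every $\zeta$ and $\sup_\alpha\|h^\alpha\|_{\vartheta''}<\infty$, then $(L^{\Delta,\alpha})^n h^\alpha(\eta)\to(L^\Delta)^n h(\eta)$ for every $\eta$ and $n\in\mathds{N}_0$. This I would prove by induction on $n$, the generic step resting on $(L^{\Delta,\alpha})^n h^\alpha-(L^\Delta)^n h=L^{\Delta,\alpha}[(L^{\Delta,\alpha})^{n-1}h^\alpha-(L^\Delta)^{n-1}h]+(L^{\Delta,\alpha}-L^\Delta)(L^\Delta)^{n-1}h$. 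For the second summand, inspection of \eqref{K3}--\eqref{K4} shows that the only $\alpha$-dependence is the factor $\psi_\alpha(x)$ multiplying $a(x-y)$, so each contribution carries a factor $(1-\psi_\alpha(x))\in[0,1]$ tending to $0$ pointwise, while the companion factor $e(\tau_y;\cdot)(W_y(L^\Delta)^{n-1}h)(\cdot)$ is bounded by $\exp(e^{\vartheta''}\langle\phi\rangle)\,\|(L^\Delta)^{n-1}h\|_{\vartheta''}\,e^{\vartheta''|\eta|}$ \emph{uniformly in the spatial variables}, because the $\mathcal{K}_{\vartheta''}$-bound on a correlation-type function, and hence on $W_y$, depends only on the number of points of a configuration and not on their locations. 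The integrand is therefore dominated by $a(x-y)\times\text{const}$, which is integrable since $\langle a\rangle=1$, and dominated convergence gives the convergence to $0$. The first summand tends to $0$ pointwise because, by the inductive hypothesis, $(L^{\Delta,\alpha})^{n-1}h^\alpha-(L^\Delta)^{n-1}h\to0$ pointwise with a position-independent majorant (again by \eqref{A15}), and $L^{\Delta,\alpha}$ carries such sequences to pointwise-null ones by the same dominated-convergence reasoning (using $a_\alpha\le a$). Granting this property, on $t<T(\vartheta',\vartheta_0)$ I would apply it termwise in the series $k^{\alpha}_t=k_0+\sum_{n\ge1}\frac{t^n}{n!}(L^{\Delta,\alpha})^n k_0$ and its $\alpha=0$ counterpart, cf. \eqref{A22}, \eqref{A26}, \eqref{Ma31}; the uniform-in-$\alpha$ bound \eqref{A15} supplies, for fixed $\eta$, a summable majorant, so the pointwise limits may be summed. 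Convergence for all $t$ then follows by restarting at an interval endpoint and invoking the stability property once more with $h^\alpha=k^\alpha_{t_1}\to h=k_{t_1}$, exactly as in the continuation step of the proof of Theorem \ref{2tm}, the uniform bound remaining in force throughout.

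Finally, I would fix $\tilde\vartheta_t>\vartheta_t$ and $G\in\mathcal{G}_{\tilde\vartheta_t}$. By the embedding \eqref{C21A} one has $|G|_{\vartheta_t}\le|G|_{\tilde\vartheta_t}<\infty$, so $e^{\vartheta_t|\eta|}|G(\eta)|$ is $\lambda$-integrable; since $|k^\alpha_t(\eta)G(\eta)|\le e^{\vartheta_t|\eta|}|G(\eta)|$ and $k^\alpha_t(\eta)G(\eta)\to k_t(\eta)G(\eta)$ pointwise, dominated convergence yields $\langle\!\langle k^\alpha_t,G\rangle\!\rangle\to\langle\!\langle k_t,G\rangle\!\rangle$, which is \eqref{U4a}. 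The main obstacle is precisely the pointwise step: the convergence $\psi_\alpha\to1$ is \emph{not} uniform in $x$, so no operator-norm convergence $L^{\Delta,\alpha}\to L^\Delta$ (nor norm convergence of the forward solutions) is available, and a crude perturbative estimate fails. The device that rescues the argument is that every relevant $\mathcal{K}_\vartheta$-bound is controlled by point-\emph{counts} rather than by locations, which furnishes position-independent dominating functions that are integrable against the finite-mass jump kernel $a$; the strict gap $\tilde\vartheta_t>\vartheta_t$ is exactly what converts the uniform bound on $k^\alpha_t$ into an integrable majorant for the pairing.
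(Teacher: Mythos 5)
Your proposal is correct in substance, but it takes a genuinely different route from the paper. The paper never proves pointwise convergence of $k^\alpha_t$: it bootstraps over time intervals and, on each interval, writes the Duhamel-type identity (\ref{U5}), $k_{t+s}-k^\alpha_{t+s}=Q_{\bar\vartheta_t\vartheta_t}(s)(k_t-k^\alpha_t)+\int_0^s Q_{\bar\vartheta_t\vartheta_2}(s-u)\widetilde{L}^{\Delta,\alpha}_{\vartheta_2\vartheta_1}k^\alpha_{t+u}\,du$, where $\widetilde{L}^{\Delta,\alpha}$ is the ``difference generator'' built from $\tilde a_\alpha(x,y)=a(x-y)(1-\psi_\alpha(x))$; it then pairs with $G$, throws the semigroup onto $G$ via $G_s=H_{\vartheta_2\bar\vartheta_t}(s)G$ (see (\ref{U7})), and reduces the whole problem to showing $Y_\alpha(s)\to0$ by explicit estimates (\ref{U8})--(\ref{U15}) and dominated convergence for functions on $\mathds{R}^d$, with a quantitative rate $h^{(1)}_\alpha(y)\lesssim\alpha^{1/(d+2)}(1+|y|)^{d+1}+\alpha^{(d+1)/(d+2)}$. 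You instead prove $\lambda$-a.e. convergence $k^\alpha_t\to k_t$ itself, via termwise stability of the exponential series (\ref{A22}), (\ref{Ma31}) under the perturbation, and then apply dominated convergence once in the pairing. Both arguments rest on the same two external inputs — the uniform-in-$\alpha$ bound $\|k^\alpha_t\|_{\vartheta_t}\le1$ from the $\alpha$-analogue of Proposition \ref{3.3pn}, and a restart/continuation scheme whose restart intervals do not shrink summably (cf. (\ref{U18})); note that your continuation only works because you re-invoke the sharp Proposition-\ref{3.3pn} bound at each restart time rather than the bounds accumulated from the series, which would degrade and cover only a finite horizon. What each approach buys: the paper's stays entirely within the Banach-space calculus on $\mathcal{K}_\vartheta$, $\mathcal{G}_\vartheta$ (where $L^\Delta$, $W_y$ act on equivalence classes and no pointwise issues arise) and yields explicit rates in $\alpha$; yours is more elementary — no dual semigroup, no Duhamel formula — and gives the marginally stronger conclusion that \emph{any} $\tilde\vartheta_t>\vartheta_t$ works (indeed $G\in\mathcal{G}_{\vartheta_t}$ suffices, by (\ref{C21A})).

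One point in your argument deserves explicit care, though it is repairable by standard means and not a fatal gap: the operators (\ref{K3})--(\ref{K4}) evaluated at a \emph{fixed} configuration $\eta$ read the values of $k$ at configurations containing the fixed points of $\eta$, i.e., on a $\lambda$-null subset of $\Gamma_0$. So ``pointwise for every $\eta$'' statements are version-sensitive, and your induction must either propagate canonical (everywhere-defined) versions produced by the integral formulas, or be formulated throughout in the a.e. sense, using that the union maps $(\zeta,\xi)\mapsto\zeta\cup\xi$ and $(\zeta,x)\mapsto\zeta\cup x$ push $\lambda\otimes\lambda$ and $\lambda\otimes dx$ to measures absolutely continuous with respect to $\lambda$; with that bookkeeping (in particular at the restart times, where the sharp bound from Proposition \ref{3.3pn} is only an a.e. statement), your dominated-convergence steps go through and the final pairing, which only sees $\lambda$-a.e. values, yields (\ref{U4a}).
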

\begin{proof}
We recall that $k_t$ satisfies (\ref{C31}) with
$L^\Delta_{\vartheta_T}$ corresponding to $\alpha =0$. Note
that the domains of $L^{\Delta,\alpha}_{\vartheta}$ are the same for
all $\alpha \in [0,1]$.

Assume now that the convergence stated in (\ref{U4a}) holds for a
given $t\geq 0$. Note that $k_0=k^\alpha_0 = k_{\mu_0}$; hence, this
assumption is valid for  at least $t=0$. Let us prove that there
exists $s_0>0$ -- possibly dependent on $t$ -- such that this
convergence holds for all $t+s$, $s\leq s_0$. Keeping in mind that
$Q^\alpha$ and $k_t^\alpha$ satisfy the corresponding analogs of
(\ref{A24}) and (\ref{C31}), respectively, we write
\begin{equation}
 \label{Ua}
  k_{t+s} - k^\alpha_{t+s} = Q_{\bar{\vartheta}_t \vartheta_t} (s)
k_t - Q^\alpha_{\bar{\vartheta}_t \vartheta_t}(s) k^\alpha_t,
\end{equation}
where $\bar{\vartheta}_t = \vartheta_t + \delta(\vartheta_t)$ and
$\vartheta_t= \vartheta_0 + t$. Note that the left-hand side of
(\ref{Ua}) is considered as a vector in
$\mathcal{K}_{\bar{\vartheta}_t}$. Both $Q_{\bar{\vartheta}_t
\vartheta_t}(s)$ and $Q^\alpha_{\bar{\vartheta}_t \vartheta_t}(s)$
are defined only for $s< \tau(\vartheta_t)$, see (\ref{U2}). At the
same time, for each $\vartheta'>\vartheta$, $Q_{\vartheta'
\vartheta}(0)=Q^\alpha_{\vartheta' \vartheta}(0) =
I_{\vartheta'\vartheta}$, where the latter is the embedding
operator, see (\ref{C21b}). Keeping this and (\ref{A24}) in mind we
rewrite (\ref{Ua}) as follows
\begin{eqnarray}
  \label{U5}
k_{t+s} - k^\alpha_{t+s} & = & Q_{\bar{\vartheta}_t \vartheta_t} (s)
(k_t - k_t^\alpha) - \left(\int_0^s \frac{d}{du}
[Q_{\bar{\vartheta}_t \vartheta_1}(s-u)
Q^\alpha_{\vartheta_1\vartheta_t }(u)] du \right)k^\alpha_t \qquad
\\[.2cm] \nonumber & =  & Q_{\bar{\vartheta}_t \vartheta_t} (s)
(k_t - k_t^\alpha) + \int_0^s Q_{\bar{\vartheta}_t
\vartheta_2}(s-u)L^\Delta_{\vartheta_2\vartheta_1}
Q^\alpha_{\vartheta_1 \vartheta_t}(u)k_t^\alpha d u \\[.2cm] \nonumber & -
& \int_0^s Q_{\bar{\vartheta}_t
\vartheta_2}(s-u)L^{\Delta,\alpha}_{\vartheta_2\vartheta_1}
Q^\alpha_{\vartheta_1 \vartheta_t}(u)k_t^\alpha d u
\\[.2cm] \nonumber & =  & Q_{\bar{\vartheta}_t \vartheta_t} (s)
(k_t - k_t^\alpha) + \int_0^s Q_{\bar{\vartheta}_t \vartheta_2}(s-u)
\widetilde{L}^{\Delta,\alpha}_{\vartheta_2\vartheta_1}
k^\alpha_{t+u} du,
\end{eqnarray}
where $\widetilde{L}^{\Delta,\alpha}$ is given in (\ref{K3}) with
$a(x-y)$ replaced by $\tilde{a}_\alpha(x,y) = a(x-y)(1- \psi_\alpha
 (x))$. The  choice of $s$ and  $\vartheta_1$,
$\vartheta_2$ should be made in such a way that the series as in
(\ref{A22}) converge for the corresponding operators. Set
$\vartheta_1 = \vartheta_t + \delta(\vartheta_t)/2$. We use this in
(\ref{U1}) and obtain that
\begin{equation}
  \label{tau1}
T(\bar{\vartheta}_t ,\vartheta_1) = \frac{\tau(\vartheta_t)}{2} <
T(\vartheta_1,\vartheta_t).
\end{equation}
Then for some $\epsilon\in (0,1)$, we set
\begin{equation}
  \label{U6}
 s_0 = \epsilon \tau (\vartheta_t )/2 = \epsilon T(\bar{\vartheta}_t ,\vartheta_1).
\end{equation}
Since the map $\vartheta \mapsto T(\bar{\vartheta}_t , \vartheta)$
is continuous, one can find $\vartheta_2 \in
(\vartheta_1,{\vartheta}_t)$ such that $s_0 < T(\bar{\vartheta}_t
,\vartheta_2)$, cf. (\ref{U6}), which together with (\ref{tau1})
yields that all the three $Q_{\bar{\vartheta}_t \vartheta_1}(s-u)$,
$Q_{\bar{\vartheta}_t \vartheta_2}(s-u)$ and $Q_{\vartheta_1
\vartheta_t}^\alpha(u)$ in (\ref{U5}) are defined for all $s\leq
s_0$ and $u\in [0,s]$. Now we take
$G\in\mathcal{G}_{\bar{\vartheta}_t}$ and set $G_s =
H_{\vartheta_2\bar{\vartheta}_t}(s)G$, $s\leq s_0$. Then $G_s \in
\mathcal{G}_{\vartheta_2} \subset \mathcal{G}_{\vartheta_t}$, which
yields by (\ref{U5}) the following
\begin{gather}
  \label{U7}
\langle\!\langle k_{t+s} - k_{t+s}^\alpha , G\rangle \! \rangle  =
\langle\!\langle k_{t} - k_{t}^\alpha , G_s\rangle \! \rangle +
Y_\alpha (s), \\[.2cm] Y_\alpha (s) := \int_0^s \langle\!\langle \widetilde{L}^{\Delta,\alpha}_{\vartheta_2 \vartheta_1}
k_{t+u}^\alpha, G_{s-u}\rangle \! \rangle d u. \nonumber
\end{gather}
Thus, we have to prove that $Y_\alpha (s) \to 0$ as $\alpha \to 0$.
Since $L^{\Delta,\alpha}$ consists of two terms, see (\ref{K3}), it
is convenient for us to write $Y_\alpha (s)= Y_\alpha^{(1)} (s) +
Y_\alpha^{(2)} (s)$, where
\begin{eqnarray}
  \label{U8}
& & Y_\alpha^{(1)} (s)  =  \int_0^s \int_{\Gamma_0} \bigg{(}
\sum_{y\in \eta} \int_{\mathds{R}^d} \tilde{a}_\alpha (x,y) e(\tau_y
; \eta\setminus y) (W_y k^\alpha_{t+u})(\eta\setminus y\cup x) d x
\bigg{)}\\[.2cm] \nonumber & & \quad \times   G_{s-u} (\eta) \lambda ( d
\eta) d u \\[.2cm] \nonumber & & =  \int_0^s \int_{\Gamma_0} \bigg{(}
\int_{(\mathds{R}^d)^2} \tilde{a}_\alpha (x,y) e(\tau_y ; \eta) (W_y
k^\alpha_{t+u})(\eta\cup x) G_{s-u}(\eta \cup y) d x dy \bigg{)}
\lambda ( d \eta) d u,
\end{eqnarray}
and
\begin{eqnarray}
  \label{U9}
& & Y_\alpha^{(2)} (s)  =  - \int_0^s \int_{\Gamma_0} \bigg{(}
\sum_{x\in \eta} \int_{\mathds{R}^d} \tilde{a}_\alpha (x,y) e(\tau_y
;
\eta\setminus x) (W_y k^\alpha_{t+u})(\eta) d y \bigg{)}\\[.2cm] \nonumber & & \quad \times   G_{s-u} (\eta)  \lambda ( d
\eta)d u \\[.2cm] \nonumber & & =  - \int_0^s \int_{\Gamma_0} \bigg{(}
\int_{(\mathds{R}^d)^2} \tilde{a}_\alpha (x,y) e(\tau_y ; \eta) (W_y
k^\alpha_{t+u})(\eta\cup x) G_{s-u}(\eta \cup x) d x dy \bigg{)}
\lambda ( d \eta)d u.
\end{eqnarray}
To estimate both terms we take into account that $e(\tau_y;\eta)
\leq 1$ and
\[
|(W_y k^\alpha_{t+u})(\eta\cup x)| \leq
 \exp\left( \vartheta_1 +
\vartheta_1 |\eta| + \langle \phi \rangle e^{\vartheta_1}\right),
\]
where the latter estimate follows by the fact that
$k^\alpha_{t+u}(\eta) \leq \exp(\vartheta_{t+u}|\eta|)\leq
\exp(\vartheta_{1}|\eta|)$, see claim (a) of Proposition
\ref{3.3pn}. By these estimates we obtain from (\ref{U8}) and
(\ref{U9}) the following
\begin{equation}
  \label{u10}
\left|Y_\alpha^{(i)} (s)\right| \leq \int_{\mathds{R}^d}
h^{(i)}_\alpha (y) g^{(i)}_s(y) d y, \qquad i=1,2,
\end{equation}
where
\begin{equation}
  \label{U11}
  h^{(1)}_\alpha (y) = \int_{\mathds{R}^d} \tilde{a}_\alpha (x,y) d x =
  \int_{\mathds{R}^d} (1- \bar{\psi}_\alpha  (|x|)) a (x -y) d x,
\end{equation}
$\bar{\psi}_\alpha (r) := (1+\alpha r^{d+1})^{-1}$, cf.
(\ref{Ma30}), and
\begin{gather*}
  %\label{U12}
g^{(1)}_s (y) = c(\vartheta_1) \int_0^s \int_{\Gamma_0} \left|
G_{s-u}(\eta \cup y)\right| e^{\vartheta_1 |\eta|} \lambda ( d \eta)
d u, \\[.2cm] \nonumber c(\vartheta_1):= \exp\left(\vartheta_1 + \langle \phi
\rangle e^{\vartheta_1} \right).
\end{gather*}
Let us show that $g^{(1)}_s$ is integrable for all $s\leq s_0$. To
this end we use the fact that $G_{s-u}\in \mathcal{G}_{\vartheta_2}$
for all $s\leq s_0$ and $u\leq s$. Then its norm can be estimated
\begin{equation*}
 % \label{U13}
 |G_{s-u}|_{\vartheta_2} \leq \frac{T(\bar{\vartheta}_t ,
 \vartheta_2) }{T(\bar{\vartheta}_t ,
 \vartheta_2) - s_0} |G|_{\bar{\vartheta}_t} =: C_G
\end{equation*}
which is finite by our choice of $\vartheta_2$ and $s_0$. Then
\begin{eqnarray}
  \label{U14}
  \int_{\mathds{R}^d} g^{(1)}_s (y) d y & = & c(\vartheta_1) \int_0^s
  \int_{\Gamma_0} \int_{\mathds{R}^d} |G_{s-u}(\eta\cup y)| e^{\vartheta_1
  |\eta|} d y \lambda (d\eta) d u \\[.2cm] \nonumber &= & c(\vartheta_1)e^{-\vartheta_1}\int_0^s
  \int_{\Gamma_0} |G_{s-u}(\eta)||\eta| e^{\vartheta_1
  |\eta|} d y \lambda (d\eta)  d u
   \\[.2cm] \nonumber &= & c(\vartheta_1)e^{-\vartheta_1}\int_0^s
  \int_{\Gamma_0}\left(|\eta| e^{-|\eta| (\vartheta_2 -
  \vartheta_1)} \right) |G_{s-u} (\eta)| e^{\vartheta_2 |\eta|}
  \lambda ( d\eta) d u
   \\[.2cm] \nonumber &\leq &
      \frac{c(\vartheta_1)s}{e^{1+\vartheta_1}(\vartheta_2-\vartheta_1)} C_G.
\end{eqnarray}
Now let us turn to (\ref{U11}). First of all, we note that
$h^{(1)}_\alpha (y) \leq 1$, see (\ref{C7}). The function $r\mapsto
1 - \bar{\psi}_\alpha (r)$ is increasing. Then, for a certain $r>0$,
we have
\begin{eqnarray}
  \label{U15}
h^{(1)}_\alpha (y) & = & \int_{\mathds{R}^d}(1- \bar{\psi}_\alpha
(|x+y|))
a(x) d x \\[.2cm] \nonumber & \leq & \int_{B_r}(1- \bar{\psi}_\alpha (r+|y|)) a(x) d x +
\int_{B_r^c}  a(x) d x \\[.2cm] \nonumber & \leq
& (1-\bar{\psi}_\alpha( r+|y|)) + \frac{m^a_{d+1}}{r^{d+1}},
\end{eqnarray}
where the second term of the last line was obtained by Markov's
inequality and (\ref{C8}) together with the estimate
$1-\bar{\psi}_\alpha (r) \leq 1$. Now we set in (\ref{U15}) $r =
\alpha^{-1/(d+2)}$ and obtain
\begin{equation*}
 % \label{U16}
h^{(1)}_\alpha (y) \leq \frac{\alpha^{1/(d+2)}(1+|y|)^{d+1}} {1+
\alpha^{1/(d+2)}(1+|y|)^{d+1}} + m^a_{d+1} \alpha^{\frac{d+1}{d+2}}.
\end{equation*}
Hence, for each $y$, $h^{(1)}_\alpha (y)\to 0$ as $\alpha \to 0$.
Then by Lebesgue's dominated convergence theorem and (\ref{U14}) and
(\ref{u10}) we conclude that $Y^{(1)}_\alpha (s) \to 0$ as $\alpha
\to 0$, holding for all $s\leq s_0$.

Now we turn to (\ref{U9}) by which we get
\begin{equation*}
 % \label{U17}
 h^{(2)}(y) = \int_{\mathds{R}^d} \tilde{\psi}_\alpha(y) a(x-y) d x =
 \tilde{\psi}_\alpha(y)=: 1 - \psi_\alpha (y),
\end{equation*}
and $g_s^{(2)}(y) = g_s^{(1)}(y)$. Hence, also $Y^{(2)}_\alpha (s)
\to 0$ as $\alpha \to 0$, holding for all $s\leq s_0$, which by
(\ref{U7}) yields the proof of (\ref{U4a}) for $t+s$ with $s\leq
s_0$ whenever it holds for $t$. To complete the proof let us
consider the following sequences, cf. (\ref{U6}),
\begin{gather}
  \label{U18}
t_l = t_{l-1}+s_{0l}, \qquad t_0=0, \ \ l\in \mathds{N}, \\[.2cm]
s_{0l} = \epsilon \tau(\vartheta_{t_{l-1}})/2. \nonumber
\end{gather}
Since $k_0^\alpha = k_0 = k_{\mu}$, the proof made above yields the
stated convergence for $t\leq \sup_l t_l = \lim_l t_l$. Thus, our
aim is to show that $t_l \to +\infty$ as $l\to +\infty$. Assume that
$\sup_l t_l = t_*<\infty$. By the first line in (\ref{U18}) we have
that $t_l = s_{01} +\cdots s_{0l}$ and hence $s_{0l} \to 0$ in this
case. Now we pass in the second line of (\ref{U18}) to the limit
$l\to +\infty$ ($\tau$ is continuous) and get that $t_*$ should
satisfy $\tau(\vartheta_{t_*})=\tau(\vartheta_0 + t_{*})=0$, which
is impossible as $\tau(\vartheta)>0$ for all $\vartheta\in
\mathds{R}$. This completes the proof with $\tilde{\vartheta}_t =
\bar{\vartheta}_t$.
\end{proof}
\vskip.1cm \noindent {\it Proof of Lemma \ref{U3lm}.} By Lemma
\ref{U2lm} and (\ref{Lenard2}) it follows that $\mu^\alpha_t (F) \to
\mu_t(F)$ as $\alpha\to 0$, holding for all $F\in \mathcal{F}$, see
(\ref{WA}). Then the proof follows by the fact that
$\widetilde{\mathcal{F}}\subset \mathcal{F}$, see (\ref{WA1}), and
claim (ii) of Proposition \ref{T1pn}. \hfill $\square$ \vskip.1cm
\noindent Below we us the following fact, that can be considered as
a complement to Lemma \ref{U3lm}.
\begin{lemma}
  \label{U40lm}
Assume that a sequence $\{\nu_n\}_{n\in \mathds{N}}\subset
\mathcal{P}_{\rm exp}^\vartheta$, $\vartheta \in \mathds{R}$, cf.
(\ref{C30A}), satisfy $\nu_n \Rightarrow \nu$ as $n\to +\infty$ for
some $\nu \in \mathcal{P}(\Gamma_*)$. Then $\nu \in \mathcal{P}_{\rm
exp}^\vartheta$. Furthermore, for each $G\in
\cap_{\vartheta}\mathcal{G}_\vartheta$, it follows that
\begin{equation}
  \label{QQa}
 \langle \! \langle k_{\nu_n}, G \rangle \! \rangle \to \langle \!
\langle k_{\nu}, G \rangle \! \rangle, \qquad n\to +\infty.
\end{equation}
\end{lemma}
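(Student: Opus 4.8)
The plan is to deduce both assertions from a single passage to the limit in the (unbounded but continuous) factorial‑moment functions, exploiting that all $\nu_n$, having type $\le e^\vartheta$, enjoy uniform moment bounds.

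\emph{The core estimate (and the main obstacle).} For $m\in\mathds{N}$ and $\theta_1,\dots,\theta_m\in\varTheta_\psi^{+}$ consider the factorial‑moment function $F^{\theta_1,\dots,\theta_m}(\gamma)=\sum_{(x_1,\dots,x_m)\ \mathrm{distinct}}\theta_1(x_1)\cdots\theta_m(x_m)$, which is the $\tau\downarrow0$ limit of $\widehat{F}_\tau^{\theta_1,\dots,\theta_m}$ from (\ref{TH1}). By the inclusion–exclusion identity used in the proof of Proposition \ref{TH1pn}, it is a polynomial in the functions $\gamma\mapsto\sum_{x\in\gamma}(\theta_{i_1}\cdots\theta_{i_s})(x)$, each weakly continuous on $\Gamma_*$ by Remark \ref{N1rk} (products of elements of $\varTheta_\psi$ stay in $\varTheta_\psi$); hence $F^{\theta_1,\dots,\theta_m}\in C(\Gamma_*)$. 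The delicate point is its uniform integrability along $\{\nu_n\}$: from $\theta_j\le\bar c_{\theta_j}\psi$ (\ref{C801}) one gets $F^{\theta_1,\dots,\theta_m}\le\bigl(\prod_j\bar c_{\theta_j}\bigr)\Psi^m$, while (\ref{N7}) yields $\sup_n\nu_n(\Psi^{2m})\le T_{2m}(e^\vartheta\langle\psi\rangle)<\infty$. Thus $\sup_n\nu_n\bigl((F^{\theta_1,\dots,\theta_m})^2\bigr)<\infty$, so these functions are uniformly integrable, and by the standard criterion for weak convergence with uniformly bounded second moments, $\nu_n(F^{\theta_1,\dots,\theta_m})\to\nu(F^{\theta_1,\dots,\theta_m})$; that is, $\chi_{\nu_n}^{(m)}(\theta_1\otimes\cdots\otimes\theta_m)\to\chi_\nu^{(m)}(\theta_1\otimes\cdots\otimes\theta_m)$, the limit being finite. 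This is the step that genuinely forces the common type bound $\le e^\vartheta$.

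\emph{Membership.} Specializing to $\theta_1=\cdots=\theta_m=\theta$ and using $\nu_n(F_m^\theta)=\chi_{\nu_n}^{(m)}(\theta^{\otimes m})\le e^{\vartheta m}\|\theta\|_{L^1(\mathds{R}^d)}^m$ (valid by (\ref{I3})) gives, in the limit, $\chi_\nu^{(m)}(\theta^{\otimes m})\le e^{\vartheta m}\|\theta\|_{L^1(\mathds{R}^d)}^m$ for all $\theta\in\varTheta_\psi^{+}$. Writing a nonnegative $\theta\in C_{\rm cs}(\mathds{R}^d)$ as the decreasing limit of $\theta+\epsilon\psi\in\varTheta_\psi^{+}$ as $\epsilon\downarrow0$ extends this to (\ref{Len}) with $\varkappa=e^\vartheta$; by Remark \ref{Len1rk} this is exactly (\ref{I3}) with the same $\varkappa$, so $\nu$ has finite correlations and $\nu\in\mathcal{P}_{\rm exp}^\vartheta$. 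In particular $k_\nu^{(m)}$ exists with $\|k_\nu^{(m)}\|_{L^\infty}\le e^{\vartheta m}$.

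\emph{Upgrading and summing.} Since $\|k_{\nu_n}^{(m)}\|_{L^\infty}\le e^{\vartheta m}$ uniformly in $n$, and the products $\theta_1\otimes\cdots\otimes\theta_m$ with $\theta_j\in\varTheta_\psi^{+}$ have linear span dense in $L^1((\mathds{R}^d)^m)$ (the Stone–Weierstrass argument already used for (\ref{Ma22a}), cf. \cite{dB}), the convergence above upgrades to the weak‑$*$ convergence $k_{\nu_n}^{(m)}\to k_\nu^{(m)}$ in $L^\infty=(L^1)^*$, i.e. $\int k_{\nu_n}^{(m)}G^{(m)}\,dx\to\int k_\nu^{(m)}G^{(m)}\,dx$ for every $G^{(m)}\in L^1$ and every $m$. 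Finally, for $G\in\bigcap_\vartheta\mathcal{G}_\vartheta$ one writes
\[
\langle\!\langle k_\mu,G\rangle\!\rangle=\sum_{m=0}^\infty\frac{1}{m!}\int_{(\mathds{R}^d)^m}k_\mu^{(m)}(x_1,\dots,x_m)\,G^{(m)}(x_1,\dots,x_m)\,dx_1\cdots dx_m,
\]
and for every $\mu\in\mathcal{P}_{\rm exp}^\vartheta$ the $m$‑th summand is bounded by $\tfrac{e^{\vartheta m}}{m!}\|G^{(m)}\|_{L^1}$, a nonnegative series summing to $|G|_\vartheta<\infty$ and independent of $\mu$. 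Choosing $N$ so that the tail beyond $N$ is $<\varepsilon$ uniformly over $\{\nu_n\}\cup\{\nu\}$ and invoking the level‑wise convergence of the finitely many remaining terms, an $\varepsilon/3$‑argument yields $\langle\!\langle k_{\nu_n},G\rangle\!\rangle\to\langle\!\langle k_\nu,G\rangle\!\rangle$, which is (\ref{QQa}). The only genuinely nontrivial ingredient is the uniform integrability established in the core estimate, where the uniform exponential‑moment bound (\ref{N701})/(\ref{N7}) — and hence the common type $\le e^\vartheta$ — is indispensable.
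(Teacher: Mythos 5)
Your proof is correct, but it takes a genuinely different technical route from the paper's. The paper never integrates an unbounded function against the approximating measures: it tests the weak convergence only against the \emph{bounded} continuous truncations $\varPhi^m_\tau\in\widehat{\mathcal{F}}$ (Proposition \ref{TH1pn}), obtaining $\nu(\varPhi^m_\tau)=\lim_n\nu_n(\varPhi^m_\tau)\le \sup_n \nu_n(\varPhi_\tau^m) \le e^{m\vartheta}\|\theta\|^m_{L^1(\mathds{R}^d)}$, and then sends $\tau\downarrow 0$ by monotone convergence applied to the single limit measure $\nu$; this one-sided bound already yields $\nu\in\mathcal{P}_{\rm exp}^\vartheta$. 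The convergence (\ref{QQa}) is then read off first for $G$ with $KG\in\widehat{\mathcal{F}}$ (again only bounded continuous test functions are involved) and extended to all $G\in\cap_\vartheta\mathcal{G}_\vartheta$ by the density-and-tail arguments of the proof of (\ref{C101a}). You instead push the weak convergence through the \emph{unbounded} factorial-moment functions $F^{\theta_1,\dots,\theta_m}$ directly, which forces you to invoke the uniform second-moment bound $\sup_n\nu_n(\Psi^{2m})\le T_{2m}(e^\vartheta\langle\psi\rangle)$ from (\ref{N7}) together with the standard uniform-integrability criterion for weak convergence. This buys a stronger intermediate statement, namely the two-sided convergence $\chi^{(m)}_{\nu_n}(\theta_1\otimes\cdots\otimes\theta_m)\to\chi^{(m)}_{\nu}(\theta_1\otimes\cdots\otimes\theta_m)$ of all factorial moments, from which both claims of the lemma fall out in a unified way; the price is the extra moment machinery, which the paper's monotone-truncation trick avoids entirely (for the membership claim even a portmanteau/lower-semicontinuity bound would do, since only an upper estimate on $\nu(F^\theta_m)$ is needed). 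Your last two steps -- level-wise weak-$*$ convergence of $k^{(m)}_{\nu_n}$ in $L^\infty=(L^1)^*$ via the Stone--Weierstrass density, followed by the $\varepsilon/3$ summation using the uniform bound $e^{\vartheta m}\|G^{(m)}\|_{L^1}/m!$ -- are essentially the same extension mechanism the paper invokes by referring to the proof of (\ref{C101a}), so the two arguments reconverge there; note that the $L^1$-density of the span of the products $\theta_1\otimes\cdots\otimes\theta_m$ is a fact you inherit from the paper rather than prove, exactly as the paper itself does.
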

\begin{proof}
By assumption $\nu_n (F) \to \nu(F)$ for each $F\in
\widehat{\mathcal{F}}$, see (\ref{C800}) and Proposition
\ref{TH1pn}. By (\ref{TH1}), (\ref{Ma2}) and (\ref{Ma1}), for given
$m\in \mathds{N}$, $\theta\in \varTheta_\psi^+$ and $\tau\in (0,1]$,
we then get
\begin{gather*}
\nu (\varPhi^m_\tau ) \leq \sup_{n\in \mathds{N}} \nu_n \leq
(\varPhi^m_\tau ) e^{m \vartheta} \|\theta\|^m.
\end{gather*}
Then the proof of $\nu \in \mathcal{P}_{\rm exp}^\vartheta$ follows
by the monotone convergence theorem and  (\ref{Len}). The validity
of (\ref{QQa}) for $G$ such that $KG \in \widehat{\mathcal{F}}$
follows by the fact just mentioned, i.e., just because $\nu$ has a
correlation function. The extension of (\ref{QQa}) to all
$G\in\cap_{\vartheta}\mathcal{G}_\vartheta$ is then made by the same
arguments as the proof of (\ref{C101a}).
\end{proof}

\section{The Existence: Approximating Processes}
In this section, we prove Theorem \ref{3tm} by constructing path
measures for the models described by $L^\alpha$, $\alpha \in (0.,1]$
introduced in the preceding section. This will be done in a direct
way by means of the corresponding Markov transition functions.

\subsection{The Markov transition functions}

The transition functions in question will be obtained in the form
\begin{equation}
  \label{Markov}
p^{\alpha}_t(\gamma, \cdot) = S^\alpha (t) \delta_\gamma, \quad
t\geq 0, \ \ \alpha\in (0,1],
\end{equation}
where $\delta_\gamma$ is the Dirac measure with atom at $\gamma\in
\Gamma_*$ and $S^\alpha = \{S^\alpha (t)\}_{t\geq 0}$ is a
stochastic semigroup of linear operators, related to the Kolmogorov
operator $L^\alpha$. Hence, we begin by constructing $S^\alpha$.

\subsubsection{Stochastic semigroups} A more detailed presentation
of the notions and facts which we introduce here can be found in
\cite{hon,Banasiak,TV}.

Let $\mathcal{E}$ be an ordered real Banach space, and
$\mathcal{E}^{+}$ be a generating cone of its positive elements. Set
$\mathcal{E}^{+,1} = \{ x\in \mathcal{E}^{+}:
\|x\|_{\mathcal{E}}=1\}$ and assume that the norm is additive on
$\mathcal{E}^{+}$, i.e., $\|x+y\|_{\mathcal{E}} =
\|x\|_{\mathcal{E}} + \|y\|_{\mathcal{E}}$ whenever $x,y \in
\mathcal{E}^{+}$. In such spaces, there exists a positive linear
functional, $\varphi_{\mathcal{E}}$, such that
\begin{equation}
  \label{V5}
  \varphi_{\mathcal{E}} (x) = \|x\|_{\mathcal{E}}, \quad x \in
  \mathcal{E}^{+}.
\end{equation}
A $C_0$-semigroup, $S=\{S(t)\}_{t\geq 0}$, of bounded linear
operators on $\mathcal{E}$ is said to be stochastic (resp.
substochastic) if the following holds $\|S(t) x \|_{\mathcal{E}} =
1$ (resp. $\|S(t) x \|_{\mathcal{E}} \leq 1$) for all $t>0$ and
$x\in \mathcal{E}^{+,1}$. Let $\mathcal{D}\subset \mathcal{E}$ be a
dense linear subspace, $\mathcal{D}^{+} =\mathcal{D}\cap
\mathcal{E}^{+}$ and $(A,\mathcal{D})$, $(B,\mathcal{D})$ be linear
operators in $\mathcal{E}$. A paramount question of the theory of
stochastic semigroups is under which conditions the closure (resp.
an extension) of $(A+B,\mathcal{D})$ is the generator of a
stochastic semigroup. Classical works on this subject trace back to
Feller, Kato, Miyadera, etc, see \cite{hon,TV}. In the present work,
we will use a result of \cite{TV}, which we present now in the form
adapted to the context.

To proceed we need to further specify the properties of the space
$\mathcal{E}$.
\begin{assumption}
  \label{1ass}
There exists a linear subspace, $\widetilde{\mathcal{E}}\subset
\mathcal{E}$, which has the following properties:
\begin{itemize}
  \item[(i)] $\widetilde{\mathcal{E}}$ is dense in $\mathcal{E}$ in
  the norm
  $\|\cdot\|_{\mathcal{E}}$.
  \item[(ii)] There exists a norm,
  $\|\cdot\|_{\widetilde{\mathcal{E}}}$, on $\widetilde{\mathcal{E}}$
  that makes it a Banach space.
  \item[(iii)] $\widetilde{\mathcal{E}}^{+}:=\widetilde{\mathcal{E}}
  \cap \mathcal{E}^{+}$ is a generating cone in
  $\widetilde{\mathcal{E}}$; $\|\cdot\|_{\widetilde{\mathcal{E}}}$
  is additive on $\widetilde{\mathcal{E}}^{+}$ and hence there
  exists a linear functional, $\varphi_{\widetilde{\mathcal{E}}}$, on
$\widetilde{\mathcal{E}}$, such that
$\|x\|_{\widetilde{\mathcal{E}}} =
\varphi_{\widetilde{\mathcal{E}}}(x)$ whenever
$x\in\widetilde{\mathcal{E}}^{+}$, cf. (\ref{V5}).
  \item[(iv)] The cone $\widetilde{\mathcal{E}}^{+}$ is dense in
  $\mathcal{E}^{+}$.
\end{itemize}
\end{assumption}
For $\mathcal{D}$ as above, set $\widetilde{\mathcal{D}} = \{x\in
\mathcal{D}\cap \widetilde{\mathcal{E}}: A x \in
\widetilde{\mathcal{E}}\}$. Then $(A,\widetilde{\mathcal{D}})$ is
the \emph{trace} of $A$ in $\widetilde{\mathcal{E}}$. The next
statement is an adaptation of \cite[Theorem 2.7]{TV}.
\begin{proposition}[Thieme-Voigt]
  \label{TVpn}
Assume that:
\begin{itemize}
  \item[(i)] $-A:\mathcal{D}^{+} \to \mathcal{E}^{+}$ and $B:\mathcal{D}^{+}
\to \mathcal{E}^{+}$;
\item[(ii)] $(A,\mathcal{D})$ is the generator of a
substochastic semigroup, $S=\{S(t)\}_{t\geq 0}$, on $\mathcal{E}$
such that $S(t):\widetilde{\mathcal{E}} \to \widetilde{\mathcal{E}}$
for all $t\geq 0$ and the restrictions $S
(t)|_{\widetilde{\mathcal{E}}}$ constitute a $C_0$-semigroup on
$\widetilde{\mathcal{E}}$ generated by $(A,
\widetilde{\mathcal{D}})$;
\item[(iii)] $B:\widetilde{\mathcal{D}} \to \widetilde{\mathcal{E}}$ and
$\varphi_{\mathcal{E}} \left( (A+B) x\right) = 0$,  for $x\in
\mathcal{D}^{+}$;
\item[(iv)] there exist $c>0$ and $\varepsilon >0$ such that
\[
\varphi_{\widetilde{\mathcal{E}}} \left( (A+B) x\right) \leq c
\varphi_{\widetilde{\mathcal{E}}} (x) - \varepsilon \|A
x\|_{\mathcal{E}}, \qquad {\rm for}  \ \ x\in
\widetilde{\mathcal{D}}\cap \mathcal{E}^{+}.
\]
\end{itemize}
Then the closure of $(A+B,\mathcal{D})$ in $\mathcal{E}$ is the
generator of a stochastic semigroup, $S_{\mathcal{E}}=
\{S_{\mathcal{E}}(t)\}_{t\geq 0}$, on $\mathcal{E}$ which leaves
$\widetilde{\mathcal{E}}$ invariant. The restrictions
$S_{\widetilde{\mathcal{E}}}(t):=S_{\mathcal{E}}(t)|_{\widetilde{\mathcal{E}}}$,
$t\geq 0$ constitute a $C_0$-semigroup,
$S_{\widetilde{\mathcal{E}}}$, on $\widetilde{\mathcal{E}}$
generated by the trace of the generator of $S_{\mathcal{E}}$ in
$\widetilde{\mathcal{E}}$.
\end{proposition}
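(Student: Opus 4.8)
The plan is to deduce the statement from the Thieme--Voigt theorem \cite[Theorem 2.7]{TV} by checking that hypotheses (i)--(iv) are precisely its assumptions rewritten in the present notation. The construction proceeds on two levels: on the large space $\mathcal{E}$ one produces a \emph{stochastic} semigroup whose generator extends $(A+B,\mathcal{D})$, while on the finer space $\widetilde{\mathcal{E}}$ one obtains invariance and strong continuity. The role of each hypothesis is transparent once this is understood: (i) guarantees positivity of the perturbing operator and of the resolvent of $A$; (ii) supplies the unperturbed (sub)stochastic semigroup together with its good behaviour on $\widetilde{\mathcal{E}}$; (iii) is the conservativity (mass-preservation) condition that upgrades \emph{sub}stochastic to stochastic; and (iv) is the Lyapunov-type dissipativity estimate in $\widetilde{\mathcal{E}}$ that controls the graph norm $\|Ax\|_{\mathcal{E}}$ and forces the perturbed evolution to remain in, and be continuous on, the finer space.

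First I would carry out the construction on $\mathcal{E}$. Since $(A,\mathcal{D})$ generates a substochastic semigroup, for $\lambda>0$ the resolvent $(\lambda-A)^{-1}$ exists, is positive, and satisfies $\|\lambda(\lambda-A)^{-1}\|_{\mathcal{E}}\le 1$. By hypothesis (i) the operator $B(\lambda-A)^{-1}$ is positive, and the additivity of the norm on $\mathcal{E}^{+}$ together with (\ref{V5}) and (iii) yields $\varphi_{\mathcal{E}}\big(B(\lambda-A)^{-1}x\big)\le \varphi_{\mathcal{E}}(x)$ for $x\in\mathcal{E}^{+}$, so that $B(\lambda-A)^{-1}$ has norm at most one on the cone. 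The classical Kato--Voigt additive perturbation argument then shows that the Neumann-type series
\begin{equation*}
R_\lambda = \sum_{n=0}^{\infty}(\lambda-A)^{-1}\big[B(\lambda-A)^{-1}\big]^{n}
\end{equation*}
converges strongly to a positive operator which is the resolvent of a closed extension $G$ of $(A+B,\mathcal{D})$, and that $G$ generates a substochastic semigroup $S_{\mathcal{E}}=\{S_{\mathcal{E}}(t)\}_{t\ge 0}$. The conservativity identity in (iii), $\varphi_{\mathcal{E}}((A+B)x)=0$ on $\mathcal{D}^{+}$, is exactly what rules out any mass defect in the series, so that $\lambda R_\lambda$ is stochastic and hence $S_{\mathcal{E}}$ is stochastic; by the Trotter--Kato/Voigt identification $G$ is in fact the closure of $(A+B,\mathcal{D})$.

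Next I would transfer the construction to $\widetilde{\mathcal{E}}$. Here Assumption \ref{1ass} supplies the second Banach-space structure (density, a generating cone, an additive norm with functional $\varphi_{\widetilde{\mathcal{E}}}$). The decisive input is the estimate (iv): rewritten as $\varphi_{\widetilde{\mathcal{E}}}((A+B)x)+\varepsilon\|Ax\|_{\mathcal{E}}\le c\,\varphi_{\widetilde{\mathcal{E}}}(x)$, it is a subinvariance inequality that makes $\widetilde{\mathcal{E}}^{+}$, and hence $\widetilde{\mathcal{E}}$, invariant under $R_\lambda$ for $\lambda>c$, and simultaneously dominates the graph norm of $A$ by the $\widetilde{\mathcal{E}}$-norm along the orbit. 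Invariance of the resolvent yields invariance of $S_{\mathcal{E}}(t)$, and the graph-norm control gives the equicontinuity needed for the restrictions $S_{\mathcal{E}}(t)|_{\widetilde{\mathcal{E}}}$ to form a $C_0$-semigroup $S_{\widetilde{\mathcal{E}}}$ on $\widetilde{\mathcal{E}}$. Identifying its generator with the trace of $G$ in $\widetilde{\mathcal{E}}$ is then a routine resolvent computation using (ii).

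I expect the main obstacle to be the simultaneous bookkeeping of the two norms in the third step: one must show that the dissipation term $-\varepsilon\|Ax\|_{\mathcal{E}}$ in (iv) genuinely prevents the perturbed orbit from leaving $\widetilde{\mathcal{E}}$ and supplies \emph{strong} continuity there, rather than mere boundedness. This is precisely the point at which \cite[Theorem 2.7]{TV} does the heavy lifting, so in practice the proof reduces to a careful, line-by-line verification that hypotheses (i)--(iv) coincide with the assumptions of that theorem; once they are matched, the conclusion follows verbatim.
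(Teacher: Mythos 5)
Your overall strategy coincides with the paper's: Proposition \ref{TVpn} is stated there without any proof, being presented as an adaptation of \cite[Theorem 2.7]{TV}, so ``match the hypotheses and invoke that theorem'' is exactly what the paper does, and your closing paragraph acknowledges that this is where the heavy lifting happens.

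However, your sketch of the internal mechanism contains a genuine conceptual error in the division of labour between hypotheses (iii) and (iv). You claim that the conservativity identity in (iii), $\varphi_{\mathcal{E}}\left((A+B)x\right)=0$ on $\mathcal{D}^{+}$, ``is exactly what rules out any mass defect in the series'', so that $S_{\mathcal{E}}$ is stochastic and its generator is the closure of $(A+B,\mathcal{D})$, while (iv) is needed only to keep the orbit inside $\widetilde{\mathcal{E}}$ and obtain strong continuity there. This is backwards. In Kato--Voigt perturbation theory, positivity of $B$ together with (iii) yields only a \emph{substochastic} semigroup whose generator is some extension of $(A+B,\mathcal{D})$, possibly a proper one: mass can still leak (dishonesty), and the operator obtained from the Neumann series $\sum_{n}(\lambda-A)^{-1}\left[B(\lambda-A)^{-1}\right]^{n}$ need not be a stochastic resolvent even though $\varphi_{\mathcal{E}}\left((A+B)x\right)=0$ holds identically on $\mathcal{D}^{+}$. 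This is the central subtlety of honesty theory, see \cite{hon,Banasiak}, and the paper states it explicitly in Remark \ref{1rk}: without item (iv) one can only guarantee that an \emph{extension} of $(A+B,\mathcal{D})$ generates a \emph{substochastic} semigroup. It is the dissipative term $-\varepsilon\|Ax\|_{\mathcal{E}}$ in (iv) that simultaneously forces honesty (stochasticity, and identification of the generator with the closure) on $\mathcal{E}$ and provides the control of the finer space; it is not merely a device for $\widetilde{\mathcal{E}}$. If you carried out your plan as written, establishing stochasticity and the closure property from (iii) alone before ever touching (iv), the argument would fail at precisely that step.
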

\begin{remark}
  \label{1rk}
Without assuming item (iv) above one can only guarantee that an
extension of $(A+B,\mathcal{D})$ is the generator of a substochastic
semigroup on $\mathcal{E}$, which corresponds to a dishonesty of the
evolution described by this semigroup. More on this item can be
found in \cite{hon}.
\end{remark}
Now we turn to constructing the semigroups $S^\alpha$.

\subsubsection{The Banach spaces of measures}

Let $\mathcal{M}$ be the linear space of finite signed measures on
$(\Gamma, \mathcal{B}(\Gamma))$, see \cite[Chapter 4]{Cohn}. That
is, $\mu\in \mathcal{M}$ is a $\sigma$-additive map
$\mu:\mathcal{B}(\Gamma) \to \mathds{R}$ which takes only finite
values. By $\mathcal{M}^{+}$ we denote the set of all such $\mu$
that take only nonnegative values. Then the Jordan decomposition of
$\mu$ is the unique representation $\mu = \mu^{+} - \mu^{-}$ with
$\mu^{\pm} \in \mathcal{M}^{+}$. Thus, $\mathcal{M}^{+}$ is a
generating cone. Set $|\mu| = \mu^{+} + \mu^{-}$. Then
\begin{equation}
  \label{Ma40}
\|\mu\|:=|\mu|(\Gamma)
\end{equation}
is a norm, that is clearly additive on $\mathcal{M}^{+}$. By
\cite[Proposition 4.1.8, page 119]{Cohn} with this norm
$\mathcal{M}$ is a Banach space. Let $\Psi_1$ be the function
defined in (\ref{C3}). For $n\in \mathds{N}$, let $\mathcal{M}_n$ be
the subset of $\mathcal{M}$ consisting of all those $\mu$ for which
$\Psi_1^n \mu$ are finite signed measures. Recall that $\Psi_1 = 1
+\Psi$, see (\ref{Psi}). We equip $\mathcal{M}_n$ with the norm
\begin{equation}
  \label{V6}
  \|\mu\|_n = \int_{\Gamma} \Psi_1^n (\gamma) |\mu|(d \gamma)=: \varphi_n (|\mu|).
\end{equation}
By the same \cite[Proposition 4.1.8, page 119]{Cohn} with this norm
$\mathcal{M}_n$ is a Banach space. Now for $\beta>0$, let
$\mathcal{M}_\beta$ be the subset of $\mathcal{M}$ the elements of
which remain finite measures being multiplied by $\exp(\beta \Psi_0
(\gamma))$. We equip it with the norm
\begin{equation*}
  %\label{V9}
 \|\mu\|_\beta = \int_{\Gamma} \exp(\beta \Psi_0 (\gamma)) |\mu|
 (d\gamma) =: \varphi_\beta (|\mu|).
\end{equation*}
Then also $(\mathcal{M}_\beta, \|\cdot \|_\beta)$ is a Banach space.
By (\ref{N1}) and (\ref{C4}) it follows that
\begin{equation*}
 % \label{V7}
\forall \mu \in \mathcal{M}_1 \qquad |\mu|(\Gamma_*) =
|\mu|(\Gamma).
\end{equation*}
That is, for each $\mu \in \mathcal{M}_1$, it follows that
$|\mu|(\Gamma_*^c) =0$. Define
\begin{equation*}
 % \label{V7a}
 \mathcal{M}_* = \{ \mu \in \mathcal{M}: |\mu|(\Gamma_*^c) =0\}.
\end{equation*}
Thus, $\mathcal{M}_1 \subset \mathcal{M}_*$. Obviously, also all
$\mathcal{M}_n$ and $\mathcal{M}_\beta$ have the same property. For
a subset, $\mathcal{M}'\subset \mathcal{M}$, let
$\overline{\mathcal{M}'}$ denote its closure in $\|\cdot\|$ defined
in (\ref{Ma40}).
\begin{lemma}
  \label{V1lm}
For each $n\in \mathds{N}$ and $\beta >0$, it follows that
\begin{equation}
  \label{V7b}
 \overline{\mathcal{M}_n} =  \overline{\mathcal{M}_\beta} =
 \mathcal{M}_*.
\end{equation}
\end{lemma}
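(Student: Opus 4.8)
The plan is to establish both equalities in (\ref{V7b}) at once by squeezing each of $\mathcal{M}_n$ and $\mathcal{M}_\beta$ between $\mathcal{M}_*$ and its own $\|\cdot\|$-closure. Concretely I would prove three facts: (i) $\mathcal{M}_n\subset\mathcal{M}_*$ and $\mathcal{M}_\beta\subset\mathcal{M}_*$; (ii) $\mathcal{M}_*$ is closed in the total variation norm $\|\cdot\|$ of (\ref{Ma40}); and (iii) $\mathcal{M}_*\subset\overline{\mathcal{M}_n}\cap\overline{\mathcal{M}_\beta}$. Facts (i) and (ii) together give $\overline{\mathcal{M}_n},\overline{\mathcal{M}_\beta}\subset\mathcal{M}_*$, while (iii) supplies the reverse inclusions, and these combine to (\ref{V7b}). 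The only substantial input is a single truncation construction serving $\mathcal{M}_n$ and $\mathcal{M}_\beta$ simultaneously.

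For (i), observe that $\Psi_1=1+\Psi\geq 1$ and $\exp(\beta\Psi_0)\geq 1$, while both weights equal $+\infty$ precisely on $\Gamma_*^c=\{\gamma:\Psi(\gamma)=+\infty\}$, see (\ref{Psi}) and (\ref{N1}). Hence finiteness of $\|\mu\|_n$ (resp.\ $\|\mu\|_\beta$) forces the relevant integrand to be $|\mu|$-integrable, which is possible only if $|\mu|(\Gamma_*^c)=0$; that is, $\mu\in\mathcal{M}_*$. For (ii), suppose $\mu_k\to\mu$ in $\|\cdot\|$ with $\mu_k\in\mathcal{M}_*$. For every measurable $\mathbb{A}\subset\Gamma_*^c$ one has $|\mu(\mathbb{A})|\leq\|\mu-\mu_k\|+|\mu_k(\mathbb{A})|=\|\mu-\mu_k\|\to 0$, since $|\mu_k|(\Gamma_*^c)=0$ gives $\mu_k(\mathbb{A})=0$. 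Taking the supremum of $\sum_i|\mu(\mathbb{A}_i)|$ over finite measurable partitions $\{\mathbb{A}_i\}$ of $\Gamma_*^c$ yields $|\mu|(\Gamma_*^c)=0$, so $\mu\in\mathcal{M}_*$ and $\mathcal{M}_*$ is $\|\cdot\|$-closed.

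The crucial step is (iii). Given $\mu\in\mathcal{M}_*$, set $\mathbb{A}_N=\{\gamma\in\Gamma:\Psi(\gamma)\leq N\}$, which is measurable because $\Psi$ is the increasing pointwise limit of the vaguely continuous functions $\Psi_n$ (see the discussion following (\ref{Psi})), and define the truncation $\mu_N:=\mathds{1}_{\mathbb{A}_N}\mu$. On $\mathbb{A}_N$ the weights are bounded, $\Psi_1^n\leq(1+N)^n$ and $\exp(\beta\Psi_0)\leq e^{\beta N}$, so that $\|\mu_N\|_n\leq(1+N)^n\|\mu\|$ and $\|\mu_N\|_\beta\leq e^{\beta N}\|\mu\|$; thus $\mu_N\in\mathcal{M}_n\cap\mathcal{M}_\beta$. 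Since $\mu\in\mathcal{M}_*$ means $|\mu|$ is concentrated on $\Gamma_*=\bigcup_N\mathbb{A}_N$, continuity from below gives $|\mu|(\mathbb{A}_N)\uparrow|\mu|(\Gamma_*)=\|\mu\|$, whence $\|\mu-\mu_N\|=|\mu|(\Gamma\setminus\mathbb{A}_N)\to 0$. Therefore $\mu\in\overline{\mathcal{M}_n}\cap\overline{\mathcal{M}_\beta}$, which completes (iii) and hence the proof.

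As for the main obstacle, there is essentially none of real depth: the whole argument rests on the observation that the defining weights blow up exactly on $\Gamma_*^c$ yet are bounded on each sublevel set $\{\Psi\leq N\}$. The only points requiring a little care are the measurability of the sublevel sets $\mathbb{A}_N$ (already supplied by the paper's construction of $\Gamma^\psi$) and the fact that total variation convergence controls $|\mu|$, and not merely $\mu$, on $\Gamma_*^c$; the latter I handle through the partition characterization of $|\mu|$ in step (ii).
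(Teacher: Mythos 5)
Your proposal is correct and follows essentially the same route as the paper: both rest on the two observations that a total-variation limit of measures vanishing on $\Gamma_*^c$ again vanishes there (you via the partition characterization of $|\mu|$, the paper via a Hahn decomposition of $\mu-\mu_n$), and that truncating $\mu\in\mathcal{M}_*$ to the sublevel sets $\{\Psi\leq N\}$ gives approximants in $\mathcal{M}_n\cap\mathcal{M}_\beta$ converging in $\|\cdot\|$ (you via continuity from below of $|\mu|$, the paper via Jordan decomposition and dominated convergence). The only cosmetic difference is that the paper first reduces to $\mathcal{M}_\beta$ using $\mathcal{M}_\beta\subset\mathcal{M}_n$, whereas you treat both spaces with one truncation; the substance is identical.
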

\begin{proof}
Obviously, for each $n\in \mathds{N}$ and $\beta >0$, the following
holds $\mathcal{M}_\beta \subset \mathcal{M}_n$. Then it is enough
to prove the validity of (\ref{V7b}) for $\mathcal{M}_\beta$. Let us prove
the inclusion $\overline{\mathcal{M}_\beta} \subset
\mathcal{M}_{*}$. For a given $\mu\in \overline{\mathcal{M}_\beta}$,
let $\{\mu_n\}_{n\in \mathds{N}} \subset \mathcal{M}_\beta$ be a
sequence such that $\|\mu - \mu_n\| \to 0$. Fix $n$ and let
then $\Gamma = \mathbb{P}\cup \mathbb{N}$ be the Hahn decomposition
for $\mu - \mu_n$, i.e.,  $\mu(\mathbb{A}) \geq \mu_n(\mathbb{A})$
for each $\mathbb{A}\subset \mathbb{P}$, and $\mu(\mathbb{A}) \leq
\mu_n(\mathbb{A})$ for each $\mathbb{A}\subset \mathbb{N}$. Then
\begin{gather*}
\|\mu -\mu_n \| = (\mu - \mu_n)(\mathbb{P}) + (\mu_n -
\mu)(\mathbb{N}) \geq (\mu - \mu_n)(\mathbb{P}\cap\Gamma_*^c) +
(\mu_n - \mu)(\mathbb{N}\cap\Gamma_*^c) \\[.2cm]
= \mu (\mathbb{P}\cap\Gamma_*^c) - \mu(\mathbb{N}\cap\Gamma_*^c) =
\mu^{+} (\Gamma^c_*) + \mu^{-} (\Gamma^c_*) =  |\mu|(\Gamma_*^c),
\end{gather*}
where we have taken into account that $|\mu_n|(\Gamma_*^c)=0$. Then the assumed convergence $\mu_n\to \mu$ yields that
$\mu\in \mathcal{M}_*$. To prove the opposite inclusion we take an
arbitrary $\mu\in \mathcal{M}_*$ and write its Jordan decomposition
$\mu = \mu^{+} - \mu^{-}$. For a given $n\in \mathds{N}$, let $I_n$
be the indicator of the set $\Gamma_{*,n}$ defined in (\ref{N1}).
Then both $\mu^{\pm}_n := I_{n} \mu^{\pm}$ are in
$\mathcal{M}_\beta$. At the same time, by (\ref{N1}) the sequence of
function $J_{n} (\gamma) := 1 - I_{n}(\gamma)$ converges to zero
pointwise on $\Gamma_*$. Since $\mu\in \mathcal{M}_*$, we have
\begin{gather}
\label{V7d} \|\mu^{\pm}-\mu_n^{\pm}\| = \int_{\Gamma} J_{n} (\gamma)
\mu^{\pm} ( d\gamma) = \int_{\Gamma_*}J_{n} (\gamma) \mu^{\pm} (
d\gamma) \to 0, \quad {\rm as}  \ \ n\to +\infty.
\end{gather}
By the triangle inequality we then obtain that $\|\mu -\mu_n\| \to
0$, where $\mu_n := \mu^{+}_n - \mu^{-}_n \in \mathcal{M}_\beta$.
\end{proof}
By the very definition of the spaces $\mathcal{M}_n$,
$\mathcal{M}_\beta$ and $\mathcal{M}_*$, we conclude that they have
generating cones of positive elements consisting of those $\mu$ that take
nonnegative values only.
\begin{corollary}
  \label{V1co}
The set $\mathcal{M}_*$ equipped with the norm $\|\cdot\|$ defined
in (\ref{Ma40}) is a Banach space. Let $\mathcal{M}_*^{+}$ be its
cone of positive elements. Then for each $n\in \mathds{N}$ and
$\beta>0$, it follows that
\[
 \overline{\mathcal{M}_n^{+}} =  \overline{\mathcal{M}_\beta^{+}} =
 \mathcal{M}_*^{+},
\]
where we mean the closure in the norm of $\mathcal{M}_*$.
\end{corollary}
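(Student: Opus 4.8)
The plan is to piggyback on Lemma \ref{V1lm}, which already identifies $\mathcal{M}_*$ with the $\|\cdot\|$-closure $\overline{\mathcal{M}_\beta}$, and to reuse the explicit truncation approximants built in its proof. For the first assertion I would simply note that $\mathcal{M}_*$ is a linear subspace of the Banach space $(\mathcal{M},\|\cdot\|)$, see (\ref{Ma40}) and \cite[Proposition 4.1.8, page 119]{Cohn}. Since Lemma \ref{V1lm} gives $\mathcal{M}_* = \overline{\mathcal{M}_\beta}$, the set $\mathcal{M}_*$ coincides with the closure of a linear subspace and is therefore itself $\|\cdot\|$-closed. A closed linear subspace of a Banach space is complete, so $(\mathcal{M}_*,\|\cdot\|)$ is a Banach space, and closures taken in $\mathcal{M}_*$ agree with those taken in $\mathcal{M}$ for subsets of $\mathcal{M}_*$.

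For the cone equalities I would first record the inclusions. Because the exponential weight dominates every power $\Psi_1^n$ one has $\mathcal{M}_\beta \subset \mathcal{M}_n \subset \mathcal{M}_*$ (as already observed in the proof of Lemma \ref{V1lm}), and each inclusion preserves positivity, whence $\mathcal{M}_\beta^{+} \subset \mathcal{M}_n^{+} \subset \mathcal{M}_*^{+}$. Passing to $\|\cdot\|$-closures yields
\[
\overline{\mathcal{M}_\beta^{+}} \subset \overline{\mathcal{M}_n^{+}} \subset \overline{\mathcal{M}_*^{+}}.
\]
The argument then splits into two halves that pinch this chain. The upper end is handled by showing $\mathcal{M}_*^{+}$ is already closed: the full positive cone $\mathcal{M}^{+}$ is $\|\cdot\|$-closed because total-variation convergence $\|\mu_k-\mu\|\to 0$ forces setwise convergence, so $\mu(\mathbb{A}) = \lim_k \mu_k(\mathbb{A}) \geq 0$ for every $\mathbb{A}\in\mathcal{B}(\Gamma)$ whenever each $\mu_k \geq 0$; then $\mathcal{M}_*^{+} = \mathcal{M}_* \cap \mathcal{M}^{+}$ is the intersection of two closed sets, giving $\overline{\mathcal{M}_*^{+}} = \mathcal{M}_*^{+}$.

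The lower end is the reverse inclusion $\mathcal{M}_*^{+} \subset \overline{\mathcal{M}_\beta^{+}}$, and here I would reuse the truncation from the proof of Lemma \ref{V1lm}. For $\mu \in \mathcal{M}_*^{+}$ the Jordan decomposition degenerates ($\mu^{-}=0$), so the approximants $\mu_n := I_n\mu$ lie in $\mathcal{M}_\beta$ and are manifestly nonnegative, hence belong to $\mathcal{M}_\beta^{+}$, while $\|\mu-\mu_n\| \to 0$ by (\ref{V7d}). Combining the two halves gives $\mathcal{M}_*^{+} \subset \overline{\mathcal{M}_\beta^{+}} \subset \overline{\mathcal{M}_n^{+}} \subset \overline{\mathcal{M}_*^{+}} = \mathcal{M}_*^{+}$, forcing all four sets to coincide. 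There is no serious obstacle: the only point demanding a touch of care is the closedness of the positive cone in the total-variation norm, which reduces to the elementary fact that total-variation convergence implies setwise convergence; everything else is a transcription of Lemma \ref{V1lm} together with the observation that, for a positive measure, the truncated approximants are automatically positive.
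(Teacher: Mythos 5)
Your proposal is correct and follows essentially the same route as the paper: the Banach space claim is read off from Lemma \ref{V1lm} (i.e.\ from (\ref{V7b}), since $\mathcal{M}_*$ is then a closed linear subspace of $\mathcal{M}$), and the cone equalities come from the truncation construction of (\ref{V7d}), which for a positive $\mu$ produces positive approximants $I_n\mu\in\mathcal{M}_\beta^{+}$. The only point you spell out that the paper leaves implicit is the $\|\cdot\|$-closedness of the positive cone (via setwise convergence under total-variation convergence), which is indeed needed to pinch the chain of inclusions and is handled correctly.
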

\begin{proof}
 The first part of the statement follows directly by (\ref{V7b}).
 The  second part is obtained by the construction used in
 (\ref{V7d}).
\end{proof}
Let $\mathcal{M}_*^{+,1}$ be the subset of $\mathcal{M}_*$
consisting of probability measures, i.e., for which $\|\mu\| =
\mu(\Gamma) = \mu(\Gamma_*) = 1$. Then by (\ref{C4}) it follows that
\begin{equation*}
  %\label{V8}
\mathcal{P}_{\rm exp} \subset \mathcal{M}_1^{+,1} \subset
\mathcal{M}_*^{+,1}.
\end{equation*}
By (\ref{N701}), for each $\beta >0$, we also have
\begin{equation*}
  %\label{V10}
\mathcal{P}_{\rm exp} \subset \mathcal{M}_\beta^{+,1} :=
\mathcal{M}_\beta \cap \mathcal{M}^{+,1}_* \subset \mathcal{M}_n
\cap \mathcal{M}^{+,1}_*, \quad {\rm for} \ {\rm all} \ n \in
\mathds{N} .
\end{equation*}

\subsubsection{The stochastic semigroup}

For a given $\alpha\in (0,1]$, set
\begin{equation}
  \label{NN3}
\Phi_\alpha (\gamma) = \sum_{x\in \gamma}\int_{\mathds{R}^d}
a_\alpha(x,y) \exp\left( - \sum_{z\in \gamma\setminus x} \phi(y-z)
\right) dy , \qquad \gamma \in \Gamma_*.
\end{equation}
Since $\psi(x) < \psi_\alpha(x) \leq \psi(x)/\alpha$, see
(\ref{C3}), for all $\alpha\in (0,1]$ we have
\begin{equation}
  \label{NN4}
\Psi(\gamma) < \Phi_\alpha (\gamma) \leq \Psi (\gamma)/\alpha,
\end{equation}
and hence $\Phi_\alpha (\gamma) < \infty$ for $\gamma \in \Gamma_*$.
Now let $L^\alpha$ be the corresponding Kolmogorov operator
(\ref{Lalpha}). Our aim is to define its `predual',
$L^{\dag,\alpha}$, acting according to the rule
\begin{equation}
  \label{V11}
 \mu (L^\alpha F) = (L^{\dag,\alpha} \mu) (F),
\end{equation}
for appropriate $\mu \in \mathcal{P}(\Gamma_*)$ and $F:\Gamma_* \to
\mathds{R}$, and then to use it to define the corresponding
operators acting in the spaces of measures just introduced.
Obviously,  we can restrict ourselves to  the elements of
$\mathcal{M}_*$. By (\ref{Lalpha}) and (\ref{N2a}) we thus obtain it
in the form
\begin{equation}
  \label{V12}
L^{\dag,\alpha} = A + B
\end{equation}
where $A$ is the multiplication operator by the function $-
\Phi_\alpha$ defined in (\ref{NN3}). In view of (\ref{NN4}) the
domain of $A$ is to be
\begin{equation}
  \label{V13}
 \mathcal{D} = \{ \mu \in \mathcal{M}_*: \Phi_\alpha \mu \in
 \mathcal{M}_*\} = \mathcal{M}_1.
\end{equation}
To define $B$ we introduce the following measure kernel
\begin{equation}
  \label{N10}
  \varOmega^\gamma_\alpha ({ \mathbb{A}}) = \sum_{x\in \gamma} \int_{\mathds{R}^d} a_\alpha
  (x ,y) \exp\left(- \sum_{z\in \gamma\setminus x} \phi(y-z) \right)
  \mathds{1}_{\mathbb{A}} (\gamma\setminus x \cup y) d y,
\end{equation}
with $\gamma \in \Gamma_*$ and $\mathbb{A}\in
\mathcal{B}(\Gamma_*)$. By (\ref{NN3}) we then have
\begin{equation}
  \label{N11}
\varOmega^\gamma_\alpha (\Gamma_*) = \Phi_\alpha (\gamma).
\end{equation}
Next, define
\begin{equation}
  \label{N12}
 (B \mu) (\mathbb{A}) = \int_{\Gamma_*} \varOmega^\gamma_\alpha (\mathbb{A}) \mu(d
 \gamma).
\end{equation}
Note that
\begin{equation}
  \label{N12a}
B: \mathcal{M}_1^{+}\to \mathcal{M}_*^{+}.
\end{equation}
Moreover, for $\mu\in \mathcal{M}^{+}_1$, by (\ref{N11}) and
(\ref{N12}) we have
\begin{equation}
  \label{N13}
\| B\mu\|=(B \mu) (\Gamma_*) = \int_{\Gamma_*} \Phi_\alpha (\gamma)
\mu( d\gamma) = - (A \mu)(\Gamma_*).
\end{equation}
Hence, we can take $\mathcal{M}_1$ as the domain of $B$ and then
define $L^{\dag,\alpha}$ by (\ref{V12}) with domain
$\mathcal{D}=\mathcal{M}_1$, see (\ref{V13}).

In the sequel, we will use one more property of $B$. By (\ref{N10}),
(\ref{N12}) and (\ref{V6}) we get
\begin{eqnarray}
  \label{K2a}
\varphi_n(B\mu) & = & \int_{\Gamma_*} \Psi^n_1 (\gamma) (B\mu)(d
\gamma)
\\[.2cm] \nonumber
& = & \int_{\Gamma_*}\left( \sum_{x\in \gamma} \int_{\mathds{R}^d}
a_\alpha (x,y) \exp\left(- \sum_{z\in \gamma\setminus x}
\phi(y-z)\right) \Psi^n_1(\gamma\setminus x \cup y) d y\right)
\mu(d\gamma).
\end{eqnarray}
By (\ref{C3}) it follows that
\begin{equation}
  \label{N16a}
 \Psi^n_1(\gamma\setminus x\cup y) = \left(\Psi_1(\gamma) + \psi(y) -
\psi(x) \right)^n \leq 2^n \Psi^n_1(\gamma).
\end{equation}
We apply this and (\ref{NN4}) in (\ref{K2a}) and obtain
\begin{equation*}
 % \label{K2z}
 \forall \mu\in \mathcal{M}_{n+1}^{+} \qquad \|B\mu\|_n =
 \varphi_n(B\mu) \leq (2^n/\alpha) \|\mu\|_{n+1}.
\end{equation*}
This yields the following extension of (\ref{N12a})
\begin{equation}
  \label{N12b}
B: \mathcal{M}_{n+1}^{+} \to \mathcal{M}_n^{+},
\end{equation}
holding for all $n\in \mathds{N}$. Since $\|A\mu\|_{n} \leq
\alpha^{-1} \|\mu\|_{n+1}$, by (\ref{N12b}) we also get
\begin{equation*}
  %\label{N12ba}
\forall n\in \mathds{N}_0 \qquad  L^{\dag,\alpha}: \mathcal{M}_{n+1}
\to \mathcal{M}_n,
\end{equation*}
that can be used to define the powers of $L^{\dag,\alpha}$
\begin{equation}
  \label{N12bb}
(L^{\dag,\alpha})^m :\mathcal{M}_{n+m} \to \mathcal{M}_n, \qquad
n\in \mathds{N}_0, \ m\in \mathds{N}.
\end{equation}
Here -- and in the sequel in similar expressions -- $\mathcal{M}_0$
(corresponding to $\mathcal{M}_n$ with $n=0$) is understood as
$\mathcal{M}_*$ Let us now define a bounded linear operator
$L^{\dag,\alpha}_{\beta'\beta}:\mathcal{M}_\beta  \to
\mathcal{M}_{\beta'} $, $\beta'<\beta$, the action of which is the
same as that of the unbounded operator $L^{\dag,\alpha}=A+B$ defined
in (\ref{V12}) and (\ref{N12}). For a given $\mu\in \mathcal{M}_*$,
let $\mu=\mu^{+}-\mu^{-}$ be its Jordan decomposition. Then
\[
L^{\dag,\alpha} \mu = \left( B\mu^{+} - A \mu^{-}\right) - \left(
B\mu^{-} - A \mu^{+}\right) =: \mu_1^{+} - \mu_1^{-}, \qquad
\mu^{\pm}_1 \in \mathcal{M}_*^{+}.
\]
This yields that
\begin{equation}
  \label{N12z}
\|L^{\dag,\alpha} \mu\|_{\beta'} \leq \|\mu_1^{+}\|_{\beta'} +
\|\mu^{-}_1\|_{\beta'} = \|B\mu^{+}\|_{\beta'} +
\|B\mu^{-}\|_{\beta'} + \|A\mu^{+}\|_{\beta'} +
\|A\mu^{-}\|_{\beta'},
\end{equation}
holding for all $\mu\in \mathcal{M}_\beta$. Here we have used the
additivity of the norms on the positive cone as well as the
positivity of $B$ and $-A$. By (\ref{NN4}) and the following evident
inequality $xe^{-\kappa x}\leq 1/ e \kappa$ holding for all positive
$x$ and $\kappa$, we obtain
\begin{equation}
  \label{N12w}
\Phi_\alpha (\gamma) \exp\left( \beta' \Psi_{0}(\gamma) \right) \leq
\frac{1}{\alpha e(\beta - \beta')}\exp\left( \beta \Psi(\gamma)
\right).
\end{equation}
By (\ref{N12w}) for $\mu\in \mathcal{M}^{+}_\beta$, we then get
\begin{equation}
  \label{N12v}
\|A \mu\|_{\beta'} \leq \frac{\|\mu\|_\beta}{\alpha e(\beta -
\beta')}.
\end{equation}
Next, similarly as in (\ref{K2a}) it follows that
\begin{eqnarray*}
  %\label{N12u}
& & \int_{\Gamma_*}\exp\left( \beta' \Psi (\gamma) \right) (B\mu)(d
\gamma)
\\[.2cm] \nonumber
& & \quad =  \int_{\Gamma_*}\bigg{(} \sum_{x\in \gamma}
\int_{\mathds{R}^d} a_\alpha (x,y) \exp\left(- \sum_{z\in
\gamma\setminus x}
\phi(y-z)\right) \\[.2cm] &  & \quad\times  \exp\left( \beta' \Psi (\gamma)  \right)
\exp\left( \beta'[\psi(y) - \psi(x)]\right) d y\bigg{)}
\nonumber
\mu(d\gamma) \\[.2cm] \nonumber & & \leq e^\beta  \int_{\Gamma_*}
\Phi_\alpha(\gamma)  \exp\left( \beta' \Psi (\gamma)  \right) \mu( d
\gamma) \leq \frac{e^\beta \|\mu\|_\beta}{\alpha e(\beta - \beta')}.
\end{eqnarray*}
We combine this estimate with (\ref{N12v}) and (\ref{N12z}) to
obtain
\begin{equation*}
  %\label{N12k}
\|L^{\dag,\alpha}_{\beta'\beta}\| \leq \frac{e^\beta + 1}{\alpha
e(\beta - \beta')}.
\end{equation*}
In a similar way, for each $n\in \mathds{N}$, we also obtain,  cf.
(\ref{A15}),
\begin{equation}
  \label{N12l}
\|(L^{\dag,\alpha})^n_{\beta'\beta}\| \leq \left(\frac{n}{e
T_\alpha(\beta,\beta')}\right)^n, \qquad T_\alpha(\beta , \beta') :=
\frac{\alpha(\beta - \beta')}{e^\beta +1}.
\end{equation}
By (\ref{N12bb}), for each $n\in \mathds{N}$ and $\mu \in
\mathcal{M}_\beta $, we have that $(L^{\dag,\alpha})^n\mu \in
\mathcal{M}_{\beta'}$, $\beta'<\beta$, and the following holds
\begin{equation}
  \label{N12m}
(L^{\dag,\alpha})_{\beta'\beta}^n \mu = (L^{\dag,\alpha})^n \mu.
\end{equation}
\begin{lemma}
  \label{C1lm}
For each $\alpha\in (0,1]$, the closure of $(L^{\dag,\alpha},
\mathcal{M}_1 )$ in $\mathcal{M}_*$ is the generator of a stochastic
semigroup, $S^\alpha =\{S^\alpha(t)\}_{t\geq 0}$, in $\mathcal{M}_*$
such that $S^\alpha(t):\mathcal{M}_n\to \mathcal{M}_n$ for each
$n\in \mathds{N}$. The restrictions $S^\alpha(t)|_{\mathcal{M}_n}$
constitute a $C_0$-semigroup on $\mathcal{M}_n$. Moreover, for each
$\beta >0$ and $\beta' \in (0,\beta)$, $S^\alpha (t) :
\mathcal{M}_\beta^{+}  \to \mathcal{M}_{\beta'}^{+} $ for $t <
T_\alpha(\beta, \beta')$, see (\ref{N12l}).
\end{lemma}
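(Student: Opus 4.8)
The plan is to deduce the statement from the Thieme--Voigt theorem (Proposition \ref{TVpn}), applied with $\mathcal{E}=\mathcal{M}_*$, positive cone $\mathcal{E}^{+}=\mathcal{M}_*^{+}$, the norm $\|\cdot\|$ of (\ref{Ma40}), and the positive functional $\varphi_{\mathcal{E}}(\mu)=\mu(\Gamma_*)$, which equals $\|\mu\|$ on $\mathcal{E}^{+}$. The operators are $A$ (multiplication by $-\Phi_\alpha$, see (\ref{NN3})) and $B$ of (\ref{N12}), with common domain $\mathcal{D}=\mathcal{M}_1$, see (\ref{V13}). I would fix an arbitrary $n\in\mathds{N}$ and take the auxiliary space $\widetilde{\mathcal{E}}=\mathcal{M}_n$ with norm $\|\cdot\|_n$ and functional $\varphi_n$ from (\ref{V6}). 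Then Assumption \ref{1ass} is furnished by Lemma \ref{V1lm} (density of $\mathcal{M}_n$ in $\mathcal{M}_*$) and Corollary \ref{V1co} (density of the cones), the additivity of $\varphi_n$ on $\mathcal{M}_n^{+}$ being evident.

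Next I would check hypotheses (i)--(iv) of Proposition \ref{TVpn}. Item (i) holds because $-A$ is multiplication by $\Phi_\alpha\geq 0$ and $B$ is positive by (\ref{N12a}), so both map $\mathcal{D}^{+}$ into $\mathcal{E}^{+}$. For (ii), the operator $A$ generates the multiplication semigroup $(S(t)\mu)(d\gamma)=e^{-t\Phi_\alpha(\gamma)}\mu(d\gamma)$, which is substochastic since $0<e^{-t\Phi_\alpha}\leq 1$; the same bound shows that $S(t)$ leaves each $\mathcal{M}_n$ invariant and, by dominated convergence, restricts to a $C_0$-semigroup there, generated by the trace of $A$. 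For (iii), the identity $\varphi_{\mathcal{E}}((A+B)\mu)=0$ for $\mu\in\mathcal{D}^{+}$ is exactly (\ref{N13}), while $B:\widetilde{\mathcal{D}}\to\mathcal{M}_n$ follows from (\ref{N12b}).

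The crux is the dissipativity estimate (iv). For $\mu\in\mathcal{M}_n^{+}$, combining (\ref{K2a}) with (\ref{N13}) gives
\[
\varphi_n\big((A+B)\mu\big)=\int_{\Gamma_*}\Big(\sum_{x\in\gamma}\int_{\mathds{R}^d}a_\alpha(x,y)\,e^{-\sum_{z\in\gamma\setminus x}\phi(y-z)}\big[\Psi_1^n(\gamma\setminus x\cup y)-\Psi_1^n(\gamma)\big]\,dy\Big)\mu(d\gamma).
\]
Here I would bound the positive part of the bracket by using $\Psi_1(\gamma\setminus x\cup y)\leq\Psi_1(\gamma)+\psi(y)$ (cf. (\ref{N16a})) together with $b^n-a^n=(b-a)\sum_{j=0}^{n-1}b^j a^{n-1-j}$, which yields $\Psi_1^n(\gamma\setminus x\cup y)-\Psi_1^n(\gamma)\leq n\,2^{\,n-1}\psi(y)\,\Psi_1^{n-1}(\gamma)$, since $0<\psi\leq 1$. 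Estimating $e^{-\sum_{z}\phi(y-z)}\leq 1$ and then invoking the convolution bound (\ref{C81}) for $a\ast\psi$ together with $\psi_\alpha\psi\leq\psi/\alpha$ and $\Psi\le\Psi_1$, I would obtain $\varphi_n((A+B)\mu)\leq(c_n/\alpha)\varphi_n(\mu)$ for a suitable constant $c_n$. Finally $\|A\mu\|=\int_{\Gamma_*}\Phi_\alpha\,d\mu\leq\alpha^{-1}\int_{\Gamma_*}\Psi\,d\mu\leq\alpha^{-1}\varphi_n(\mu)$ by (\ref{NN4}), so (iv) holds with $\varepsilon=1$ and $c=(c_n+1)/\alpha$. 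This is precisely where $\alpha>0$ enters: the factor $\psi_\alpha\leq\psi/\alpha$ keeps both the jump rate $\Phi_\alpha$ and the moment increase comparable to finite multiples of $\Psi$, which closes the estimate and fails at $\alpha=0$. I expect this step to be the main obstacle, since it is where the quantitative dependence on $\alpha$ must be tracked correctly.

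Proposition \ref{TVpn} then produces the stochastic semigroup $S^\alpha$ on $\mathcal{M}_*$ whose generator is the closure of $(L^{\dag,\alpha},\mathcal{M}_1)$, leaving $\mathcal{M}_n$ invariant with $C_0$-restriction. Since this generator does not depend on $n$, the same $S^\alpha$ serves for every $n$, giving invariance of all the $\mathcal{M}_n$. For the final assertion I would repeat the construction of Subsection \ref{SS4.2}: by (\ref{N12l}) and Stirling's bound $n^n/(n!\,e^n)=O(1)$ the series $\sum_{n\geq0}(t^n/n!)(L^{\dag,\alpha})^n_{\beta'\beta}$ converges in operator norm, for $t<T_\alpha(\beta,\beta')$, to a bounded operator $\mathcal{M}_\beta\to\mathcal{M}_{\beta'}$; through (\ref{N12m}) and the differential relations analogous to (\ref{A24}) it coincides with $S^\alpha(t)$ on $\mathcal{M}_\beta$. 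As $S^\alpha$ is positive, for $\mu\in\mathcal{M}_\beta^{+}$ one gets $S^\alpha(t)\mu\in\mathcal{M}_{\beta'}\cap\mathcal{M}_*^{+}=\mathcal{M}_{\beta'}^{+}$, which is the claimed mapping property.
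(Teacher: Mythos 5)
Your proposal follows the paper's own route essentially verbatim: it applies the Thieme--Voigt theorem (Proposition \ref{TVpn}) with $\mathcal{E}=\mathcal{M}_*$ and $\widetilde{\mathcal{E}}=\mathcal{M}_n$, the same splitting $L^{\dag,\alpha}=A+B$ on $\mathcal{D}=\mathcal{M}_1$, the same verification scheme for conditions (i)--(iv), and the same exponential-series argument based on (\ref{N12l}), (\ref{N12m}) for the $\mathcal{M}_\beta^{+}\to\mathcal{M}_{\beta'}^{+}$ part. The only variations are cosmetic and valid: in condition (iv) you bound the positive part of the increment of $\Psi_1^n$ by $n\,2^{n-1}\psi(y)\Psi_1^{n-1}(\gamma)$ and invoke the convolution bound (\ref{C81}), whereas the paper bounds $|\psi(y)-\psi(x)|\leq \psi(x)v(|x-y|)$ as in (\ref{N17})--(\ref{N19}) (both rest on the moment condition (\ref{C8}), and your extra factor $1/\alpha$ from $\psi_\alpha\psi\leq\psi/\alpha$ is harmless since $\alpha$ is fixed, though $\psi_\alpha\leq 1$ would avoid it); likewise you get strong continuity of the multiplication semigroup by dominated convergence instead of the paper's density argument (\ref{C20}).
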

\begin{proof}

The construction of the semigroup in question will be made, in
particular, by showing that all the conditions of Proposition
\ref{TVpn} are met. We thus begin by checking whether each of the
spaces $\mathcal{M}_n$ and $\mathcal{M}_\beta$ enjoys the properties
listed in Assumption \ref{1ass}. By Lemma \ref{V1lm} the density
assumed in (i) is guaranteed. Each of these spaces is a Banach space
with the corresponding norm, that was already mentioned in the
course of their introduction. The properties assumed in (iii) are
evident, whereas (iv) follows by Corollary \ref{V1co}. Thus, we can
start checking the validity of the conditions imposed in Proposition
\ref{TVpn}. Recall that both $A$ and $B$ are (densely) defined on
the domain $\mathcal{D}=\mathcal{M}_1$, see (\ref{V13}) and Lemma
\ref{V1lm}, and $A$ is the multiplication operator by the function
$(-\Phi_\alpha)$. Hence, condition (i) of Proposition \ref{TVpn} is
satisfied. Moreover, $A$ generates the semigroup $S$ consisting of
the following operators
\begin{equation}
\label{C18} (S(t) \mu)(d \gamma) = \exp\left( - t \Phi_\alpha
(\gamma) \right) \mu ( d \gamma).
\end{equation}
Then
\begin{equation}
  \label{C19}
  \|S(t)\mu\| \leq \|\mu\|,
\end{equation}
which obviously holds for all $\mu\in \mathcal{M}_*$. To check
whether $S$ is strongly continuous in $\mathcal{M}_*$, for a given
$\mu\in\mathcal{M}_* $ and $\varepsilon>0$, we have to find
$\delta>0$ such that $\|\mu - S(t) \mu\|<\varepsilon$ for all
$t<\delta$. Since $\mathcal{M}_* $ is the $\|\cdot\|$-closure of
$\mathcal{M}_1$ (by Lemma \ref{V1lm}), for the chosen $\mu$, one
finds $\mu' \in \mathcal{M}_1$ such that $\|\mu - \mu'\| <
\varepsilon/3$. Then by (\ref{C18}) and (\ref{C19}) we get
\begin{eqnarray}
\label{C20} \|\mu - S(t) \mu\| & \leq & \|\mu-\mu'\| +
\|S(t)(\mu-\mu')\| + \|\mu' - S(t) \mu'\| \\[.2cm] \nonumber
&\leq &  t\|A \mu'\| + 2\varepsilon /3 \leq ( t/\alpha)\|\mu'\|_1 +
2\varepsilon /3,
\end{eqnarray}
which completes the proof for $\mathcal{M}_*$. Clearly, $S(t) :
\mathcal{M}^{+}_n \to \mathcal{M}^{+}_n$, and the domain of the
trace of $A$ in $\mathcal{M}_n$ is
$\widetilde{\mathcal{D}}_n=\mathcal{M}_{n+1}$. Then the proof that
$S(t)|_{\mathcal{M}_n}$ is strongly continuous in $\mathcal{M}_{n}$
can be performed similarly as in (\ref{C20}). Thus, condition (ii)
of Proposition \ref{TVpn} is met. In view of (\ref{N12b}) to
complete the proof of (iii) we have to show $\varphi((A+B)\mu)=0$
whenever $\mu \in \mathcal{M}_1^{+}$, which is obviously the case by
(\ref{N13}). Then it remains to show that, for a fixed $n\in
\mathds{N}$,
\begin{equation}
  \label{N15}
  \int_{\Gamma_*} \Psi_1^n (\gamma) (L^{\dag,\alpha} \mu)(d\gamma) \leq c \int_{\Gamma_*} \Psi_1^n (\gamma)
  \mu(d\gamma) - \varepsilon \int_{\Gamma_*} \Phi_\alpha (\gamma)
  \mu(d\gamma),
\end{equation}
holding for each $\mu \in\mathcal{M}_{n+1}^{+}$ and some positive
$c$ and $\varepsilon$, possibly dependent on $n$. In view of the
following estimate, cf. (\ref{NN4}),
\begin{equation*}
  %\label{N6c}
  \alpha \Phi_\alpha (\gamma) \leq 1 + n \sum_{x\in \gamma} \psi(x)
  \leq \Psi^n_1 (\gamma), \qquad n\in \mathds{N}, \ \  \gamma \in
  \Gamma_*,
\end{equation*}
it is enough to show (\ref{N15}) with $\varepsilon =0$ and
sufficiently big $c$. By (\ref{V11}) this amounts to showing
\begin{equation}
  \label{N16}
  L^\alpha \Psi^n_1 (\gamma) \leq c \Psi^n_1 (\gamma), \qquad \gamma\in
  \Gamma_*.
\end{equation}
By (\ref{N16a}) it follows that
\begin{eqnarray}
  \label{N17}
|\Psi^n_1 (\gamma\setminus x \cup y) - \Psi^n_1 (\gamma)| \leq 2^n
|\psi(y) - \psi(x) | \Psi^{n-1}_1(\gamma).
\end{eqnarray}
Assume that $|x|>|y|$. By (\ref{C3}) we have
\begin{eqnarray}
  \label{N18}
|\psi(y) - \psi(x) | & = & \left( |x|^{d+1} - |y|^{d+1}\right)
\psi(x)
\psi(y) \\[.2cm] \nonumber &\leq & \left[ (|x-y|+|y|)^{d+1} -
|y|^{d+1}\right] \psi(x)
\psi(y) \\[.2cm] \nonumber & = & \sum_{l=1}^{d+1} {d+1 \choose
l}|x-y|^l |y|^{d+1-l} \psi(x) \psi(y)  \\[.2cm] \nonumber & \leq & \psi(x)\sum_{l=1}^{d+1} {d+1 \choose
l}|x-y|^l \\[.2cm] \nonumber & := & \psi(x) v(|x-y|).
\end{eqnarray}
For $|y|>|x|$, in a similar way we get
\begin{eqnarray}
  \label{N19}
|\psi(y) - \psi(x) | & \leq & \sum_{l=1}^{d+1} {d+1 \choose
l}|x-y|^l |x|^{d+1-l} \psi(x) \psi(y)  \\[.2cm] \nonumber & \leq & \sum_{l=1}^{d+1} {d+1 \choose
l}|x-y|^l |y|^{d+1-l} \psi(x) \psi(y)  \\[.2cm] \nonumber & \leq & \psi(x)\sum_{l=1}^{d+1} {d+1 \choose
l}|x-y|^l = \psi(x) v(|x-y|).
\end{eqnarray}
Now we apply (\ref{N17}), (\ref{N18}) and (\ref{N19}) to obtain
\begin{eqnarray*}
  %\label{N20}
{\rm LHS}(\ref{N16}) &\leq & 2^n \sum_{x\in \gamma}
\int_{\mathds{R}^d} \psi_\alpha (x) \psi(x) a(x-y ) v(|x-y|)
\Psi^{n-1}_1(\gamma) d y
\\[.2cm] \nonumber &\leq & 2^n \Psi^{n-1}_1(\gamma) \left( \sum_{x\in
\gamma}\psi(x)\right) \int_{\mathds{R}^d} a (y) v(|y|) d y
\\[.2cm] \nonumber &\leq & \left(2^n \int_{\mathds{R}^d} a (y) v(|y|) d y
\right) \Psi^n_1(\gamma),
\end{eqnarray*}
that by (\ref{C8}) proves (\ref{N16}). Thus, all the conditions of
Proposition \ref{TVpn} are met, which proves the part of the lemma
related to the stochastic semigroup $S^\alpha$ acting in
$\mathcal{M}_*$ and its restrictions to $\mathcal{M}_n$, $n\in
\mathds{N}$. To prove the second part of the lemma we use the
estimate in (\ref{N12l}) and define bounded operators
$S^\alpha_{\beta'\beta}(t):\mathcal{M}_\beta \to
\mathcal{M}_{\beta'}$, $t < T_\alpha(\beta,\beta')$ by the series
\[
 S^\alpha_{\beta'\beta}(t) = \mathds{I}_{\beta' \beta}+
\sum_{n=1}^\infty \frac{t^n}{n!} (L^{\dag,\alpha})^n_{\beta'\beta},
\]
where $\mathds{I}_{\beta' \beta}$ is the embedding operator. By the
latter formula and (\ref{N12m}) we conclude that
\begin{equation}
  \label{N12n}
\forall \mu \in \mathcal{M}_\beta \qquad S^\alpha (t) \mu =
S^\alpha_{\beta'\beta} (t) \mu, \quad t<T_\alpha(\beta,\beta').
\end{equation}
By (\ref{N12l}) $S^\alpha_{\beta'\beta} (t):\mathcal{M}_\beta  \to
\mathcal{M}_{\beta'}$ is a bounded operator, the norm of which
satisfies
\begin{equation*}
 % \label{ES}
\|S^\alpha_{\beta'\beta} (t)\| \leq
\frac{T_\alpha(\beta,\beta')}{T_\alpha(\beta,\beta')-t}.
\end{equation*}
The positivity of $S^\alpha_{\beta'\beta} (t)$ follows by
(\ref{N12n}) and the positivity of $S^\alpha (t)$. This completes
the proof.
\end{proof}
\vskip.1cm \noindent In view of (\ref{Markov}), we conclude that Lemma \ref{C1lm} establishes
the existence of the transition function corresponding to
$L^\alpha$, with the properties arising from the corresponding
properties of the semigroup. Note that $\delta_\gamma\in
\mathcal{M}_*$ (and hence in all $\mathcal{M}_n$ and
$\mathcal{M}_\beta$) if and only if $\gamma\in \Gamma_*$, that will
be assumed below. It is straightforward
that $p^{\alpha}_t(\gamma, \cdot)$ satisfies the corresponding
standard conditions and thus determines finite-dimensional
distributions of a Markov process, see \cite[pages 156, 157]{EK}.
Our next step is to show that it has cadlag versions.

\subsection{Constructing path measures}

The construction of the families of path measures $P^\alpha$ which
solve the martingale problem in the sense of Definition \ref{A1df}
can be done by defining their finite dimensional marginals with the
help of the transition function (\ref{Markov}). In this case,
however, the one dimensional marginals
\begin{equation}
  \label{Ma60}
 \Pi^\alpha_t = S^\alpha(t) \mu = \int_{\Gamma_*} p^\alpha_t (\gamma,
 \cdot) \mu(d\gamma),
\end{equation}
need not be in $\mathcal{P}_{\rm exp}$, even for $\mu\in
\mathcal{P}_{\rm exp}$. The only fact guaranteed by Lemma \ref{C1lm}
is that $\Pi^\alpha_t \in \mathcal{M}_n$, for all $n\in \mathds{N}$,
and that $\Pi^\alpha_t \in \mathcal{M}_\beta$ with $t$ belonging to
a bounded interval. This obstacle is removed by the following
statement.
\begin{lemma}
  \label{C3lm}
For a given $\mu\in \mathcal{P}_{\rm exp}$, let $\{\mu^\alpha_t:
\mu_0 = \mu, \ t\geq 0\}\subset \mathcal{P}_{\rm exp}$ be the family
of measures defined by their correlation functions $k^\alpha_t$
according to (\ref{Ma32}). For the same $\mu$, let $\Pi^\alpha_t$,
$t\geq 0$ be as in (\ref{Ma60}). Then $\Pi_t^\alpha=\mu_t^\alpha$
for all $t>0$ and $\alpha \in (0,1]$.
\end{lemma}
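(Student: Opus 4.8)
The plan is to identify $\Pi^\alpha_t$ with $\mu^\alpha_t$ through the Fokker--Planck equation: first I would show that the one-dimensional marginals $\Pi^\alpha_t=S^\alpha(t)\mu$ solve (\ref{A12a}) with $L$ replaced by $L^\alpha$ for every test function in $\mathcal{D}(L)$, and then invoke the uniqueness of such solutions. Since $\Pi^\alpha_0=S^\alpha(0)\mu=\mu=\mu^\alpha_0\in\mathcal{P}_{\rm exp}$, the equality $\Pi^\alpha_t=\mu^\alpha_t$ will follow from uniqueness at once.

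The first and main step is the semigroup computation. By Lemma \ref{C1lm}, $S^\alpha$ is a stochastic semigroup on $\mathcal{M}_*$ whose generator is the closure of $(L^{\dag,\alpha},\mathcal{M}_1)$, and $S^\alpha(t):\mathcal{M}_1\to\mathcal{M}_1$ for every $t$. Hence, for $\mu\in\mathcal{P}_{\rm exp}\subset\mathcal{M}_1$, the map $t\mapsto S^\alpha(t)\mu$ is continuously differentiable in the norm of $\mathcal{M}_*$, and, because $S^\alpha(t)\mu$ stays in the domain $\mathcal{M}_1$, its derivative is $L^{\dag,\alpha}S^\alpha(t)\mu$. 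Now fix $F\in\mathcal{D}(L)$. Both $F$ and $L^\alpha F$ are bounded (Proposition \ref{TH1pn}; the boundedness of $L^\alpha F$ follows exactly as for $L$ in claim (i) of Proposition \ref{T3pn}, since $a_\alpha\le a$), so $\nu\mapsto\nu(F)$ is a bounded linear functional on $\mathcal{M}_*$. Pairing the derivative with $F$ and using the predual identity (\ref{V11})--(\ref{V12}) — which is valid for bounded $F$ and $\mu'\in\mathcal{M}_1$ by Tonelli's theorem, since $\Phi_\alpha\,S^\alpha(t)\mu$ is a finite measure — I obtain
\begin{equation*}
\frac{d}{dt}\Pi^\alpha_t(F)=\bigl(L^{\dag,\alpha}\Pi^\alpha_t\bigr)(F)=\Pi^\alpha_t(L^\alpha F).
\end{equation*}
Integration over $[t_1,t_2]$ yields precisely the $L^\alpha$-version of (\ref{A12a}) for every $F\in\mathcal{D}(L)$, while each $\Pi^\alpha_t$ is a probability measure on $\Gamma_*$ because $S^\alpha$ is stochastic and positivity preserving.

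It remains to invoke uniqueness. The entire analysis of Section 5 carries over verbatim to $L^\alpha$: since $a_\alpha(x,y)=a(x-y)\psi_\alpha(x)\le a(x-y)$ and $\int_{\mathds{R}^d}a_\alpha(x,y)\,dy=\psi_\alpha(x)\le1$, the operators $L^{\Delta,\alpha}$, $\widehat{L}^\alpha$ and $L^\alpha$ obey the same bounds (cf. (\ref{A15})) used throughout, so the $L^\alpha$-analogues of Lemma \ref{W2lm}, Corollary \ref{Walm} and Theorem \ref{2tm} hold. In particular, any $\mathcal{P}(\Gamma_*)$-valued solution of the $L^\alpha$-version of (\ref{A12a}) on the linear span of $\widehat{\mathcal{F}}$ starting in $\mathcal{P}_{\rm exp}$ necessarily stays in $\mathcal{P}_{\rm exp}$ (with type controlled on compact time intervals) and is unique. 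Applying this to $\Pi^\alpha$, whose restriction to $\widehat{\mathcal{F}}\subset\mathcal{D}(L)$ solves the equation by the previous step, I conclude $\Pi^\alpha_t\in\mathcal{P}_{\rm exp}$ and $\Pi^\alpha_t=\mu^\alpha_t$, since $\mu^\alpha_t$ (determined by $k^\alpha_t$, see (\ref{Ma32})) is this unique solution.

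I expect the differentiation step to be the delicate point: one must be certain that $t\mapsto S^\alpha(t)\mu$ is genuinely norm-differentiable in $\mathcal{M}_*$ with derivative $L^{\dag,\alpha}\Pi^\alpha_t$, which rests on the invariance $\Pi^\alpha_t\in\mathcal{M}_1$ for all $t$ supplied by Lemma \ref{C1lm}, and on the predual identity being applied to the bounded but genuinely configuration-dependent functions of $\mathcal{D}(L)$ rather than to the exponential test functions for which it was introduced. Once these are secured, the identification $\Pi^\alpha_t=\mu^\alpha_t$ is forced by the uniqueness and the a priori type bounds of Section 5.
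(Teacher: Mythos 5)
Your proposal is correct and follows essentially the same route as the paper: the paper's own proof likewise identifies both $\Pi^\alpha_t$ and $\mu^\alpha_t$ as solutions of the Fokker--Planck equation (\ref{A12a}) with $L$ replaced by $L^\alpha$, and concludes by the $L^\alpha$-analogue of the uniqueness in Theorem \ref{2tm}. The only difference is one of detail: the step you justify via semigroup differentiation in $\mathcal{M}_*$ and the predual identity (\ref{V11}) is dispatched in the paper with the remark that $\Pi^\alpha_t$ solves the equation ``by construction.''
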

\begin{proof} Exactly as in Theorem \ref{2tm} one proves that the
Fokker-Planck equation (\ref{A12a}) with $L$ replaced by $L^\alpha$
has a unique solution, which is $\mu_t^\alpha$. At the same time, by
construction $\Pi^\alpha_t$ also solves this equation.
\end{proof}
\vskip.1cm \noindent Now we can start constructing the path measures
in question. To this end we use Chentsov's theorem in the following
version, see \cite[Theorems 8.6 -- 8.8, pages 137--139]{EK}. Recall
that the complete metric $\upsilon_*$ of $\Gamma_*$ was introduced
in (\ref{Psi4}). For $\alpha\in (0,1]$, $\gamma\in \Gamma_*$ and
$u,v\geq 0$, set
\begin{gather}
  \label{Ma50}
w^\alpha_u(\gamma) = \int_{\Gamma_*} \upsilon_* (\gamma,\gamma')
p^\alpha_u ( \gamma, d \gamma'), \\[.2cm] \nonumber
W^\alpha_{u,v} (\gamma) =  \int_{\Gamma_*} \upsilon_*
(\gamma,\gamma') w^\alpha_u(\gamma') p^\alpha_v ( \gamma, d
\gamma').
\end{gather}
Thereafter, for $t_3\geq t_2\geq t_1 \geq 0$ (such sets are called
{\em triples}), let us consider
\begin{equation}
  \label{Ma51}
\mathcal{W}^\alpha(t_1, t_2, t_3) = \int_{\Gamma_*}
W^\alpha_{t_3-t_2,t_2-t_1} (\gamma') \Pi^\alpha_{t_1}(d \gamma') =
\int_{\Gamma_*} W^\alpha_{t_3-t_2,t_2-t_1} (\gamma')
\mu^\alpha_{t_1}(d \gamma'),
\end{equation}
where $\mu$, $\mu_t^\alpha$ and $\Pi_t^\alpha$ are as in
(\ref{Ma60}).
\begin{proposition} (Chentsov)
  \label{MApn}
Assume that there exists $C_\alpha>0$ and $\delta>0$ such that, for
each triple $t_1,t_2,t_3$, the following holds
\begin{equation}
  \label{Ma52}
\mathcal{W}^\alpha(t_1, t_2, t_3) \leq C_\alpha |t_3-t_1|^2, \qquad
t_3 - t_1 < \delta.
\end{equation}
Then the following is true:
\begin{itemize}
  \item[(i)] The transition function (\ref{Markov}) and $\mu\in \mathcal{P}_{\rm exp}$ determine a probability
measure $P^\alpha$ on $\mathfrak{D}_{\mathds{R}_{+}}(\Gamma_*)$.
\item[(ii)] If the estimate in (\ref{Ma52}) holds uniformly in
$\alpha$, i.e., with some $C>0$ independent of $\alpha\in (0,1]$,
and if the family $\{\Pi_t^\alpha: \alpha \in (0,1]\} \subset
\mathcal{P}(\Gamma_*)$
 is tight for each $t>0$, then the family $\{P^\alpha:\alpha \in
 (0,1]\}$ of measures as in (i)
 is also tight, and hence possesses accumulation point in the weak
 topology.
\end{itemize}
\end{proposition}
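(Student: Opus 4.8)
The plan is to recognize $\mathcal{W}^\alpha(t_1,t_2,t_3)$ as the expectation of a product of two consecutive $\upsilon_*$-increments of the Markov process attached to the transition function (\ref{Markov}), and then to feed this directly into the cadlag version of Chentsov's criterion quoted from \cite[Theorems 8.6--8.8]{EK}. First I would record that, by Lemma \ref{C1lm}, $S^\alpha$ is a semigroup, so that $p^\alpha_t(\gamma,\cdot)=S^\alpha(t)\delta_\gamma$ obeys Chapman--Kolmogorov and, together with $\mu\in\mathcal{P}_{\rm exp}\subset\mathcal{P}(\Gamma_*)$, determines a consistent family of finite-dimensional distributions of a Markov process on the Polish space $(\Gamma_*,\upsilon_*)$ (Lemma \ref{Lenardlm}). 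Unfolding the definitions (\ref{Ma50})--(\ref{Ma51}) and using Chapman--Kolmogorov, $\mathcal{W}^\alpha(t_1,t_2,t_3)$ becomes the triple integral of $\upsilon_*(\gamma',\gamma'')\,\upsilon_*(\gamma'',\gamma''')$ against $\mu^\alpha_{t_1}(d\gamma')\,p^\alpha_{t_2-t_1}(\gamma',d\gamma'')\,p^\alpha_{t_3-t_2}(\gamma'',d\gamma''')$, i.e. exactly the expectation of $\upsilon_*(\varpi_{t_1},\varpi_{t_2})\,\upsilon_*(\varpi_{t_2},\varpi_{t_3})$ for the process with initial law $\mu$, whose law at time $t_1$ is $\Pi^\alpha_{t_1}=\mu^\alpha_{t_1}$ (Lemma \ref{C3lm}, (\ref{Ma60})).

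With this identification, claim (i) follows from the cadlag Chentsov criterion of \cite[Theorems 8.6--8.8]{EK}. The hypothesis (\ref{Ma52}) is precisely the required bound
\[
E\big[\upsilon_*(\varpi_{t_1},\varpi_{t_2})^\beta\,\upsilon_*(\varpi_{t_2},\varpi_{t_3})^\beta\big]\le C_\alpha\,|t_3-t_1|^{1+a}
\]
with $\beta=1$ and $a=1$; moreover, since $\upsilon_*\wedge 1\le\upsilon_*$, it entails the same estimate for the truncated metric used in \cite{EK}. Hence the process admits a modification with sample paths in $\mathfrak{D}_{\mathds{R}_{+}}(\Gamma_*)$, and the finite-dimensional distributions built from (\ref{Markov}) and $\mu$ determine a probability measure $P^\alpha$ on $\mathfrak{D}_{\mathds{R}_{+}}(\Gamma_*)$, as asserted.

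For claim (ii) I would combine two ingredients through the relative-compactness part of \cite[Theorems 8.6--8.8]{EK} together with Prohorov's theorem. The assumed tightness of $\{\Pi^\alpha_t:\alpha\in(0,1]\}$ for each $t$ supplies the compact containment condition for the one-dimensional marginals, while the now \emph{uniform} bound $\mathcal{W}^\alpha(t_1,t_2,t_3)\le C\,|t_3-t_1|^2$, with $C$ independent of $\alpha$, is exactly the quantity entering the Chentsov control of the $\mathfrak{D}$-modulus of the paths, so the resulting bound on the probability of large oscillations holds simultaneously for all $\alpha\in(0,1]$. These two facts are the hypotheses of the Ethier--Kurtz relative-compactness criterion, which yields that $\{P^\alpha:\alpha\in(0,1]\}$ is tight in $\mathfrak{D}_{\mathds{R}_{+}}(\Gamma_*)$; by Prohorov's theorem tightness gives the existence of weak accumulation points.

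The routine but slightly delicate point is the first step: unfolding (\ref{Ma50})--(\ref{Ma51}) and invoking Chapman--Kolmogorov to see that $\mathcal{W}^\alpha$ is genuinely the expectation of the product of two successive increments for the process started from $\mu$, since this is what matches the precise form of the cadlag Chentsov estimate (the \emph{product}, rather than a single increment, being what rules out oscillation for a jump process). The main obstacle for (ii) is then to verify that the uniform estimate controls the $\mathfrak{D}$-modulus uniformly in $\alpha$ in the exact sense required by \cite{EK}, so that the relative-compactness criterion applies to the whole family at once; once the estimate is recognized as uniform this is bookkeeping, but it must be matched carefully against the statement of the theorem.
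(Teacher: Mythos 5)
Your proposal is correct and takes essentially the same route as the paper: the paper states this proposition without proof, as a version of Chentsov's theorem adapted from \cite[Theorems 8.6--8.8, pages 137--139]{EK}, and your argument is exactly the reduction to those cited results --- unfolding (\ref{Ma50})--(\ref{Ma51}) via Chapman--Kolmogorov to recognize $\mathcal{W}^\alpha(t_1,t_2,t_3)$ as the expected product of two successive $\upsilon_*$-increments of the Markov process attached to (\ref{Markov}), then invoking the cadlag-modification criterion for (i) and the relative-compactness criterion (marginal tightness plus the uniform product-increment bound) together with Prohorov's theorem for (ii). The only point to phrase carefully is that what the Ethier--Kurtz compactness theorem needs is tightness of the one-dimensional marginals at each fixed time (condition (a) of their Theorem 7.2) rather than a uniform-in-time compact containment, and that is precisely what the hypothesis on $\{\Pi^\alpha_t\}$ supplies.
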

Note that the tightness of the family $\{\Pi_t^\alpha: \alpha \in
(0,1]\}$ follows by Lemmas \ref{C3lm} and \ref{U3lm}.
\begin{lemma}
  \label{MAlm}
For each $\mu\in \mathcal{P}_{\rm exp}$, the estimate in
(\ref{Ma52}) holds true for all $\alpha\in (0,1]$ with one and the
same $C>0$.
\end{lemma}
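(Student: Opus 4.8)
The plan is to run Chentsov's criterion by estimating, uniformly in $\alpha$, the first and second conditional moments of the metric displacement $\upsilon_*$ defined in \eqref{Psi4}, exploiting that $\psi_\alpha(x)=1/(1+\alpha|x|^{d+1})\le 1$ for \emph{all} $\alpha\in(0,1]$, so that every bound produced from the kernel $a_\alpha(x,y)=a(x-y)\psi_\alpha(x)$ in \eqref{Lalpha} carries an $\alpha$-independent constant. The central mechanism is that a single jump $x\mapsto y$ changes $\nu_\gamma$ by $\psi(y)\delta_y-\psi(x)\delta_x$, see \eqref{Psi32a}, whence
\[
\upsilon_*(\gamma\setminus x\cup y,\gamma)=\sup_{\|g\|_{BL}\le1}\bigl|\psi(y)g(y)-\psi(x)g(x)\bigr|\le \psi(x)|x-y|+|\psi(x)-\psi(y)|,
\]
a quantity whose integral against $a(x-y)\,dy$ is controlled by $\psi(x)$ and the moments \eqref{C8}. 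Alongside this I will use two ingredients already available: the moment bound $\int\Psi_1^n\,p^\alpha_s(\gamma,d\cdot)\le e^{c_n s}\Psi_1^n(\gamma)$ (which follows from Lemma \ref{C1lm} together with $\varphi_n(L^{\dagger,\alpha}\mu)\le c_n\varphi_n(\mu)$, i.e.\ \eqref{N15}--\eqref{N16} with $\varepsilon=0$ and $c_n$ independent of $\alpha$), and the identification $\Pi^\alpha_t=\mu^\alpha_t\in\mathcal P_{\rm exp}^{\vartheta_t}$ from Lemma \ref{C3lm} with $\vartheta_t=\vartheta_0+t$ uniform in $\alpha$.

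First I would estimate $w^\alpha_u$ from \eqref{Ma50}. Fixing $\gamma$ and writing $f_\gamma(\gamma')=\upsilon_*(\gamma,\gamma')$, the triangle inequality gives $|f_\gamma(\gamma''\setminus x\cup y)-f_\gamma(\gamma'')|\le\upsilon_*(\gamma''\setminus x\cup y,\gamma'')$, so by the displacement bound above, by $\psi_\alpha\le1$, $e(\tau_y;\cdot)\le1$, and by $|\psi(x)-\psi(y)|\le\psi(x)v(|x-y|)$ with $v$ as in \eqref{N18},
\[
(L^\alpha f_\gamma)(\gamma'')\le\sum_{x\in\gamma''}\psi(x)\Bigl(m^a_1+\textstyle\sum_{l=1}^{d+1}\binom{d+1}{l}m^a_l\Bigr)=C_1\,\Psi(\gamma''),
\]
with $C_1$ independent of $\alpha$. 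Since $f_\gamma(\gamma)=0$, Dynkin's identity $\int f_\gamma\,p^\alpha_u(\gamma,d\cdot)=\int_0^u\!\int (L^\alpha f_\gamma)\,p^\alpha_s(\gamma,d\cdot)\,ds$ together with the $n=1$ moment bound yields $w^\alpha_u(\gamma)\le C_1\int_0^u e^{c_1 s}\Psi_1(\gamma)\,ds\le C_2\,u\,\Psi_1(\gamma)$ for $u\le\delta$, with $C_2$ uniform in $\alpha$.

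Next I would treat $W^\alpha_{u,v}$ by applying the same scheme to $h(\gamma')=f_\gamma(\gamma')\Psi_1(\gamma')$. Splitting the jump increment of a product and using $\Psi_1(\gamma''\setminus x\cup y)\le2\Psi_1(\gamma'')$ from \eqref{N16a}, the displacement bound, and $\upsilon_*(\gamma,\gamma'')\le\Psi_1(\gamma)+\Psi_1(\gamma'')$, one gets $(L^\alpha h)(\gamma'')\le C_3\bigl(\Psi_1^2(\gamma'')+\Psi_1(\gamma)\Psi_1(\gamma'')\bigr)$. Since $h(\gamma)=0$, Dynkin's identity with the $n=1,2$ moment bounds gives $\int\upsilon_*(\gamma,\cdot)\Psi_1\,p^\alpha_v(\gamma,d\cdot)\le C_4\,v\,\Psi_1^2(\gamma)$ for $v\le\delta$; combined with $w^\alpha_u\le C_2 u\Psi_1$ this produces
\[
W^\alpha_{u,v}(\gamma)\le C_2 u\!\int\!\upsilon_*(\gamma,\gamma')\Psi_1(\gamma')\,p^\alpha_v(\gamma,d\gamma')\le C_5\,u\,v\,\Psi_1^2(\gamma).
\]
Finally, integrating over $\mu^\alpha_{t_1}$ in \eqref{Ma51}, and bounding $\int\Psi_1^2\,d\mu^\alpha_{t_1}=1+2\mu^\alpha_{t_1}(\Psi)+\mu^\alpha_{t_1}(\Psi^2)\le M_T$ for $t_1$ in a compact interval $[0,T]$ by \eqref{N7} and the uniform type bound $\varkappa\le e^{\vartheta_T}$, I obtain $\mathcal W^\alpha(t_1,t_2,t_3)\le C_5 M_T(t_3-t_2)(t_2-t_1)\le C\,|t_3-t_1|^2$ with $C$ independent of $\alpha$, which is \eqref{Ma52}. \textbf{The main obstacle} is rigorously justifying Dynkin's identity for the \emph{unbounded} functions $f_\gamma$ and $h$, which lie outside $\mathcal D(L^\alpha)$: I would resolve this by truncation (applying the identity to $f_\gamma\wedge R$, $h\wedge R$, which are bounded) and passing to the limit, the passage being legitimate precisely because $|L^\alpha f_\gamma|$ and $|L^\alpha h|$ are dominated by the polynomials $\Psi_1,\Psi_1^2$ whose expectations are controlled, \emph{uniformly in} $\alpha$, by the moment bounds above; the time-localization to $[0,T]$ (standard for the Chentsov theorem on $\mathds R_+$) absorbs the growth of $\int\Psi_1^2\,d\mu^\alpha_{t_1}$ while preserving $\alpha$-uniformity.
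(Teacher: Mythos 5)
Your proposal is correct and follows essentially the same route as the paper's proof: the Dynkin-type identity inherited from the measure-valued semigroup of Lemma \ref{C1lm} applied to $\delta_\gamma$, the single-jump displacement bound for $\upsilon_*$ controlled by $\psi(x)$ and the moments (\ref{C8}), Gr\"onwall-type moment estimates, and $\alpha$-uniformity coming from $\psi_\alpha\leq 1$, i.e.\ $a_\alpha(x,y)\leq a(x-y)$. The only differences are organizational: you close the second-moment bound at the transition-function level via $\int \Psi_1^n\,p^\alpha_s(\gamma,d\cdot)\leq e^{c_n s}\Psi_1^n(\gamma)$, while the paper uses the Chapman--Kolmogorov property and bounds $\mu^\alpha_{t_1+s}(\Psi)$, $\mu^\alpha_{t_1+s}(\Psi^2)$ directly; moreover, your careful splitting of the product increment (which keeps the cross term $\Psi_1(\gamma)\Psi_1(\gamma'')$) and your truncation argument for the unbounded test functions make rigorous two steps that the paper treats rather informally.
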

\begin{proof}
By (\ref{Markov}) and standard semigroup formulas, e.g., \cite[page
9]{EK}, we have
\begin{equation}
  \label{Starr}
p^{\alpha}_u (\gamma, \cdot) = \delta_\gamma + \int_{0}^u
L^{\dag,\alpha}
 p^{\alpha}_s (\gamma, \cdot)d s,
\end{equation}
since $\delta_\gamma\in \mathcal{D}=\mathcal{M}_1$. Then by this
formula and (\ref{Ma50}) we obtain
\begin{gather}
  \label{D6}
 w_u^\alpha (\gamma) = w_0^\alpha (\gamma) + \int_0^u
 \left(
 \int_ {\Gamma_*} \upsilon_* (\gamma, \gamma') (L^{\dag,\alpha}
 p^{\alpha}_s  (\gamma, d \gamma') \right) ds\\[.2cm] \nonumber =
 \int_0^u \left(
 \int_ {\Gamma_*} (L^\alpha  \upsilon_* (\gamma, \gamma') p^{\alpha}_s  (\gamma, d
 \gamma') \right) ds ,
\end{gather}
where we have taken into account that $w_0^\alpha (\gamma) =
\upsilon_*(\gamma, \gamma) =0$ as $\upsilon_*$ is a metric. We apply
now $L^\alpha$  to $\upsilon_*(\gamma, \cdot)$ -- which is a bounded
continuous function of $\gamma'$, and obtain
\begin{eqnarray*}
  %\label{D7}
& & J^{\gamma}(\gamma') := (L^\alpha \upsilon_*)(\gamma,\gamma') \\[.2cm]
\nonumber& & \quad = \sum_{x\in \gamma'}\int_{\mathds{R}^d} a_\alpha
(x,y) \exp\left(-\sum_{z\in \gamma'\setminus x}
\phi(y-z)\right)\left[\upsilon_*(\gamma, \gamma'\setminus x \cup y)
- \upsilon_*(\gamma, \gamma')\right] dy.
\end{eqnarray*}
By the triangle inequality for $\upsilon_*$ we then get from the
latter
\begin{equation}
  \label{D8}
|J^{\gamma}(\gamma')| \leq \sum_{x\in \gamma'}\int_{\mathds{R}^d}
a_\alpha (x,y) \upsilon_*(\gamma'\setminus x\cup y, \gamma') d y.
\end{equation}
In view of (\ref{Psi4}), to estimate $\upsilon_*(\gamma'\setminus
x\cup y, \gamma')$ we consider $|\theta(y) - \theta(x)|$ with
$\theta(x) = g(x) \psi(x)$, $g\in C^{L}_{\rm b} (\mathds{R}^d)$,
$\|g\|_{BL}\leq 1$, for which we obtain, cf. (\ref{C81}),
\begin{gather}
  \label{N5Z}
|\theta(y) - \theta(x)| = \psi(x) \psi(y) \left|
\frac{g(y)}{\psi(x)} - \frac{g(x)}{\psi(y)}\right|\\[.2cm] \nonumber = \psi(x)
\psi(y) \left| \frac{g(y)-g(x)}{\psi(y)} + g(y)
\left[\frac{1}{\psi(x)}-\frac{1}{\psi(y)} \right]\right|\\[.2cm]
\nonumber \leq \psi(x) |x-y| + \psi(x)\psi(y) \left||x|^{ d+1}-
|x|^{ d+1} \right|\\[.2cm] \nonumber \leq \psi(x)\left[ |x-y| + \sum_{l=1}^{d+1} {d+1 \choose l} |x-y|^l\right].
\end{gather}
Now we use this in (\ref{D8}) and arrive at
\begin{eqnarray}
  \label{D9}
\left|J^\gamma(\gamma') \right| \leq C_a \Psi(\gamma'), \qquad C_a:=
m^a_1 + \sum_{l=1}^{d+1} {d+1 \choose l}m_l^a.
\end{eqnarray}
Then we use (\ref{D9}) in (\ref{D6}) and obtain
\begin{gather}
  \label{D13}
w^\alpha_u(\gamma) \leq C_a \int_0^u \chi_s^\alpha (\gamma) d s,
\qquad \chi_s^\alpha (\gamma):= \int_{\Gamma_*} \Psi (\gamma')
p^{\alpha}_s  ( \gamma,d \gamma').
\end{gather}
Note that $\chi^\alpha_0(\gamma) = \Psi (\gamma)$. Similarly as in
(\ref{Starr}) we write
\begin{equation}
  \label{D14}
\chi_s^\alpha (\gamma) = \Psi(\gamma) + \int_0^s \left(
\int_{\Gamma_*} (L^\alpha \Psi) (\gamma') p^{\alpha}_v
(\gamma,d\gamma')\right) d v
\end{equation}
Like in (\ref{N16}) one gets
\[
(L^\alpha \Psi) (\gamma)\leq \left|(L^\alpha \Psi)(\gamma) \right|
\leq 2 c_a \Psi (\gamma),
\]
where $c_a$ is as in (\ref{ca}). We use this in (\ref{D14}), take
also into account the definition of $\chi_s^\alpha$ in (\ref{D13})
and obtain
\begin{equation*}
  %\label{D15a}
  \chi_s^a (\gamma) \leq \Psi (\gamma) + 2 c_a\int_0^s \chi^a_v(\gamma)
  d v,
\end{equation*}
which by the Gr\"onwall inequality and (\ref{D13}) yields the
following estimate
\begin{equation*}
  %\label{D15}
w^\alpha_u (\gamma) \leq C_a u e^{2 c_a u} \Psi (\gamma).
\end{equation*}
We employ this in the second line of (\ref{Ma50}) and obtain
\begin{equation}
  \label{d16}
 W^\alpha_{u,v} (\gamma) \leq C_a u e^{2 c_a u} q_v^\alpha (\gamma),
 \qquad  q_v^\alpha(\gamma) := \int_{\Gamma_*} \Psi(\gamma')
 \upsilon_* (\gamma,\gamma') p^{\alpha}_v (\gamma, d \gamma').
\end{equation}
Note that $q_0^\alpha(\gamma) = 0$ as $\upsilon_*$ is a metric.
Similarly as in (\ref{D6}) we then get
\begin{equation*}
  %\label{D17}
q_v^\alpha(\gamma) = \int_0^v \left( \int_{\Gamma_*}\left( L^\alpha
\Psi \upsilon_* (\gamma, \cdot)\right) (\gamma') p^{\alpha}_s
(\gamma, d \gamma') \right) d s.
\end{equation*}
Thus, we have to estimate
\begin{eqnarray}
  \label{D18}
  \left|  \left(L^\alpha \Psi \upsilon_* (\gamma,\cdot)\right)(\gamma')\right| & \leq & \sum_{x\in \gamma'}
 \int_{\mathds{R}^d} a(x-y) \bigg{|}\Psi (\gamma'\setminus x \cup y) \upsilon_* (\gamma,\gamma'\setminus x \cup
 y) \\[.2cm] \nonumber & - & \Psi (\gamma') \upsilon_*
 (\gamma,\gamma') \bigg{|} dy  \leq  \sum_{x\in \gamma'}
 \int_{\mathds{R}^d} a(x-y) |\psi(y) - \psi(x) |d y \\[.2cm]
 \nonumber & + & \Psi (\gamma')  \sum_{x\in \gamma'}
 \int_{\mathds{R}^d} a(x-y) \upsilon_* (\gamma',\gamma'\setminus
 x\cup  y) dy \\[.2cm] &\leq & c_a \Psi(\gamma')  +
 C_a \Psi^2(\gamma'), \nonumber
\end{eqnarray}
where we used the same estimate as in as in (\ref{N5Z}). Now we use
(\ref{D18}) in (\ref{d16}) and then plug this into (\ref{Ma51}). In
doing so, we will deal with
\[
\int_{\Gamma_*} p^{\alpha}_s (\gamma, d\gamma') \Pi^\alpha_{t_1} ( d
\gamma) = \Pi^\alpha_{t_1+s} ( d \gamma') = \mu^\alpha_{t_1+s} (d
\gamma'),
\]
that follows by the Chapman-Kolmogorov property of the transition
function (\ref{Markov}), see \cite[page 156]{EK}, and then by Lemma
\ref{C3lm}. Thereafter, we obtain
\begin{gather}
  \label{D19}
\mathcal{W}^\alpha (t_1, t_1 + v , t_1 + u + v) \leq C_a u e^{2
c_au} \int_0^v \left( c_a \mu^\alpha_{t_1+s} (\Psi) + C_a
\mu^\alpha_{t_1+s} (\Psi^2) \right) d  s.
\end{gather}
We recall that $\mu\in \mathcal{P}_{\rm exp}^{\vartheta_0}$ and thus
the correlation functions of $\mu_t^\alpha$ satisfy the estimate in
claim (a) of Proposition \ref{3.3pn} with $\vartheta_t = \vartheta_0
+t$. Then by (\ref{N7}) we get
\begin{gather*}
\mu^\alpha_t (\Psi) \leq \langle \psi \rangle e^{\vartheta_0 +t},
\quad \mu^\alpha_t (\Psi^2) \leq \langle \psi \rangle e^{\vartheta_0
+t} + \langle \psi \rangle^2 e^{2\vartheta_0 +2t}.
\end{gather*}
We use this in (\ref{D19}) and obtain
\[
\mathcal{W}^\alpha (t_1, t_1 + v , t_1 + u + v)\leq C (u+v)^2,
\]
where, for a fixed $\delta>0$, the independent of $\alpha$ constant
can be calculated explicitly for $u+v<\delta$. This yields
(\ref{Ma52}) with $C$ independent of $\alpha$, and hence completes
the whole proof.
\end{proof}

\subsection{Proof of Theorem \ref{3tm}}

For each $\alpha\in (0,1]$, $s\geq 0$ and $\mu\in \mathcal{P}_{\rm
exp}$, by Proposition \ref{MApn} the measure $P^\alpha_{s,\mu}$ on
$\mathfrak{D}_{[s,+\infty)}(\Gamma_*)$ is defined by its finite
dimensional marginals constructed with the use of the transition
function (\ref{Markov}). Namely, for $s\leq t_1< t_2 < \cdots < t_m$
and $\mathbb{A}_1, \dots , \mathbb{A}_m \in \mathcal{B}(\Gamma_*)$,
we have, cf. \cite[eq. (1.10), page 157]{EK},
\begin{gather}
  \label{Novemm}
P^{\alpha}_{s,\mu} \left( (\mathds{1}_{\mathbb{A}_1} \circ
\varpi_{t_1}) \cdots (\mathds{1}_{\mathbb{A}_m} \circ \varpi_{t_m})
\right) = \int_{\Gamma_*^{m+1}} \mathds{1}_{\mathbb{A}_m} (\gamma_m)
p^\alpha_{t_m-t_{m-1}} ( \gamma_{m-1}, d \gamma_{m}) \\[.2cm] \nonumber\times
\mathds{1}_{\mathbb{A}_{m-1}} (\gamma_{m-1})
p^\alpha_{t_{m-1}-t_{m-2}} ( \gamma_{m-2}, d \gamma_{m-1})\cdots
\mathds{1}_{\mathbb{A}_1} (\gamma_1) p^\alpha_{t_1-s} (\gamma_0 , d
\gamma_1)  \mu ( d\gamma_0).
\end{gather}
In particular, for $t\geq s$, this yields
\begin{equation}
  \label{Novem}
P^\alpha_{s,\mu} \circ \varpi^{-1}_t = S^\alpha (t-s) \mu.
\end{equation}
Then the validity of conditions (a) and (b) of Definition \ref{A1df}
follow by (\ref{Novem}) and Lemma \ref{C3lm}. Now we turn to proving
the validity of (c). Let $\sf G$ be as in (\ref{A1200}) with a given
$m\in \mathds{N}$ and $s\leq s_1 < s_2 < \cdots < s_m< t_2$. For a
given $F\in \mathcal{D}(L)$ and $u\in [s_m,t_2]$, we set ${\sf F}_u=
F\circ \varpi_u$,  ${\sf K}_u= (L F)\circ \varpi_u$ and ${\sf
K}^\alpha_u= (L^\alpha F)\circ \varpi_u$. Next, we define
\[
\chi^\alpha_{s_1} (d \gamma) = C_1 F_1(\gamma) \mu^\alpha_{s_1} (d
\gamma) = C_1 F_1(\gamma)\int_{\Gamma_*} p^\alpha_{s_1-s} (\gamma_0,
d \gamma) \mu(d \gamma_0), \quad C^{-1}_1 :=
\int_{\Gamma_*}F_1(\gamma) \mu^\alpha_{s_1} (d \gamma).
\]
By (\ref{Ma60}) and Lemma \ref{C3lm}, and then by claim (iv) of
Proposition \ref{T3pn}, we have that $\chi^\alpha_{s_1} \in
\mathcal{P}_{\rm exp}^{\vartheta_1}$ with $\vartheta_1$ dependent on
$s_1-s$ and the type of $\mu \in \mathcal{P}_{\rm  exp}$, and
independent of $\alpha$ since the norms of $L^{\Delta,\alpha}$ can
be estimated uniformly in $\alpha\in [0,1]$. Then we define
recursively
\begin{equation*}
  %\label{Nov}
  \chi^\alpha_{s_l} (d \gamma) = C_l F_l (\gamma) \int_{\Gamma_*} p^\alpha_{s_l-s_{l-1}} (\gamma_0,
d \gamma)\chi^\alpha_{s_{l-1}} (d \gamma_0), \qquad l=2, \dots , m,
\end{equation*}
and obtain $\chi^\alpha_{s_m} \in \mathcal{P}_{\rm
exp}^{\vartheta_m}$ with $\vartheta_m$ independent of $\alpha$.
Thereafter, by (\ref{Novemm}) we conclude that
\begin{equation}
  \label{Novv}
P^\alpha_{s, \mu} ({\sf F}_u {\sf G}) = C P^\alpha_{s_m, \chi_{s_m}}
({\sf F}_u ) = CP^\alpha_{s_m, \chi_{s_m}} ( F\circ \varpi_u ),
\qquad u \geq s_m,
\end{equation}
where $C$ is a normalizing constant, i.e. $C=
P^{\alpha}_{s,\mu}({\sf G})$. Then $P^\alpha_{s, \mu} ({\sf H}) = 0$
follows by the fact that the map $u\mapsto P^\alpha_{s_m,
\chi_{s_m}}\circ\varpi^{-1}_u$ solves the Fokker-Planck equation
(\ref{A12a}) with $L^\alpha$, see (\ref{Novem}). This proves (c),
and hence the family $\{P^\alpha_{s,\mu}: s\geq 0, \mu \in
\mathcal{P}_{\rm exp}\}$ is a unique solution of the corresponding
restricted martingale problem, see Theorem \ref{Au1tm}.

By Lemma \ref{MAlm} and claim (ii) of Proposition \ref{MApn}, for
each $s$ and $\mu$, the family $\{P^\alpha_{s, \mu}:\alpha \in
(0,1]\}$ is relatively weakly compact, and each of its accumulation
points has the same one dimensional marginals, that coincide with
the measures $\mu_t$, see Lemmas \ref{U3lm} and \ref{C3lm}. Let us
show that these accumulation points solve the restricted initial
value martingale problem for $L$. By Lemmas \ref{U3lm} and
\ref{C3lm} one concludes that conditions (a) and (b) of Definition
\ref{A1df} are met, and we thus turn to proving (\ref{A12}). Given
sequence $\{\alpha_n\}_{n\in \mathds{N}}\subset (0,1]$, $\alpha_n
\to 0$ and $s\geq 0$, $\mu \in \mathcal{P}_{\rm exp}$, let
$P^{\alpha_n}_{s,\mu} \Rightarrow P_{s,\mu}$. Let also $\sf G$ in
(\ref{A120}) be as in (\ref{A1200}) with a given $m$, $s_1, \dots ,
s_m$ and $F_j \in \widetilde{\mathcal{F}}$, $j=1, \dots , m$. Set
$C_n = P^{\alpha_n}_{s,\mu} ({\sf G})$. Then the measures
$\nu_{n,u}\in \mathcal{P}(\Gamma_*)$ defined by
\[
\nu_{n,u}(\mathbb{A}) = C^{-1}_n P^{\alpha_n}_{s,\mu} ({\sf G} \cdot
(\mathds{1}_{\mathbb{A}}\circ \varpi_u))= P^{\alpha_n}_{s_m
\chi_{s_m}}( \varpi^{-1}_u(\mathbb{A})) , \qquad u\in [s_m,t_2], \ \
\mathbb{A}\in \mathcal{B}(\Gamma_*).
\]
are in $\mathcal{P}_{\rm exp}^\vartheta$ with $\vartheta$
independent of $n$ and $u\in [s_m,t_2]$ (see (\ref{Novv})). We also
let
\[
\nu_{u}(\mathbb{A}) = C^{-1} P_{s,\mu}( {\sf G}\cdot
(\mathds{1}_{\mathbb{A}}\circ \varpi_u)), \qquad u\in [s_m,t_2], \ \
\mathbb{A}\in \mathcal{B}(\Gamma_*),
\]
with $C = P_{s,\mu} ({\sf G})$. Then $\nu_{n,u}\Rightarrow \nu_u$
for all $u\in [s_m,t_2]$. By Lemma \ref{U40lm} this yields $\nu_u
\in\mathcal{P}_{\rm exp}^\vartheta$, and hence the corresponding
correlation functions satisfy $k_u^{\alpha_n}, k_u\in
\mathcal{K}_{\vartheta}$ for all $u\in [s_m,t_2]$ and $n\in
\mathds{N}$. To prove  $P_{s,\mu}({\sf H})=0$ we rewrite it, cf.
(\ref{A120}),
\begin{equation}
\label{A1201} P_{s,\mu}({\sf F}_{t_2} {\sf G}) - P_{s,\mu}({\sf
F}_{t_1} {\sf G}) - \int_{t_1}^{t_2} P_{s,\mu}({\sf K}_{u} {\sf G})
d u = 0.
\end{equation}
For $u\in [s_m,t_2]$ and $n\in \mathds{N}$, we then set
\begin{gather*}
  %\label{A1202}
 a_n (u) = P_{s,\mu}({\sf F}_{u} {\sf G}) - P^{\alpha_n}_{s,\mu}({\sf F}_{u} {\sf
 G}), \\[.2cm] \nonumber  b_n (u) = P_{s,\mu}({\sf K}_{u} {\sf G}) - P^{\alpha_n}_{s,\mu}({\sf K}_{u} {\sf
 G}), \\[.2cm] \nonumber c_n(u) = P^{\alpha_n}_{s,\mu}(({\sf K}_{u} - {\sf K}^{\alpha_n}_{u}){\sf
 G}).
\end{gather*}
Since $P^{\alpha_n}_{s,\mu}({\sf H})=0$, it follows that
\begin{equation}
  \label{A1203}
{\rm LHS}(\ref{A1201}) = \left[a_n (t_2) - a_n(t_1)\right] -
\int_{t_1}^{t_2} b_n (u) du - \int_{t_1}^{t_2} c_n(u) d u=:
I_n^{(1)}+ I_n^{(2)} + I_n^{(3)}.
\end{equation}
By the assumed weak convergence of $P^{\alpha_n}_{s,\mu}$ one
readily gets $a_n (u) \to 0$, which yields $I_n^{(1)} \to 0$ as
$n\to +\infty$. At the same time,
\begin{equation}
\label{A1204}
  b_n(u) = C^{-1}_n\left[\nu_u(LF) - \nu_{n,u}(LF) \right] + (C^{-1}
  - C_n^{-1}) \nu_u (LF).
\end{equation}
Since $C_n \to C >0$, to prove $b_n(u) \to 0$ as $n\to +\infty$ by
(\ref{A1204}) it is enough to show that $\nu_u(LF) -
\nu_{n,u}(LF)\to 0$ for $F\in \mathcal{D}(L)$. To this end we recall
that ${\sf G}= {\sf G}_{m-1} (F_m \circ \varpi_{s_m})$, see
(\ref{A1200}). Set
\begin{gather*}
\tilde{\nu}_{n,u} (\mathbb{A}) = \tilde{C}_n^{-1}
P^{\alpha_n}_{s,\mu} ({\sf G}_{m-1}( \mathds{1}_{\mathbb{A} }\circ
\varpi_u)) , \quad u \in [s_{m-1}, s_m],\\[.2cm]
\tilde{\nu}_{u} (\mathbb{A}) = \tilde{C}^{-1} P_{s,\mu} ({\sf
G}_{m-1}( \mathds{1}_{\mathbb{A} }\circ \varpi_u)).
\end{gather*}
As above, we have that $\tilde{\nu}_{n,u}, \tilde{\nu}_{u} \in
\mathcal{P}_{\rm exp}^{\tilde{\vartheta}}$ for all $n$ and $u$ as
above. Clearly, we may assume that $\vartheta> \tilde{\vartheta}$,
and hence their correlation functions, $\tilde{k}_u$ and
$\tilde{k}^{\alpha_n}_u$, lie in the corresponding
$\mathcal{K}_{\tilde{\vartheta}}$. As in (\ref{Ua}) we then can
write
\begin{equation}
\label{A1208} k_u - k^{\alpha_n}_u = Q_{\vartheta\tilde{\vartheta}}
(u-s_m)\tilde{k}_{s_m} - Q^{\alpha_n}_{\vartheta\tilde{\vartheta}}
(u-s_m)\tilde{k}^{\alpha_n}_{s_m}.
\end{equation}
For $m=1$, $\tilde{k}_{u}$ and $\tilde{k}^{\alpha_n}_{u}$ are the
correlation functions of $\mu_u$ and $\mu^{\alpha_n}_u$, and hence
one may apply Lemma \ref{U2lm}, which yields
\[
\mu_u (L F) - \mu_u^{\alpha_n}(LF) = \langle \! \langle
\tilde{k}_{u} - \tilde{k}^{\alpha_n}_{u},  \widehat{L} \tilde{G}
\rangle \! \rangle  = \langle \! \langle \tilde{k}_{u} -
\tilde{k}^{\alpha_n}_{u}, G\rangle \! \rangle \to 0, \qquad n\to
+\infty,
\]
where $\tilde{G}\in \cap_{\vartheta} \mathcal{G}_\vartheta$ is such
that $F=K\tilde{G}$, see (\ref{WA}) and (\ref{WA1}). Therefore, we
may inductively assume in (\ref{A1208})  that
\[
\langle \! \langle \tilde{k}_{s_m} - \tilde{k}^{\alpha_n}_{s_m},
\widehat{L} \tilde{G} \rangle \! \rangle \to 0,
\]
and obtain
\[
\nu_u(LF) - \nu_{n,u}(LF) = \langle \! \langle k_u - k^{\alpha_n}_u
, \widehat{L} \tilde{G} \rangle \! \rangle \to 0,
\]
by repeating the steps made in the proof of Lemma \ref{U2lm}. This
yields $b_n(u) \to 0$. As already mentioned above, both terms of
$b_n(u)$ are bounded uniformly in $n$ and $u$, that yields in
(\ref{A1203}) $I_n^{(2)} \to 0$.

Let us now turn to $I_n^{(3)}$. As above, we have here
\begin{equation*}
 % \label{A1205}
|c_n(u)| = \left|\langle \! \langle k^{\alpha_n}_u, \widehat{L}_n G
\rangle \! \rangle\right| \leq e^\vartheta | \widehat{L}_n
G|_{\vartheta},
\end{equation*}
where $G\in \cap_{\vartheta}\mathcal{G}_\vartheta$ is such that $F =
KG$, see (\ref{WA}) and (\ref{WA1}), and $\widehat{L}_n$ is obtained
by replacing $a(x-y)$ in (\ref{A9a}) by
\[
a_n (x,y) = a(x-y) (1- \psi_{\alpha_n})(x) = \alpha_n
\frac{a(x-y)|x|^{d+1}}{1 + \alpha_n |x|^{d+1}} \leq \alpha_n
a(x-y)|x|^{d+1} =: \alpha_n \hat{a}(x,y).
\]
Proceeding as in obtaining (\ref{A11}) we then get, see (\ref{C8}),
\[
| \widehat{L}_n G|_{\vartheta} \leq \frac{2 \alpha_n
m^a_{d+1}}{e(\vartheta' - \vartheta)} \exp\left(e^\vartheta \langle
\phi \rangle \right) |G|_{\vartheta'}.
\]
Here $\vartheta'$ can be an arbitrary number since $G\in
\cap_{\vartheta'} \mathcal{G}_{\vartheta'}$, see (\ref{WA1}). This
yields $I_n^{(3)}\to 0$ as $n\to +\infty$ (and hence $\alpha_n \to
0$), which by (\ref{A1203}) implies (\ref{A1201}). Therefore,
 the proof of Theorem \ref{3tm} is completed.

\subsection{Proof of Theorem \ref{1tm}} Claim (a) follows by Theorem
\ref{3tm} (existence) and Theorem \ref{2tm} (uniqueness). The
validity of (b) is then a standard fact, cf. \cite[Theorem 5.1.2,
claim (iv), page 80]{Dawson}. To prove (c), we proceed as follows.
By construction, the law of $X(t)$ is $\mu_t \in \mathcal{P}_{\rm
exp}$; hence, $X(t)\in \breve{\Gamma}_*$ with probability one, see
Lemma \ref{Lenalm}. Let $\{D_k\}_{k\in \mathds{N}}$ and
$\{\Gamma_{*,k} \}_{k\in \mathds{N}}$ be the collections of balls
and sets, respectively, used in the proof of Lemma \ref{Lenardlm},
see (\ref{Psi6}). As we show there, each $\Gamma_{*,k}$ is an open
subset of  $\Gamma_*$, and $\breve{\Gamma}_* = \cap_{k}
\Gamma_{*,k}$. For $H$ as in (\ref{Hk}) and $k\in \mathds{N}$, we
set $H_k (\gamma) = H (\gamma_k) = H(\gamma\cap D_k)$. Then
$H_k(\gamma)< \infty$ for $\gamma \in \Gamma_{*,k}$. For $N\in
\mathds{N}$ and $s\geq 0$, let us consider the following stopping
time
\begin{equation*}
  %\label{Se51}
  T^k_N = \inf\{t\geq s: H_k (X(t) )> N \},
\end{equation*}
cf. \cite[page 180]{EK}, and then set $T^k_N \wedge t = \min\{T^k_N;
t\}$ and $Z(t) = \lim_{N\to +\infty} X(T^k_N \wedge t)$, which
exists as $T^k_N \leq T^k_{N+1}$. Let $\varPhi^m_\tau \in
\mathcal{D}({L})$ be the same as in (\ref{Ma16}). Then
\begin{equation*}
  %\label{Se52}
  \varPhi_\tau^m (X(t)) - \int_s^t ({L} \varPhi^m_\tau
  )(X(u)) d u
\end{equation*}
is a right-continuous martingale. Let $\tilde{\mu}_t$ be the law of
$Z(t)$ and $T^k = \lim_{N\to +\infty} T^k_N$. Similarly as in
\cite[page 180]{EK}, for each $t>s$ by the optional sampling
theorem, we can write
\begin{equation*}
  %\label{Se53}
  E\left[\varPhi_\tau^m (X(T^k_N \wedge t)) \right] = E\left[\varPhi_\tau^m (X(s))
  \right] + E \left[\int_s^{T^k_N\wedge t} ({L} \varPhi^m_\tau
  )(X(u)) d u \right],
\end{equation*}
which after passing to the limit $N\to +\infty$ yields
\begin{gather*}
  %\label{Se54}
\tilde{\mu}_t (\varPhi_\tau^m) = \mu (\varPhi_\tau^m) + E
\left[\int_s^{T^k \wedge t} ({L} \varPhi^m_\tau
  )(X(u)) d u \right] \\[.2cm] \nonumber \leq  \mu (\varPhi_\tau^m)
  + E
\left[\int_s^{T^k\wedge t} \left| ({L} \varPhi^m_\tau
  )(X(u))\right| d u \right]  \\[.2cm] \nonumber \leq \mu (\varPhi_\tau^m) +
E \left[\int_s^{ t} \left| ({L} \varPhi^m_\tau
  )(X(u))\right| d u \right] \leq \mu (\varPhi_\tau^m)
  + \int_s^t \mu_u (\varPhi^m_{\tau,1}) d u,
\end{gather*}
where $\mu_u = {P}_{s,\mu}\circ \varpi^{-1}_u$ is the law of $X(u)$.
Note that in the last line we used (\ref{Bog}). By this estimate and
(\ref{Ma18}) (with $\mu_0 = \mu$) we then get the following
\begin{equation*}
  %\label{Se55}
  \tilde{\mu}_t (\varPhi_\tau^m) \leq \sum_{n=0}^{+\infty}
  \frac{(t-s)^n}{n!} \mu (\varPhi_{\tau,n}^m).
\end{equation*}
Now we proceed as in (\ref{Ma22}) and arrive at
\begin{equation*}
 % \label{Se56}
  \lim_{\tau \to 0} \tilde{\mu}_t (\varPhi_\tau^m) \leq(\varkappa
  e^{2(t-s)})^m
  \|\theta\|^m_{L^1(X)},
\end{equation*}
holding for all $m\in \mathds{N}$ and $t-s < \rho_\varepsilon$. Here
$\varkappa$ is the type of $\mu$. This yields that $\tilde{\mu}_t
\in \mathcal{P}_{\rm exp}$ for such $t$, and hence $Z(t) \in
\breve{\Gamma}_*$ almost surely, implying $T^k
>t$. Now we fix $v< s + \rho_\varepsilon$, repeat this procedure
with the martingale
\[
  \varPhi_\tau^m (X(t+v)) - \int_{s+v}^{t+v} ({L} \varPhi^m_\tau
  )(X(u)) d u
\]
and eventually conclude that $T^k >t$ for all $t$, and hence almost
all sample paths of $X$ remain in
$\mathfrak{D}_{[s,+\infty)}(\Gamma_{*,k})$, holding for every $k$.
Since $\breve{\Gamma}_* = \cap_{k} \Gamma_{*,k}$, this yields that
these paths remain in
$\mathfrak{D}_{[s,+\infty)}(\breve{\Gamma}_*)$, cf. \cite[Proof of
Proposition 3.10, page 180, 181]{EK}, which complete the proof.

\vskip.6cm
\paragraph{\bf Acknowledgement} This work was supported by the Deutsche
Forschungsgemeinschaft through SFB 1283 ``Taming uncertainty and
profiting from randomness and low regularity in analysis,
stochastics and their applications" that is acknowledged by the
authors. Yuri Kozitsky thanks Lucian Beznea, Oleh Lopushansky and
Yuri Tomilov for discussing some of its aspects. He thanks also
Lucian Beznea and BIT DEFENDER for warm hospitality and financial
support during his stay in Bucharest in April 2019, where a part of
this work was done. Last but not least, the authors are cordially
grateful to the referee for valuable and favorable suggestion that
helped to improve the quality of the final version of this work.

\section*{Appendix}

Here we prove (\ref{Bog}), (\ref{Ma6}) and (\ref{Ma7a}). For $n=2$,
(\ref{Bog}) is just (\ref{Ma4}) with $\varPhi^{m}_{\tau,2}$ given by
(\ref{Ma6}) with  $w_1(m,2) = (m+1)^2 - m^2 = 2m +1$, $w_2(m,2)=1$,
see (\ref{Ma9}) and (\ref{Ma8}). Assume then that
$\varPhi^{m}_{\tau,n}$ is as in (\ref{Ma6}). By (\ref{CV1}),
similarly as in (\ref{TH7}), we get
\begin{eqnarray*}
\left|L V_\tau(c;\gamma) \right| & \leq & \sum_{j=1}^m
\widehat{F}_\tau^{m, \theta^{q_1}, \dots , \theta^{q_{j-1}} ,
\theta^{q_j} + 1, \theta^{q_{j+1}}, \dots \theta^{q_m}}(\gamma) ) +
 \tau c_a^{n + 1} \widehat{F}_\tau^{m+1}(\gamma).
\end{eqnarray*}
Here we have taken into account that $\bar{c}_\theta=1$, and also
$\theta^q (x) \leq c^q_a \psi(x)$, see (\ref{TH3}) and (\ref{C81}).
In a similar way, by (\ref{TH8}) we obtain
\[
\left| L \widehat{F}_\tau^m (\gamma) \right| \leq m c_a
\widehat{F}_\tau^m (\gamma) + \tau c_a \widehat{F}_\tau^{m
+1}(\gamma).
\]
Now we use both this estimates in (\ref{Ma6}) and obtain
\begin{eqnarray}
  \label{Zu}
 \left| L \varPhi^{m}_{\tau,n} (\gamma)\right|  & \leq &  \sum_{c\in \mathcal{C}_{m,n}}
C_{m,n}(c) \bigg{(} c_0 V_\tau(c_0-1, c_1+1, c_2, \dots, c_k,
\dots;\gamma)
\\[.2cm]\nonumber & +& c_1 V_\tau(c_0, c_1-1, c_2+1, \dots, c_k,
\dots;\gamma)+\cdots + \\[.2cm] \nonumber & + & c_n V_\tau(c_0, c_1, \dots, c_n
-1, c_{n+1}+1, \dots;\gamma)\bigg{)} \\[.2cm] \nonumber & + & \tau
c_a^{n+1} m^n \widehat{F}^{m+1}_\tau (\gamma) +  c_a^{n+1} \bigg{(}
\sum_{k=1}^n \tau^k (m+k) w_{k}(m,n)\widehat{F}^{m+k}_\tau (\gamma)
\\[.2cm] \nonumber &+ & \sum_{k=2}^{n+1} \tau^k (m+k) w_{k-1}(m,n)\widehat{F}^{m+k}_\tau
(\gamma) \bigg{)}.
\end{eqnarray}
If one takes into account the recurrence formulas in (\ref{Ma8}),
the latter two lines of the right-hand side of (\ref{Zu}) convert
into the second term of (\ref{Ma6}) written for
$\varPhi_{\tau,n+1}^m$. Thus, it remains to prove that the first
three lines of (\ref{Zu}) yield the first term of (\ref{Ma6})
written for $\varPhi_{\tau,n+1}^m$. Note that therein the summands
corresponding to $c_j=0$ vanish automatically since we multiply them
by zero in this case. Assuming that a given $c_j \neq 0$ we can
write the corresponding summand in (\ref{Zu}), denoted $S_j^{n+1}$,
as follows, see the second line in (\ref{Ma6}),
\begin{eqnarray}
\label{Zux}
 S_j^{n+1} & = & \frac{m! n! (j+1)! (c_{j+1}+1)}{ c_0!
\cdots (c_j-1)! (c_{j+1} + 1)! \cdots (0!)^{c_0} \cdots (j!)^{c_j-1}
j! ((j+1)!)^{c_{j+1}+1}\cdots
} \\[.2cm] \nonumber & \times & V_\tau (c_0, c_1, \dots , c_j-1, c_{j+1}+ 1,
\cdots ;\gamma) \\[.2cm] & = & \frac{j+1}{n+1} c'_{j+1} C_{m,n+1}
( c') V_\tau (c'; \gamma), \qquad c'\in \mathcal{C}_{m,n+1},
\nonumber
\end{eqnarray}
where $c' = (c_0, \dots , c_{j}-1, c_{j+1}+1, \dots)$. To get
convinced that $c'$ is indeed in $\mathcal{C}_{m,n+1}$ one computes
the corresponding sums, cf. (\ref{CV}), that yields $c_0 + \cdots +
c_j -1 + c_{j+1}+1 +\cdots  = c_0 + \cdots + c_j + c_{j+1} + \cdots
=m$, and $c_1 + \cdots + j (c_j-1) + (j+1) (c_{j+1}+1) + \cdots =
c_1 +\cdots + j c_j + (j+1)c_{j+1} +\cdots - j + j+1 = n+1$. Then we
rewrite each summand in the first three lines of (\ref{Zu}) as in
(\ref{Zux}) and observe that the corresponding $c'$ runs over the
whole $\mathcal{C}_{m,n+1}$ when $c$ runs through
$\mathcal{C}_{m,n}$. Then these three lines, denoted $S^{n+1}$, take
the following form
\begin{eqnarray}
  \label{Zu1}
  S^{n+1} & = & \sum_{c'\in \mathcal{C}_{m,n+1}} \bigg{(} \frac{1}{n+1} \sum_{j=1}^{n+1} j
  c'_j\bigg{)} C_{m,n+1} (c') V_\tau (c';\gamma) \\[.2cm] \nonumber
  & = & \sum_{c'\in \mathcal{C}_{m,n+1}} C_{m,n+1} (c') V_\tau
  (c';\gamma),
\end{eqnarray}
where we have taken into account that $\sum_{j} j c_j = n+1$, see
(\ref{CV}). This completes the proof of (\ref{Bog}) and (\ref{Ma6}).
It then remains to prove (\ref{Ma7a}). For $n=1$,
$\mathcal{C}_{m,1}$ is a singleton consisting of $c= (m-1,1,
0,\dots)$, which yields
\[
\sum_{c\in \mathcal{C}_{m,1}} C_{m,1}(c) = \frac{m!}{ (m-1)! 1!} =
m.
\]
Now we set in the second line of (\ref{Zu1}) $V_\tau (c';\gamma)
\equiv 1$ and calculate $S^{n+1}$ with this $V_\tau$, which is equal
to the first three lines of (\ref{Zu}). That is,
\begin{eqnarray*}
\sum_{c'\in \mathcal{C}_{m,n+1}} C_{m,n+1}(c') = \sum_{c\in
\mathcal{C}_{m,n}} C_{m,n}(c) \bigg{(} c_0 + c_1 +\cdots +
c_n\bigg{)} = m \sum_{c\in \mathcal{C}_{m,n}} C_{m,n}(c),
\end{eqnarray*}
where we once again have used the first equality in (\ref{CV}). Now
(\ref{Ma7a}) is obtained from the latter by the induction in $n$.

\end{document}